\documentclass[12pt,a4paper]{article}
\usepackage[top=2.5cm, bottom=2.5cm, left=2.5cm, right=2.5cm]{geometry}

\usepackage{amsmath,amssymb,amsthm,mathrsfs,mathtools}
\usepackage{graphicx,tikz}
\usepackage{empheq}
\usepackage{braket}
\usepackage{comment}
\usepackage{physics} 

\usetikzlibrary{decorations.pathreplacing,calligraphy,cd}

\theoremstyle{definition}
\newtheorem{dfn}{Definition}[section] 
\newtheorem{ex}[dfn]{Example}
\newtheorem{hyp}[dfn]{Hypothesis}
\newtheorem{rem}[dfn]{Remark}

\theoremstyle{plain}
\newtheorem{maintheorem}{Theorem}

\newtheorem{mainconjecture}[maintheorem]{Conjecture}

\newtheorem{thm}[dfn]{Theorem}
\newtheorem{lem}[dfn]{Lemma}
\newtheorem{prp}[dfn]{Proposition}
\newtheorem{cor}[dfn]{Corollary}

\numberwithin{equation}{section}

\DeclareMathOperator{\Ind}{Ind}
\DeclareMathOperator{\ind}{ind}
\DeclareMathOperator{\res}{res}
\DeclareMathOperator{\con}{con}
\DeclareMathOperator{\Cok}{Coker}
\DeclareMathOperator{\Ker}{Ker}
\DeclareMathOperator{\Ad}{Ad}
\DeclareMathOperator{\pullback}{pb}
\DeclareMathOperator{\Aut}{Aut}
\DeclareMathOperator{\id}{id}
\DeclareMathOperator{\Tor}{Tor}

\newcommand{\ve}{\varepsilon}
\newcommand{\vp}{\varphi}

\newcommand{\C}{\mathbb{C}}
\newcommand{\R}{\mathbb{R}}
\newcommand{\Q}{\mathbb{Q}}
\newcommand{\Z}{\mathbb{Z}}
\newcommand{\Oi}{\mathcal{O}_\infty}

\newcommand{\Zn}{\mathbb{Z}/n}
\newcommand{\cupprod}{\hat{\otimes}}
\newcommand{\Cstar}{\mathrm{C}^*}

\begin{document}
\title{Toward the classification of strongly self-absorbing $\mathrm{C}^*$-dynamical systems of compact groups}

\author{Masaki Izumi 
\thanks{Supported in part by JSPS KAKENHI Grant Number 25K00912}\\  Keiya Ohara\\
Graduate School of Science \\
Kyoto University \\
Sakyo-ku, Kyoto 606-8502, Japan} 
\date{}
\maketitle

\begin{abstract}
Strongly self-absorbing $\mathrm{C}^*$-algebras play a distinguished role in the classification of nuclear $\Cstar$-algebras. 
Their dynamical analogues were introduced and extensively studied by Szab\'o. 
In this paper, we propose a conjecture regarding the equivariant $KK$-theory of strongly self-absorbing $\mathrm{C}^*$-dynamical 
systems of compact groups in the equivariant bootstrap category; an affirmative answer to this conjecture 
would lead to classification results.  
We settle this conjecture for all finite EPPO (every element has a prime-power order) groups. 
In the course of our proof, we establish a K\"unneth-type formula for the equivariant $K$-theory of $\mathrm{C}^*$-algebras equipped 
with finite cyclic group actions---more precisely, for the cyclotomic part of the equivariant $K$-groups introduced by 
Meyer and Nadareishvili---under a certain unique divisibility assumption. 
\end{abstract}
\tableofcontents

\section*{Introduction}
\addcontentsline{toc}{section}{\numberline{}Introduction}

The notion of strongly self-absorbing $\Cstar$-algebras was introduced by Toms and Winter 
\cite{Toms and Winter} in order to axiomatize the class of $\Cstar$-algebras for which a McDuff-type argument applies.    
A unital separable $\Cstar$-algebra $D$ is said to be \textit{strongly self-absorbing} if the first-factor  embedding $D\to D\otimes D$, 
given by $x\mapsto x\otimes 1_D$, is approximately unitarily equivalent to an isomorphism. 
The following list exhausts all the known examples of strongly self-absorbing $\Cstar$-algebras: 
the Jiang-Su algebra $\mathcal{Z}$, the UHF-algebras of infinite type $M_{\mathfrak{P}^\infty}$, 
where $\mathfrak{P}$ is a set of prime numbers, 
the Cuntz algebras $\mathcal{O}_2$ and $\mathcal{O}_\infty$, and their tensor products.  

It is known that $\mathcal{O}_2$ and $\mathcal{O}_\infty$ play a central role in the Kirchberg--Phillips classification 
theory of purely infinite nuclear $\Cstar$-algebras (see \cite{Kirchberg and Phillips, Phillips}),  
and that $\mathcal{Z}$-absorption is the precise regularity condition required for the classification 
of nuclear $\Cstar$-algebras (see, for example, \cite{Gong Lin Niu, CETWW}). 
The formulation of Toms and Winter offers a unified framework in which these algebras can be treated simultaneously. 
As a consequence of the classification theory of nuclear $\Cstar$-algebras, the above list exhausts all isomorphism classes 
of strongly self-absorbing $\Cstar$-algebras provided that they satisfy the Universal Coefficient Theorem 
\cite{Rosenberg and Schochet 1997} (see \cite{Toms and Winter, Winter2011, Dadarlat Rordam} for the proof). 
In the present paper, we initiate a project to extend this classification of the strongly self-absorbing $\Cstar$-algebras to the equivariant setting involving compact group actions.

Let $G$ be a locally compact group. 
By a $G$-$\Cstar$-algebra, we mean a $\Cstar$-algebra equipped with a fixed $G$-action throughout this paper.  
A $G$-equivariant analogue of the notion of strongly self-absorbing $\Cstar$-algebras, namely 
\textit{strongly self-absorbing $G$-actions} or \textit{strongly self-absorbing $G$-$\Cstar$-algebras}, was introduced by Szab\'o \cite{Szabo}, and their properties have been extensively studied in the series of works \cite{Szabo, Szabo 2, Szabo 3}. 
In the recent striking work of Gabe and Szab\'o \cite{Gabe and Szabo} on the classification of isometrically shift-absorbing 
actions on Kirchberg algebras by equivariant $KK$-theory, an equivariant analogue of the $\mathcal{O}_\infty$-absorption 
plays an essential role. 
Strongly self-absorbing actions are of fundamental importance in the classification of group actions. 

The significance of the notion of strongly self-absorbing $\Cstar$-algebras (and their equivariant analogues) 
is not limited to the classification theory. 
Indeed, in a series of works \cite{Dadarlat and Pennig I, Dadarlat and Pennig II, Dadarlat and Pennig U}, Dadarlat and Pennig 
developed a far-reaching generalization of the Dixmier--Douady theory. 
They demonstrated that the classifying space $B\Aut(D\otimes \mathbb{K})$ of the automorphism group of the stabilization of a strongly self-absorbing $\Cstar$-algebra $D$ is an infinite loop space, and hence gives rise to a generalized cohomology theory. 
Moreover, they elucidated the relationship between these resulting infinite loop spaces and the ring spectra associated with $K$-theory.
An equivariant version of this theory involving compact group actions has not been fully developed yet, but a few examples coming from 
the infinite tensor product actions on the UHF algebras have been worked out in 
\cite{Evans and Pennig SU(n), Evans and Pennig T, Bianchi and Pennig}. 
An important lesson from these works is that the equivariant $K_0$-groups of these examples are localizations of 
the representation ring $R(G)$ of the acting group $G$.  
  
The $K_0$-group of a strongly self-absorbing $\Cstar$-algebra $D$ has a commutative ring structure, which is in fact a localization 
of the ring of integers $\mathbb{Z}$. 
This fact was implicitly used in \cite{Toms and Winter} to determine the structure of $D$, and explicitly stated 
in \cite{Dadarlat and Pennig I}. 
On the other hand, when $G$ is a compact group, we have $K^G_0(\C)=KK^G(\C,\C)\cong R(G)$, and the equivariant $KK$-group 
$KK^G(A,B)$ for any two $G$-$\Cstar$-algebras $A,B$ has a natural $R(G)$-module structure. 
Moreover, when $D$ is a strongly self-absorbing $G$-$\Cstar$-algebra, $K^G_0(D)$ has a commutative ring structure,  
as in the non-equivariant case, and the embedding map $\nu:\C \hookrightarrow D$ induces a ring homomorphism 
\[\nu_*:R(G)=K_0^G(\C)\to K_0^G(D).\]    
 
Let 
\[S_D=\{x\in R(G);\; \nu_*(x) \text{ is invertible in } K_0^G(D)\}.\]
Then $S_D$ is a multiplicative subset of $R(G)$. 
Thus for every $R(G)$-module $\mathcal{M}$, its localization $\mathcal{M}_{S_D}$ at $S_D$ makes sense. 
Although it is customary to exclude the case $S=R(G)$ to define the localization $\mathcal{M}_S$, we include the case 
$S=R(G)$ by putting $\mathcal{M}_S=\{0\}$ for convenience in stating our main conjecture.   

More generally, we denote by $\nu^B_*:KK_i^G(B,\C)\to KK_i^G(B,D)$ the map induced by $\nu$ for any $G$-$\Cstar$-algebra $B$. 

\begin{dfn}\label{loc condition}
We say that a strongly self-absorbing $G$-$\Cstar$-algebra $D$ satisfies the \textit{localization condition with respect to 
a $G$-$\Cstar$-algebra $B$} if   
there exist $R(G)$-module isomorphisms $h_i^B\colon KK_i^G(B,\C)_{S_D}\to KK_i^G(B,D)$ for $i=0,1$  
such that $h_i^B$ composed with the natural localization maps $KK^G_i(B,\mathbb{C})\to KK^G_i(B,\mathbb{C})_{S_D}$ are $\nu^B_\ast\colon KK_i^G(B,\C)\to KK_i^G(B,D)$:   
\[
\begin{tikzcd}[row sep=large, column sep=large]
	KK_i^G(B,\mathbb{C})
	\arrow[r]
	\arrow[dr, "\nu^B_*"']
	&
	KK_i^G(B,\mathbb{C})_{S_D}
	\arrow[d, "h^B_i"', "\cong"]
	\\
	{}
	&
	KK_i^G(B,D)
	\arrow[phantom, from=1-1, to=2-2,
	"\circlearrowright", pos=0.5,
	xshift=22pt, yshift=6pt]
\end{tikzcd}. 
\]
\end{dfn}
 
 Now we state our main conjecture. We say that a separable $G$-$\Cstar$-algebra $B$ is \textit{compact} if $B$ is a compact object of the $G$-equivariant Kasparov category $KK^G$, that is, for every countable family $\{C_n\}_n$ of separable $G$-$\Cstar$-algebras, $\bigoplus_n KK^G(B,C_n)$ is canonically isomorphic to $KK^G(B, \bigoplus_n C_n)$. 
 
\begin{mainconjecture}\label{conjecture}Let $G$ be a compact group and $D$ be a strongly self-absorbing $G$-$\Cstar$-algebra that belongs to the $G$-equivariant bootstrap class $\mathcal{B}_G$. 
Then $D$ satisfies the localization condition with respect to every separable compact $G$-$\Cstar$-algebra $B$. 
\end{mainconjecture}

\begin{rem}\label{rem of main conjecture} Several remarks are in order. 
\begin{itemize}
\item[(1)] For every saturated multiplicative subset $S\subset R(G)$, we construct in Proposition \ref{model action} 
a model action $(M^S,\mu^S)$ belonging to $\mathcal{B}_G$ such that $S_{M^S}=S$ and $(M^S,\mu^S)$ satisfies 
the localization condition with respect to every compact $B$. 
Furthermore, we can take $(M^S,\mu^S)$ so that $M^S$ is a Kirchberg algebra and $\mu^S$ is isometrically shift absorbing (Corollary \ref{model remark}).  
\item[(2)] For a closed subgroup $H\subset G$, $D$ satisfies the localization condition with respect to $C(G/H)$  
if and only if $K_0^H(D)\cong R(H)_{S_D}$ and $K_1^H(D)\cong \{0\}$, where $R(H)$ is regarded as a $R(G)$-module 
through the restriction homomorphism $R(G)\to R(H)$.  
\item[(3)] Let $\{B\}_{B\in \mathcal{B}_G^{0}}$ be a family of compact objects in $\mathcal{B}_G$ generating $\mathcal{B}_G$. 
Then Conjecture A is true for $G$ if and only if every strongly self-absorbing $D\in \mathcal{B}_G$ satisfies 
the localization condition with respect to every $B\in \mathcal{B}_G^{0}$. 
Indeed, we show in Proposition \ref{loc KK} that the latter condition implies that $D$ is $KK^G$-equivalent to the model 
$(M^{S_D},\mu^{S_D})$ with the equivalence sending $[1_D]\in K_0^G(D)$ to $[1_{M^{S_D}}]\in K_0^G(M^{S_D})$.    
For the same reason, we also see that if Conjecture A is true for $G$, the multiplicative set $S_D$ is a complete invariant 
of $D$ up to $KK^G$-equivalence preserving identity.  
If moreover we consider only isometrically shift absorbing actions on Kirchberg algebras, the set $S_D$ is a complete 
invariant up to conjugacy thanks to the Gabe--Szab\'o theorem.  
\item[(4)] When $G$ is a finite group, Meyer and Nadareishvili \cite[Corollary 3.3]{Meyer and Nadareishvili} showed, based on 
Arano and Kubota's work \cite{Arano and Kubota}, that the objects $C(G/H)$ for cyclic subgroups $H\subset G$ generate $\mathcal{B}_G$. 
Thus the proof of Conjecture A is reduced to showing $K_0^H(D)\cong R(H)_{S_D}$ and $K_1^H(D)=\{0\}$ for every cyclic 
subgroup $H\subset G$. 
\end{itemize}
\end{rem}

It is too ambitious a goal to verify Conjecture A in full generality at present.  
In this paper, we content ourselves with verifying it for a class of finite groups slightly larger than the class of $p$-groups, 
which still contains a few non-abelian simple groups. 
We believe that an affirmative answer in such a class of groups demonstrates the relevance of the conjecture. 

\begin{dfn}A finite group $G$ is called an \textit{EPPO-group} (short for \textit{every element has prime-power order}) if every element of $G$ has prime-power order. \end{dfn}

\begin{ex}\begin{itemize}
		\item[(1)]All $p$-groups are EPPO-groups. 
		\item[(2)]A dihedral group $D_n=\mathbb{Z}/n\rtimes\mathbb{Z}/2$ is an EPPO-group if $n=p^m$ for some prime number $p$. Indeed, if $p=2$, then $\abs{D_n}=2^{m+1}$. If $p$ is odd, then each element of $D_n$ has order $1$, $2$ or a power of $p$. 
		\item[(3)]The symmetric groups $S_3, S_4$ and the alternating groups $A_4, A_5, A_6$ are EPPO-groups. \end{itemize}
	\end{ex}

\begin{maintheorem}[Theorem \ref{EPPO case}]\label{intro strongly self-absorbing} Conjecture A is true for every EPPO-group. 
\end{maintheorem}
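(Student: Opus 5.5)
By Remark \ref{rem of main conjecture}(4), it suffices to prove the following for every strongly self-absorbing $D\in\mathcal{B}_G$ and every cyclic subgroup $H\subset G$:
\[
K_0^H(D)\cong R(H)_{S_D},\qquad K_1^H(D)=0 ,
\]
with the isomorphism compatible with $\nu_*$. Since $G$ is EPPO, each cyclic $H$ is $\Z/p^m$ for some prime $p$. The restriction $D|_H$ is a strongly self-absorbing $H$-algebra in $\mathcal{B}_H$, so $K_*^H(D)$ is a $\Z/2$-graded commutative ring. The plan therefore has two parts: first treat cyclic $p$-groups $H=\Z/p^m$ directly, and then show that the multiplicative set seen by $H$ is the image of $S_D$ under restriction, so that $R(H)_{S_{D|_H}}=R(H)_{S_D}$.

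For $H=\Z/p^m$, write $R(H)=\Z[t]/(t^{p^m}-1)$ and decompose it over the cyclotomic factors $\Phi_{p^k}(t)$, $0\le k\le m$. The matching decomposition of $K^H_*(D)$ into cyclotomic parts is that of Meyer--Nadareishvili, and we argue by induction on $m$. The restriction to $\Z/p^{m-1}$ handles every cyclotomic part except the top one. For the top part we apply the Künneth-type formula for cyclotomic parts, which holds under unique divisibility. The tensor square $D\otimes D\cong D$, together with the flip being homotopic to the identity as in Szabó's theory, should force the top cyclotomic part $M$ of $K_*^H(D)$ to be a ring with $M\otimes M\cong M$ via multiplication. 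This is the equivariant analogue of the classical argument showing that $K_*(D)$ is a localization of $\Z$.

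The key step in this part is to show that $M$ is a quotient-localization of $\Z[\zeta_{p^m}]$ and that $M_1=0$. Since $\Z[\zeta_{p^m}]$ is a Dedekind domain with a single prime above $p$, namely $(1-\zeta_{p^m})$, the solid-ring classification has only a few cases. Either $p$ is invertible in $M$, in which case unique divisibility holds and the Künneth formula applies, or $M$ is $p$-local. In the $p$-local case, the Burnside-type relation ties the top cyclotomic part to the lower ones modulo $1-\zeta_{p^m}$: on this part the element $\sum_i t^{ip^{m-1}}$ of $R(H)$ acts as $0$ in the top piece but as $p$ on the rest. I expect this to let us use the lower parts, already treated by induction, to decide whether $M$ vanishes or equals the appropriate localization. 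This $p$-local case is the main obstacle, because the Künneth formula requires unique divisibility and that is exactly what can fail here. My first attempt would be to work rationally, using the fixed-point/character decomposition, together with an argument modulo $p$ using the Rokhlin-type dichotomy. Concretely: either $1-t$ is invertible in the top part, or the action on $D$ restricted to $\Z/p$ behaves like a trivial action in $KK$.

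Once each cyclotomic part is known to be $0$ or the localization of $\Z[\zeta_{p^k}]$ at the relevant primes, with vanishing $K_1$, we reassemble. The gluing of the cyclotomic parts is governed by the pullback square of the rings $\Z[t]/(\Phi_{p^k})$. The invertible elements of $K_0^H(D)$ coming from $R(H)$ are the images of the saturated set $S_{D|_H}$, so $K_0^H(D)\cong R(H)_{S_{D|_H}}$. Finally, $S_{D|_H}$ is the preimage of $S_D$ under restriction up to saturation: an element of $R(G)$ is invertible in $K_0^G(D)$ exactly when it becomes invertible after restriction to every cyclic subgroup, which is detected through the characters. Hence $R(H)_{S_{D|_H}}=R(H)_{S_D}$, and Remark \ref{rem of main conjecture}(3),(4) completes the proof.
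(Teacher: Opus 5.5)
\textbf{Gap 1: the lifting step $S^H_D=\overline{\res^H_G(S_D)}$ is asserted, not proved.} Reducing to cyclic subgroups via Remark~\ref{rem of main conjecture}(2),(4) is the right frame. The last step, however, is exactly where the paper does most of its work.

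You need $S^H_D=\overline{\res^H_G(S_D)}$ for every cyclic $H$, where $S^H_D\subset R(H)$ is the set of elements invertible in $K^H_0(D)$. The fact you invoke is Corollary~\ref{res invertible}: $x\in R(G)$ is invertible in $K^G_0(D)$ iff $\res^L_G(x)$ is invertible for all cyclic $L$. This only gives the easy inclusion $\res^H_G(S_D)\subset S^H_D$. It describes $S_D$ as an intersection of preimages, but says nothing about whether a given $f\in S^H_D$ divides the restriction of some element of $S_D$.

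For the reverse inclusion you must \emph{construct}, for each $f\in S^H_D$, an honest virtual character $\widetilde f\in R(G)$ such that:
\begin{itemize}
\item $\res^L_G(\widetilde f)\in S^L_D$ for \emph{every} cyclic $L$;
\item $\res^H_G(\widetilde f)\in f\cdot R(H)$.
\end{itemize}
Characters do not do this for you. A family on cyclic subgroups that is compatible under restriction and conjugation only defines a class function in $R(G)\otimes\Q$. By Brauer's characterization, integrality has to be arranged on the elementary subgroups.

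The paper does precisely this in Section~5.3. It builds compatible families of invertible elements over all $p$-subgroups:
\begin{itemize}
\item at primes where $K_*(D)$ is not uniquely divisible, it raises to $p$-powers so that $f_L\equiv 1$ mod $p^{r}$, which makes the Burnside-type formula for $f_E$ ($E$ a non-cyclic $p$-group) integral;
\item at primes where $K_*(D)$ is uniquely divisible, it multiplies by $p_i^{r_i}$;
\item it matches all the families at the trivial subgroup.
\end{itemize}
It then glues them with Brauer's induction theorem, $\widetilde f=\sum_H I^G_H(\phi_Hf_H)$ with $\sum_H I^G_H(\phi_H)=1$, checked via the Mackey formula (Theorem~\ref{lifting theorem}). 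This is where the EPPO hypothesis really enters, since elementary subgroups are then $p$-groups. In your outline EPPO is used only to make cyclic subgroups $p$-groups, which is a symptom of the missing step.

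\textbf{Gap 2: the non-divisible case for cyclic $p$-groups is left open.} The cyclic $p$-group step is incomplete in the case you flag yourself.

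First, the dichotomy has to be on the non-equivariant $K_*(D)$. Theorem~\ref{Kunneth} needs $K^L_*(D)$ uniquely $p$-divisible for all $L\subset H$. That follows from unique $p$-divisibility of $K_*(D)$ only through Theorem~\ref{p-div}, not from $p$ being invertible on the top piece.

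Second, in the other case the top piece $\Ker\Phi_{p^m}$ is not a unital ring and no K\"unneth formula applies. ``Rational arguments plus a Rokhlin-type dichotomy'' is a hope, not an argument. The missing idea runs as follows:
\begin{itemize}
\item $K_*(C_\nu)$ is then uniquely $p$-divisible (from Toms--Winter), hence so is $K^H_*(C_\nu)$.
\item Therefore $K^H_*(D)\hookrightarrow K^H_*(D\otimes M_{p^\infty})$ (Corollary~\ref{embedding}), which gives $K^H_1(D)=0$.
\item This makes $K^H_0(D)/\Phi_{p^m}K^H_0(D)$ an overring of $\Z[\zeta_{p^m}]$, hence a localization (Theorem~\ref{overring}).
\item Meyer's triangles identify this quotient with $F^H_0(D)$ as a module.
\end{itemize}

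Your pullback square is the right picture: $R(H)$ is the fibre product of $\Z[\zeta_{p^m}]$ and $R(\Z/p^{m-1})$ over $\Z[t]/(t^{p^{m-1}}-1,p)$. But localizations of the two corners only glue to a localization of $R(H)$ once you find a \emph{common} invertible denominator in $R(H)$. The paper gets it by raising both denominators to a power so that they become $1$ in the finite ring $\Z[t]/(t^{p^{m-1}}-1,p)$, then lifting with Lemma~\ref{cyclic lift}. It then uses the mapping cone of $\nu$ to check that multiplying by this denominator lands in the image of $R(H)$.
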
%

Toms and Winter's computation of $K_\ast(D)$ in \cite{Toms and Winter} relied crucially on the K\"unneth formula established 
by Schochet \cite{Schochet}. 
To generalize their computation to the equivariant case, an equivariant version of the K\"unneth formula is necessary. 
However, it is well known that the K\"unneth formula does not hold in equivariant $K$-theory in a naive way, 
and several variants have been proposed in the literature (see \cite{Rosenberg and Schochet 1986, freeness, Rosenberg}). 
We formulate another variant suitable for our purposes following ideas of Meyer and Nadareishvili \cite{Meyer and Nadareishvili}. 

Let $\mathbb{Z}/n$ denote the cyclic group of order $n$. 
For a $\Zn$-$\Cstar$-algebra $A$, its equivariant $K$-theory $K^{\Zn}_\ast(A)$ is a module over the representation ring $R(\Zn)\cong\mathbb{Z}[t]/(t^n-1)$. 
Let $\Phi_n(t)$ denote the $n$-th cyclotomic polynomial, regarded as an element of $\mathbb{Z}[t]/(t^n-1)$. 
Then %
%
\[F^{\Zn}_\ast(A)=\left\{x\in K^{\Zn}_\ast(A)\mid \Phi_n\cdot x=0\right\}\]
is a module over $\mathbb{Z}[t]/(\Phi_n(t))\cong\mathbb{Z}[\zeta_n]$, where $\zeta_n$ is a primitive $n$-th root of unity. 
Using this module, Meyer and Nadareishvili formulated in \cite{Meyer and Nadareishvili} a version of the universal coefficient theorem for actions of finite groups. 
The following theorem may be thought of as a tensor analogue of that theorem in the case of finite cyclic groups. 

\begin{maintheorem}[Theorem \ref{Kunneth}]\label{intro Kunneth}Let $A$ and $B$ be $\mathbb{Z}/n$-$\Cstar$-algebras. Suppose that $K^H_\ast(A)$ is uniquely $n$-divisible for all subgroups $H\subset \Zn$ and $B$ belongs to the $\mathbb{Z}/n$-equivariant 
bootstrap class $\mathcal{B}_{\mathbb{Z}/n}$. 
Then there is a short exact sequence %
\[F^{\mathbb{Z}/n}_\ast(A)\otimes_{\mathbb{Z}[\zeta_{n}]} F^{\mathbb{Z}/n}_\ast(B)\rightarrowtail F^{\mathbb{Z}/n}_\ast(A\otimes B)\twoheadrightarrow {\Tor}^{\mathbb{Z}[\zeta_{n}]}_1(F^{\mathbb{Z}/n}_\ast(A),F^{\mathbb{Z}/n}_{\ast-1}(B)).\]
\end{maintheorem}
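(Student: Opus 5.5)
The argument follows the standard Atiyah/Schochet pattern. We fix $A$ and regard both sides as functors of $B$. Then we show that the Künneth sequence holds for a generating family of $\mathcal{B}_{\Zn}$ and propagate it using geometric resolutions of $F^{\Zn}_\ast(B)$.

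First, let $R=\mathbb{Z}[1/n][t]/(t^n-1)$. Under the unique $n$-divisibility hypothesis, $R$ splits as a product of the rings $\mathbb{Z}[1/n][t]/(\Phi_d)\cong\mathbb{Z}[1/n][\zeta_d]$ for $d\mid n$. This holds because the cyclotomic factors $\Phi_d$ of $t^n-1$ are pairwise comaximal once $n$ is inverted, since their resultants divide powers of $n$.

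It follows that $K^{\Zn}_\ast(A)$ decomposes as a direct sum of its $\Phi_d$-torsion parts. The summand for $d=n$ is exactly $F^{\Zn}_\ast(A)$, and it is a module over the Dedekind domain $\mathbb{Z}[1/n][\zeta_n]$. The same holds for $K^{\Zn}_\ast(A\otimes B)$: it is also uniquely $n$-divisible, since $A\otimes B\simeq (A\otimes M_{n^\infty})\otimes B$ in the relevant sense, and multiplication by $n$ is invertible on $KK^{\Zn}(\C,A)$ and hence on the tensor product. Consequently the functor $B\mapsto F^{\Zn}_\ast(A\otimes B)$ is exact: it is a natural direct summand of a homological functor. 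I will use the hypothesis on all subgroups $H$ to make this splitting compatible with restriction and induction.

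Next, we verify the statement on generators. By Remark \ref{rem of main conjecture}(4), $\mathcal{B}_{\Zn}$ is generated by $C(\Zn/H)$ for subgroups $H=\mathbb{Z}/m$, $m\mid n$. For $B=C(\Zn/H)$, Green–Julg and induction give
\[K^{\Zn}_\ast(A\otimes C(\Zn/H))\cong K^H_\ast(A).\]
The $R(\Zn)$-action factors through $R(H)=\mathbb{Z}[t]/(t^m-1)$. After inverting $n$, the $\Phi_n$-part therefore vanishes unless $m=n$, and $F^{\Zn}_\ast(C(\Zn/H))=0$ likewise for $m<n$. For $H=\Zn$ we have $B=\C$, where $F_\ast(\C)=\mathbb{Z}[\zeta_n]$ is free and the sequence is trivially exact with vanishing Tor. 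For $m<n$ all three terms vanish, again because $\Phi_n$ and $t^m-1$ are comaximal after inverting $n$.

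We then build a geometric resolution. Over $\mathbb{Z}[1/n][\zeta_n]$, which is Dedekind of global dimension $1$, we choose a two-step projective resolution. By Meyer–Nadareishvili's UCT framework, projective $\mathbb{Z}[\zeta_n]$-modules (direct sums of $F_\ast(\C)$) are realized by direct sums of copies of $\C$ and suspensions. We realize an epimorphism $\bigoplus \C\to B$ on $F_\ast$, take its cone, and obtain an exact triangle $P_1\to P_0\to B$ in $\mathcal{B}_{\Zn}$ with $0\to F_\ast(P_1)\to F_\ast(P_0)\to F_\ast(B)\to 0$ exact.

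The difficulty is that $F_\ast$ sees only the cyclotomic part, so the cone need not realize a resolution of the full $K^{\Zn}_\ast(B)$. This is where unique divisibility on $A$ must be used: the other $\Phi_d$-parts of $P_i$ and $B$ contribute nothing to $F_\ast(A\otimes -)$. To justify this, I would first replace $B$ by $B\otimes M_{n^\infty}$, which is harmless for $F_\ast(A\otimes B)$ since $A\otimes M_{n^\infty}\sim A$. I would then argue that $F_\ast(A\otimes X)=0$ whenever $X\in\mathcal{B}_{\Zn}$ has $F_\ast(X)\otimes\mathbb{Z}[1/n]=0$ and $X$ is $n$-divisible. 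This follows by a localizing-subcategory argument: such $X$ lies in the localizing subcategory generated by the $C(\Zn/H)$ with $H\neq\Zn$ after localization, and for those generators the vanishing was shown in the previous step.

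With this in hand, tensoring the triangle with $A$ gives a long exact sequence in $F_\ast(A\otimes -)$. For the terms $P_i$, which are sums of $\C$, it is identified with $F_\ast(A)\otimes_{\mathbb{Z}[\zeta_n]}F_\ast(P_i)$, using compatibility of $F_\ast$ with countable direct sums. The usual diagram chase then yields the cokernel $F_\ast(A)\otimes F_\ast(B)$ and the kernel $\Tor_1(F_\ast(A),F_{\ast-1}(B))$. Naturality and independence of the resolution follow as in Schochet's proof.

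The step I expect to be hardest is the vanishing lemma of the previous paragraph, namely controlling the non-cyclotomic part in the cone construction.
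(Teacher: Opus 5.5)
There is a genuine gap. It sits at the step you flag as hardest, and several earlier claims also fail as stated.

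\textbf{Where the resolution of $B$ breaks.} You resolve $B$, which is \emph{not} uniquely $n$-divisible. On such objects $F^{\Zn}_\ast$ is not homological; it is homological only on the subcategory $\mathfrak{N}$ of Lemma~\ref{category n}. So a triangle $P_1\to P_0\to B$ need not give a short exact sequence on $F^{\Zn}_\ast$.

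Worse, sums of $\C$ and $C_0(\R)$ cannot even surject onto $F^{\Zn}_\ast(B)$. A map $\C\to B$ given by $y\in K^{\Zn}_0(B)$ sends $F^{\Zn}_0(\C)=\Psi\cdot R(\Zn)$, where $\Psi=(t^n-1)/\Phi_n$, into $\Psi\cdot K^{\Zn}_0(B)$. For example, take $n=p$ and let $B$ be the mapping cone of $\C\to C(\mathbb{Z}/p)$. Then $F^{\mathbb{Z}/p}_0(B)=K^{\mathbb{Z}/p}_0(B)=(t-1)R(\mathbb{Z}/p)\cong\mathbb{Z}[\zeta_p]$, but the image is only $(t-1)^2R(\mathbb{Z}/p)\cong(\zeta_p-1)\mathbb{Z}[\zeta_p]$.

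Passing to $B\otimes M_{n^\infty}$ is indeed harmless. But then $P_0$ must also have the form $\mathscr{F}\otimes M_{n^\infty}$, so that the whole triangle lies in $\mathfrak{N}$.

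\textbf{The missing projective case and the vanishing lemma.} $P_1$ is a cone, not a sum of copies of $\C$. So the identification $F^{\Zn}_\ast(A\otimes P_1)\cong F^{\Zn}_\ast(A)\otimes_{\mathbb{Z}[\zeta_n]}F^{\Zn}_\ast(P_1)$ needs a separate projective-case theorem. Its proof would realize $F^{\Zn}_\ast(P_1)$ by some $\mathscr{F}'\otimes M_{n^\infty}$ and take the cone $Z$, which has $F^{\Zn}_\ast(Z)=0$. That proof rests entirely on your vanishing lemma: if $X\in\mathcal{B}_{\Zn}\cap\mathfrak{N}$ and $F^{\Zn}_\ast(X)=0$, then $F^{\Zn}_\ast(A\otimes X)=0$ for every $A\in\mathfrak{N}$.

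Since $A$ is not assumed to lie in the bootstrap class, this cannot be checked by localizing in the $A$-variable. Your justification is that such $X$ lies in the localizing subcategory generated by the $C((\Zn)/H)$ with $H\neq\Zn$. That is an unproved structural claim. The class of $X$ with $F^{\Zn}_\ast(X)=0$ is defined by a vanishing condition, and knowing that the proper generators belong to it says nothing about the reverse inclusion.

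Likewise, unique $n$-divisibility of $K^H_\ast(A\otimes B)$ does not follow from invertibility of $n$ on $K^{\Zn}_\ast(A)$. When $A\notin\mathcal{B}_{\Zn}$, $n\cdot\id_A$ need not be invertible in $KK^{\Zn}$; you need the localizing argument in $B$ of Lemma~\ref{tensor n-div}.

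\textbf{How to close it.} The vanishing lemma is true, and can be proved as follows.
\begin{itemize}
\item Since $X\in\mathcal{B}_{\Zn}$ and $n$ is invertible on every $K^H_\ast(X)$, $n\cdot\id_X$ is invertible by Proposition~\ref{bootstrap}.
\item So the idempotent $\psi_n$ of Remark~\ref{decomposition by psi} defines an idempotent $e\in KK^{\Zn}(X,X)$.
\item $\id_X-e$ induces the identity on every $K^H_\ast(X)$. Indeed, $\res^H_{\Zn}\psi_n=0$ for $H\neq\Zn$, and $\psi_nK^{\Zn}_\ast(X)=F^{\Zn}_\ast(X)=0$.
\item Hence $\id_X-e$ is invertible by Proposition~\ref{bootstrap}; being idempotent, it equals $\id_X$, so $e=0$.
\item Therefore $\psi_n$ acts on $K^{\Zn}_\ast(A\otimes X)$ through $\id_A\otimes e=0$, giving $F^{\Zn}_\ast(A\otimes X)=0$.
\end{itemize}
With this lemma, the projective case, and your diagram chase, your mirror-image route works.

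\textbf{How the paper avoids the issue.} The paper resolves $A$ instead of $B$. Theorem~\ref{geometric resolution} realizes generators of $F^{\Zn}_\ast(A)$ by a map $\mathscr{F}\otimes M_{n^\infty}\to A$, and the resulting triangle lies in $\mathfrak{N}$, where $F^{\Zn}_\ast$ is homological. The only vanishing result needed is Proposition~\ref{F vanish}: $F^{\Zn}_\ast(C\otimes B)=0$ for $C\in\mathfrak{N}$ with $F^{\Zn}_\ast(C)=0$ and $B\in\mathcal{B}_{\Zn}$. This is proved by localizing in the bootstrap variable $B$, which is exactly your computation on the generators $C((\Zn)/H)$.
\begin{itemize}
\item The free case follows from Lemma~\ref{F(M tensor B)}.
\item The projective case follows from a direct-summand trick (Theorem~\ref{projective case}).
\item The general case follows from the diagram chase, using that $F^{\Zn}_\ast(C)$ is projective over the Dedekind domain $\mathbb{Z}[\zeta_n,1/n]$.
\end{itemize}
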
%

This paper is organized as follows. 
In Section 1, we recall preliminary definitions and results that will be used throughout the paper. 
We begin with the definitions of equivariant $K$-theory, equivariant $KK$-theory, and the equivariant bootstrap class $\mathcal{B}_G$. 
We also review several facts from commutative ring theory and homological algebra that will be used in later sections. 

In Section 2, we provide an introduction to strongly self-absorbing actions. 
This section contains an explanation of the multiplicative structure of their equivariant $K$-theory. 

In Section 3, we construct model actions and discuss their applications mentioned in Remark \ref{rem of main conjecture}. 

In Section 4, we prove an equivariant K\"unneth formula (Theorem \ref{intro Kunneth}). 
Its proof follows the strategy of Schochet \cite{Schochet}: we construct a geometric realization of a projective resolution and derive the K\"unneth formula from the case where one of the two $\Cstar$-algebras has a projective homology.

In Section 5, we compute the ring structure of the $K$-theory of strongly self-absorbing actions. 
As a first step, we discuss the case where the acting group is a cyclic $p$-group for a prime $p$ as an application of 
Theorem \ref{intro Kunneth}. 
This part heavily relies on Meyer' work \cite{Meyer} on $\mathbb{Z}/p$-actions in the bootstrap category. 
We then extend the argument to the case of general $p$-groups. 
Finally, we deduce the result for EPPO-groups (Theorem \ref{intro strongly self-absorbing}) from the case of $p$-groups.

In Section 6, we attempt to verify our main conjecture for the Lie groups $U(1)$ and $SU(2)$ by using the K\"{u}nneth spectral sequence of Rosenberg and Schochet \cite{Rosenberg and Schochet 1986}. 
We show that the conjecture holds provided that $K_1^G(D)$ is trivial. 
In fact, we prove the conjecture after tensoring with the universal UHF algebra, which merely implies that $K_1^G(D)$ is a torsion group in general.

This paper is based on the second-named author's master's thesis conducted under the supervision of the first-named author.

\paragraph{Acknowledgements.}
The authors would like to thank Yosuke Kubota, Taro Sogabe, and Hiroro Kamikawa for stimulating discussions, 
and Takumi Nishihara for bringing Theorem \ref{overring} to our attention. 
The authors used Gemini and ChatGPT for English language editing and mathematical information retrieval during the preparation of this manuscript.


\section{Preliminaries}

\subsection{$K$-theory and $KK$-theory}
In this subsection, we recall basic definitions and properties of equivariant $K$-thoery and $KK$-theory that will be used in the hole of this paper. Standard references are \cite{Kasparov} and the book \cite{Blackadar}. Throughout this subsection, $G$ denotes a second countable locally compact group unless stated otherwise. In the latter part of this subsection, we will further assume that $G$ is finite. \par %
A \textit{$G$-$\Cstar$-algebra} means  a $\Cstar$-algebra $A$ with a point-norm continuous homomorphism $\alpha\colon G\to\text{Aut}(A)$.
$\Sigma A$ denotes the suspension of $A$, in other words, the minimal tensor product of $C_0(\R)$ (equipped with the trivial action of $G$) and $A$. For two $G$-$\Cstar$-algebras $(A,\alpha)$ and $(B,\beta)$, $A\otimes B$ means the minimal tensor product of $A$ and $B$ with the action $\alpha\otimes\beta$. \par %
For a $G$-$\Cstar$-algebra $A$, a \textit{$G$-Hilbert $A$-module} means a right Hilbert $A$-module $E$ with a norm-continuous action of $G$ satisfying %
\[g(e+\lambda e^\prime)=g(e)+\lambda g(e^\prime),\ g(e\cdot a)=g(e)\cdot g(a),\ g(\langle e,e^\prime\rangle)=\langle g(e),g(e^\prime)\rangle.\]%
for $e,e^\prime\in E,\ \lambda\in\C,\ a\in A$. A \textit{$G$-Hilbert space} means a $G$-Hilbert $\C$-module. \par%
$\tilde{A}$ and $\mathcal{M}(A)$ denotes the unitalization and the multiplier algebra of $A$, respectively. For a Hilbert module $E$, we write the $\Cstar$-algebra of all bounded adjointable operators as $\mathcal{L}(E)$. Its norm-closed ideal generated by finite rank operators is denoted by $\mathcal{K}(E)$.\par%
We begin by recalling the definition of equivariant $K$-theory.

\begin{dfn}[cf. {\cite[Definitions 11.5.1, 11.5.3 and 11.9.3]{Blackadar}}]Let $G$ be a compact group and $A$ be a $G$-$\Cstar$-algebra. \begin{itemize}
\item[(1)]When $A$ is unital, let $V,W$ be two finite dimensional $G$-Hilbert spaces. For $G$-invariant projections $p\in B(V)\otimes A$ and $q\in B(W)\otimes A$, they are \textit{equivalent} if there is a $G$-invariant partial isometry $v\in B(V\oplus W)\otimes A$ such that $vv^\ast=p$ and $v^\ast v=q$ hold. The set of equivalence classes of such projections forms an abelian semigroup by $[p]+[q]=[\text{diag}(p,q)]$. Its Grothendieck group is denoted by $K^G_0(A)$.
\item[(2)]When $A$ is nonunital, let $\pi\colon \tilde{A}\to\C$ be the natural projection. $\pi$ induces a map $\pi_\ast\colon K^G_0(\tilde{A})\to K^G_0(\C)$ given by $\pi_\ast([p])=[\pi(p)]$. The group $K^G_0(A)$ is defined as the kernel of $\pi_\ast$. 
\item[(3)]For $i\in\mathbb{Z}_{>0}$, the group $K^G_i(A)$ is defined as $K^G_0(\Sigma^i A)$. \end{itemize}\end{dfn}

There is a bilinear pairing %
\[K^G_i(A)\times K^G_j(B)\ni (x,y)\mapsto x\cupprod y\in K^G_{i+j}(A\otimes B)\]%
called the \textit{cup product}. If $p\in B(V)\otimes A$ and $q\in B(W)\otimes B$ are projections, then the cup product of $[p]\in K^G_0(A)$ and $[q]\in K^G_0(B)$ is $[p]\cupprod [q]=[p\otimes q]\in K^G_0(A\otimes B)$. \par %
When $G$ is compact, $K^G_0(\C)$ is the representation ring $R(G)$. Thus, $K^G_i(A)$ is a left $R(G)$-module by the cup product $R(G)\times K^G_i(A)\to K^G_i(A)$. \par%
We now pass to the equivariant $KK$-thoery. 

\begin{dfn}[cf. {\cite[Definitions 2.2 and 2.3]{Kasparov}}]Let $A,B$ be two separable $G$-$\Cstar$-algebras. A \textit{Kasparov $A$-$B$ bimodule} is a triple $(E,\phi, F)$, where $E$ is a countably generated $\mathbb{Z}/2$-graded $G$-Hilbert $B$-module, $\phi$ is a $G$-equivariant graded $\ast$-homomorphism $A\to \mathcal{L}(E)$ and $F\in\mathcal{L}(E)$ is an odd element such that %
\[\phi(a)F-F\phi(a),\ \phi(a)(F^2-1),\ \phi(a)(F-F^\ast)\ \text{and } \phi(a)(g\cdot F-F)\]%
are in $\mathcal{K}(E)$ for any $a\in A,\ g\in G$. We say that two Kasparov $A$-$B$ bimodules $\mathcal{E},\mathcal{E}^\prime$ are equivalent if there are Kasparov $A$-$B$ bimodules $\mathcal{E}=\mathcal{E}_0, \mathcal{E}_1, \dots ,\mathcal{E}_n=\mathcal{E}^\prime$ such that $\mathcal{E}_i$ and $\mathcal{E}_{i+1}$ are unitarily equivalent or homotopy equivalent for each $i=0,\dots ,n-1$. $KK^G(A,B)$, also denoted by $KK^G_0(A,B)$, is the set of equivalence classes of Kasparov $A$-$B$ bimodules. It forms an abelian group by $[(E,\phi ,F)]+[(E^\prime, \phi^\prime, F^\prime)]=[(E\oplus E^\prime, \phi\oplus\phi^\prime, \text{diag}(F,F^\prime))]$. We also write $KK^G(A,\Sigma B)$ as $KK^G_1(A,B)$. \end{dfn}

 The direct sums $K^G_0(A)\oplus K^G_1(A)$ and $KK^G_0(A,B)\oplus KK^G_1(A,B)$ are denoted by $K^G_\ast(A)$ and $KK^G_\ast(A,B)$, respectively. There is a natural isomorphism $K^G_\ast(A)\cong KK^G_\ast(\C ,A)$. If $p\in B(V)\otimes A$ is a projection, then this isomorphism maps $[p]\in K^G_0(A)$ to $[(E,\phi ,0)]\in KK^G_0(\C ,A)$, where $E=p(V\otimes A)$ and $\phi\colon \C\to\mathcal{L}(E)$ is the unital inclusion. 

\begin{prp}[cf. {\cite[Theorems 2.11 and 2.14]{Kasparov}}] There is an associative and functorial bilinear pairing %
\[KK^G(A,B)\times KK^G(B,C)\ni (x,y)\mapsto x\otimes y\in KK^G(A,C),\]%
called the \textit{Kasparov product}. \end{prp}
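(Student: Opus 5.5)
The plan follows Kasparov's original construction, which rests on the technical theorem. Given classes $x=[(E_1,\phi_1,F_1)]\in KK^G(A,B)$ and $y=[(E_2,\phi_2,F_2)]\in KK^G(B,C)$, I would form the interior tensor product $E=E_1\hat\otimes_{\phi_2}E_2$. This is a countably generated $\mathbb{Z}/2$-graded $G$-Hilbert $C$-module carrying the diagonal $G$-action and the representation $\phi=\phi_1\hat\otimes 1$ of $A$. The product $x\otimes y$ is then represented by $(E,\phi,F)$, where $F$ is any odd operator satisfying two conditions: $F$ is an \emph{$F_2$-connection} for $E_1$, and $\phi(a)[F_1\hat\otimes 1,F]\phi(a)^*\geq 0$ modulo $\mathcal{K}(E)$ for all $a\in A$. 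Before doing this, I would first reduce to convenient representatives. By Kasparov stabilization one may assume that $E_1=\hat{\mathcal{H}}_B$ is the standard graded $G$-Hilbert module. By averaging over $G$ (using compactness of $G$ in the cases of interest, or a $G$-continuous cutoff in general), one may also assume that $F_1$ and $F_2$ are exactly $G$-invariant, self-adjoint and of norm at most one.

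Constructing $F$ has two steps. First I would build a $G$-equivariant $F_2$-connection $T$ on $E$. For $E_1=\hat{\mathcal{H}}_B$ this is explicit: $T=1\hat\otimes F_2$ on $\hat{\mathcal{H}}\hat\otimes E_2$. Second, I would set
\[F=M^{1/2}(F_1\hat\otimes 1)+N^{1/2}T,\]
where $M,N\geq 0$ are even and $G$-invariant with $M+N=1$. These are obtained from Kasparov's technical theorem applied to two subalgebras. The first is $J_1=\mathcal{K}(E_1)\hat\otimes 1+\mathcal{K}(E)$. The second is the algebra generated by $T^2-1$, $T-T^*$, $[T,F_1\hat\otimes 1]$, $[T,\phi(A)]$ and $\mathcal{K}(E)$, together with a separable space of derivations coming from $F_1\hat\otimes 1$, $T$, $\phi(A)$ and the $G$-action. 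The conditions on $M$ and $N$ are that $MJ_1\subset\mathcal{K}(E)$, that $N$ kills the second algebra modulo compacts, and that $M$ and $N$ commute with the derivations modulo compacts. I expect this application of the technical theorem to be the main obstacle. It needs a quasi-central approximate unit that is simultaneously $G$-equivariant and quasi-central for a non-separable family, and that is where the real analytic work lies. With $M$ and $N$ in hand, checking the Kasparov conditions for $(E,\phi,F)$ and the two defining properties is a routine computation.

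Well-definedness and the remaining properties come next.
\begin{itemize}
\item \emph{Uniqueness.} Any two operators satisfying the connection and positivity conditions are operator homotopic, via the straight-line path, because the positivity condition makes their graded commutator positive modulo compacts.
\item \emph{Independence of representatives.} Homotopies of $x$ and $y$ are themselves Kasparov modules over $C[0,1]$, so running the same construction with $C$ replaced by $C[0,1]$ shows that homotopic inputs give homotopic products.
\item \emph{Bilinearity.} The construction is compatible with direct sums.
\item \emph{Functoriality.} For $*$-homomorphisms in the outer variables this follows by direct inspection. Compatibility with $*$-homomorphisms in the middle variable follows from the connection criterion, since pulling back a connection gives a connection.
\end{itemize}

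Associativity is the final step. For $x$, $y$ and $z\in KK^G(C,D)$, I would show that both $(x\otimes y)\otimes z$ and $x\otimes(y\otimes z)$ are represented on $E_1\hat\otimes E_2\hat\otimes E_3$ by an operator satisfying the characterization of the Kasparov product relative to each of the two factorizations. The key check is that a connection for one factorization can be chosen to be a connection for the other. The uniqueness statement above then identifies the two classes. This step is again a careful but essentially formal application of the connection and positivity criteria, once the technical theorem is available.
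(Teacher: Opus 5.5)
The paper gives no proof of this statement; it simply cites Kasparov. Your outline follows the route used there: the interior tensor product, the characterization by connection plus positivity, and $F=M^{1/2}(F_1\hat\otimes 1)+N^{1/2}T$ obtained from the technical theorem. But your uniqueness step has a genuine gap, and both well-definedness and your associativity argument rest on it. Its conclusion is true, but the reason you give is false. The two defining conditions do \emph{not} force $\phi(a)[F,F']\phi(a)^*\geq 0$ modulo $\mathcal{K}(E)$.

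Here is a counterexample. Take $A=B=C=\mathbb{C}$, $y=[(\mathbb{C},\id,0)]$ and $x=[(\mathcal{H}\oplus\mathcal{H},1,F_1)]$ with $F_1=\begin{pmatrix}0&1\\1&0\end{pmatrix}$. Then $E=\mathcal{H}\oplus\mathcal{H}$. Since $\mathcal{K}(\mathbb{C},E)=\mathcal{L}(\mathbb{C},E)$ and $\mathcal{K}(E,\mathbb{C})=\mathcal{L}(E,\mathbb{C})$, every odd operator on $E$ is an $F_2$-connection. Both $F=\begin{pmatrix}0&-i\\ i&0\end{pmatrix}$ and $F'=-F$ satisfy $F^2=1$ and $[F_1,F]=[F_1,F']=0\geq 0$. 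Yet $[F,F']=-2$, and the straight-line path $(1-2s)F$ passes through $0$, so it cannot be normalized to a path of Kasparov operators. The operators $F$ and $F'$ are operator homotopic, but only through an intermediate operator, here $F_1$.

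The missing idea is a second application of the technical theorem to build such an intermediate operator in general. Since $F-F'$ is a $0$-connection, $(\mathcal{K}(E_1)\hat\otimes 1)(F-F')\subset\mathcal{K}(E)$. So you can choose $M,N\geq 0$ with $M+N=1$ such that:
\begin{itemize}
\item $M(\mathcal{K}(E_1)\hat\otimes 1)\subset\mathcal{K}(E)$;
\item $N$ kills $F-F'$ and $[F_1\hat\otimes 1,F]$ modulo $\mathcal{K}(E)$;
\item $M,N$ commute modulo compacts with $F$, $F'$, $F_1\hat\otimes 1$ and $\phi(A)$, compatibly with the $G$-action.
\end{itemize}
Then $F''=M^{1/2}(F_1\hat\otimes 1)+N^{1/2}F$ is a Kasparov operator. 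Both $\phi(a)[F,F'']\phi(a)^*$ and $\phi(a)[F',F'']\phi(a)^*$ are positive modulo $\mathcal{K}(E)$, so $F\sim F''\sim F'$ by two applications of the positivity lemma.

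Associativity has the same issue. There the connection property is the easy part: an $F_{23}$-connection for $E_1$ is automatically an $F_3$-connection for $E_1\hat\otimes E_2$ when $F_{23}$ is an $F_3$-connection. The real work is constructing a product $F_{12}$ of $F_1,F_2$ with $\phi(a)[F_{12}\hat\otimes 1,F]\phi(a)^*\geq 0$. This needs another use of the technical theorem, with $F$ among the derivations, together with the positivity of $F_{23}$ relative to $F_2\hat\otimes 1$; it is not formal.

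Finally, making $F_1,F_2$ exactly $G$-invariant by averaging needs $G$ compact. For general locally compact $G$, which is the paper's standing assumption at that point, Kasparov instead builds the $G$-continuity requirements into the choice of $M$ and $N$.
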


A $G$-equivariant $\ast$-homomorphism $f\colon A\to B$ gives $[(B,f,0)]\in KK^G(A,B)$. We also write it as $f$. If $A=B$ and $f=\id_A$, then $x\otimes \id_A=x$ and $\id_A\otimes y=y$ for any $x\in KK^G(C,A),\ y\in KK^G(A,D)$. For another $\ast$-homomorphism $g\colon B\to C$, the Kasparov product $f\otimes g\in KK^G(A,C)$ equals $g\circ f$. Given $x\in KK^G(A,B)$, the following maps %
\begin{align*}& KK^G(C,A)\ni y\mapsto y\otimes x\in KK^G(C,B)\quad \text{and} \\
& KK^G(B,C)\ni y\mapsto x\otimes y\in KK^G(A,C)\end{align*}%
are denoted by $x_\ast$ and $x^\ast$, respectively.

\begin{dfn}Let $A,B$ be two separable $G$-$\Cstar$-algebras. An element $x\in KK^G(A,B)$ is called \textit{invertible} if there exists $y\in KK^G(B,A)$ with $x\otimes y=\id_A$ and $y\otimes x=\id_B$. In this case $A$ and $B$ are said to be \textit{$KK^G$-equivalent}. \end{dfn}

\begin{ex}\begin{itemize}
\item[(1)]Two $G$-$\Cstar$-algebras $A,B$ are \textit{Morita equivalent} if there exists a full $G$-Hilbert $B$-module with an isomorphism $\phi\colon A\to\mathcal{K}(E)$. The Kasparov $A$-$B$ bimodule $(E,\phi ,0)$ gives an invertible element of $KK^G(A,B)$. For details, see \cite[Theorem 2.18]{Kasparov}. 
\item[(2)]There is a $KK^G$-equivalence between $\C$ and $C_0(\R^2)$ with the trivial $G$-action. Equivalently, $\Sigma^2 A$ is $KK^G$-equivalent to $A$ for any $A\in KK^G$. This is called the \textit{Bott periodicity}. The proof is found in \cite[Corollary 19.2.3]{Blackadar}, for example. 
\end{itemize}\end{ex}

We next recall the triangulated categorical aspects of $KK$-theory. 

\begin{dfn}[cf. {\cite[1.1]{Meyer and Nest}}]The \textit{$G$-equivariant Kasparov category $KK^G$} is the category whose objects are all separable 
$G$-$\Cstar$-algebras and whose set of morphisms $A\to B$ is $KK^G(A,B)$. \end{dfn}

Meyer and Nest showed in \cite{Meyer and Nest} that $KK^G$ is (equivalent to) a triangulated category; an additive category $\mathcal{T}$ 
with an autofunctor $\Sigma$ and a collection of diagrams of the form %
\begin{align}\label{ex trgl}\Sigma Z\to X\to Y\to Z\quad\quad (X,Y,Z\in\mathcal{T}), \end{align}%
called \textit{exact triangles}, satisfying some properties. An exact triangle in $KK^G$ is a diagram of the form as above having a commutative diagram %
\[\begin{tikzcd} \Sigma Z \arrow[d,"\Sigma z",] \arrow[r] & X \arrow[d,"x"] \arrow[r] & Y \arrow[d,"y"] \arrow[r] & Z \arrow[d,"z"] \\ 
	\Sigma B \arrow[r] & C_f \arrow[r] & A \arrow[r,"f"] & B,\end{tikzcd}\]%
where the bottom row is the mapping cone sequence of a $G$-equivariant $\ast$-homomorphism $f$ and vertical morphisms $x\in KK^G(X,C_f),\ y\in KK^G(Y,A)$ and $z\in KK^G(Z,B)$ are all invertible. \par%
Let $\mathcal{T}$ be a triangulated category, $\text{\bf{Ab}}$ be the category of abelian groups and $F\colon\mathcal{T}\to\text{\bf{Ab}}$ be an additive functor. Then $F$ is said to be \textit{homological} if for any exact triangle (\ref{ex trgl}), the diagram $F(X)\to F(Y)\to F(Z)$ is exact. In this case we have a long exact sequence %
\[\cdots\to F_{n+1}(X)\to F_{n+1}(Y)\to F_{n+1}(Z)\to F_n(X)\to F_n(Y)\to F_n(Z)\to\cdots ,\]%
here $F_n(X)\coloneqq F(\Sigma^nX)$. A \textit{cohomological} functor is defined dualy. 

\begin{prp}[cf. {\cite[1.1 Theorem]{Cuntz and Skandalis}}]Let $D$ be an object in $KK^G$. Then for any mapping cone sequence $\Sigma B\to C_f\to A\to B$, we have two exact sequences %
\[KK^G(D,C_f)\to KK^G(D,A)\to KK^G(D,B),\]
\[KK^G(B,D)\to KK^G(A,D)\to KK^G(C_f,D).\]
Thus $KK^G(D, \cdot)\colon KK^G\to\text{\bf{Ab}}$ is homological and $KK^G(\cdot ,D)\colon KK^G\to\text{\bf{Ab}}$ is cohomological.  \end{prp}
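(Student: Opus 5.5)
The plan is to reduce both exact sequences to the six-term exact sequence in $KK^G$ for equivariantly semisplit extensions, proved by Kasparov (see also \cite[Theorem 19.5.7]{Blackadar}), by replacing $A$ with the mapping cylinder of $f$.

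First, define the mapping cylinder
\[Z_f=\{(a,h)\in A\oplus C([0,1],B)\;;\; h(0)=f(a)\},\]
with the diagonal $G$-action. The inclusion $\iota\colon A\to Z_f$, $a\mapsto (a, f(a)\cdot 1)$, and the projection $\pi\colon Z_f\to A$, $(a,h)\mapsto a$, satisfy $\pi\circ\iota=\id_A$. Moreover, $\iota\circ\pi$ is $G$-equivariantly homotopic to $\id_{Z_f}$ by shrinking the path $h$ toward its initial point. Hence $\iota$ is invertible in $KK^G$, by homotopy invariance of $KK^G$.

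Next, let $\mathrm{ev}_1\colon Z_f\to B$ be evaluation at $1$. Then $\mathrm{ev}_1\circ\iota=f$, and $\mathrm{ev}_1$ is surjective. Its kernel is $\{(a,h)\;;\;h(0)=f(a),\ h(1)=0\}$, which is exactly the mapping cone $C_f$ (with the parametrisation convention used for $C_f$). Under this identification, the map $C_f\to A$ from the mapping cone sequence becomes $\pi$ restricted to $C_f$, which equals the inclusion $C_f\hookrightarrow Z_f$ followed by $\pi$. So we have an extension
\[0\to C_f\to Z_f\xrightarrow{\mathrm{ev}_1} B\to 0.\]
It has a $G$-equivariant completely positive contractive linear splitting $s(b)=(0,\,t\mapsto t\,b)$. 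This lands in $Z_f$ because $h(0)=0=f(0)$. It is clearly equivariant, and it is completely positive and contractive since $0\le t\le 1$.

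We then apply Kasparov's exact sequences for equivariantly semisplit extensions. This gives exactness of
\[KK^G(D,C_f)\to KK^G(D,Z_f)\to KK^G(D,B)\quad\text{and}\quad KK^G(B,D)\to KK^G(Z_f,D)\to KK^G(C_f,D).\]
Finally, we transport these along the $KK^G$-equivalence $\iota$ (equivalently $\pi$). Since $\mathrm{ev}_1\circ\iota=f$ and $\pi|_{C_f}$ is the cone map, the diagrams commute, and we obtain the two claimed sequences.

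Once exactness at the middle term is known for all mapping cone sequences, the triangulated structure transfers it to arbitrary exact triangles, since exact triangles are isomorphic to mapping cone triangles. Thus $KK^G(D,\cdot)$ is homological and $KK^G(\cdot,D)$ is cohomological.

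The main input, and the only nontrivial step, is the semisplit six-term exact sequence, i.e. Kasparov's identification of $KK^G(\cdot,C_f)$ with $KK^G(\cdot, \ker)$ for semisplit extensions. I would simply cite this result rather than reprove it. The remaining work is bookkeeping: checking that the identifications of $C_f$ with $\ker(\mathrm{ev}_1)$ and of the maps are compatible with the orientation conventions for the cone.
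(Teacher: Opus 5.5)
Your argument is correct, but it runs in the opposite logical direction from the result the paper cites.

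\textbf{What the paper relies on.} The paper gives no proof of its own; it cites Cuntz--Skandalis. They prove exactness for mapping-cone sequences directly (this is their Theorem 1.1). They then obtain the six-term sequences for a semisplit extension $J\rightarrowtail M\overset{\pi}{\twoheadrightarrow} Q$ from it, by showing that the natural map $J\to C_\pi$ is a $KK$-equivalence.

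\textbf{What you do instead.} You take half-exactness for semisplit extensions as the primitive input and prove the converse comparison. Up to the homotopy equivalence $\iota\colon A\to Z_f$, every mapping-cone sequence is the ideal--algebra--quotient part of the semisplit cylinder extension $C_f\rightarrowtail Z_f\twoheadrightarrow B$. The individual steps check out: your splitting $b\mapsto(0,tb)$ is an equivariant completely positive contractive lift, and the compatibilities $\mathrm{ev}_1\circ\iota=f$ and $\mathrm{ev}_1\simeq\mathrm{ev}_0=f\circ\pi$ make the transport squares commute. The passage from mapping-cone triangles to arbitrary exact triangles is also fine.

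\textbf{Comparison.} Your route is short and conceptual. However, it rests on a heavier theorem, namely Kasparov's half-exactness for semisplit extensions proved via his technical theorem. The Cuntz--Skandalis route is more elementary, and it is the foundation on which the semisplit sequences are usually built.

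\textbf{Caveat on your reference.} As far as I recall, the proof of \cite[Theorem 19.5.7]{Blackadar} follows Cuntz--Skandalis. It therefore passes through precisely the mapping-cone statement you are proving, so it cannot serve as your black box without circularity. You need Kasparov's original, independent argument, in an equivariant form; that form applies here because your splitting is $G$-equivariant.
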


\begin{dfn}Let $H$ be a closed subgroup of $G$. \begin{itemize}
\item[(1)] \textit{The restriction functor $\Res^H_G\colon KK^G\to KK^H$} is the functor defined by restricting the actions of $G$ to $H$.
\item[(2)]\textit{The induction functor $\Ind^G_H\colon KK^H\to KK^G$} is defined as follows. For any object $A\in KK^H$, we define the object in $KK^G$ %
\[\Ind^G_H A\coloneqq\qty{\widetilde{a}\in C_b(G,A)\middle| \begin{split}&h(\widetilde{a}(g))=\widetilde{a}(gh^{-1})\ \text{for all $g\in G,\ h\in H$} \\ &\text{and $\qty(gH\to \|\widetilde{a}(g)\|)\in C_0(G/H)$}\end{split}}\] %
with the $G$-action given by left translation. For any morphism $f=[(E,\phi ,F)]\in KK^H(A,B)$, we define the morphism \[\Ind^G_H f=[(\Ind^G_H E,\widetilde{\phi}, \widetilde{F})]\in KK^G(\Ind^G_H A,\Ind^G_H B)\] by %
\[\Ind^G_H E\coloneqq\qty{\widetilde{e}\in C_b(G,E)\middle| \begin{split}&h(\widetilde{e}(g))=\widetilde{e}(gh^{-1})\ \text{for all $g\in G,\ h\in H$} \\ &\text{and $\qty(gH\to \|\widetilde{e}(g)\|)\in C_0(G/H)$}\end{split}},\]
\[\qty(\widetilde{\phi}(a)e)(g)\coloneqq \phi(a(g))e(g)\quad (a\in\Ind^G_H A,\ e\in\Ind^G_H E,\ g\in G),\]
\[(\widetilde{F}e)(g)\coloneqq Fe(g)\quad (e\in\Ind^G_H E,\ g\in G).\]
\end{itemize}\end{dfn}

Let $A,B$ be objects in $KK^G$ and $H$ be a closed subgroup of $G$. We often write $KK^H_\ast(\Res^H_G A,\Res^H_G B)$ and $K^H_\ast(\Res^H_G A)$ simply as $KK^H_\ast(A,B)$ and $K^H_\ast(A)$, respectively. If $K$ is a subgroup of $H$, then the restriction functor and the induction functor satisfy %
\[\Res^K_H\circ \Res^H_G=\Res^K_G,\]
\[\Ind^G_H\circ \Ind^H_K=\Ind^G_K.\]

\begin{lem}\label{IndRes}Let $H$ be a closed subgroup of $G$ and $A$ be an object in $KK^G$. Then There is a $G$-equivariant $\ast$-isomorphism %
\[\Ind^G_H\Res^H_G A\cong C_0(G/H)\otimes A.\]\end{lem}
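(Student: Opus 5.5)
We will write down an explicit map and check that it is a $G$-equivariant $\ast$-isomorphism. Identify $C_0(G/H)\otimes A$ with $C_0(G/H,A)$, the algebra of continuous $A$-valued functions on $G/H$ vanishing at infinity, with $G$-action $(g\cdot f)(xH)=\alpha_g(f(g^{-1}xH))$, where $\alpha$ is the action on $A$. Here $G$ acts diagonally: by left translation on $C_0(G/H)$ and by $\alpha$ on $A$.

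For $\widetilde{a}\in \Ind^G_H\Res^H_G A$, define $\Phi(\widetilde{a})\colon G\to A$ by
\[
\Phi(\widetilde{a})(g)=\alpha_g\bigl(\widetilde{a}(g)\bigr).
\]
The key point is that the twist by $\alpha_g$ removes the $H$-equivariance constraint. Since $H$ acts on $A$ through $\alpha|_H$, the defining condition gives $\alpha_h(\widetilde{a}(g))=\widetilde{a}(gh^{-1})$. Therefore
\[
\Phi(\widetilde{a})(gh^{-1})=\alpha_{gh^{-1}}\bigl(\alpha_h(\widetilde{a}(g))\bigr)=\alpha_g\bigl(\widetilde{a}(g)\bigr)=\Phi(\widetilde{a})(g),
\]
so $\Phi(\widetilde{a})$ is constant on cosets and descends to a function on $G/H$.

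We then check the following properties.
\begin{itemize}
\item[(i)] \emph{Continuity.} $g\mapsto \alpha_g(\widetilde{a}(g))$ is continuous because $\alpha$ is point-norm continuous and each $\alpha_g$ is isometric. Indeed, $\|\alpha_g(x)-\alpha_{g'}(x')\|\le \|\alpha_g(x)-\alpha_{g'}(x)\|+\|x-x'\|$.
\item[(ii)] \emph{Vanishing at infinity.} $\|\Phi(\widetilde{a})(gH)\|=\|\widetilde{a}(g)\|$, so the $C_0(G/H)$ norm condition in the definition of $\Ind$ gives exactly that $\Phi(\widetilde{a})$ vanishes at infinity on $G/H$.
\item[(iii)] \emph{Algebraic properties.} $\Phi$ is a $\ast$-homomorphism because it acts pointwise by automorphisms. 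It is isometric for the sup norms, by (ii).
\item[(iv)] \emph{Bijectivity.} The inverse is $f\mapsto \bigl(g\mapsto \alpha_{g^{-1}}(f(gH))\bigr)$. The same computation shows this lands in the induced algebra: boundedness follows from $f$ vanishing at infinity, and the $H$-condition holds.
\item[(v)] \emph{Equivariance.} For $k\in G$,
\[
\Phi(k\cdot\widetilde{a})(g)=\alpha_g\bigl(\widetilde{a}(k^{-1}g)\bigr)=\alpha_k\bigl(\Phi(\widetilde{a})(k^{-1}g)\bigr)=\bigl(k\cdot\Phi(\widetilde{a})\bigr)(gH).
\]
\end{itemize}

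The only step needing care is the standard identification $C_0(G/H)\otimes A\cong C_0(G/H,A)$. This holds for the minimal tensor product since $C_0(G/H)$ is commutative, and it is compatible with the diagonal action. The continuity argument in (i) is routine. We expect no real obstacle beyond keeping the conventions for left and right translations consistent.
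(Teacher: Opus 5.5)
Your proposal is correct and uses exactly the map the paper uses, namely $\widetilde{a}\mapsto\bigl(gH\mapsto \alpha_g(\widetilde{a}(g))\bigr)$. The paper simply asserts that this map is a $G$-equivariant $\ast$-isomorphism, and your checks (well-definedness on cosets, continuity, vanishing at infinity, the explicit inverse, and equivariance) correctly supply the details it omits.
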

\begin{proof}The map %
\[\Ind^G_H\Res^H_G A\ni a\mapsto\qty(gH\mapsto g(a(g)))\in C_0(G/H)\otimes A\]%
gives the desired $\ast$-isomorphism. \end{proof}

\begin{prp}\label{reciprocity}Let $H$ be a closed subgroup of $G$, $A$ be an object in $KK^G$ and $B$ be an object in $KK^H$. \begin{itemize}
\item[(1)](cf. \cite[(19)]{Meyer and Nest}) If $H$ is cocompact, then there is a natural isomorphism %
\[KK^G(A,\Ind^G_H B)\cong KK^H(\Res^H_G A,B).\]%
\item[(2)](cf. \cite[(20)]{Meyer and Nest}) If $H$ is open, then there is a natural isomorphism %
\[KK^G(\Ind^G_H B,A)\cong KK^H(B,\Res^H_G A).\]
\end{itemize}\end{prp}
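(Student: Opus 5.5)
The plan is to prove both reciprocity isomorphisms by writing down explicit unit and counit morphisms for an adjunction between $\Res^H_G$ and $\Ind^G_H$ in the Kasparov categories. Each isomorphism then comes from composing with these morphisms, followed by a check of the triangle identities. The construction that makes this work is the same in both cases, and Lemma \ref{IndRes} is the common input.

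\emph{Case (1): $H$ cocompact.} Here $G/H$ is compact, so $C(G/H)$ is unital. Composing the unital inclusion $A\to C(G/H)\otimes A$ with the isomorphism of Lemma \ref{IndRes} gives a $G$-equivariant $\ast$-homomorphism
\[\eta_A\colon A\to \Ind^G_H\Res^H_G A.\]
In the other direction, evaluation at the identity $\epsilon_B\colon \Res^H_G\Ind^G_H B\to B$, $\tilde b\mapsto \tilde b(e)$, is $H$-equivariant by the defining relation $h(\tilde b(g))=\tilde b(gh^{-1})$. With these, define
\[\Phi(x)=\epsilon_B\circ \Res^H_G(x)\in KK^H(\Res^H_G A,B), \qquad \Psi(y)=\Ind^G_H(y)\circ\eta_A\in KK^G(A,\Ind^G_H B).\]
Naturality of $\eta$ and $\epsilon$ with respect to Kasparov cycles is a routine check on the formulas for $\Ind^G_H E$ and $\widetilde F$. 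The triangle identities hold already at the level of $\ast$-homomorphisms:
\[\epsilon_{\Res A}\circ\Res(\eta_A)=\id, \qquad \Ind(\epsilon_B)\circ\eta_{\Ind B}=\id.\]
For the second identity, $\eta_{\Ind B}(\tilde b)$ is the function $g\mapsto g\cdot\tilde b$. Evaluating at $e$ after translation returns $\tilde b(g)$ in the $g$-th slot, using the definition of the left translation action.

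\emph{Case (2): $H$ open.} Now $G/H$ is discrete. The unit goes the other way: $\eta'_B\colon B\to\Res^H_G\Ind^G_H B$ sends $b$ to the function supported on $H$ with $\tilde b(h)=h^{-1}(b)$. This is $H$-equivariant, and openness of $H$ is exactly what makes it continuous and well defined. The counit is the $G$-equivariant $\ast$-homomorphism
\[\Ind^G_H\Res^H_G A\cong C_0(G/H)\otimes A\to A\]
given by summation over $G/H$. It is not a $\ast$-homomorphism into $A$ itself. Instead it is realised as the Kasparov cycle $(\ell^2(G/H)\otimes A,\ \text{mult},\ 0)$, which is legitimate because $C_0(G/H)$ acts on $\ell^2(G/H)$ by compact operators. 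The same formal argument as in case (1) then yields the isomorphism. The triangle identities now hold only up to Morita equivalence and unitary equivalence of cycles, rather than on the nose.

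\emph{Main obstacle.} The hard part is checking that $\Ind^G_H$ is a well-defined functor on $KK$: it must respect homotopy and Kasparov products, and the unit and counit must be natural with respect to arbitrary Kasparov cycles, not just $\ast$-homomorphisms. Since the paper cites \cite{Meyer and Nest} for this, I would rely on their functoriality of induction. The only thing then left to verify is the two triangle identities, which are elementary computations with the defining formulas.
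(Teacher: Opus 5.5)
Your proposal is correct and follows essentially the same route as the paper. For (2), your unit $\eta'_B$ and the cycle $(\ell^2(G/H)\otimes A,\text{mult},0)$ are exactly the paper's $\iota_B$ and $\pi_A$. For (1), which the paper only cites from Meyer--Nest (cf.\ Remark \ref{counit}), your unit $A\to C(G/H)\otimes A\cong\Ind^G_H\Res^H_G A$ and evaluation-at-$e$ counit are the standard ones, and you defer functoriality of $\Ind^G_H$ on $KK$ to Meyer--Nest just as the paper does.

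One small slip: unwinding Lemma \ref{IndRes}, $\eta_{\Ind B}(\tilde b)$ is $g\mapsto g^{-1}\cdot\tilde b$ (that is, $x\mapsto\tilde b(gx)$), not $g\mapsto g\cdot\tilde b$. The latter does not satisfy the defining relation of $\Ind^G_H\Res^H_G\Ind^G_H B$. With the corrected formula, evaluation at $e$ does return $\tilde b(g)$.
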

\begin{proof}The proof can be found in \cite[p.230-231]{Meyer and Nest}, but we briefly describe the homomorphisms that give the isomorphism of (2). Define the map $\iota_B\colon B\to \Res^H_G\ind^G_H B$ by %
\[\iota_B(b)(g)=\begin{dcases*} g^{-1}(b) & if $g\in H$, \\ 0 & otherwise. \end{dcases*}\quad (b\in B,\ g\in G)\]%
Then we obtain the composition map %
\begin{align}\label{adjunction}KK^G(\Ind^G_H B,A)\overset{\Res^H_G}\to KK^H(\Res^H_G\Ind^G_H B,\Res^H_G A)\overset{\iota_B^\ast}\to KK^H(B,\Res^H_G A).\end{align}%
By Lemma \ref{IndRes}, $\Ind^G_H\Res^H_G A$ is isomorphic to $C_0(G/H)\otimes A$. Thus, the composition of the diagonal inclusion $C_0(G/H)\hookrightarrow\mathcal{K}(\ell_2(G/H))$ with the Morita equivalence $\mathcal{K}(\ell_2(G/H))\sim\C$ gives an element $\pi_A\in KK^G(\Ind^G_H\Res^H_G A, A)$. Now the map %
\[KK^H(B,\Res^H_G A)\overset{\Ind^G_H}\to KK^H(\Ind^G_H B, \Ind^G_H\Res^H_G A)\overset{(\pi_A)_\ast}\to KK^G(\ind^G_H B,A)\]%
is the inverse of (\ref{adjunction}). \end{proof}

\begin{rem}\label{counit}Proposition \ref{reciprocity} (1) states that the functor $\Res^H_G$ is the left adjoint of $\Ind^G_H$. Hence, there is an element $\ve_B\in KK^G(\Res^H_G\Ind^G_H B,B)$ such that the isomorphism of Proposition \ref{reciprocity} (1) is equal to %
\[KK^G(A,\Ind^G_H B)\overset{\Res^H_G}{\to}KK^H(\Res^H_G A,\Res^H_G\Ind^G_H B)\overset{(\ve_B)_\ast}{\to}KK^H(\Res^H_G A,B)\]%
(see \cite[Theorem 2.2.5]{basic category}).\end{rem}

Let $\mathcal{T}$ be a triangulated category with countable direct sums and $\mathcal{G}$ be a class of objects in $\mathcal{T}$. The \textit{($\aleph_0$-)localising subcategory} generated by $\mathcal{G}$ is the minimal triangulated subcategory of $\mathcal{T}$ that contains $\mathcal{G}$ and closed under countable direct sums. If $\mathcal{T}=KK^G$, then it is the minimal full subcategory $\mathcal{B}$ containing $\mathcal{G}$ with the following properties: \begin{itemize}
\item[(1)]For any $A,B\in \mathcal{B}$ and any $G$-equivariant $\ast$-homomorphism $f\colon A\to B$, its mapping cone $C_f$ is in $\mathcal{B}$. 
\item[(2)]For any $A\in\mathcal{B}$, its suspention $\Sigma A$ is in $\mathcal{B}$. 
\item[(3)]For any countably many $A_n\in\mathcal{B}\ (n\in\mathbb{N})$, their direct sum $\oplus_n A_n$ is in $\mathcal{B}$. 
\item[(4)]If $A\in\mathcal{B}$ and $B\in KK^G$ are $KK^G$-equivalent, then $B\in\mathcal{B}$. 
\end{itemize}

\begin{dfn}[cf. {\cite[Definition 3.9]{Lefschetz}}] Let $G$ be a compact group. The \textit{$G$-equivariant bootstrap class} $\mathcal{B}_G$ is the localising subcategory of $KK^G$ generated by the collection of $G$-$\Cstar$-algebras of the form $\Ind^G_H M_n$, where $H$ is a closed subgroup of $G$ and $M_n$ is a matrix algebra with some action of $H$. \end{dfn}

The invertibility of a morphism between two objects in the bootstrap class can be easily checked. 

\begin{prp}\label{bootstrap}Let $G$ be a compact group, $A,B$ be objects in the $G$-equivariant bootstrap class $\mathcal{B}_G$ and $\mathcal{B}_G^0$ be a subclass of $\mathcal{B}_G$ generating $\mathcal{B}_G$. Then an element $f\in KK^G(A,B)$ is invertible if and only if $f_\ast\colon KK^G_\ast(C,A)\to KK^G_\ast(C,B)$ is invertible for each object $C$ in $\mathcal{B}_G^0$. In particular, when $G$ is finite, $f$ is invertible if and only if $f_\ast\colon K^H_\ast(A)\to K^H_\ast(B)$ is invertible for each cyclic subgroup $H\subset G$. \end{prp}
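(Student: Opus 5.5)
The plan is to reduce the statement to the standard fact that, in a triangulated category, a morphism is invertible exactly when its cone is zero, and then to use that $\mathcal{B}_G$ is generated by $\mathcal{B}_G^0$ as a localising subcategory.

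The direction "invertible implies isomorphisms" is immediate. If $f$ has inverse $g$, then $g_\ast$ inverts $f_\ast$ on $KK^G_\ast(C,\cdot)$ for every $C$, by associativity and functoriality of the Kasparov product.

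For the converse, embed $f$ in an exact triangle $\Sigma B\to C_f\to A\xrightarrow{f} B$. Apply the homological functor $KK^G_\ast(C,\cdot)$ for $C\in\mathcal{B}_G^0$. In the resulting long exact sequence, the maps $f_\ast$ in degrees $0$ and $1$ (and hence in all degrees, by Bott periodicity) are isomorphisms, so $KK^G_\ast(C,C_f)=0$ for all $C\in\mathcal{B}_G^0$.

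Next, let $\mathcal{N}$ be the full subcategory of objects $C$ with $KK^G_\ast(C,C_f)=0$. I will check that $\mathcal{N}$ is localising:
\begin{itemize}
\item It is closed under suspension, by Bott periodicity.
\item It is closed under mapping cones, by the long exact sequence of the cohomological functor $KK^G(\cdot,C_f)$ and the five lemma.
\item It is closed under $KK^G$-equivalence, trivially.
\item It is closed under countable direct sums, because $KK^G(\bigoplus_n C_n,D)\cong\prod_n KK^G(C_n,D)$. This is the universal property of direct sums in $KK^G$, which holds since $KK^G$ has countable coproducts given by $\Cstar$-direct sums (Meyer--Nest).
\end{itemize}
Since $\mathcal{N}\supset\mathcal{B}_G^0$, we get $\mathcal{N}\supset\mathcal{B}_G$. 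Now $C_f\in\mathcal{B}_G$, since it is the cone of a morphism between objects of $\mathcal{B}_G$ and the localising subcategory is triangulated. Hence $KK^G(C_f,C_f)=0$, so $\id_{C_f}=0$ and $C_f\cong 0$. In a triangulated category, a triangle whose third vertex is zero has its remaining morphism invertible, so $f$ is invertible.

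The main point needing care is this last triangulated-category step together with the direct-sum compatibility. In $KK^G$ both can be cited: the first from Meyer--Nest's triangulated structure, the second from the universal property of direct sums.

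For the finite-group statement, take $\mathcal{B}_G^0=\{C(G/H): H\subset G \text{ cyclic}\}$. These generate $\mathcal{B}_G$ by Meyer--Nadareishvili (Remark~\ref{rem of main conjecture}(4)). Since $C(G/H)=\Ind^G_H\C$ and $H$ is open in the finite group $G$, Proposition~\ref{reciprocity}(2) gives natural isomorphisms $KK^G_\ast(C(G/H),A)\cong KK^H_\ast(\C,A)=K^H_\ast(A)$. Naturality in $A$ identifies $f_\ast$ on the left with $\Res^H_G(f)_\ast$ on $K^H_\ast$, which yields the stated criterion.
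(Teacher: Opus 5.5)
Your proof is correct, but the route differs from the paper's.

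\textbf{The paper's route.} The paper never forms the cone of $f$. It takes the class $\mathcal{C}$ of test objects $C$ for which $f_\ast\colon KK^G_\ast(C,A)\to KK^G_\ast(C,B)$ is bijective. Using the Cuntz--Skandalis exact sequences, the five lemma, and the product formula for countable direct sums, it checks that $\mathcal{C}$ is localising, so $\mathcal{C}\supset\mathcal{B}_G$. Since $A,B\in\mathcal{B}_G$, this gives bijections $x\mapsto x\otimes f$ on $KK^G(B,A)$ and on $KK^G(A,A)$. It then builds the inverse by hand:
\begin{itemize}
\item surjectivity of the first map gives $x$ with $x\otimes f=\id_B$;
\item injectivity of the second forces $f\otimes x=\id_A$.
\end{itemize}

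\textbf{Your route.} You show that $C_f$ lies in $\mathcal{B}_G$ and is annihilated by $KK^G_\ast(C,\cdot)$ for every $C\in\mathcal{B}_G$. Hence $\id_{C_f}=0$, so $C_f\cong 0$ and $f$ is invertible. This is the more conceptual argument: it really proves that no nonzero object of $\mathcal{B}_G$ is orthogonal to $\mathcal{B}_G^0$. It also needs only exactness, not the five lemma, for closure of $\mathcal{N}$ under cones.

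\textbf{What each route buys.} Your route uses a bit more of the triangulated structure. You need that an arbitrary $KK^G$-class, not just a $\ast$-homomorphism, sits in an exact triangle whose cone stays in $\mathcal{B}_G$. You also need that a morphism with zero cone is invertible. Both are supplied by Meyer--Nest, as you note. The paper's route needs only honest mapping cones of $\ast$-homomorphisms, at the cost of the short Yoneda-type construction of the inverse.

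The reduction to cyclic subgroups for finite $G$ (Meyer--Nadareishvili plus Proposition~\ref{reciprocity}(2)) is the same in both.
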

\begin{proof}Assume that $f_\ast\colon KK^H_\ast(C,A)\to KK^H_\ast(C,B)$ is invertible for each object $C\in\mathcal{B}_G^0$. Let $\mathcal{C}$ be the full subcategory of objects $C$ in $KK^G$ such that $f_\ast\colon KK^G_\ast(C,A)\to KK^G_\ast(C,B)$ is invertible. Clearly $\mathcal{C}$ includes $\mathcal{B}_G^0$. It is easy to see that $\mathcal{C}$ is a localising subcategory of $KK^G$. Hence, $\mathcal{C}$ includes $\mathcal{B}_G$. Since $A,B\in\mathcal{B}_G$, we have two isomorphisms %
\begin{align}&KK^G(B,A)\overset{\cong}{\to} KK^G(B,B),\quad x\mapsto x\otimes f, \\
\label{second isom}&KK^G(A,A)\overset{\cong}{\to} KK^G(A,B),\quad y\mapsto y\otimes f. \end{align}%
The first isomorphism implies that there exits an element $x\in KK^G(B,A)$ with $x\otimes f=\id_B$. For this $x$, %
\[f_\ast\circ x_\ast=(x\otimes f)_\ast=(\id_B)_\ast\colon KK^G(A,B)\to KK^G(A,B). \]%
The map $f_\ast\colon KK^G(A,A)\to KK^G(A,B)$ is isomorphic by (\ref{second isom}). Hence, $x_\ast\colon KK^G(A,B)\to KK^G(A,A)$ is the inverse of $f_\ast$ and %
\[f\otimes x=x_\ast\circ f_\ast(\id_A)=\id_A. \]%
Thus $f\in KK^G(A,B)$ is invertible with the inverse $x\in KK^G(B,A)$. Conversely, if $f$ is invertible, then clearly $f$ gives an isomorphism $KK^H_\ast(C,A)\cong KK^H_\ast(C,B)$ for each object $C\in\mathcal{B}_G^0$. 

When $G$ is finite, by \cite[Corollary 3.3]{Meyer and Nadareishvili}, $\mathcal{B}_G$ is generated by the objects $C(G/H)$ for cyclic subgroups $H\subset G$. Since $KK^G_\ast(C(G/H),A)\cong K^H_\ast(A)$ by Proposition \ref{reciprocity} (2), the above argument implies that $f$ is invertible if $f$ induces an isomorphism $K^H_\ast(A)\cong K^H_\ast(B)$ for each cyclic subgroup $H$. \end{proof}

Henceforth, $G$ is assumed to be finite. We finally describe the Mackey module structure of $(K^H_\ast(A))_{H\subset G}$. 

\begin{dfn}\label{Mackey} Let $G$ be a finite group and $A$ be an object in $KK^G$. \begin{itemize}
\item[(1)]For any subgroups $K\subset H\subset G$, \textit{the restriction map $\res^K_H$} is the restriction homomorphism $\Res^K_H\colon K^H_\ast(A)\to K^K_\ast(A)$. 
\item[(2)]For any element $g\in G$ and any subgroup $H\subset G$, \textit{the conjugation map $\con_{g,H}$} is the homomorphism $K^H_\ast(A)\to K^{gHg^{-1}}_\ast(A)$ defined by regarding the $H$-action as the $gHg^{-1}$-action via the isomorphism $H\to gHg^{-1},\ h\mapsto ghg^{-1}$.
\item[(3)]For any subgroups $K\subset H\subset G$, \textit{the induction map $I^H_K\colon K^K_\ast(A)\to K^H_\ast(A)$} is defined as follows. Let $\iota\colon\C\to C(H/K)=\Ind^H_K\C$ be the unital inclusion and $\pi_A\colon\Ind^H_K\Res^K_H(\Res^H_G A)\to\Res^G_H A$ be the map defined in the same way as in the proof of Proposition \ref{reciprocity} (2). Then $I^H_K$ is the composition %
\begin{align*}&K^K_\ast(A)\overset{\Ind^H_K}\to KK^H_\ast(\Ind^H_K\C ,\Ind^H_K\Res^K_H(\Res^H_G A)) \\ &\overset{(\pi_A)_\ast}\to KK^H_\ast(\Ind^H_K\C ,A)\overset{\iota^\ast}\to K^H_\ast(A).\end{align*}
\end{itemize}\end{dfn}

\begin{prp}[cf. {\cite[Proposition 4.5]{Dell'ambrogio}}]\label{Dell'ambrogio} For $g\in G$ and $H\subset G$, we write the subgroups $gHg^{-1}$ and $g^{-1}Hg$ as ${}^gH$ and $H^g$, respectively. Then the maps defined in Definition \ref{Mackey} satisfy the following relations: %
\begin{align*}&\res^K_H\circ \res^H_G=\res^K_G &(K\subset H\subset G), \\ 
&I^G_K\circ I^H_K=I^G_K &(K\subset H\subset G), \\
&\con_{g^\prime ,{}^gH}\circ\con_{g,H}=\con_{g^\prime g,H} &(g^\prime, g\in G,\ H\subset G), \\
&\con_{g,H}\circ I^H_K=I^{{}^gH}_{{}^gK}\circ\con_{g,K} &(g\in G,\ K\subset H\subset G), \\
&\con_{g,K}\circ \res^K_H=\res^{{}^gK}_{{}^gH}\circ\con_{g,H} &(g\in G,\ K\subset H\subset G), \\
&I^H_H=\res^H_H=\con_{h,H}=\id_{K^H_\ast(A)} &(h\in H\subset G), \\
&\res^L_H\circ I^H_K=\sum_{x\in L\backslash H/K}I^L_{L\cap {}^xK}\circ\con_{x,L^x\cap K}\circ \res^{L^x\cap K}_K &(L,K\subset H\subset G).\end{align*}%
Furthermore, for all $A\in KK^G$ and $K\subset H\subset G$, the following formula holds: %
\begin{align*}&x\cdot I^H_K(y)=I^H_K(\res^K_H(x)\cdot y) &(x\in R(H),\ y\in K^K_\ast(A)), \\
& I^H_K(x)\cdot y=I^H_K(x\cdot\res^K_H(y)) &(x\in R(K),\ y\in K^H_\ast(A)).\end{align*}\end{prp}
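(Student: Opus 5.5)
The plan is to realize every structure map on the Kasparov level, inside the additive category $KK^G$, and then read off each relation as an identity between Kasparov morphisms. Via Proposition \ref{reciprocity} (2) we identify $K^H_\ast(A)\cong KK^G_\ast(C(G/H),A)$. Under this identification:
\begin{itemize}
\item $\res^K_H$ becomes precomposition with the $G$-equivariant quotient-type map $C(G/H)\to C(G/K)$ (pullback along $G/K\to G/H$);
\item $I^H_K$ becomes precomposition with the transfer $C(G/K)\to C(G/H)$, i.e.\ the class of the conditional expectation viewed through the Morita equivalence $C(G/K)\rtimes$-type Kasparov module (summing over fibres);
\item $\con_{g,H}$ becomes precomposition with the isomorphism $C(G/{}^gH)\to C(G/H)$ induced by right translation $xH\mapsto xg^{-1}\cdot{}^gH$.
\end{itemize}
The first step is to check these three identifications against Definition \ref{Mackey}. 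This amounts to unwinding the unit and counit $\iota_B$, $\pi_A$ of the adjunction in Proposition \ref{reciprocity} (2).

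Once this is done, all relations among $\res$, $\con$ and $I$ reduce to relations among morphisms between the commutative algebras $C(G/H)$ in $KK^G$. Such algebras are sums of orbits, and these morphisms behave exactly like the spans of finite $G$-sets: a restriction corresponds to a correspondence $G/K\to G/H$ in one direction, an induction to the reverse direction. With this dictionary the equations follow at once.
\begin{itemize}
\item Transitivity of $\res$ and of $I$ is functoriality of pullback and transfer along $G/L\to G/K\to G/H$.
\item The conjugation identities are naturality of pullback and transfer with respect to the $G$-isomorphisms $G/K\cong G/{}^gK$.
\item The identity relations are immediate; for $\con_{h,H}$ we use that right translation by $h\in H$ is the identity on $G/H$.
\end{itemize}

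The main obstacle is the double coset (Mackey) formula. Here we must compute the composite $C(G/L)\leftarrow C(G/H)\leftarrow C(G/K)$, namely the transfer followed by the pullback, as a Kasparov class. The plan is to decompose the fibre product $G/L\times_{G/H}G/K$ into $G$-orbits. For $L,K\subset H$ the orbits are indexed by $L\backslash H/K$, with the orbit of $x$ isomorphic to $G/(L\cap{}^xK)$. The composite is then a finite sum of pullback–transfer pairs through these orbits. Concretely, $C(G/L)\otimes_{C(G/H)}C(G/K)\cong C(G/L\times_{G/H}G/K)$ as $G$-equivariant correspondences, and splitting this algebra as a direct sum yields the sum over double cosets. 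Reading each summand back via the identifications above gives $I^L_{L\cap{}^xK}\circ\con_{x,L^x\cap K}\circ\res^{L^x\cap K}_K$. The delicate part is tracking the conjugation twist correctly; we check it on the generator $[1]\in K^K_0(C(K/K))$ first.

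The remaining step is the two projection formulas. The $R(H)$-module structure comes from the exterior cup product with $K^H_0(\C)$, i.e.\ tensoring Kasparov cycles with a finite-dimensional $H$-representation $V$. We show that the restriction, conjugation and induction maps commute with $-\otimes V$ in the appropriate sense. Inducing $W\otimes\Res V$ gives $\Ind(W)\otimes V$ by the standard Frobenius isomorphism $\Ind^H_K(W\otimes\Res^K_H V)\cong\Ind^H_K W\otimes V$, which carries over verbatim to Hilbert modules. The first formula follows from this. The second formula follows by symmetry, inducing the $R(K)$-factor instead.
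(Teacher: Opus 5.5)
Your overall strategy is sound: realize $\res$, $I$ and $\con$ as precomposition with morphisms between the algebras $C(G/H)$ in $KK^G$, then compute composites via fibre products of finite $G$-sets. However, your dictionary interchanges restriction and induction, so your first step (checking the identifications against Definition~\ref{Mackey}) fails as written.

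Under $K^H_\ast(A)\cong KK^G_\ast(C(G/H),A)$ from Proposition~\ref{reciprocity}~(2), the group is contravariant in $C(G/H)$. Precomposing with a morphism $C(G/H)\to C(G/K)$ maps $KK^G_\ast(C(G/K),A)$ to $KK^G_\ast(C(G/H),A)$, that is, $K^K_\ast(A)\to K^H_\ast(A)$. Hence:
\begin{itemize}
\item the pullback $\ast$-homomorphism $C(G/H)\to C(G/K)$ along $G/K\to G/H$ realizes $I^H_K$;
\item the transfer $C(G/K)\to C(G/H)$ realizes $\res^K_H$. Concretely, the transfer is the Hilbert $C(G/H)$-module $C(G/K)$ with inner product summing over the fibres of $G/K\to G/H$, multiplication action, and zero operator.
\end{itemize}
This is what Remark~\ref{alpha and res-ind} records for $\C\to C(G/H)$: the unital inclusion $\alpha^H_{01}$ induces $I^G_H$, and $\alpha^H_{10}$ induces $\res^H_G$. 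Your assignment is the one valid in the covariant picture $K^H_\ast(A)\cong K^G_\ast(C(G/H)\otimes A)$, obtained from Proposition~\ref{reciprocity}~(1) and Lemma~\ref{IndRes} and acting by postcomposition. You have mixed the two pictures. Your conjugation morphism $C(G/{}^gH)\to C(G/H)$, by contrast, is correctly oriented for the contravariant one.

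Once the roles are exchanged, the rest of your plan goes through. The composite $\res^L_H\circ I^H_K$ is precomposition with the transfer $C(G/L)\to C(G/H)$ followed by the pullback $C(G/H)\to C(G/K)$. Its class is the correspondence $C(G/L)\otimes_{C(G/H)}C(G/K)\cong C(G/L\times_{G/H}G/K)$. On the orbit of $(L,xK)$, identified with $G/(L\cap{}^xK)$, the legs are as follows:
\begin{itemize}
\item the left leg is the projection to $G/L$;
\item the right leg is the isomorphism $g(L\cap{}^xK)\mapsto gx(L^x\cap K)$ followed by the projection to $G/K$.
\end{itemize}
Transfer along that isomorphism equals pullback along its inverse, which is exactly your conjugation morphism. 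Since precomposition reverses order, these pieces contribute $I^L_{L\cap{}^xK}$, $\con_{x,L^x\cap K}$ and $\res^{L^x\cap K}_K$ in the stated order. The transitivity, conjugation and identity relations are fine, and so is your Hilbert-module Frobenius isomorphism for the two projection formulas.

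For comparison, the paper gives no argument of its own and simply refers to Dell'Ambrogio's Proposition~4.5. Your corrected argument is a self-contained alternative. It makes all the Mackey relations formal consequences of composing the correspondences $C(Z)$ attached to spans $X\leftarrow Z\to Y$ of finite $G$-sets.
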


\subsection{Some remarks on ring theory and homological algebra}

The purpose of this subsection is to collect some elementary facts of ring theory and homological algebra. They are used in the analysis of equivariant $K$-theory and in the proof of the equivariant K\"unneth formula. Most of the material in this subsection is standard and can be found in \cite{Rotman} and \cite{Neukirch}. 

In what follows, a commutative ring is simply called a ring. 

Let $R$ be a ring. For a multiplicative subset $S\subset R$ and a left $R$-module $M$, $M_S$ denotes the localization of $M$ by $S$. More precisely, $M_S$ is the quotient of $M\times S$ by the equivalence relation $\sim$, where $(m,x)\sim (n,y)$ if $s(ym-xn)=0$ for some $s\in S$. The element of $M_S$ represented by $(m,x)\in M\times S$ is denote by $m/x$. For a prime ideal $\mathfrak{p}\triangleleft R$, we write $M_{R\setminus\mathfrak{p}}$ as $M_\mathfrak{p}$. When $R$ is an integral domain, $\text{Frac}(R)$ denotes the field of fractions $R_{R\setminus \{0\}}$. For details on the localization, see \cite[4.7]{Rotman}. 

A multiplicative subset $S\subset R$ is said to be \textit{saturated} if for $x,y\in R$ with $xy\in S$, we have $x,y\in S$. The saturation of a multiplicative subset $S$ is the set %
\[\overline{S}\coloneqq \{x\in R\mid xy\in S\ \text{for some $y\in R$}\}.\]%
It is easy to see that $\overline{S}$ is saturated and the two localizations $R_S$, $R_{\overline{S}}$ are canonically isomorphic. 

\begin{rem}\label{localization}The localization $R_S$ has the universality in the sense as follows. Let $i\colon R\to R_S$ be the natural map $i(r)=r/1$. If $f\colon R\to R^\prime$ is a ring homomorphism such that $f(s)\in R^\prime$ is invertible for all $s\in S$, then there exists a unique homomorphism $\widetilde{f}\colon R_S\to R^\prime$ with $\widetilde{f}\circ i=f$. More precisely, $\widetilde{f}$ is defined by %
\[\widetilde{f}\qty(\frac{x}{y})=f(x)\cdot f(y)^{-1}\quad (x\in R,\ y\in S).\]%
Thus, $\widetilde{f}$ is surjective if and only if for any $a\in A$, there are $x\in R$ and $y\in S$ with $a=f(x)\cdot f(y)^{-1}$. The above formula also implies that $\widetilde{f}$ is injective if $f$ is injective. \end{rem}

\begin{dfn}A ring $R$ is said to be \textit{hereditary} if every ideal of $R$ is projective as an $R$-module. A hereditary domain is called a \textit{Dedekind domain}. \end{dfn}

If $R$ is a principal ideal domain (PID), then every ideal of $R$ is a free $R$-module. Hence, PIDs are Dedekind domains. 

\begin{rem}\label{hereditary}If $R$ is hereditary, then every submodule of a projective module over $R$ is also projective (see \cite[Theorem 4.19]{Rotman}, for example). In particular, every module over a hereditary ring has a projective resolution of length at most one. \end{rem}
\begin{rem}\label{Dedekind}Let $R$ be a Dedekind domain. Then $R_S$ is also a Dedekind domain for any multiplicative subset $S\subset R$, and $R_\mathfrak{p}$ is a PID for any prime ideal $\mathfrak{p}\triangleleft R$. The proof is found in \cite[(11.4) Proposition and (11.5) Proposition]{Neukirch}, for example.\end{rem}

The following statement will be used in the proof of Theorem \ref{F is a localization}.

\begin{lem}\label{Tor(M,M)}Let $R$ be a Dedekind domain and $M$ be a left $R$-module. If $M$ satisfies $\text{Tor}^R_1(M,M)=0$, then $M$ is flat. \end{lem}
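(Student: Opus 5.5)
Over a Dedekind domain, flatness is the same as torsion-freeness. The plan is therefore to show that $\text{Tor}^R_1(M,M)=0$ forces $M$ to be torsion-free.

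First, the reduction. A module over a Dedekind domain is flat if and only if all its localizations $M_\mathfrak{p}$ at prime ideals are flat over $R_\mathfrak{p}$. Tor commutes with localization:
\[
\text{Tor}^R_1(M,M)_\mathfrak{p}\cong \text{Tor}^{R_\mathfrak{p}}_1(M_\mathfrak{p},M_\mathfrak{p}).
\]
By Remark \ref{Dedekind}, each $R_\mathfrak{p}$ is a PID; it is a field or a DVR. So it suffices to treat the case where $R$ is a DVR with uniformizer $\pi$; the field case is trivial. Over a PID, flat means torsion-free, so the goal becomes: if $\text{Tor}^R_1(M,M)=0$, then $M$ has no nonzero torsion.

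Suppose instead that the torsion submodule $T\subset M$ is nonzero. Then $M[\pi]=\{m\in M\mid \pi m=0\}\neq 0$.

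Next, compute Tor using the resolution $0\to R\xrightarrow{\pi}R\to R/\pi\to 0$. This gives
\[
\text{Tor}^R_1(R/\pi,N)\cong N[\pi]
\]
for every $N$. The short exact sequence $0\to R/\pi\to M\to M/R\pi^{-?}\ldots$ is not yet well defined, so I will instead use the inclusion $j\colon R/\pi\hookrightarrow M$ sending $1$ to a chosen nonzero $m\in M[\pi]$, with cokernel $Q=M/Rm$. Since $R$ is hereditary (Remark \ref{hereditary}), $\text{Tor}_2$ vanishes. The long exact sequence for $\text{Tor}(-,M)$ then yields an injection
\[
\text{Tor}^R_1(R/\pi,M)\hookrightarrow \text{Tor}^R_1(M,M),
\]
because the preceding term $\text{Tor}_2(Q,M)$ is zero. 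The left-hand side is $M[\pi]\neq 0$. Hence $\text{Tor}^R_1(M,M)\neq 0$, a contradiction. Therefore $M$ is torsion-free and hence flat.

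The hypothesis $\text{Tor}_1^R(M,M)=0$ is global, so I also have to check that it passes to the localizations. This holds because localization is exact and $R_\mathfrak{p}$ is flat over $R$, which is where the identification of Tor with localization above is used. A torsion element of $M_\mathfrak{p}$ then feeds into the argument over $R_\mathfrak{p}$, and everything is consistent.

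The main obstacle is this injectivity step. It needs the vanishing of $\text{Tor}_2$ over a hereditary ring, which Remark \ref{hereditary} supplies through projective resolutions of length at most one. It also needs the standard fact that, over a PID or Dedekind domain, torsion-free implies flat: flatness is detected on finitely generated ideals, which are projective.
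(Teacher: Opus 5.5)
Your proof is correct and follows essentially the same route as the paper: you localize to reduce to a PID, embed a cyclic torsion submodule into $M$, and use the vanishing of $\text{Tor}_2$ over a hereditary ring to inject its $\text{Tor}_1$ into $\text{Tor}^R_1(M,M)=0$. Choosing a nonzero $\pi$-torsion element gives $\text{Tor}^R_1(R/\pi,M)\cong M[\pi]\neq 0$ directly, which slightly streamlines the paper's version; the paper instead uses the long exact sequence in both variables to reach $\text{Tor}_1(R_\mathfrak{p}x,R_\mathfrak{p}x)=0$ and then rules out $R_\mathfrak{p}x\cong R_\mathfrak{p}/(a)$ with $a\neq 0$.
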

\begin{proof}$M$ is flat if and only if $M_\mathfrak{p}$ is a flat $R_\mathfrak{p}$-module for every maximal ideal $\mathfrak{p}\triangleleft R$ (see \cite[Corollary 7.18]{Rotman}, for example). Thus, it suffices to show the flatness of $M_\mathfrak{p}$. Observe that %
\[0=\qty(\text{Tor}^R_1(M,M))_{\mathfrak{p}}\cong\text{Tor}^{R_\mathfrak{p}}_1(M_\mathfrak{p},M_\mathfrak{p}).\]%
Take any nonzero element $x\in M_\mathfrak{p}$. We will show that $x$ is not a torsion element. By applying the torsion long exact sequence for the short exact sequence %
\[R_\mathfrak{p}x\rightarrowtail M_\mathfrak{p}\twoheadrightarrow M_\mathfrak{p}/R_\mathfrak{p}x,\]%
we get an exact sequence %
\[\text{Tor}^{R_\mathfrak{p}}_2(M_\mathfrak{p}/R_\mathfrak{p}x,M_\mathfrak{p})\to\text{Tor}^{R_\mathfrak{p}}_1(R_\mathfrak{p}x,M_\mathfrak{p})\to\text{Tor}^{R_\mathfrak{p}}_1(M_\mathfrak{p},M_\mathfrak{p}).\]%
Since $R_\mathfrak{p}$ is a PID by Remark \ref{Dedekind}, the leftmost module in the above sequence vanishes. We already know that the rightmost one vanishes. Thus, the middle $\text{Tor}^{R_\mathfrak{p}}_1(R_\mathfrak{p}x,M_\mathfrak{p})$ also vanishes. By repeating a similar argument, we get $\text{Tor}^{R_\mathfrak{p}}_1(R_\mathfrak{p}x,R_\mathfrak{p}x)=0$. Now remember that $R_\mathfrak{p}x$ is a module over a PID and generated by one element $x$. Hence there exists $a\in R_{\mathfrak{p}}$ such that $R_\mathfrak{p}x$ is isomorphic to $R_\mathfrak{p}/(a)$. Suppose $a\neq 0$ and consider the following part of the torsion exact sequence %
\[\text{Tor}^{R_\mathfrak{p}}_1(R_\mathfrak{p}/(a),R_\mathfrak{p}/(a))\to R_\mathfrak{p}\otimes_{R_\mathfrak{p}}R_\mathfrak{p}/(a)\overset{m_a\otimes \id}\to R_\mathfrak{p}\otimes_{R_\mathfrak{p}}R_\mathfrak{p}/(a),\]%
where $m_a\colon R_\mathfrak{p}\to R_\mathfrak{p}$ denotes the multiplication by $a$. The map $m_a\otimes \id$ is injective because $\text{Tor}^{R_\mathfrak{p}}_1(R_\mathfrak{p}/(a),R_\mathfrak{p}/(a))=0$. This map is identified with the multiplication by $a$ on $R_\mathfrak{p}/(a)$ and it is injective. However, it occurs only when $R_\mathfrak{p}/(a)=0$, and it contradicts $R_\mathfrak{p}x\neq 0$. Thus, we have $a=0$ and $R_\mathfrak{p}x\cong R_\mathfrak{p}$, that is, $x$ is not a torsion element of $M_{\mathfrak{p}}$. Hence, $M_{\mathfrak{p}}$ is torsion-free. Since a torsion-free module over a PID is flat (see \cite[Corollary 3.51]{Rotman}), $M_{\mathfrak{p}}$ is a flat $R_\mathfrak{p}$-module. \end{proof}

To state the theorem that is needed in the fourth section, we now turn to the definition of algebraic integers.

\begin{dfn}Let $K$ be a number field, in other words, an extension field of $\mathbb{Q}$ such that $K$ is finite dimensional as a vector space over $\mathbb{Q}$. An element $a\in K$ is called an \textit{algebraic integer} of $K$ if there exists a monic $f\in\mathbb{Z}[x]$ such that $f(a)=0$ in $K$. The set of algebraic integers of $K$ forms a subring of $K$, called \textit{the ring of integers} of $K$. \end{dfn}

\begin{rem}\label{Z[zeta] is Dedekind}If $K$ is a cyclotomic field $\mathbb{Q}[\zeta_n]$, where $\zeta_n$ is a primitive $n$-th root of unity, then the ring of integers of $\mathbb{Q}[\zeta_n]$ is $\mathbb{Z}[\zeta_n]$. It is known that the ring of integers of a number field is always a Dedekind domain. See \cite[(3.1) Theorem]{Neukirch}, for example. In particular, $\mathbb{Z}[\zeta_n]$ is a Dedekind ring. \end{rem}

\begin{rem}\label{Frac=K}Let $K$ be a number field and $\mathcal{O}_K$ be the ring of integers of $K$. Then $\text{Frac}(\mathcal{O}_K)$ equals $K$. Indeed, every element $a\in K$ has a monic $f\in\mathbb{Q}[x]$ with $f(a)=0$ because all finite extension of $\mathbb{Q}$ are algebraic. Write %
\[f(x)=x^n+\frac{p_1}{q_1}x^{n-1}+\dots +\frac{p_n}{q_n}\]%
for some $p_1, \dots ,p_n\in\mathbb{Z},\ q_1, \dots ,q_n\in\mathbb{Z}\setminus\{0\}$. Put $d=q_1\cdots q_n$. Then we can easily check that $da$ is a root of the monic %
\[g(x)\coloneqq x^n+\frac{p_1}{q_1}dx^{n-1}+\dots +\frac{p_n}{q_n}d^n\in\mathbb{Z}[x].\]%
Thus, we can write $a=da/d$ with $da,d\in\mathcal{O}_K$. \end{rem}

\begin{dfn}Let $R$ be a domain. An \textit{overring} of $R$ is a subring of $\text{Frac}(R)$ that contains $R$ as a subring. \end{dfn}

The following theorem plays a key role in the discussion of Section 4. Let $R$ be a Dedekind domain. In general, the set of all nonzero, finitely generated $R$-submodules of $\text{Frac}(R)$ modulo ones generated by one element forms a group, called the \textit{class group}. 

\begin{thm}[cf. {\cite[Theorem 2]{Davis}}]\label{overring}Let $R$ be a Dedekind domain. Then the class group of $R$ is torsion if and only if for any overring $\mathcal{A}$ of $R$, there exists a multiplicative subset $S$ of $R$ such that $\mathcal{A}$ equals $R_S$. \end{thm}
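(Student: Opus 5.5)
The plan is to reduce everything to the description of overrings of a Dedekind domain as intersections of localizations at maximal ideals, and then compare with the analogous description of $R_S$. If $R$ is a field there is nothing to prove, so assume not. For a set $P$ of maximal ideals of $R$, write $R_P:=\bigcap_{\mathfrak{p}\in P}R_\mathfrak{p}\subset \mathrm{Frac}(R)$. Two standard facts will be used.

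\emph{Fact (a).} Every overring $\mathcal{A}$ of $R$ equals $R_P$ for $P=\{\mathfrak{p}\mid \mathfrak{p}\mathcal{A}\neq\mathcal{A}\}$.

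\emph{Fact (b).} For a multiplicative set $S$, $R_S=R_P$ with $P=\{\mathfrak{p}\mid \mathfrak{p}\cap S=\emptyset\}$.

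For (a), the argument goes as follows. $\mathcal{A}$ is an intersection of its localizations $\mathcal{A}_\mathfrak{m}$ at maximal ideals $\mathfrak{m}$. Each $\mathcal{A}_\mathfrak{m}$ contains $R_{\mathfrak{m}\cap R}$, which is a DVR by Remark \ref{Dedekind}. Overrings of a DVR are only itself and the fraction field, and $\mathcal{A}_\mathfrak{m}$ is not a field if $\mathfrak{m}\neq 0$, so $\mathcal{A}_\mathfrak{m}=R_{\mathfrak{m}\cap R}$. One then checks that the primes $\mathfrak{m}\cap R$ are exactly those $\mathfrak{p}$ with $\mathfrak{p}\mathcal{A}\ne\mathcal{A}$. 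Fact (b) is the usual statement that a module is the intersection of its localizations at primes. I will also use that an element $x$ lies in $R_P$ if and only if $v_\mathfrak{p}(x)\ge 0$ for all $\mathfrak{p}\in P$.

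\emph{Torsion class group $\Rightarrow$ localization.} Let $\mathcal{A}=R_P$ be an overring and let $Q$ be the set of maximal ideals not in $P$. For each $\mathfrak{q}\in Q$ the class of $\mathfrak{q}$ is torsion, so $\mathfrak{q}^{n}=(a_\mathfrak{q})$ for some $n\geq 1$ and some $a_\mathfrak{q}\in R$. Let $S$ be the multiplicative set generated by the $a_\mathfrak{q}$. A maximal ideal meets $S$ if and only if it contains some $a_\mathfrak{q}$. By unique factorization of ideals, the only maximal ideal containing $a_\mathfrak{q}$ is $\mathfrak{q}$. Hence $\{\mathfrak{p}\mid\mathfrak{p}\cap S=\emptyset\}=P$, and Fact (b) gives $R_S=R_P=\mathcal{A}$.

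\emph{Localization $\Rightarrow$ torsion class group.} Suppose the class group is not torsion. Since it is generated by classes of maximal ideals, some maximal ideal $\mathfrak{q}$ has infinite order. Put $\mathcal{A}=R_{P}$ with $P$ the set of all maximal ideals other than $\mathfrak{q}$.

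First, $\mathcal{A}\neq R$. Since $\mathfrak{q}^{-1}\supsetneq R$ (strict because $\mathfrak{q}\mathfrak{q}^{-1}=R\neq\mathfrak{q}$), there is $x\in\mathfrak{q}^{-1}\setminus R$. This $x$ has $v_\mathfrak{p}(x)\ge0$ for all $\mathfrak{p}\neq\mathfrak{q}$, so $x\in\mathcal{A}$.

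Second, suppose $\mathcal{A}=R_S$. Every $s\in S$ is a unit in $\mathcal{A}$, so $s\notin\mathfrak{p}$ for every $\mathfrak{p}\in P$. Otherwise $s\in\mathfrak{p}\mathcal{A}$, which is a proper ideal by Fact (a) applied to $\mathcal{A}$, noting $\mathcal{A}\subset R_\mathfrak{p}$. Hence $(s)=\mathfrak{q}^k$ for some $k\ge0$. Because $\mathfrak{q}$ has infinite order in the class group, $k=0$ and $s$ is a unit of $R$. Then $R_S=R\ne\mathcal{A}$, a contradiction.

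The main obstacle is Fact (a), in particular identifying the local rings of an arbitrary overring with the DVRs $R_\mathfrak{p}$. I would handle this through the valuation-ring argument above rather than by invoking Krull--Akizuki.
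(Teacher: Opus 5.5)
Your argument is correct and complete. The paper gives no proof of this statement: it quotes it from Davis. It uses it only in the direction ``torsion class group $\Rightarrow$ every overring is a localization'', for $R=\mathbb{Z}[\zeta_{p^m}]$ (whose class group is finite), in Theorem~\ref{F is a localization} and Corollary~\ref{K/Im is a localization}. So there is no in-paper argument to compare against. What you supply is the standard self-contained proof in the Dedekind case:
\begin{itemize}
\item every overring has the form $R_P=\bigcap_{\mathfrak p\in P}R_{\mathfrak p}$ for a set $P$ of maximal ideals;
\item $R_S$ corresponds to $P=\{\mathfrak p:\mathfrak p\cap S=\emptyset\}$;
\item the question then becomes whether each excluded prime $\mathfrak q$ can be cut out by a single element, which happens exactly when some power $\mathfrak q^n$ is principal.
\end{itemize}
Your explicit choice of $S$ (generated by $a_{\mathfrak q}$ with $\mathfrak q^{n}=(a_{\mathfrak q})$) gives the direction the paper actually needs.

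A few small points to tighten.
\begin{itemize}
\item In Fact (a), say explicitly that a nonzero maximal ideal $\mathfrak m$ of $\mathcal A$ satisfies $\mathfrak m\cap R\neq 0$ (clear denominators of a nonzero element). This is what makes $R_{\mathfrak m\cap R}$ a DVR rather than $\mathrm{Frac}(R)$.
\item Adopt the convention $R_{\emptyset}=\mathrm{Frac}(R)$, so that the case $\mathcal A=\mathrm{Frac}(R)$ is covered in both facts.
\item Fact (b) is most cleanly obtained by applying Fact (a) to $\mathcal A=R_S$, since $\mathfrak pR_S\neq R_S$ if and only if $\mathfrak p\cap S=\emptyset$. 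Your phrasing about ``a module is the intersection of its localizations'' is looser than needed.
\item In the converse direction, properness of $\mathfrak p\mathcal A$ for $\mathfrak p\in P$ follows directly from $\mathfrak p\mathcal A\subseteq\mathfrak pR_{\mathfrak p}\neq R_{\mathfrak p}$, so Fact (a) is not needed at that step.
\end{itemize}
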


It is known that if $K$ is a number field, then the class group of the ring of integers $\mathcal{O}_K$ is finite (see \cite[(6.3) Theorem]{Neukirch}). Thus, the above theorem implies that all overrings of $\mathcal{O}_K$ are localizations of $\mathcal{O}_K$. 

\section{Strongly self-absorbing actions}

In this section, we review the notion of strongly self-absorbing $G$-$\Cstar$-algebras and 
equip their $K$-theory with a ring structure. 

\begin{dfn}[cf. {\cite[Definition 1.15]{Szabo}}]\label{app G-uni eq}Let $G$ be a second countable locally compact group, $(A,\alpha),\ (B,\beta)$ be two $G$-$\Cstar$-algebras and $\phi, \psi\colon A\to B$ be two $G$-equivariant $\ast$-homomorphisms. Then $\phi$ and $\psi$ are called \textit{approximately $G$-unitarily equivalent} if there exist a sequence of unitaries $(u_n)_{n=1}^\infty$ in $\mathcal{M}(B)$ satisfying %
\begin{align}\label{app eq}&\lim_{n\to\infty}\|\text{Ad}(u_n)\circ \phi(a)-\psi(a)\|=0\quad (a\in A), \\
&\label{app inv}\lim_{n\to\infty}(\beta_g(u_n)-u_n)=0\quad (\text{in the strict topology, }g\in G),\end{align}%
where the convergence of (\ref{app inv}) is uniform on compact subsets of $G$. \end{dfn}

\begin{prp}\label{app G-uni eq}Let $G,A,B,\phi,\psi$ be as above. If $G$ is compact and $B$ is unital, then the homomorphisms %
\[\phi_\ast, \psi_\ast\colon K^G_\ast(A)\to K^G_\ast(B)\]%
induced by $\phi$ and $\psi$ are the same maps. \end{prp}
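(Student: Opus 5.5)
The plan is to reduce to the familiar non-equivariant argument by working with $G$-invariant unitaries, which compact group averaging allows us to produce. Since $G$ is compact, every $G$-invariant projection in $B(V)\otimes A$ (or over $\tilde A$) can be treated via the standard picture of $K^G_0$. Because $B$ is unital, $\mathcal{M}(B)=B$ and the strict topology in (\ref{app inv}) is the norm topology. Uniform convergence on the compact group $G$ then gives $\sup_{g\in G}\|\beta_g(u_n)-u_n\|\to 0$.

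First, I will replace $u_n$ by $G$-invariant unitaries. Put $a_n=\int_G \beta_g(u_n)\,dg\in B^G$ with normalized Haar measure, so that $\|a_n-u_n\|\le\sup_g\|\beta_g(u_n)-u_n\|\to0$. For large $n$, $a_n$ is invertible, and $v_n=a_n|a_n|^{-1}$ is a $G$-invariant unitary. By continuity of polar decomposition, $\|v_n-u_n\|\to0$. Consequently $\|\Ad(v_n)\circ\phi(a)-\psi(a)\|\to0$ for all $a\in A$.

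Next, I will compare $\phi_*$ and $\psi_*$ on $K^G_0$. For a unital $A$, take a $G$-invariant projection $p\in B(V)\otimes A$ with $V$ finite-dimensional (for nonunital $A$, pass to $\tilde A$ and the unitalized maps $\tilde\phi,\tilde\psi$, noting that the formal differences defining $K^G_0(A)$ are compatible). Then $(1\otimes v_n)(\id\otimes\phi)(p)(1\otimes v_n)^*$ is a $G$-invariant projection in $B(V)\otimes B$, since $1_V\otimes v_n$ is invariant with the trivial action on the $B(V)$ factor. Being a unitary conjugate, it is equivalent to $(\id\otimes\phi)(p)$ through the $G$-invariant partial isometry $1\otimes v_n$, suitably embedded in $B(V\oplus V)\otimes B$. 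For large $n$ it is within distance $<1$ of $(\id\otimes\psi)(p)$. Two $G$-invariant projections at distance $<1$ are equivalent via the standard construction $z=qp+(1-q)(1-p)$, whose unitary part is $G$-invariant because the construction is equivariant. Hence $\phi_*[p]=\psi_*[p]$.

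For $K^G_1$, I apply the same argument to $\Sigma A\to\Sigma B$, or rather to the unitalizations $\widetilde{\Sigma A}\to C(\mathbb{T})\otimes B$. There the invariant unitaries $1\otimes v_n$ implement approximate equivalence between $\id\otimes\phi$ and $\id\otimes\psi$. The only delicate step is that $\Sigma B$ is nonunital; this is handled by conjugating by $1_{C(\mathbb T)}\otimes v_n$ inside the unital algebra $C(\mathbb{T})\otimes B$, where the relevant projections live.
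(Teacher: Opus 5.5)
Your proposal is correct and follows essentially the same route as the paper. You average $u_n$ over Haar measure, take the unitary part $a_n(a_n^*a_n)^{-1/2}$ to obtain $G$-invariant implementing unitaries, identify conjugated $G$-invariant projections through norm-closeness, and reduce the nonunital and $K_1$ cases to the unital $K_0$ case. Your handling of $K_1$ through $\widetilde{\Sigma A}\to C(\mathbb{T})\otimes B$ is in fact more careful than the paper's one-line remark, which passes over the non-unitality of $\Sigma B$; it needs only the standard fact that $K^G_0(\Sigma B)\to K^G_0(C(\mathbb{T})\otimes B)$ is injective, by split exactness of the equivariantly split extension $\Sigma B\rightarrowtail C(\mathbb{T})\otimes B\twoheadrightarrow B$.
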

\begin{proof}We first show that in this case, the sequence of unitaries that gives the equivalence of $\phi$ and $\psi$ can be taken to be $G$-invariant. To see this, let $\beta$ be the action of $G$ on $B$ and $\mu$ be a Haar measure on $G$ with $\mu(G)=1$. Pick $0<\ve<1/4$ and a sequence of unitaries $(u_n)_n$ of $\mathcal{M}(B)=B$ satisfying (\ref{app eq}) and (\ref{app inv}). Since $B$ is unital, the strict convergence in (\ref{app inv}) coinsides with norm convergence. Thus, for a large $n$ and for all $g\in G$, we have $\|\beta_g(u_n)-u_n\|<\ve$. Then %
\[\|u_n-\int_G \beta_g(u_n)d\mu(g)\|\leq\int_G \|u_n-\beta_g(u_n)\|d\mu(g)<\ve,\]%
where the integral $\int_G \beta_g(u_n)d\mu(g)$ converges in the norm topology. Put $v_n\coloneqq\int_G \beta_g(u_n)d\mu(g)$. By the above inequality, $v_n$ is invertible and $G$-invariant. Moreover, we have %
\begin{align*}&\|v_n\| \leq \int_G\| \beta_g(u_n)\|d\mu(g)=1, \\
&\|1-v_n^\ast v_n\| \leq \|u_n^\ast-v_n^\ast\| \|u_n\|+\|v_n^\ast\| \|u_n-v_n\|<2\ve,\quad \text{and} \\
&\|1-(v_n^\ast v_n)^{-\frac{1}{2}}\|=\sup\qty{\abs{1-(1-x)^{-\frac{1}{2}}}\middle|  x\in\sigma(1-v_n^\ast v_n)}<2\ve. \end{align*}
The last inequality follows from $(1-x)^{-1/2}\leq 1+x$ for $0\leq x\leq1/2$. Thus, the sequence of unitaries defined as $u_n^\prime\coloneqq v_n(v_n^\ast v_n)^{-1/2}$ is $G$-invariant and satisfies (\ref{app eq}). \par
If $A$ is unital, then take any finite dimensional $G$-Hilbert space $V$ and any $G$-invariant projection $p\in B(V)\otimes A$. For large $n$, we have $\|\text{Ad}(u_n^\prime)\circ\phi(p)-\psi(p)\|<1/4$. Now there is a $G$-invariant unitary that gives a unitary equivalence between $\text{Ad}(u_n^\prime)\circ\phi(p)$ and $\psi(p)$. Thus, we obtain $[\phi(p)]=[\text{Ad}(u_n^\prime)\circ\phi(p)]=[\psi(p)]$ in $K^G_0(A)$. It implies that $\phi$ and $\psi$ induce the same map from $K^G_0(A)$ to $K^G_0(B)$ if $A$ is unital. The nonunital case follows from the unital case by considering the unitalization $\tilde{A}$. Finally, since $K^G_1(A)$ equals $K^G_0(\Sigma A)$, $\phi$ and $\psi$ also induce the same map from $K^G_1(A)$ to $K^G_1(B)$. \end{proof}

\begin{dfn}[cf. {\cite[Definition 3.1]{Szabo}}]Let $G$ be a second countable locally compact group, $D\neq\C$ be a separable, unital $G$-$\Cstar$-algebra and $\nu\colon\C\to D$ be the unital inclusion. $D$ is called a \textit{strongly self-absorbing $G$-$\Cstar$-algebra} if the $\ast$-homomorphism $\id\otimes\nu\colon D\to D\otimes D$ is approximately $G$-unitarily equivalent to an isomorphism. \end{dfn}

\begin{prp}[cf. {\cite[Proposition 3.4 (ii)]{Szabo}}]\label{app inner half flip}In the above setting, the two maps $\id\otimes \nu,\ \nu\otimes \id\colon D\to D\otimes D$ are approximately $G$-unitarily equivalent. \end{prp}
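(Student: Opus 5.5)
We follow the standard argument from the non-equivariant setting (Toms--Winter), adapted to the equivariant context as in Szab\'o's work. Fix an isomorphism $\varphi\colon D\to D\otimes D$ that is approximately $G$-unitarily equivalent to $\id\otimes\nu$, witnessed by unitaries $u_n\in D\otimes D$. The target is to show $\id\otimes\nu\sim\nu\otimes\id$ in the sense of Definition \ref{app G-uni eq}. Since approximate $G$-unitary equivalence is transitive (compose the unitaries $u_nv_n$; both the intertwining and the approximate invariance conditions are preserved under products, uniformly on compact sets), it suffices to connect the two first-factor embeddings through $\varphi$.

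\textbf{Step 1: an auxiliary equivalence.} From $\id\otimes\nu\sim\varphi$ we get $\nu\otimes\id\sim\sigma\circ\varphi$, where $\sigma\colon D\otimes D\to D\otimes D$ is the flip, which is $G$-equivariant. It therefore suffices to show that $\varphi$ and $\sigma\circ\varphi$ are approximately $G$-unitarily equivalent, i.e.\ that $\sigma$ is approximately $G$-inner on the image. Equivalently, we need the flip $\sigma$ to be approximately $G$-unitarily equivalent to $\id_{D\otimes D}$ after suitable modification.

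\textbf{Step 2: the key trick.} We use the three-fold tensor product $D\otimes D\otimes D$ and the maps $x\mapsto x\otimes 1\otimes 1$, $x\mapsto 1\otimes x\otimes 1$, $x\mapsto 1\otimes 1\otimes x$. From $\id\otimes\nu\sim\varphi$ one first derives that $\varphi^{-1}\circ(\id\otimes\nu)\colon D\to D$ is approximately $G$-inner: it is $\sim\id_D$, by conjugating the unitaries with $\varphi^{-1}$, which is equivariant. Next we prove that $\nu\otimes\id\sim\id\otimes\nu$ as maps $D\to D\otimes D$. We apply $\id\otimes\nu\sim\varphi$ in the second coordinate of $D\otimes D$ to get $\id_D\otimes(\id\otimes\nu)\sim\id_D\otimes\varphi$, and we use the symmetric statement in the first coordinate. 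We then compare $x\otimes1\otimes1$ and $1\otimes x\otimes1$ inside $D\otimes(D\otimes D)\cong D\otimes D$ via $\id\otimes\varphi^{-1}$. The concrete plan is to show that $x\otimes 1\mapsto 1\otimes x$ is realized up to approximate $G$-unitary equivalence by the composite
\[ D\otimes D\xrightarrow{\ \id\otimes\varphi\ }D\otimes D\otimes D\xrightarrow{\ \text{cyclic permutation}\ }D\otimes D\otimes D\xrightarrow{\ \id\otimes\varphi^{-1}\ }D\otimes D, \]
noting that on elements of the form $x\otimes 1$ each arrow can be replaced up to $\sim$ by the first-factor embeddings.

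\textbf{Step 3: equivariance bookkeeping.} All maps involved, namely $\varphi^{\pm1}$, the flips and the tensor embeddings, are $G$-equivariant for the diagonal actions. Conjugating an approximately $G$-invariant sequence of unitaries by an equivariant isomorphism yields an approximately $G$-invariant sequence, and tensoring with $1$ preserves approximate invariance. Hence every step respects condition (\ref{app inv}). For locally compact (not just compact) $G$, uniformity on compact sets is preserved because the estimates are norm estimates controlled by the given sequences.

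The main obstacle is Step 2, namely producing the flip-type equivalence without assuming approximately inner half-flip a priori. If the triple tensor manipulation becomes unwieldy, the fallback is to follow Szab\'o's approach via the equivariant sequence algebra. In that approach one shows that there is an equivariant unital map $D\to D_{\infty,\alpha}\cap D'$, uses it to produce $D\otimes D\to D_{\infty}$ intertwining both first- and second-factor embeddings with the identity of $D$, and then uses a reindexing or diagonal argument to extract the unitaries.
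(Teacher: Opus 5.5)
\textbf{Verdict: there is a genuine gap.} Your Steps 1 and 3 are fine. Step 1 is only a reformulation: since $\varphi$ is onto, $\varphi\sim\sigma\circ\varphi$ is the same as $\sigma\sim\id_{D\otimes D}$. But Step 2 carries all the content, and as proposed it is circular.

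Write $\iota_j\colon D\to D^{\otimes 3}$ for the $j$-th factor embeddings. Take $c(a\otimes b\otimes e)=e\otimes a\otimes b$. Then $\Gamma=(\id\otimes\varphi^{-1})\circ c\circ(\id\otimes\varphi)$ sends $x\otimes 1$ \emph{exactly} to $1\otimes\varphi^{-1}(x\otimes 1)$, so $\Gamma\circ(\id\otimes\nu)\sim\nu\otimes\id$ comes for free. Everything then rests on $\Gamma\circ(\id\otimes\nu)\sim\id\otimes\nu$. That means replacing the permutation arrow by the identity on $x\otimes1\otimes1$, i.e.\ proving $\iota_1\sim\iota_2$ in $D^{\otimes 3}$; the other cyclic permutation leads to $\iota_1\sim\iota_3$ instead. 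Pull back along the equivariant isomorphism $\id\otimes\varphi$ (resp.\ $\varphi\otimes\id$) and use $\varphi^{-1}\circ(\id\otimes\nu)\sim\id_D$: each of these is equivalent to the proposition itself. So ``each arrow can be replaced up to $\sim$'' fails exactly at the permutation.

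The fallback does not close the gap either. An equivariant unital map $D\to D_\infty\cap D'$ can be extracted from the hypothesis. However, the McDuff-type arguments that turn such a map into approximate unitary equivalences use the approximately ($G$-)inner half-flip as an input. Also, a map $D\otimes D\to D_\infty$ that is the identity of $D$ on both legs cannot exist for noncommutative $D$, because the legs commute.

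\textbf{The missing idea.} Compare two isomorphisms $D\to D^{\otimes 3}$, one of which involves the flip, and then cancel $\varphi$ using its surjectivity. Put $\rho_1=(\sigma\circ\varphi)\otimes\id_D$ and $\rho_2=(\id_D\otimes\varphi)\circ\sigma$, both equivariant unital maps $D\otimes D\to D^{\otimes 3}$. Push the unitaries witnessing $\varphi\sim\id\otimes\nu$ and $\sigma\circ\varphi\sim\nu\otimes\id$ forward along equivariant unital homomorphisms. This gives
\[\rho_1\circ\varphi\sim\bigl(x\mapsto\sigma\varphi(x)\otimes 1\bigr)\sim\iota_2,\qquad \rho_2\circ\varphi\sim\bigl(x\mapsto 1\otimes\varphi(x)\bigr)\sim\iota_2 .\]
Hence $\rho_1\circ\varphi\sim\rho_2\circ\varphi$, and since $\varphi$ is onto, $\rho_1\sim\rho_2$. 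Now $\rho_1(1\otimes x)=1\otimes1\otimes x$ and $\rho_2(1\otimes x)=x\otimes1\otimes1$, so $\iota_3\sim\iota_1$. Apply the equivariant isomorphism $\varphi^{-1}\otimes\id_D\colon D^{\otimes 3}\to D\otimes D$. This turns $\iota_1\sim\iota_3$ into $(\id\otimes\nu)\circ\varphi^{-1}\circ(\id\otimes\nu)\sim\nu\otimes\id$. Finally, $\varphi^{-1}\circ(\id\otimes\nu)\sim\id_D$ gives $\id\otimes\nu\sim\nu\otimes\id$.

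Your ``second coordinate / first coordinate'' remark suggests comparing $(\varphi\otimes\id)\circ\varphi$ with $(\id\otimes\varphi)\circ\varphi$. That comparison only yields $\iota_2\sim\iota_3$. After merging the first two factors, this just recovers $\nu\otimes\id\sim(\id\otimes\nu)\circ\varphi^{-1}\circ(\nu\otimes\id)$. The flip is what brings the first leg into play. Your Step 3 bookkeeping covers all these manipulations. The paper gives no proof of its own here and simply cites Szab\'o.
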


\begin{cor}\label{multiplication}Let $G$ be a compact group, $D$ be a strongly self-absorbing $G$-$\Cstar$-algebra and $\nu\colon\C\to D$ be the unital inclusion. \begin{itemize}
\item[$(1)$] $\id\otimes\nu$ and $\nu\otimes \id$ induce the same isomorphism %
\[(\id\otimes\nu)_\ast=(\nu\otimes \id)_\ast\colon K^G_\ast(D)\stackrel{\cong}{\to} K^G_\ast(D\otimes D).\]%
\item[$(2)$] The automorphism on $K^G_\ast(D\otimes D)$ induced by the flip automorphism $D\otimes D\to D\otimes D,\ a\otimes b\mapsto b\otimes a$ is the identity map. 
\item[$(3)$] $K^G_\ast(D\otimes C_\nu)=0$, where $C_\nu$ is the mapping cone of $\nu$.\end{itemize}\end{cor}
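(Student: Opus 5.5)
Item (1) comes straight from the earlier results. Proposition \ref{app inner half flip} says that $\id\otimes\nu$ and $\nu\otimes\id$ are approximately $G$-unitarily equivalent, and $D\otimes D$ is unital, so Proposition \ref{app G-uni eq} gives $(\id\otimes\nu)_\ast=(\nu\otimes\id)_\ast$. By definition of strong self-absorption, $\id\otimes\nu$ is approximately $G$-unitarily equivalent to an isomorphism $\theta\colon D\to D\otimes D$. Applying Proposition \ref{app G-uni eq} again gives $(\id\otimes\nu)_\ast=\theta_\ast$, which is an isomorphism.

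For (2), let $\sigma$ be the flip. We have $\sigma\circ(\id\otimes\nu)=\nu\otimes\id$, so $\sigma_\ast\circ(\id\otimes\nu)_\ast=(\nu\otimes\id)_\ast=(\id\otimes\nu)_\ast$ by (1). Since $(\id\otimes\nu)_\ast$ is surjective onto $K^G_\ast(D\otimes D)$, it follows that $\sigma_\ast=\id$.

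For (3), use the mapping cone sequence of $\nu\colon\C\to D$ tensored with $D$. The mapping cone construction commutes with tensoring by $D$ (all algebras are nuclear/exact enough here; in any case $C_0$-functions tensor correctly with the minimal tensor product), so $D\otimes C_\nu\cong C_{\id_D\otimes\nu}$, the mapping cone of $\id_D\otimes\nu\colon D\cong D\otimes\C\to D\otimes D$. Since $K^G_\ast=KK^G_\ast(\C,\cdot)$ is homological, we get a long exact sequence
\[\cdots\to K^G_{\ast+1}(D)\to K^G_{\ast+1}(D\otimes D)\to K^G_\ast(C_{\id\otimes\nu})\to K^G_\ast(D)\to K^G_\ast(D\otimes D)\to\cdots\]
whose maps $K^G_\ast(D)\to K^G_\ast(D\otimes D)$ are $(\id\otimes\nu)_\ast$. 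By (1) these are isomorphisms for both parities, so exactness forces $K^G_\ast(D\otimes C_\nu)=0$.

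The main point to check is the identification $D\otimes C_\nu\cong C_{\id\otimes\nu}$ compatibly with the exact triangle, together with matching the degree shifts in the long exact sequence. Both follow from exactness of the minimal tensor product with the short exact sequence $\Sigma D\rightarrowtail C_\nu\twoheadrightarrow \C$ and its functions-on-an-interval description.
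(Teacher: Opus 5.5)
Your proposal is correct and follows the paper's proof essentially step for step. Part (1) applies Proposition~\ref{app G-uni eq} together with strong self-absorption and Proposition~\ref{app inner half flip}; part (2) uses the flip intertwining $\id\otimes\nu$ and $\nu\otimes\id$; part (3) uses the long exact sequence of the mapping cone triangle of $\nu$ tensored with $D$. Your added justification of $D\otimes C_\nu\cong C_{\id\otimes\nu}$ (the extension has quotient $\C$, so tensoring with $D$ preserves exactness) fills in a detail that the paper leaves implicit.
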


\begin{proof}\begin{itemize}
\item[(1)]Since $\id\otimes\nu$ is approximately $G$-unitarily equivalent to an isomorphism, $(\id\otimes\nu)_\ast$ is isomorphic by Proposition \ref{app G-uni eq}. Again by Proposition \ref{app G-uni eq} and Proposition \ref{app inner half flip}, we have $(\id\otimes\nu)_\ast=(\nu\otimes \id)_\ast$. 
\item[(2)]Let $f$ be the flip automorphism. Since $f\circ(\nu\otimes \id)=\id\otimes\nu$, %
\[f_\ast\circ(\nu\otimes \id)_\ast=(\id\otimes\nu)_\ast\colon K^G_\ast(D)\stackrel{\cong}{\to} K^G_\ast(D\otimes D)\]%
holds. (1) implies that $f_\ast$ is the identity map. 
\item[(3)]Tensoring the mapping cone sequence %
\[\Sigma D\to C_\nu\to \C\overset{\nu}{\to} D\]%
with $D$, we get the exact triangle %
\[\Sigma D\otimes D\to D\otimes C_\nu\to D\overset{\id\otimes\nu}{\to}D\otimes D.\]%
It gives the exact sequence %
\[\cdots\to K^G_{\ast+1}(D\otimes D)\to K^G_\ast(D\otimes C_\nu)\to K^G_\ast(D)\overset{(\id\otimes \nu)_\ast}{\to}K^G_\ast(D\otimes D)\to\cdots\]%
Since $(\id\otimes\nu)_\ast$ is an isomorphism by (1), $K^G_\ast(D\otimes C_\nu)=0$. \end{itemize}\end{proof}

\begin{dfn}\label{product}Let $G$ be a compact group, $D$ be a strongly self-absorbing $G$-$\Cstar$-algebra and $\nu\colon\C\to D$ be the unital inclusion. We call the composition of the cup product with the isomorphism $(\id\otimes \nu)_\ast^{-1}$%
\[K^G_\ast(D)\times K^G_\ast(D)\to K^G_\ast(D\otimes D)\to K^G_\ast(D),\quad (x, y)\mapsto x\cupprod y\mapsto (\id\otimes \nu)_\ast^{-1}(x\cupprod y)\]%
the \textit{multiplication} of $K^G_\ast(D)$. For $x,y\in K^G_\ast(D)$, we write $(\id\otimes \nu)_\ast^{-1}(x\cupprod y)$ simply as $xy$. \end{dfn}

\begin{prp}\label{ring structure}Let $G,D$ be as above. Then $K^G_\ast(D)$ is a commutative ring with the multiplication defined in the previous definition. \end{prp}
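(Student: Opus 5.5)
We must show that the multiplication of Definition \ref{product} is associative, commutative, bilinear and unital. Throughout we write $\theta=(\id\otimes\nu)_\ast\colon K^G_\ast(D)\to K^G_\ast(D\otimes D)$, which is an isomorphism by Corollary \ref{multiplication} (1). Bilinearity is immediate, since the cup product is bilinear and $\theta^{-1}$ is additive.

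For commutativity, the cup product is compatible with the flip $f\colon D\otimes D\to D\otimes D$. In the $\Z/2$-graded sense this reads $f_\ast(x\cupprod y)=(-1)^{|x||y|}\,y\cupprod x$. Corollary \ref{multiplication} (2) says $f_\ast=\id$, so $x\cupprod y=\pm\,y\cupprod x$ and hence $xy=\pm yx$. For $K_0$ this is honest commutativity. For mixed or odd degrees the ring is graded-commutative. I expect commutativity to be intended either in this graded sense or after observing later that $K_1^G(D)$ plays no role. I would state that the flip argument gives commutativity on $K^G_0(D)$ and graded commutativity in general.

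For the unit, take $1=[1_D]=\nu_\ast(1)\in K^G_0(D)$. Naturality of the cup product gives
\[x\cupprod [1_D]=(\id\otimes\nu)_\ast(x\cupprod 1)=\theta(x),\]
so $x\cdot 1=x$. Similarly $[1_D]\cupprod x=(\nu\otimes\id)_\ast(x)$, and this equals $\theta(x)$ by Corollary \ref{multiplication} (1). Hence $1\cdot x=x$.

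For associativity, the key is that the cup product is associative in $K^G_\ast(D\otimes D\otimes D)$ and natural in each variable. Write $\theta_{12}=(\id_D\otimes\id_D\otimes\nu)_\ast\colon K^G_\ast(D\otimes D)\to K^G_\ast(D^{\otimes 3})$. This is $\theta\otimes\id$ applied in the first tensor factor, and it is again an isomorphism because $\id\otimes\nu$ is approximately $G$-unitarily equivalent to an isomorphism, so Proposition \ref{app G-uni eq} applies. By naturality,
\[\theta_{12}\bigl(\theta(xy)\bigr)=\theta_{12}(x\cupprod y),\qquad \theta\bigl((xy)z\bigr)\cupprod\text{-compatibility}:\ (\theta\otimes\id)_\ast\bigl((xy)\cupprod z\bigr)=(x\cupprod y)\cupprod z,\]
where the second identity follows from $\theta(xy)=x\cupprod y$ and naturality of $\cupprod$ in the first variable. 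In the same way, using $\id\otimes\theta$, one gets $(\id\otimes\theta)_\ast\bigl(x\cupprod (yz)\bigr)=x\cupprod(y\cupprod z)$.

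It then suffices to show that $(\theta\otimes\id)_\ast(w\cupprod z)$ and $(\id\otimes\theta)_\ast(w\cupprod z)$, pushed through the single isomorphism $K^G_\ast(D)\to K^G_\ast(D^{\otimes3})$ given by $a\mapsto a\otimes1\otimes1$, identify $(xy)z$ with $x(yz)$. Concretely, both maps $(\id\otimes\nu)\otimes\id_D$ and $\id_D\otimes(\id\otimes\nu)$ from $D\otimes D$ to $D^{\otimes3}$ differ by permuting tensor factors. These permutations are compositions of flips on two adjacent factors, and by Corollary \ref{multiplication} (2) (tensored with $\id_D$, via Proposition \ref{app inner half flip} and Proposition \ref{app G-uni eq}) such flips act trivially on $K$-theory. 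With this in hand, both $(xy)z$ and $x(yz)$ are the preimage of $x\cupprod y\cupprod z$ under the iterated isomorphism $K^G_\ast(D)\cong K^G_\ast(D^{\otimes3})$, and so they agree.

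The main obstacle is this bookkeeping: checking that the iterated inverse isomorphisms for threefold tensor products are coherent. This rests on the triviality of flips on $K^G_\ast(D\otimes D\otimes D)$, and the verification has to be done carefully.
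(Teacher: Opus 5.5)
Your proposal is correct and follows essentially the paper's proof. The unit comes from $[1_D]\cupprod x=(\nu\otimes\id)_\ast(x)$ together with Corollary~\ref{multiplication}~(1). Associativity comes from $(\id\otimes\id\otimes\nu)_\ast=(\id\otimes\nu\otimes\id)_\ast$ on $K^G_\ast(D\otimes D)$: the paper gets this directly from the approximate $G$-unitary equivalence $\id\otimes\nu\sim\nu\otimes\id$ tensored with $\id_D$, and you get it via the flip of the last two factors, which rests on the same input. Even this step is unnecessary, though. Both composites $D\to D\otimes D\to D^{\otimes 3}$ are literally $a\mapsto a\otimes 1\otimes 1$, so the ``coherence'' you worry about is automatic and only injectivity of the induced map is needed.

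Your sign caveat is also correct and sharper than the paper. The coordinate flip on $C_0(\R^2)$ acts by $-1$, so for odd $x,y$ one gets $yx=-xy$, i.e.\ graded commutativity. The paper's one-line claim $x\cupprod y=y\cupprod x$ silently drops this sign, which is harmless wherever $K^G_1(D)=0$.
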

\begin{proof}Since $x\cupprod y=y\cupprod x$ by Corollary \ref{multiplication} (2), the commutativity follows. Let $1_D$ be the unit of $D$. Then $[1_D]$ is the multiplicative identity of $K^G_\ast(D)$ because we have $(\nu\otimes \id)_\ast(x)=[1_D]\cupprod x$ for any $x\in K^G_\ast(D)$. \par%
Take $x,y,z\in K^G_\ast(D)$. In a similar way to the proof of Corollary \ref{multiplication} (1), the two maps $\id\otimes \id\otimes \nu,\ \id\otimes \nu\otimes \id\colon D\otimes D\to D\otimes D\otimes D$ induces the same isomorphism %
\[(\id\otimes \id\otimes \nu)_\ast=(\id\otimes \nu\otimes \id)_\ast\colon K^G_\ast(D\otimes D)\overset{\cong}{\to}K^G_\ast(D\otimes D\otimes D).\]%
Thus, we get the associativity %
\begin{align*}x(yz)&=(\id\otimes \nu)_\ast^{-1}(x\cupprod((\id\otimes \nu)_\ast^{-1}(y\cupprod z))) \\ &=(\id\otimes \nu)_\ast^{-1}\circ(\id\otimes\id\otimes \nu)_\ast^{-1}(x\cupprod y\cupprod z) \\ &=(\id\otimes \nu)_\ast^{-1}\circ(\id\otimes \nu\otimes \id)_\ast^{-1}(x\cupprod y\cupprod z) \\ &=(\id\otimes \nu)_\ast^{-1}(((\id\otimes \nu)_\ast^{-1}(x\cupprod y))\cupprod z) \\ &=(xy)z.\end{align*}%
It is easy to see the distributivity $x(y+z)=xy+xz$. This finishes the proof. \end{proof}

Next, we see the relation between $K^G_\ast(D)$ and $KK^G_\ast(D)$. Let $\vp\colon D\to D\otimes D$ be a $G$-equivariant isomorphism that is approximately $G$-unitarily equivalent to $\id\otimes\nu$. We also write the element of $KK^G(D,D\otimes D)$ given by $\vp$ as $\vp$. There is a map %
\[K^G_\ast(D)\to KK^G_\ast(D,D\otimes D)\to KK^G_\ast(D,D),\quad x\mapsto x\cupprod\id\mapsto (x\cupprod\id)\otimes\vp^{-1}.\]%
\begin{prp}\label{K0(D) and KK(D,D)}Let $G$ be a finite group and $D$ be a strongly self-absorbing $G$-algebra that belongs to the $G$-equivariant bootstrap class. Then $x\in K^G_\ast(D)$ is invertible if and only if $(x\cupprod\id)\otimes\vp^{-1}\in KK^G_\ast(D,D)$ is invertible. \end{prp}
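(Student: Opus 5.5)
Write $\Theta\colon K^G_\ast(D)\to KK^G_\ast(D,D)$ for the map $x\mapsto (x\cupprod\id)\otimes\vp^{-1}$. The plan is to show that $\Theta$ is a unital ring homomorphism, with the Kasparov product on the target, and that it is a section of the evaluation map $ev\colon KK^G_\ast(D,D)\to K^G_\ast(D)$, $f\mapsto [1_D]\otimes f = f_\ast[1_D]$. Once this is in place, one direction is formal: if $x$ has inverse $y$ in $K^G_\ast(D)$, then $\Theta(x)\otimes\Theta(y)=\Theta(xy)=\Theta([1_D])=\id_D$, and similarly in the other order.

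\emph{Properties of $\Theta$.} We first compute $ev(\Theta(x)) = \vp^{-1}_\ast(x\cupprod [1_D])$. Since $\vp$ is approximately $G$-unitarily equivalent to $\id\otimes\nu$, Proposition \ref{app G-uni eq} gives $\vp_\ast=(\id\otimes\nu)_\ast$ on $K^G_\ast$. Hence $ev(\Theta(x))=(\id\otimes\nu)_\ast^{-1}(x\cupprod[1_D]) = x[1_D]=x$, using the ring structure of Proposition \ref{ring structure}. Moreover, $f\mapsto f_\ast$ makes $K^G_\ast(D)$ a right $KK^G_\ast(D,D)$-module, and for $f=\Theta(y)$ we have $f_\ast(z)=\vp^{-1}_\ast(z\cupprod y) = zy$. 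Thus $\Theta(x)_\ast$ is multiplication by $x$. In particular $ev(\Theta(x)\otimes\Theta(y)) = \Theta(y)_\ast(x)=xy$.

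\emph{Converse and multiplicativity.} The main obstacle is to upgrade statements about $ev$ to statements in $KK^G(D,D)$. For this we use the bootstrap hypothesis together with Proposition \ref{bootstrap}: $f\in KK^G(D,D)$ is invertible if and only if $f_\ast\colon K^H_\ast(D)\to K^H_\ast(D)$ is bijective for every cyclic subgroup $H\subset G$. Restricting to $H$, the algebra $\Res^H_G D$ is still strongly self-absorbing, and $\vp$ remains an equivariant approximate equivalence. The computation above therefore shows that $\Theta(x)_\ast$ on $K^H_\ast(D)$ is multiplication by $\res^H_G(x)$ in the ring $K^H_\ast(D)$, and $\res^H_G$ is a ring homomorphism because it commutes with cup products and with $(\id\otimes\nu)_\ast$.

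\emph{Both directions.} If $x$ is invertible, then $\res^H_G(x)$ is invertible, so multiplication by it is bijective for every $H$, and $\Theta(x)$ is invertible by Proposition \ref{bootstrap}. This route makes the multiplicativity of $\Theta$ unnecessary. Conversely, suppose $\Theta(x)$ is invertible. Then multiplication by $x$ on $K^G_\ast(D)$ is bijective, so there is $y$ with $yx=[1_D]$, and $x$ is invertible by commutativity. Some care is needed with the grading when $x$ is odd, but the same argument applies in the $\Z/2$-graded ring $K^G_\ast(D)$.
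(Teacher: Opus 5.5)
Your proposal is correct and is essentially the paper's proof. You show that on each $K^H_\ast(D)$ the map induced by $(x\cupprod\id)\otimes\vp^{-1}$ is multiplication by $\res^H_G(x)$, using $\vp_\ast=(\id\otimes\nu)_\ast$ on equivariant $K$-theory; you then invoke Proposition~\ref{bootstrap} for one direction and surjectivity of multiplication by $x$ on $K^G_\ast(D)$ for the other.

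The multiplicativity of $\Theta$ announced in your opening plan is never actually established. The identity $ev(\Theta(x)\otimes\Theta(y))=xy$ does not give $\Theta(x)\otimes\Theta(y)=\Theta(xy)$, since $ev$ need not be injective. As you yourself note, however, your final argument does not use it.
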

\begin{proof}Let $H\subset G$ be a subgroup and $y\in K^H_\ast(D)$ be an element. We write $\res^H_G(x)$ simply as $x$. Since %
\[y\otimes(x\cupprod\id)=x\cupprod y\in K^H_\ast(D\otimes D),\]%
we have %
\[y\otimes(x\cupprod\id)\otimes\vp^{-1}=(x\cupprod y)\otimes\vp^{-1}=\vp^{-1}_\ast(x\cupprod y)=(\id\otimes\nu)^{-1}_\ast(x\cupprod y)\]%
in $K^G_\ast(D)$, where the last equality follows from Proposition \ref{app G-uni eq}. It means that the map $((x\cupprod\id)\otimes\vp^{-1})_\ast\colon K^H_\ast(D)\to K^H_\ast(D)$ equals the multiplication by $x$. Thus $x$ is invertible if and only if $((x\cupprod\id)\otimes\vp^{-1})_\ast\colon K^H_\ast(D)\to K^H_\ast(D)$ is invertible for all subgroups $H\subset G$. It occurs if and only if $(x\cupprod\id)\otimes\vp^{-1}$ is invertible by Proposition \ref{bootstrap}. \end{proof}

In fact, the above result holds for every compact $G$ (see \cite{Kamikawa}). 

The following corollary is useful to check the invertibility of elements of $K^G_\ast(D)$.

\begin{cor}\label{res invertible}Let $G$ be a finite group and $D$ be a strongly self-absorbing $G$-$\Cstar$-algebra that belongs to the $G$-equivariant bootstrap class. Then $x\in K^G_\ast(D)$ is invertible if and only if $\res^H_G(x)\in K^H_\ast(D)$ is invertible for all cyclic subgroup $H\subset G$. \end{cor}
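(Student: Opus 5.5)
Both implications go through Proposition \ref{K0(D) and KK(D,D)}, which turns invertibility of $x$ into invertibility of a $KK^G$-class, together with the fact that restriction to subgroups respects the ring structure of Definition \ref{product}.

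\emph{Restriction is a unital ring map.} For a subgroup $H\subset G$, the restriction $\res^H_G\colon K^G_\ast(D)\to K^H_\ast(D)$ is a ring homomorphism when both sides carry the product of Definition \ref{product}. Here $K^H_\ast(D)$ is a ring because $\Res^H_G D$ is again strongly self-absorbing: approximate $G$-unitary equivalence implies approximate $H$-unitary equivalence. To see multiplicativity, note that $\Res^H_G$ is a functor compatible with cup products, so
\[
\res^H_G(x\cupprod y)=\res^H_G(x)\cupprod\res^H_G(y).
\]
It also commutes with $(\id\otimes\nu)_\ast$, since $\id\otimes\nu$ is a $G$-equivariant $\ast$-homomorphism. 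Hence it commutes with $(\id\otimes\nu)_\ast^{-1}$, and so $\res^H_G(xy)=\res^H_G(x)\res^H_G(y)$. Finally, $\res^H_G[1_D]=[1_D]$.

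\emph{The ``only if'' direction.} This now follows directly: a unital ring homomorphism sends units to units.

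\emph{The ``if'' direction.} Suppose $\res^H_G(x)$ is invertible for every cyclic subgroup $H$. Let $\Theta=(x\cupprod\id)\otimes\vp^{-1}\in KK^G_\ast(D,D)$. In the proof of Proposition \ref{K0(D) and KK(D,D)} it was shown that for every subgroup $H$ the map
\[
\Theta_\ast\colon K^H_\ast(D)\to K^H_\ast(D)
\]
is multiplication by $\res^H_G(x)$ in the ring $K^H_\ast(D)$. One should double-check that the identity $(\id\otimes\nu)^{-1}_\ast(x\cupprod y)=xy$ used there is exactly the $H$-ring product, which it is by the definition applied to $\Res^H_G D$. For cyclic $H$, multiplication by a unit is bijective, so $\Theta_\ast$ is an isomorphism on $K^H_\ast(D)$ for all cyclic $H$. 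By the second part of Proposition \ref{bootstrap}, $\Theta$ is invertible in $KK^G$. Proposition \ref{K0(D) and KK(D,D)} then gives that $x$ is invertible.

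\emph{Remaining care.} The only delicate point is the grading. For homogeneous $x$, $\Theta$ is an element of $KK^G_0$ or of $KK^G_1$. Proposition \ref{bootstrap} is stated for degree $0$, but applies after composing with a suspension (Bott periodicity), since the criterion is for $f_\ast$ on $K^H_\ast$. I expect this bookkeeping, and the inhomogeneous case $x=x_0+x_1$, to be handled by exactly the same conventions as in Proposition \ref{K0(D) and KK(D,D)}. So there is no real obstacle beyond the functoriality check in the first step.
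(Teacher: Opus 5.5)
Your proof is correct. It uses the same two ingredients as the paper, Proposition~\ref{K0(D) and KK(D,D)} and Proposition~\ref{bootstrap}, but arranges them a little differently.

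The paper runs a single chain of equivalences:
\begin{itemize}
\item $x$ is invertible iff $\Theta$ is invertible;
\item $\Theta$ is invertible iff $\Res^H_G\Theta$ is invertible for all cyclic $H$;
\item $\Res^H_G\Theta$ is invertible iff $\res^H_G(x)$ is invertible.
\end{itemize}
The last step re-applies Proposition~\ref{K0(D) and KK(D,D)} to the restricted $H$-algebra $\Res^H_G D$. This tacitly uses that $\Res^H_G D$ lies in $\mathcal{B}_H$ and that $\Res^H_G\Theta$ is the element built from $\res^H_G(x)$.

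You do two things differently:
\begin{itemize}
\item You get the ``only if'' direction directly from the fact that $\res^H_G$ is a unital ring homomorphism. This is more elementary and needs no bootstrap hypothesis at all.
\item For the ``if'' direction, you use the multiplication-operator identity from inside the proof of Proposition~\ref{K0(D) and KK(D,D)} at each level $H$. As a result, Propositions~\ref{bootstrap} and \ref{K0(D) and KK(D,D)} are only invoked for $G$ itself, where $D\in\mathcal{B}_G$ is given.
\end{itemize}

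On your grading concern: an odd $\Theta\in KK^G_1(D,D)$ can be viewed as a degree-zero morphism $D\to\Sigma D$ with $\Sigma D\in\mathcal{B}_G$. Proposition~\ref{bootstrap} then applies as stated, without appealing to Bott periodicity. In the paper the corollary is only ever applied to degree-zero elements coming from $R(G)$, so the inhomogeneous case never arises.
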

\begin{proof}By Theorem \ref{K0(D) and KK(D,D)}, $x$ is invertible if and only if $(x\cupprod\id)\otimes\vp^{-1}\in KK^G_\ast(D,D)$ is invertible. By Proposition \ref{bootstrap}, it occurs if and only if $\Res^H_G((x\cupprod\id)\otimes\vp^{-1})\in KK^H_\ast(D,D)$ is invertible for all cyclic subgroup $H\subset G$. Again by Theorem \ref{K0(D) and KK(D,D)}, $\Res^H_G((x\cupprod\id)\otimes\vp^{-1})$ is invertible if and only if $\res^H_G(x)\in K^H_\ast(D)$ is invertible. \end{proof}

\section{Model actions}

The purpose of this section is to construct models of strongly self-absorbing $G$-$\Cstar$-algebras satisfying the localization condition. 
Now recall Definition \ref{loc condition}. Let $D$ be a strongly self-absorbing $G$-$\Cstar$-algebra and $\nu\colon\mathbb{C}\to D$ be the unital inclusion. Set 
\[S_D=\{x\in R(G);\; \nu_*(x) \text{ is invertible in } K_0^G(D)\}.\]
$D$ satisfies the localization condition with respect to a $G$-$\Cstar$-algebra $B$ if there exists an $R(G)$-module isomorphism $h^B_\ast\colon KK^G_\ast(B,\mathbb{C})_{S_D}\to KK^G_\ast(B,D)$ such that the diagram 
\[\begin{tikzcd}[row sep=large, column sep=large]
	KK_\ast^G(B,\mathbb{C})
	\arrow[r]
	\arrow[dr, "\nu^B_*"']
	&
	KK_\ast^G(B,\mathbb{C})_{S_D}
	\arrow[d, "h^B_\ast"', "\cong"]
	\\
	{}
	&
	KK_\ast^G(B,D)
\end{tikzcd}\]
commutes, where the map $KK_i^G(B,\mathbb{C})\to KK_i^G(B,\mathbb{C})_{S_D}$ is the natural map and $\nu^B_\ast$ is the map induced by $\nu$. 

\begin{rem}\label{uniqueness of h} The $R(G)$-module map $h^B_\ast\colon KK^G_\ast(B,\mathbb{C})_{S_D}\to KK^G_\ast(B,D)$ that makes the above diagram commutative is unique if it exists, regardless of whether it is isomorphic or not. Indeed, if such $h^B_\ast$ exists, then for each $x\in S_D$, the multiplication by $x$ on $KK^G_\ast(B,D)$ is invertible and $h^B_\ast$ is given by %
\[h^B_\ast\left(\frac{f}{x}\right)=\nu_\ast(f)\cdot\nu_\ast(x)^{-1}\quad (f\in KK^G_\ast(B,\mathbb{C}), x\in S_D).\]%
Let $D^\prime$ be another strongly self-absorbing $G$-$\Cstar$-algebra with $S_D=S_{D^\prime}$ satisfying the localization condition with respect to $B$. The uniqueness of $h^B_\ast$ gives the uniqueness of the $R(G)$-module map $KK^G(B,D^\prime) \to KK^G(B,D)$, which is automatically an isomorphism, maiking the following diagram commutative: 
\[\begin{tikzcd}[row sep=large, column sep=large]
	KK_\ast^G(B,\mathbb{C})
	\arrow[r, "\nu^B_*"]
	\arrow[dr, "\nu^B_*"']
	&
	KK_\ast^G(B,D^\prime)
	\arrow[d]
	\\
	{}
	&
	KK_\ast^G(B,D)
\end{tikzcd}\] \end{rem}

For a multiplicative subset $S\subset R(G)$, $\overline{S}$ denotes its saturation. 

\begin{prp}\label{model action}Let $G$ be a compact group and $S$ be a multiplicative subset of the representation ring $R(G)$. Then there is a strongly self-absorbing $G$-$\Cstar$-algebra $M^S$ such that $M^S$ belongs to the $G$-equivariant bootstrap class $\mathcal{B}_G$, $S_{M^S}=\overline{S}$ and $M^S$ satisfies the localization condition with respect to every compact object $B\in KK^G$. \end{prp}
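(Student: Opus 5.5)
Our plan is to realize $M^S$ as an infinite tensor product of copies of a single unital $G$-$\Cstar$-algebra, built so that every element of $S$ acts invertibly after tensoring. The standard models for this are the Evans--Pennig infinite tensor product actions and the Toms--Winter UHF-type constructions. We may assume $S$ is countable: $R(G)$ is a countable ring because $G$ is compact second countable. Enumerate the elements $s_1,s_2,\dots$ of $S$, each appearing infinitely often. Each $s\in R(G)$ is a difference $[V]-[W]$ of finite-dimensional representations. The simplest case is $s=[V]$ actual, say with $V\ni$ trivial summand. Then the unital embedding $\C\to B(V)$ equipped with $\Ad$ of $V$ works. For general virtual $s$ we use a Kirchberg/$\mathcal{O}_\infty$-type model: take a unital Cuntz--Pimsner-type $G$-algebra $\mathcal{O}_V$ or its variants, whose equivariant $K$-theory is $R(G)/(1-[V])$. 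We aim instead for a building block $A_s$ in $\mathcal{B}_G$, namely a unital Kirchberg $G$-algebra, with
\[
K^G_\ast(C(G/H)\otimes A_s)\cong R(H)[s^{-1}]
\]
and unit class $1$. Concretely, we form the sequential inductive limit $\C\to \C\to\cdots$ along the map ``multiplication by $s$'' in $KK^G$. This is realized by Pimsner's construction on a Hilbert bimodule $E$ with $[E]=s$. After tensoring with $\mathcal{O}_\infty$ with quasi-free action, it is realized unitally, since $\mathcal{O}_\infty$-stability lets us represent a virtual class by an honest equivariant bimodule.

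The first key step is to take $M^S=\bigotimes_{n}A_{s_n}^{\otimes\infty}$ and verify strong self-absorption. Infinite tensor powers of a unital algebra with approximately inner half-flip are strongly self-absorbing, by Szab\'o's equivariant version of the Toms--Winter criterion. So we must check that each $A_s$ has $G$-approximately inner half-flip, or arrange it by tensoring with the standard strongly self-absorbing model $\mathcal{O}_\infty^{G}=\bigotimes (\mathcal{O}_\infty,\text{quasifree on } \ell^2(G)\text{-type})$. Membership in $\mathcal{B}_G$ follows because $\mathcal{B}_G$ is closed under countable inductive limits, realized as homotopy colimits via mapping telescopes, and under tensor products of bootstrap objects.

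The second step computes $KK^G_\ast(B,M^S)$ for compact $B$. Compactness of $B$ gives
\[
KK^G_\ast(B,\varinjlim D_n)=\varinjlim KK^G_\ast(B,D_n).
\]
Here we use the Milnor $\varprojlim^1$ sequence for the telescope, and compactness kills the relevant term. The finite stages are the tensor products of finitely many blocks. Each finite stage is $KK^G$-equivalent to $\C$ up to the connecting maps being multiplication by a product of the $s_n$. Hence the colimit is $KK^G_\ast(B,\C)_S$, compatibly with $\nu_\ast$. This gives the isomorphism $h^B$ of Definition~\ref{loc condition}, with $S_D$ replaced by $S$.

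It remains to identify $S_{M^S}=\overline S$. Taking $B=\C$, we get $K^G_0(M^S)\cong R(G)_S$ as rings, with $\nu_\ast$ the localization map. The unit of $R(G)$ under $x$ becomes invertible in $R(G)_S$ exactly when $x\in\overline S$. Then localizing at $S$ and at $\overline S$ agree, which gives the statement.

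The main obstacle is the first step: producing genuinely unital building blocks that realize multiplication by a \emph{virtual} representation $s$ as a unital $G$-map $\C\to A_s$, while keeping the half-flip approximately inner. I would first try to handle $s=[V]$ with $V$ containing the trivial representation, using the $\Ad$-action on $\mathcal{O}_\infty\otimes B(V)$. I would then reduce general $s$ to this case, noting that $s$ and $s+N\cdot(\text{stuff})$ generate the same saturation only in special cases. That reduction is doubtful, so as a fallback I would use the $\mathcal{O}_\infty$-absorbing Pimsner construction, with the Gabe--Szab\'o uniqueness theory giving the half-flip.
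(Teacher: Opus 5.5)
Your outer framework matches the paper's: an infinite tensor product of unital blocks, Szab\'o's criterion for strong self-absorption, compactness of $B$ with the admissible inductive limit giving $KK^G_\ast(B,M^S)\cong KK^G_\ast(B,\mathbb{C})_S$, and $S_{M^S}=\overline{S}$ read off from $K^G_0(M^S)\cong R(G)_S$. But the step that carries the content is left open, as you acknowledge.

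For an arbitrary virtual element $x=[\rho_0]-[\rho_1]\in R(G)$ you need a unital $G$-$\Cstar$-algebra $A_x$ with three properties: it is $KK^G$-equivalent to $\mathbb{C}$, its unital inclusion $\mathbb{C}\to A_x$ represents $x$ in $KK^G(\mathbb{C},\mathbb{C})=R(G)$, and its infinite tensor power is strongly self-absorbing. None of your routes produces this.
\begin{itemize}
\item The blocks $(B(V),\Ad V)$ only give honest representations, and the reduction to them cannot work in general. Every element of the saturation of a set of honest nonzero representations has nonzero dimension, so $x=1-t$ for $G=\mathbb{Z}/n$ is out of reach. More to the point, a block realizing some $x$ with $\dim x=0$ must have $[1_{A_x}]=0$ in non-equivariant $K_0$. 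That excludes every stably finite block.
\item The Cuntz--Pimsner algebras $\mathcal{O}_V$ compute quotients $R(G)/(1-[V])$. They do not give a unital map of class $x$.
\item Defining $A_s$ as ``the'' limit of $\mathbb{C}\to\mathbb{C}\to\cdots$ along $s$ in $KK^G$ is circular. Realizing that homotopy colimit by a unital strongly self-absorbing $G$-algebra is exactly what the proposition asserts.
\item Tensoring with a strongly self-absorbing model does not give a block an approximately inner half-flip that it lacks.
\end{itemize}

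The missing idea is elementary: $\mathcal{O}_\infty$ contains nonzero projections of $K_0$-class $-1$. The paper proceeds as follows.
\begin{itemize}
\item It chooses mutually orthogonal projections $p_0^{(k)}$ ($1\le k\le\dim\rho_0$) of class $+1$ and $p_1^{(k)}$ ($1\le k\le\dim\rho_1$) of class $-1$.
\item It links the projections with the same $i$ by partial isometries, so that $p_i\mathcal{O}_\infty p_i\cong B(H_i)\otimes p_i^{(1)}\mathcal{O}_\infty p_i^{(1)}$ with $p_i=\sum_k p_i^{(k)}$.
\item It lets $G$ act on $p\mathcal{O}_\infty p$, $p=p_0+p_1$, by the inner action $\Ad(\rho_0\oplus\rho_1)$ implemented through these matrix units.
\end{itemize}
This block is $KK^G$-equivalent to $\mathbb{C}$, because the action is inner and $p\mathcal{O}_\infty p$ is a full corner of $\mathcal{O}_\infty\sim_{KK}\mathbb{C}$. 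The class of its unit is $[\rho_0]\cdot 1+[\rho_1]\cdot(-1)=x$. Since $p\mathcal{O}_\infty p$ has approximately inner flip non-equivariantly and the action is inner, Szab\'o's result on infinite tensor products shows that $\bigotimes_{\mathbb{N}}p\mathcal{O}_\infty p$ is strongly self-absorbing. The general case is $M^S=\bigotimes_{x\in S}M^{\{x\}}$ with $S$ countable, and from there your remaining steps go through.
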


\begin{proof}First we consider the case that $S$ is generated by $1$ and an element $x\in R(G)$. Write $x=[\rho_0]-[\rho_1]$, where $[\rho_0]$ and $[\rho_1]$ are the elements given by finite dimensional representations $\rho_0\colon G\curvearrowright H_0$ and $\rho_1\colon G\curvearrowright H_1$, respectively. For $i=0,1$, put $d_i\coloneqq\dim H_i$. Since the Cuntz algebra $\Oi$ is simple and purely infinite, there exists a family of mutually orthogonal projections $\{p_i^{(k)}\mid i=0,1,\ 1\leq k\leq d_i\}$ of $\Oi$ with %
\[[p_i^{(k)}]=(-1)^i\cdot [1_\infty]\in K_0(\Oi),\] %
where $[1_\infty]\in K_0(\Oi)$ is the element given by the identity $1_\infty\in\Oi$. Since $\Oi$ is purely infinite, $[p_i^{(1)}]=[p_i^{(k)}]$ implies that there is a partial isometry $v_i^{(1k)}$ with $v_i^{(1k)}v_i^{(1k)\ast}=p_i^{(1)}$ and $v_i^{(1k)\ast} v_i^{(1k)}=p_i^{(k)}$ for any $1\leq k\leq d_i$. Thus, setting $v_i^{(kl)}\coloneqq v_i^{(1k)\ast}v_i^{(1l)}$ and $p_i\coloneqq\sum_{k=1}^{d_i}p_i^{(k)}$, there is the isomorphism %
\begin{align}\label{model}B(H_i)\otimes p_i^{(1)}\Oi p_i^{(1)}\cong p_i\Oi p_i\end{align}%
such that for the system of matrix units $\{e_i^{(kl)}\}_{k,l=1}^{d_i}$ of $B(H_i)$, it maps $e_i^{(kl)}\otimes p_i^{(1)}$ to $v_i^{(kl)}$. 
Simplicity of $\Oi$ implies that $p_i^{(1)}$ is a full projection of $\Oi$ and so the inclusion $p_i^{(1)}\Oi p_i^{(1)}\hookrightarrow\Oi$ gives the Morita equivalence. Also, by \cite[Theorem 4.4]{Pimsner}, the unital inclusion $\C\to\mathcal{O}_\infty$ gives a $KK$-equivalence $\mathcal{O}_\infty\sim\C$. Under these equvalence, the element of $KK(\C,p_i^{(1)}\Oi p_i^{(1)})=K_0(p_i^{(1)}\Oi p_i^{(1)})$ given by the unital inclusion $\C\to p_i^{(1)}\Oi p_i^{(1)}$ is identified with $(-1)^i\cdot [1]\in K_0(\C)$. Now the isomorphism (\ref{model}) gives the inner action $\pi_i\colon G\curvearrowright p_i\mathcal{O}_\infty p_i$ induced by $\Ad\rho_i\otimes\id\colon G\curvearrowright B(H_i)\otimes p_i^{(1)}\mathcal{O}_\infty p_i^{(1)}$. Thus, the element of $K^G_0(p_i\Oi p_i)=K^G_0((p_i\Oi p_{i,j},\pi_i))$ given by the unital inclusion $\C\to p_i\Oi p_i$ is identified with $(-1)^i\cdot[\rho_i]\in K^G_0(\C)$. Setting $p\coloneqq p_0+p_1$, we define the inner action $\pi\coloneqq\pi_0\oplus\pi_1\colon G\curvearrowright p\Oi p$. Then the element $[\nu_p]\in K^G_0(p\Oi p)$ given by the unital inclusion $\nu_p\colon\C\to p\Oi p$ is identified with %
\[[\rho_0]-[\rho_1]=x\in K_0^G(\C).\] %
Now consider the inductive system $\qty((p\Oi p)^{\otimes m},\iota_m)_m$, where %
\[\iota_m=\nu_p\otimes\id\colon (p\Oi p)^{\otimes m}\to (p\Oi p)\otimes(p\Oi p)^{\otimes m}=(p\Oi p)^{\otimes m+1}.\] %
The above argument shows that $(\iota_m)_\ast\colon KK^G_\ast(B,(p\Oi p)^{\otimes m})\to KK^G_\ast(B,(p\Oi p)^{\otimes m+1})$ is identified with the multiplication-by-$x$ map on $KK^G_\ast(B,\mathbb{C})$. Hence %
\[\varinjlim_{m}\left(KK^G_\ast(B,(p\Oi p)^{\otimes m}),(\iota_m)_\ast\right)\cong KK^G_\ast(B,\mathbb{C})_S.\] %
Since all $\Cstar$-algebras appearing in $\qty((p\Oi p)^{\otimes m},\iota_m)_m$ are nuclear, this inductive system is admissible and the inductive limit $\bigotimes_{\mathbb{N}}p\Oi p$ is $KK^G$-equivalent to its homotopy limit (for the definition of "admissible" and "homotopy limit", see \cite[2.4]{Meyer and Nest}). Thus, by the compactness of $B$ and by \cite[Lemma 2.4]{Meyer and Nest}, %
\[KK^G_\ast(B, \bigotimes_{\mathbb{N}}p\Oi p)\cong\varinjlim_{m}\left(KK^G_\ast(B, (p\Oi p)^{\otimes m})\right).\] %
$\bigotimes_{\mathbb{N}}p\Oi p$ is in the $G$-equivariant bootstrap class $\mathcal{B}_G$ because all inner actions are Morita equivalent to the trivial action and $p\Oi p$ is $KK$-equivalent to $\C$. 
Since $\mathcal{O}_\infty$ is strongly self-absorbing, the corner $p\Oi p$, as a non-equivariant $\Cstar$-algebra, has approximate 
inner flip. 
Therefore, by \cite[Proposition 6.3]{Szabo 2}, $M^S\coloneqq\bigotimes_{\mathbb{N}}p\Oi p$ is a strongly self-absorbing $G$-$\Cstar$-algebra. It satisfies $S_{M^S}=\overline{S}$ because %
\begin{align*}K^G_i(\bigotimes_{\mathbb{N}}p\Oi p)\cong\varinjlim_{m}(K^G_i((p\Oi p)^{\otimes m}))\cong\begin{cases*}R(G)_S & ($i=0$), \\ 0 & ($i=1$).\end{cases*} \end{align*}\par%
In the case that $S$ is general, for each $x\in S$, let $S_x$ be the multiplicative subset generated by $1$ and $x$, and take $M^{S_x}$ as above. Then $M^S\coloneqq\bigotimes_{x\in S}M^{S_x}$ is the desired one. Note that $S\subset R(G)$ is a countable set because $G$ is compact. \end{proof}

In the proof of the preceding proposition, $M^S$ is given as an infinite tensor product of unital Kirchberg algebras of the form $p\mathcal{O}_\infty p$. Thus, this $M^S$ is also a unital Kirchberg algebra. 
Let $\mu^S$ be the action of $G$ on $M^S$ and $\gamma\colon G\curvearrowright\mathcal{O}_\infty$ be the action defined in \cite[Definition 3.4]{Gabe and Szabo} (in fact, we can choose $\gamma$ to be a quasi-product action thanks to 
\cite[Proposition 7.2]{Izumi} ). 
The unital inclusion $\C\to (\mathcal{O}_\infty, \gamma)$ gives a $KK^G$-equivalence $\C\sim(\mathcal{O}_\infty, \gamma)$ by \cite[Remark 3.3]{Gabe and Szabo}. 
Also, $(\mathcal{O}_\infty, \gamma)$ is strongly self-absorbing by \cite[Corollary 6.8]{Gabe and Szabo}. 
Hence, by \cite[Corollary 6.8]{Gabe and Szabo}, the action $\mu^S\otimes\gamma\colon G\curvearrowright M^S\otimes \mathcal{O}_\infty$ is isometrically shift-absorbing and strongly self-absorbing. Moreover, $(M^S\otimes \mathcal{O}_\infty, \mu^S\otimes\gamma)$ is $KK^G$-equivalent to $(M^S,\mu^S)$. This observation gives the following corollary. 

\begin{cor}\label{model remark} $M^S=(M^S,\mu^S)$ in Proposition \ref{model} can be chosen so that $M^S$ is a unital Kirchberg algebra and the action $\mu^S\colon G\curvearrowright M^S$ is isometrically shift-absorbing. \end{cor}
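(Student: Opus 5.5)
The statement is Corollary \ref{model remark}. I would take the discussion preceding it as the skeleton, and take as $M^S$ the tensor product $N:=M^S\otimes\mathcal{O}_\infty$ with action $\mu^S\otimes\gamma$. Here $M^S$ is the algebra from Proposition \ref{model action} and $\gamma$ is the Gabe--Szab\'o action. The task is to show that $N$ retains every property promised by Proposition \ref{model action}:
\begin{itemize}
\item membership in $\mathcal{B}_G$;
\item strong self-absorption;
\item $S_N=\overline{S}$;
\item the localization condition with respect to every compact $B$.
\end{itemize}
On top of these, $N$ must be a unital Kirchberg algebra with an isometrically shift-absorbing action.

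\textbf{Step 1 (non-equivariant structure).} $M^S$ is an infinite tensor product of the unital Kirchberg algebras $p\mathcal{O}_\infty p$. So $M^S$ is separable, unital, simple, nuclear and purely infinite, as an inductive limit of such algebras; the permanence of Kirchberg algebras under inductive limits with injective connecting maps applies. Then $N=M^S\otimes\mathcal{O}_\infty$ is again a unital Kirchberg algebra.

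\textbf{Step 2 (dynamical properties).} By \cite[Corollary 6.8]{Gabe and Szabo}, $(\mathcal{O}_\infty,\gamma)$ is strongly self-absorbing. The tensor product of two strongly self-absorbing actions is strongly self-absorbing, by the same tensor-product argument used via \cite{Szabo 2}. Since $\gamma$ is isometrically shift-absorbing and $N$ absorbs it, the same corollary gives that $\mu^S\otimes\gamma$ is isometrically shift-absorbing.

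\textbf{Step 3 (KK-data).} The unital inclusion $\C\to(\mathcal{O}_\infty,\gamma)$ is a $KK^G$-equivalence \cite[Remark 3.3]{Gabe and Szabo}. Tensoring with $M^S$ therefore shows that the first-factor embedding $\id\otimes1\colon M^S\to N$ is a $KK^G$-equivalence, and it is unital. This has three consequences.
\begin{itemize}
\item $N\in\mathcal{B}_G$, since $\mathcal{B}_G$ is closed under $KK^G$-equivalence.
\item The embedding induces a unital ring isomorphism $K_0^G(M^S)\cong K_0^G(N)$ compatible with the maps $\nu_*$ from $R(G)$. Compatibility with the multiplication of Definition \ref{product} follows from naturality of the cup product and of $(\id\otimes\nu)_*$ along unital equivariant maps. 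Hence $S_N=S_{M^S}=\overline{S}$.
\item For each compact $B$, composing $h^B_*$ for $M^S$ with the induced isomorphism $KK^G_*(B,M^S)\cong KK^G_*(B,N)$ gives the required $h^B_*$ for $N$. The triangle commutes because the embedding intertwines the unital inclusions of $\C$.
\end{itemize}

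\textbf{Main obstacle.} The delicate point is Step 2: checking that the cited results of Gabe--Szab\'o apply verbatim for compact $G$. In particular, one has to confirm that absorbing $\gamma$ forces isometric shift-absorption, and that the tensor product of strongly self-absorbing actions stays strongly self-absorbing. I would verify the hypotheses of \cite[Corollary 6.8]{Gabe and Szabo} directly. If needed, I would use \cite[Proposition 7.2]{Izumi} to choose $\gamma$ as a quasi-product action to meet them.
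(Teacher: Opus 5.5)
Your proposal is correct and follows essentially the same route as the paper. The paper also replaces $(M^S,\mu^S)$ by $(M^S\otimes\mathcal{O}_\infty,\mu^S\otimes\gamma)$, invokes Gabe--Szab\'o for strong self-absorption and isometric shift-absorption, and uses the unital $KK^G$-equivalence $\C\to(\mathcal{O}_\infty,\gamma)$. Your Step~3 only makes explicit what the paper leaves implicit in the phrase ``$KK^G$-equivalent to $(M^S,\mu^S)$'': that membership in $\mathcal{B}_G$, the identity $S_{M^S}=\overline{S}$ and the maps $h^B_\ast$ all transfer along the unital embedding $\id\otimes 1$.
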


\begin{rem}\label{model limit} In the argument of Proposition \ref{model} (and Corollary \ref{model remark}), $M^S$ was constructed as a direct limit of an inductive system consisting of $G$-$\Cstar$-algebras that are $KK^G$-equivalent to $\mathbb{C}$. More precisely, let us relabel the elements of $S\subset R(G)$ as $\{x_n\}_{n=1}^\infty$. Set the inductive system $(A_n,\iota_n)_n$ as $A_n=\mathbb{C}$ and $\iota_n=x_1\cdots x_n\in KK^G(A_n,A_{n+1})=R(G)$. Then the homotopy limit of this system is $KK^G$-equivalent to $M^S$. \end{rem}

Our ultimate goal is to prove that every strongly self-absorbing $D\in\mathcal{B}_G$ is $KK^G$-equivalent to one of the models $M^S$ in Proposition \ref{model}. We conclude this section by establishing the following proposition, which serves as a stepping stone toward this goal. 

\begin{prp}\label{loc KK} Let $\mathcal{B}_G^0$ be a subclass of the $G$-equivariant bootstrap class $\mathcal{B}_G$ that generates $\mathcal{B}_G$ and $D$ be a strongly self-absorbing $G$-$\Cstar$-algebra in $\mathcal{B}_G$. 
Suppose that $D$ satisfies the localization condition with respect to every $B\in\mathcal{B}_G^0$. 
Then there is a $KK^G$-equivalence between $M^{S_D}$ of Proposition \ref{model} (or Corollary \ref{model remark}) and $D$ that maps $[1_{M^{S_D}}]\in K^G_0(M^{S_D})$ to $[1_D]\in K^G_0(D)$. \end{prp}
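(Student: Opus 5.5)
We construct a morphism $f\in KK^G(M^{S_D},D)$ and then check invertibility with Proposition \ref{bootstrap}. Write $S=S_D$, which is saturated: if $xy\in S$, then $\nu_*(x)\nu_*(y)$ is invertible in the commutative ring $K^G_0(D)$ of Proposition \ref{ring structure}, so both factors are invertible. Hence $S_{M^S}=S$ by Proposition \ref{model action}.

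By Remark \ref{model limit}, $M^S$ is $KK^G$-equivalent to the homotopy limit of the system $A_n=\C$ with connecting maps $\iota_n=x_1\cdots x_n\in R(G)$, where $\{x_n\}$ enumerates $S$. Put $s_n=x_1\cdots x_{n-1}$ (so $s_1=1$) and define $f_n\in KK^G(A_n,D)=K^G_0(D)$ by $f_n=\nu_*(s_n)^{-1}$. Since $s_n\in S$, this inverse exists. The compatibility $\iota_n\otimes f_{n+1}=\nu_*(x_n)\nu_*(s_{n+1})^{-1}\cdots$ reduces to $\nu_*(s_{n+1})=\nu_*(s_n)\nu_*(x_n)$ in the ring $K^G_0(D)$, where the $R(G)$-action on $KK^G(\C,D)$ is multiplication by $\nu_*$. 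For this we use that $\nu_*$ is a ring homomorphism and that the $R(G)$-module structure is compatible with the ring structure; we also need $r\cdot y=\nu_*(r)y$ for $r\in R(G)$ and $y\in K^G_0(D)$, which follows from the definition of the multiplication via $(\id\otimes\nu)_*^{-1}$.

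By the defining exact triangle of the homotopy limit, $\bigoplus A_n\to\bigoplus A_n\to \mathrm{ho\text{-}lim}\to$, and the cohomological long exact sequence in the first variable, a compatible family lifts to some $f\in KK^G(M^S,D)$. The lift is possibly non-unique, because of a $\varprojlim^1$ term; we simply choose one. Its restriction to the first stage $A_1=\C\to M^S$, the unital inclusion, is $f_1=[1_D]$. Composing with the equivalence identifying the image of $1\in A_1$ with $[1_{M^S}]$, we get $f_*[1_{M^S}]=[1_D]$.

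For invertibility, by Proposition \ref{bootstrap} applied with $\mathcal{B}^0_G$ it suffices that $f_*\colon KK^G_*(B,M^S)\to KK^G_*(B,D)$ is bijective for each $B\in\mathcal{B}^0_G$. Here is the main obstacle: Proposition \ref{model action} gives the localization description of $KK^G_*(B,M^S)$ only for compact $B$, while the objects of $\mathcal{B}^0_G$ need not be compact. I would first try to get around this by replacing $\mathcal{B}^0_G$ with a generating family of compact objects. By the hypothesis's intended use (Remark \ref{rem of main conjecture} (3), (4)) these are the $C(G/H)$; in general one may intersect with compacts, or else argue directly with the $\varprojlim^1$-free Milnor sequence. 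For the $\varprojlim^1$-free argument: since $KK^G_*(B,\C)$ with $B$ in the generating set may be handled via the Milnor sequence $\varprojlim^1$ for contravariant arguments, I would rather assume compactness of the generators.

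Granting compactness of $B$, we have $KK^G_*(B,M^S)\cong\varinjlim_n KK^G_*(B,\C)=KK^G_*(B,\C)_S$. Under this identification the element $y/s_n$ maps under $f_*$ to $\nu^B_*(y)\cdot\nu_*(s_n)^{-1}$, which is exactly $h^B_*(y/s_n)$ by Remark \ref{uniqueness of h}. By the localization hypothesis $h^B_*$ is an isomorphism, so $f_*$ is bijective for every $B$ in the generating family. Proposition \ref{bootstrap} then shows that $f$ is invertible, and we have already seen that $f_*[1_{M^S}]=[1_D]$.
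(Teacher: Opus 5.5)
Your argument is essentially the paper's proof: $f$ is obtained from the Milnor sequence for the homotopy colimit of Remark~\ref{model limit} as the element restricting to $[1_D]$, and invertibility follows by comparing with the localization isomorphisms via Remark~\ref{uniqueness of h} and then applying Proposition~\ref{bootstrap}. The compactness of the objects of $\mathcal{B}^0_G$ that you end up assuming is exactly what the paper tacitly uses as well: its claim that $M^{S_D}$ satisfies the localization condition for $B\in\mathcal{B}^0_G$ comes from Proposition~\ref{model action}, which covers only compact $B$ (cf.\ Remark~\ref{rem of main conjecture}(3)). Your suggested workarounds, namely swapping in another compact generating family or intersecting with compacts, would not work, because the hypothesis on $D$ is only available for $\mathcal{B}^0_G$.

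There are two minor fixes. First, with the connecting maps $\iota_n=x_1\cdots x_n$, your $s_n$ should be the composite $\iota_1\cdots\iota_{n-1}$ rather than $x_1\cdots x_{n-1}$; any product of elements of $S$ works equally well. Second, the $\varprojlim{}^1$ term in fact vanishes, since each $\iota_{n-1}^\ast$ acts invertibly on $K^G_\ast(D)$, so $f$ is unique rather than merely chosen.
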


\begin{proof}
Take the inductive system $(A_n,\iota_n)_n$ of Remark \ref{model limit} for $S=S_D$. By \cite[Lemma 2.4]{Meyer and Nest}, there is a short exact sequence
\[\varprojlim{}^1 KK^G_{\ast+1}(A_n,D)\rightarrowtail KK^G_\ast(M^{S_D},D)\twoheadrightarrow \varprojlim KK^G_\ast(A_n,D),\]
where $\varprojlim{}^1 KK^G_{\ast}(A_n,D)$ is %
\begin{align*}\Cok\qty(\id-(\iota_{n-1}^\ast)_n\colon\prod_{n=1}^\infty KK^G_\ast(A_n,D)\to\prod_{n=1}^\infty KK^G_\ast(A_n,D)). \end{align*}
Since $A_n=\mathbb{C}$ and $\iota_{n-1}\in KK^G(A_{n-1},A_n)$ is identified with an element of $S_D$, the map $(\iota_{n-1})^\ast \colon KK^G_\ast(A_n,D)\to KK^G_\ast(A_{n-1},D)$ is invertible. Hence, $\varprojlim{}^1 KK^G_{\ast+1}(A_n,D)$ vanishes and we have the isomorphism 
\[KK^G_\ast(M^{S_D},D) \cong \varprojlim KK^G_\ast(A_n,D) \cong K^G_\ast(D).\] 
Let $f\in KK^G(M^{S_D},D)$ be the element corresponding to $[1_D]\in K^G_0(D)$. Then we have $[1_{M^{S_D}}]\otimes f = [1_D]$. Equivalently, the diagram 
\[\begin{tikzcd}[row sep=large, column sep=large]
	KK_\ast^G(B,\mathbb{C})
	\arrow[r, "\nu^B_*"]
	\arrow[dr, "\nu^B_*"']
	&
	KK_\ast^G(B,M^{S_D})
	\arrow[d,"f_\ast"]
	\\
	{}
	&
	KK_\ast^G(B,D)
\end{tikzcd}\]
commutes for all $B\in KK^G$. Now $D$ and $M^{S_D}$ satisfy the localization condition with respect to every $B\in \mathcal{B}_G^0$. Thus, by Remark \ref{uniqueness of h}, the map $f_\ast\colon KK^G(B,M^{S_D}) \to KK^G(B,D)$ is an isomorphism. Combining this with Proposition \ref{bootstrap}, we conclude that $f\in KK^G(M^{S_D},D)$ is invertible. 
\end{proof}

Recall that $\mathcal{B}_G$ is generated by $G$-$\Cstar$-algebras of the form $\Ind^G_H M_n$, which are compact by \cite[Proposition 3.13]{Lefschetz}. Hence, a subclass $\mathcal{B}_G^0$ as in the previous proposition always exists. In particular, $D$ is $KK^G$-equivalent to $M^{S_D}$ preserving identities if Conjecture \ref{conjecture} holds for $D$.

\section{Equivariant K\"{u}nneth formula}

In this section we prove an equivariant K\"unneth formula that is needed to calculate the equivariant $K$-theory of strongly self-absorbing actions. To state it, we have to define some notions. \par
For a positive integer $n$, an abelian group is said to be \textit{uniquely n-divisible} if the multiplication by $n$ is bijective on it. 

\begin{dfn}Let $n$ be a positive integer and $G$ be a finite group. We say that $A\in KK^G$ {\it has uniquely $n$-divisible $K$-theory} if for each subgroup $H$ of $G$, $K^H_\ast(A)$ is uniquely $n$-divisible.\end{dfn}

For $A\in KK^{\Zn}$, its equivariant $K$-theory $K^{\Zn}_\ast(A)$ is a left $R(\Zn)$-module. The representation ring $R(\Zn)$ is isomorphic to $\mathbb{Z}[t]/(t^n-1)$. Let $\Phi_n(t)\in\mathbb{Z}[t]/(t^n-1)$ be the $n$-th cyclotomic polynomial: the minimal polynomial of the primitive $n$-th root of unity $\zeta_n$. Note that the ring $\mathbb{Z}[t]/(\Phi_n)$ is isomorphic to $\mathbb{Z}[\zeta_n]$.

\begin{dfn}[cf. {\cite[p.2]{Meyer and Nadareishvili}}] For $A\in KK^{\Zn}$, a $\mathbb{Z}[\zeta_n]$-module $F^{\Zn}_\ast(A)$ is defined by %
\[F^{\Zn}_\ast(A)=\qty{x\in K^{\Zn}_\ast(A)\mid \Phi_n\cdot x=0}.\]%
$F^{\Zn}_\ast$ is the functor from $KK^{\Zn}$ to the category of $\mathbb{Z}/2$-graded left $\mathbb{Z}[\zeta_n]$-modules that maps an object $A\in KK^{\Zn}$ to $F^{\Zn}_\ast(A)$. \end{dfn}

The main result of this section is as follows. 

\begin{thm}\label{Kunneth} Let $A,B$ be objects in $KK^{\Zn}$. If $A$ has uniquely $n$-divisible $K$-theory and $B$ is in the $\Zn$-equivariant bootstrap class $\mathcal{B}_{\Zn}$, then there is an short exact sequence of $\mathbb{Z}[\zeta_n]$-modules %
\[F^{\mathbb{Z}/n}_\ast(A)\otimes_{\mathbb{Z}[\zeta_{n}]} F^{\mathbb{Z}/n}_\ast(B)\rightarrowtail F^{\mathbb{Z}/n}_\ast(A\otimes B)\twoheadrightarrow \text{Tor}^{\mathbb{Z}[\zeta_{n}]}_1(F^{\mathbb{Z}/n}_\ast(A),F^{\mathbb{Z}/n}_{\ast-1}(B)),\]%
where the first homomorphism is given by the cup product. \end{thm}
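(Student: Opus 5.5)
We follow Schochet's strategy, as announced in the introduction: first prove the formula when $B$ has projective $F$-homology, then use a geometric resolution of a general $B\in\mathcal{B}_{\Zn}$ to reduce to that case. Throughout we fix $A$ with uniquely $n$-divisible $K$-theory. We regard the functor $B\mapsto F^{\Zn}_\ast(A\otimes B)$ as a homological functor on $KK^{\Zn}$ restricted to $\mathcal{B}_{\Zn}$.

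\textbf{Step 1: exactness of $F$ under the divisibility hypothesis.} In general $F^{\Zn}_\ast$ (the $\Phi_n$-torsion part) is only left exact, so it is not homological. After inverting $n$, however, $R(\Zn)[1/n]\cong\mathbb{Z}[1/n][t]/(t^n-1)$ splits as the product $\prod_{d\mid n}\mathbb{Z}[1/n][t]/(\Phi_d)$, because the cyclotomic factors become pairwise comaximal once $n$ is invertible. Hence for any $C$ with uniquely $n$-divisible $K$-theory, $F^{\Zn}_\ast(C)$ is a direct summand of $K^{\Zn}_\ast(C)$ cut out by an idempotent of $R(\Zn)[1/n]$. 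We apply this to $C=A\otimes B$. Here we need that $A\otimes B$ again has uniquely $n$-divisible $K$-theory for $B\in\mathcal{B}_{\Zn}$; this is proved by a localizing-subcategory argument, checking the generators $C(\Zn/H)$, where $K^{\Zn}_\ast(A\otimes C(\Zn/H))\cong K^H_\ast(A)$ by Proposition~\ref{reciprocity}. It follows that $B\mapsto F^{\Zn}_\ast(A\otimes B)$ is homological and commutes with countable direct sums. The same idempotent applied to $F^{\Zn}_\ast(A)$ shows that $F^{\Zn}_\ast(A)$ is a module over $\mathbb{Z}[1/n][\zeta_n]$.

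\textbf{Step 2: the projective case.} Let $B\in\mathcal{B}_{\Zn}$ with $F^{\Zn}_\ast(B)$ projective over $\mathbb{Z}[\zeta_n]$. We compare the natural transformations $F(A)\otimes_{\mathbb{Z}[\zeta_n]}F(B)\to F(A\otimes B)$ given by the cup product. For $B=C(\Zn)$ we have $F^{\Zn}_\ast(C(\Zn))\cong\mathbb{Z}[\zeta_n]$ and the map is an isomorphism. By the Meyer--Nadareishvili theory, every projective $\mathbb{Z}[\zeta_n]$-module is realized by a direct summand of a direct sum of suitable objects, which lets us reduce to this generator.

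The main obstacle is that an object $B$ whose $F$ is projective need not be built only from $C(\Zn)$: the other $K^H_\ast(B)$ also carry information. We will use the divisibility of $A$ to kill this extra information. After tensoring with $A$, the contributions of the subgroups $H\neq\Zn$ land in the $\Phi_d$-components with $d<n$ and are discarded by the idempotent. To make this precise, we will show that any $f\colon B\to B'$ inducing an isomorphism on $F$ induces an isomorphism on $F(A\otimes -)$, using the Meyer--Nadareishvili UCT to build, for a projective module $P$, a realization $B_P$ with $F(B_P)=P$.

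\textbf{Step 3: geometric resolution.} Since $\mathbb{Z}[\zeta_n]$ is Dedekind (Remark~\ref{Z[zeta] is Dedekind}) and hence hereditary (Remark~\ref{hereditary}), $F^{\Zn}_\ast(B)$ has a projective resolution $P_1\rightarrowtail P_0\twoheadrightarrow F(B)$ of length one. Realize $P_0$ by an object $B_0$ together with a map $B_0\to B$ that is surjective on $F$. Complete it to an exact triangle $\Sigma B\to B_1\to B_0\to B$, so that $F(B_1)\cong P_1$ up to shift. Apply the homological functor $F(A\otimes -)$ to this triangle, and use Step 2 to identify the terms coming from $B_0$ and $B_1$ with $F(A)\otimes P_i$. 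The long exact sequence then splits into the short exact sequence in the statement, with cokernel $F(A)\otimes F(B)$ and kernel $\mathrm{Tor}_1(F(A),F_{\ast-1}(B))$. Finally, naturality of the cup product identifies the first map with the cup product.
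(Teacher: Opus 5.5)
Your proposal has a genuine gap, and it comes from resolving the wrong factor. Step 1 shows that $B\mapsto F^{\Zn}_\ast(A\otimes B)$ is homological. Step 3, however, needs $F^{\Zn}_\ast$ \emph{itself} to be exact on the triangle $\Sigma B\to B_1\to B_0\to B$ in order to get $F^{\Zn}_\ast(B_1)\cong P_1$. Outside the subcategory $\mathfrak{N}$ of objects with uniquely $n$-divisible $K$-theory, $F^{\Zn}_\ast=\Ker\Phi_n$ is not homological.

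Concretely, take $n=p$ and the triangle $\Sigma\C\to Z\to\C\xrightarrow{t-1}\C$, where $t-1\in R(\Z/p)=KK^{\Z/p}(\C,\C)$.
\begin{itemize}
\item The $K$-groups of $Z$ are $\Ker(t-1)\cong\Z$ and $\Cok(t-1)\cong\Z$, with $t$ acting trivially. So $\Phi_p$ acts by $p$ and $F_\ast(Z)=0$.
\item On $F_0(\C)=(t-1)R(\Z/p)\cong\Z[\zeta_p]$, the map $t-1$ acts as the non-unit $\zeta_p-1$, with cokernel $\Z/p$.
\end{itemize}
If $F$ were homological, $\zeta_p-1$ would have to be bijective on $\Z[\zeta_p]$, which it is not.

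Even the first move of Step 3, an $F$-surjection $B_0\to B$ with $F(B_0)$ projective, is unavailable for non-divisible $B$. Maps out of copies of $\C$ and $C_0(\R)$ only reach $\frac{t^n-1}{\Phi_n}K^{\Zn}_\ast(B)$. This is a proper subgroup of $F_\ast(B)$ already for the cone of $\alpha_{10}\in KK^{\Z/p}(C(\Z/p),\C)$, whose $K$-theory is $R(\Z/p)/(\Phi_p)$ in one degree. The Meyer--Nadareishvili UCT cannot supply other maps, since it requires $n$ to be invertible on the target.

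Your base object is also wrong. $R(\Zn)$ acts on $K^{\Zn}_\ast(C(\Zn))\cong\Z$ through $t\mapsto1$, and $\Phi_n(1)\neq0$ for $n>1$, so $F_\ast(C(\Zn))=0$. The object with $F\cong\Z[\zeta_n]$ is $\C$.

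Second, the comparison claim in Step 2 is not established. The claim is that a morphism in $\mathcal{B}_{\Zn}$ inducing an isomorphism on $F$ induces one on $F(A\otimes-)$; equivalently, $F_\ast(C)=0$ implies $F_\ast(A\otimes C)=0$ for $C\in\mathcal{B}_{\Zn}$. This carries the real weight of the theorem, and realizing projective modules via the UCT says nothing about objects $C$ with $F(C)=0$ but large $K^H_\ast(C)$ for $H\neq\Zn$. The localizing-subcategory argument that does prove such a vanishing, Proposition~\ref{F vanish}, needs $F=0$ on the \emph{uniquely divisible} factor. It then runs the induction over the other factor inside $\mathcal{B}_{\Zn}$. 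With your roles reversed, the induction would have to run over $A$, which is not assumed to lie in the bootstrap class.

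The paper avoids both problems by resolving $A$ instead of $B$.
\begin{itemize}
\item Since $F=\psi_nK$ is homological on $\mathfrak{N}$ (Lemma~\ref{category n}), Theorem~\ref{geometric resolution} gives $\alpha\colon\mathscr{F}\otimes M_{n^\infty}\to A$ surjective on $F$. Here $\mathscr{F}$ is a countable sum of copies of $\C$ and $C_0(\R)$, and $\alpha$ is built using $KK^{\Zn}_\ast(M_{n^\infty},A)\cong\varprojlim K^{\Zn}_\ast(A)$.
\item The cone $C$ of $\alpha$ lies in $\mathfrak{N}$, and $F(C)$ is projective because $\Z[\zeta_n,1/n]$ is Dedekind, hence hereditary.
\item The base case is the direct computation of Lemma~\ref{F(M tensor B)}.
\item The projective case reduces to the free case. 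There the cone has $F=0$, and Proposition~\ref{F vanish} applies with the cone as the divisible factor and $B\in\mathcal{B}_{\Zn}$.
\item A diagram chase then produces the $\Tor$ term.
\end{itemize}

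Your route can be rescued, but only with ideas not in your sketch. First replace $B$ by $B\otimes M_{n^\infty}$, which changes neither side of the sequence and makes $F$ exact on the $B$-side. Then prove the comparison by showing that the idempotent $\psi_n\cdot\id_C\in KK^{\Zn}(C,C)$ vanishes when $F_\ast(C)=0$, so that $\psi_n$ kills $K^{\Zn}_\ast(A\otimes C)$.
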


Before starting the proof of the above theorem, we introduce Meyer and Nadareishvili's universal coefficient theorem. Let $G$ be a finite group and $H\subset G$ be a cyclic subgroup. We write the normalizer of $H$ by $N_H$. For any $g\in N_H$, $\con_{g,H}\colon K^H_\ast(\C)\to K^H_\ast(\C)$ fixes $\Phi_\abs{H}$. Thus, $\con_{g,H}$ gives an automorphism of $R(H)/\Phi_\abs{H}\cong \mathbb{Z}[\zeta_\abs{H}]$. Similarly, for any $A\in KK^G$, $\con_{g,H}\colon K^H_\ast(A)\to K^H_\ast(A)$ gives an automorphism of $F^H_\ast(A)$. If $g\in H$, then $\con_{g,H}$ acts trivially on $\mathbb{Z}[\zeta_\abs{H}]$ and on $F^H_\ast(A)$. Hence, $F^H_\ast(A)$ is a $\mathbb{Z}[\zeta_\abs{H}]\rtimes W_H$-module, where $W_H\coloneqq N_H/H$. 

\begin{thm}[cf. {\cite[Theorem 1.1]{Meyer and Nadareishvili}}]\label{UCT}Let $G$ be a finite group and $A,B$ be objects in $KK^G$. If $A$ is in the $G$-equivariant bootstrap class and $\abs{G}\cdot \id\in KK^G(B,B)$ is invertible, then there is a short exact sequence %
\begin{align*}\prod_{H\subset G\ \text{cyclic}} \operatorname{Ext}^1_{\mathbb{Z}[\zeta_\abs{H}]\rtimes W_H}(&F^H_{\ast-1}(A), F^H_\ast(B)) \rightarrowtail KK^G(A,B) \\
& \twoheadrightarrow \prod_{H\subset G\ \text{cyclic}} \operatorname{Hom}_{\mathbb{Z}[\zeta_\abs{H}]\rtimes W_H}(F^H_\ast(A),F^H_\ast(B)).\end{align*}\end{thm}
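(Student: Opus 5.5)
The plan is to deduce the sequence from the Meyer--Nest homological algebra in triangulated categories, working in the localization of $KK^G$ where $|G|$ is invertible. Let $\mathfrak{C}$ be the full subcategory of $KK^G$ consisting of objects $B$ for which $|G|\cdot\id_B$ is invertible; morphisms in $\mathfrak{C}$ form $\mathbb{Z}[1/|G|]$-modules. On $\mathfrak{C}$ consider the stable homological functor
\[F_\ast(B)=\bigl(F^H_\ast(B)\bigr)_{H\subset G\ \text{cyclic}}.\]
It takes values in the abelian category $\mathfrak{A}$ of families of $\mathbb{Z}[1/|G|][\zeta_{|H|}]\rtimes W_H$-modules, taken up to conjugation of subgroups. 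For $A$ in the bootstrap class, I would replace $A$ by $A\otimes M_{|G|^\infty}$ with trivial action, or an equivalent localization. This does not change $KK^G(A,B)$ when $|G|$ is invertible on $B$. The target of the UCT is then also of the form $KK^G(A',B)$ with $A'\in\mathfrak{C}$.

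\textbf{Step 1: $\mathfrak{A}$ is hereditary.} Each $\mathbb{Z}[1/|G|][\zeta_{|H|}]$ is a Dedekind domain by Remarks \ref{Z[zeta] is Dedekind} and \ref{Dedekind}. The order of $W_H$ divides $|G|$ and is therefore invertible. An averaging argument of Maschke type then shows that the projective dimension over the twisted group ring equals the projective dimension over the coefficient ring. Hence every module has projective dimension at most $1$.

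\textbf{Step 2: projective objects representing $F^H$.} This is the key step. Since $|H|$ is invertible, $R(H)[1/|H|]\cong\mathbb{Z}[1/|H|][t]/(t^{|H|}-1)$ splits as a product of the rings $\mathbb{Z}[1/|H|][\zeta_d]$ for $d\mid |H|$. Let $e_H$ be the idempotent of the factor $d=|H|$. Then
\[F^H_\ast(B)=e_H\cdot K^H_\ast(B)\cong e_H\cdot KK^G_\ast(C(G/H),B),\]
using Proposition \ref{reciprocity}(2). Let $P_H$ be the direct summand of $C(G/H)$, localized at $|G|$, cut out by $e_H$. Idempotents split in $KK^G$ because it has countable direct sums. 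Averaging $e_H$ over $N_H$ makes the summand $W_H$-equivariant. One should then check that $F(P_H)$ is the free module $\mathbb{Z}[1/|G|][\zeta_{|H|}]\rtimes W_H$, placed at $H$, concentrated in degree $0$. The main obstacle is verifying that the components of $F(P_H)$ at other cyclic subgroups $H'$ vanish or are accounted for. This is exactly where the Mackey relations of Proposition \ref{Dell'ambrogio} and the double coset formula enter: restriction to a proper subgroup kills the primitive idempotent. Once this holds, we obtain
\[\operatorname{Hom}_{\mathfrak{A}}(F(P_H),F(B))\cong KK^G(P_H,B).\]
So $F$ has enough projective objects, and every projective of $\mathfrak{A}$ is of the form $F(P)$ for a direct sum $P$ of the objects $P_H$.

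\textbf{Step 3: the Meyer--Nest UCT.} Meyer--Nest's theory, applied to the ideal $\ker F$, produces a short exact sequence
\[\operatorname{Ext}^1_{\mathfrak{A}}(F_{\ast-1}(A),F_\ast(B))\rightarrowtail KK^G(A,B)\twoheadrightarrow \operatorname{Hom}_{\mathfrak{A}}(F_\ast(A),F_\ast(B)).\]
It applies whenever $A$ lies in the localizing subcategory generated by the $P_H$ and $F(A)$ has projective dimension at most $1$; the latter is guaranteed by Step 1. That $A\otimes M_{|G|^\infty}$ lies in this subcategory follows from \cite[Corollary 3.3]{Meyer and Nadareishvili}. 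Indeed, after inverting $|G|$, each $C(G/H)$ decomposes, via the idempotents of $R(H)[1/|H|]$, into summands equivalent to the $P_{H'}$ for $H'\subset H$. Finally, $\operatorname{Hom}$ and $\operatorname{Ext}$ in $\mathfrak{A}$ split as products over conjugacy classes of cyclic subgroups. Since $F^H_\ast(B)$ is already uniquely $|G|$-divisible, the $\mathbb{Z}[1/|G|]$-coefficients may be dropped. This gives the stated sequence.
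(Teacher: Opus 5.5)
The paper gives no proof of this statement (it is quoted from \cite{Meyer and Nadareishvili}), so there is no internal argument to compare with. Your outline is the standard Meyer--Nest argument, and it is sound:
\begin{itemize}
\item on the subcategory where $|G|$ is invertible, $F^H_\ast=e_HK^H_\ast$ is represented by the summand $P_H$ of $C(G/H)\otimes M_{|G|^\infty}$ cut out by $\Ind^G_H(e_H)$;
\item the rings $\Z[1/|G|][\zeta_{|H|}]\rtimes W_H$ are hereditary by Maschke averaging;
\item flat base change along $\Z\to\Z[1/|G|]$ removes the localization at the end.
\end{itemize}

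A few details need tightening.

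\textbf{Step 2.} The vanishing at other subgroups comes from using ``restriction to a proper subgroup kills the primitive idempotent'' twice. Write $K^{H'}_\ast(C(G/H))\cong\bigoplus_{g\in H'\backslash G/H}R(H'\cap{}^gH)$. The idempotent $e_{H'}$ kills the $g$-summand unless $H'\subseteq{}^gH$. The endomorphism $\Ind^G_H(e_H)$ acts on that summand through $\res^{H'\cap{}^gH}_{{}^gH}(e_{{}^gH})$, which kills it unless ${}^gH\subseteq H'$. So only $H'={}^gH$ survives. At $H'=H$ this leaves exactly $g\in N_H$, giving the free rank-one module $\bigoplus_{W_H}\Z[1/|G|][\zeta_{|H|}]$. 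No averaging over $N_H$ is needed, since $e_H$ (the indicator function of the generators of $H$) is already $\Aut(H)$-invariant.

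\textbf{Step 3.} For a divisor $d<|H|$, let $H_d\subseteq H$ be the subgroup of order $d$. The summand of $C(G/H)\otimes M_{|G|^\infty}$ cut out by the $d$-th idempotent is only a retract of $P_{H_d}$, not equivalent to it. For example, with $G=H=\Z/p$ and $d=1$, the summand has $F^{\{e\}}=\Z[1/p]$ with trivial action, whereas $F^{\{e\}}(P_{\{e\}})=\Z[1/p][\Z/p]$. This is harmless, because localizing subcategories and $\ker F$-projective objects are closed under retracts. The same remark covers general projectives of your $\mathfrak{A}$: they are $F$ of retracts of direct sums of the $P_H$, which exist by idempotent splitting.

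\textbf{The index set.} Your product over conjugacy classes of cyclic subgroups is the right formulation. Read literally, the printed product over all cyclic subgroups would count conjugate subgroups repeatedly. For $G=S_3$, $A=\C$, $B=M_{6^\infty}$ it would give rank $5$ instead of $\operatorname{rank}R(S_3)=3$.
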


Although this universal coefficient theorem is not needed to prove Theorem \ref{Kunneth}, the two proofs share common ideas. We will also use Theorem \ref{UCT} to the computation in Subsection 5.3.  

\begin{rem}\label{decomposition by psi}If $A\in KK^{\Zn}$ has uniquely $n$-divisible $K$-theory, then $K^{\Zn}_\ast(A)$ is a $\mathbb{Z}[t,1/n]/(t^n-1)$-module. By \cite[Lemma 5.3]{Meyer and Nadareishvili}, this is decomposed by idempotents $(\psi_k)_{k|n}$ in $\mathbb{Z}[t,1/n]/(t^n-1)$ defined as %
\[\psi_k(t)=\frac{t(t^n-1)}{n\Phi_k(t)}\frac{d\Phi_k(t)}{dt}.\]%
The direct summand $\psi_n K^{\Zn}_\ast(A)$ of $K^{\Zn}_\ast(A)$ is just $F^{\Zn}_\ast(A)$. Indeed, $\psi_n\Phi_n=0$ implies $\psi_nK^{\Zn}_\ast(A)\subset F^{\Zn}_\ast(A)$. If $k$ divides $n$ and $k\neq n$, then $\Phi_{n}$ divides $(t^n-1)/\Phi_k$. Hence, for any $x\in F^{\Zn}_\ast(A)$, %
\[x=\sum_{k|n}\psi_k x=\psi_n x.\]%
This formula means $F^{\Zn}_\ast(A)\subset\psi_nK^{\Zn}_\ast(A)$. \end{rem}

\begin{lem}\label{category n} Let $\mathfrak{N}$ be the full subcategory of $KK^{\Zn}$ consisting of objects that have uniquely $n$-divisible $K$-theory. Then $\mathfrak{N}$ is a localising subcategory of $KK^{\Zn}$, and the restriction of the functor $F^{\Zn}_\ast$ to $\mathfrak{N}$ is a homological functor. \end{lem}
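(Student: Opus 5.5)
Both claims reduce to the long exact sequences in equivariant $K$-theory attached to exact triangles, together with the five lemma and the exactness of localization.

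\textbf{$\mathfrak{N}$ is localising.} We verify the closure properties (1)--(4) listed before the definition of $\mathcal{B}_G$.

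Closure under $KK^{\Zn}$-equivalence and under suspension is immediate. For suspension we use Bott periodicity: $K^H_\ast(\Sigma A)$ is $K^H_\ast(A)$ with the grading shifted.

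Closure under countable direct sums holds because $K^H_\ast(\bigoplus_m A_m)\cong\bigoplus_m K^H_\ast(A_m)$, since equivariant $K$-theory commutes with countable direct sums. A direct sum of uniquely $n$-divisible groups is uniquely $n$-divisible.

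For mapping cones, let $f\colon A\to B$ with $A,B\in\mathfrak{N}$, and fix a subgroup $H\subset\Zn$. Restricting to $H$ gives an exact triangle $\Sigma B\to C_f\to A\to B$ in $KK^H$. The functor $K^H_\ast=KK^H_\ast(\C,\cdot)$ is homological, so we obtain a six-term exact sequence
\[K^H_{\ast}(\Sigma B)\to K^H_\ast(C_f)\to K^H_\ast(A)\to K^H_\ast(B)\to K^H_{\ast-1}(C_f)\to\cdots.\]
Multiplication by $n$ is a natural endomorphism of this sequence. It is bijective on the four terms surrounding $K^H_\ast(C_f)$, so the five lemma shows it is bijective on $K^H_\ast(C_f)$. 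Hence $C_f\in\mathfrak{N}$.

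\textbf{$F^{\Zn}_\ast$ is homological on $\mathfrak{N}$.} For $A\in\mathfrak{N}$, Remark \ref{decomposition by psi} gives
\[F^{\Zn}_\ast(A)=\psi_n K^{\Zn}_\ast(A),\]
where $\psi_n$ is an idempotent of $\mathbb{Z}[t,1/n]/(t^n-1)$. By naturality of the $R(\Zn)$-module structure, an $R(\Zn)$-module map between uniquely $n$-divisible modules is automatically $\mathbb{Z}[t,1/n]/(t^n-1)$-linear, because inverting $n$ is unique. Such a map therefore commutes with $\psi_n$.

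Now take an exact triangle $\Sigma Z\to X\to Y\to Z$ in $KK^{\Zn}$ with all vertices in $\mathfrak{N}$. We may assume $X,Y,Z\in\mathfrak{N}$, since being in $\mathfrak{N}$ is invariant under isomorphism in $KK^{\Zn}$. The sequence $K^{\Zn}_\ast(X)\to K^{\Zn}_\ast(Y)\to K^{\Zn}_\ast(Z)$ is an exact sequence of $\mathbb{Z}[t,1/n]/(t^n-1)$-modules.

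Multiplication by the idempotent $\psi_n$ is an exact functor: it is the projection onto a direct summand, or equivalently a localization at a factor of the ring. Applying it yields an exact sequence
\[F^{\Zn}_\ast(X)\to F^{\Zn}_\ast(Y)\to F^{\Zn}_\ast(Z).\]
Additivity of $F^{\Zn}_\ast$ is clear, so $F^{\Zn}_\ast$ is homological on $\mathfrak{N}$.

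\textbf{Main obstacle.} The only delicate point is that $\psi_n$ is defined only after inverting $n$. We must therefore check that $\psi_n$ acts naturally on $K^{\Zn}_\ast$ of objects in $\mathfrak{N}$, and that $\psi_n K^{\Zn}_\ast$ agrees with the $\Phi_n$-torsion. The second fact is exactly Remark \ref{decomposition by psi}. The first follows because the morphisms $KK^{\Zn}$ induce are $R(\Zn)$-linear and $n$ acts invertibly on the relevant modules. Note that only unique divisibility of $K^{\Zn}_\ast$ itself is needed for this part, not that of $K^H_\ast$ for proper subgroups.
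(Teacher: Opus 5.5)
Your proof is correct and follows the paper's argument. Closure under countable sums, suspension and $KK^{\Zn}$-equivalence is immediate; the mapping-cone step is the five lemma applied to the long exact $K^H_\ast$-sequences for each subgroup $H$; and homologicality comes from identifying $F^{\Zn}_\ast$ with the direct summand $\psi_n K^{\Zn}_\ast$ via Remark \ref{decomposition by psi}. You also check explicitly that maps induced by $KK^{\Zn}$-morphisms are $\mathbb{Z}[t,1/n]/(t^n-1)$-linear, so they commute with $\psi_n$; the paper leaves this point implicit.
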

\begin{proof}It is easy to see that $\mathfrak{N}$ is closed under countable direct sums, $KK^{\Zn}$-equivalence and suspension. For any exact triangle %
\[\Sigma C\longrightarrow A\longrightarrow B\longrightarrow C\]%
in $KK^{\Zn}$ and for any subgroup $H$ of $\Zn$, we have a long exact sequence %
\[\cdots\longrightarrow K^H_{\ast+1}(C)\longrightarrow K^H_\ast(A)\longrightarrow K^H_\ast(B)\longrightarrow K^H_\ast(C)\longrightarrow\cdots.\]%
If any two of $A,B,C$ are in $\mathfrak{N}$, then the third one is also in $\mathfrak{N}$ by the five lemma. Thus $\mathfrak{N}$ is a localising subcategory of $KK^{\Zn}$. On $\mathfrak{N}$, the functor $F^{\Zn}_\ast$ equals the direct summand $\psi_nK^{\Zn}_\ast$ of $K^{\Zn}_\ast$ by Remark \ref{decomposition by psi}. Since $K^{\Zn}_\ast$ is a homology, its direct summand $F^{\Zn}_\ast$ is also a homology on $\mathfrak{N}$.
\end{proof}

\begin{lem}\label{tensor n-div} Let $A,B$ be objects in $KK^{\Zn}$. If $A$ has uniquely $n$-divisible $K$-theory and $B$ is in the equivariant bootstrap class $\mathcal{B}_{\Zn}$, then $A\otimes B$ also has uniquely $n$-divisible $K$-theory. \end{lem}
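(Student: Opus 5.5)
The plan is a localising-subcategory argument in the variable $B$. Fix $A$ with uniquely $n$-divisible $K$-theory. Let $\mathcal{C}$ be the full subcategory of $KK^{\Zn}$ consisting of those $B$ for which $A\otimes B\in\mathfrak{N}$, where $\mathfrak{N}$ is the subcategory of Lemma \ref{category n}. It then suffices to show two things: that $\mathcal{C}$ is a localising subcategory, and that $\mathcal{C}$ contains a generating family of $\mathcal{B}_{\Zn}$.

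For the first point, note that the functor $A\otimes(\cdot)$ is exact. It preserves mapping cone triangles, since $A\otimes C_f\cong C_{\id_A\otimes f}$ by exactness of the minimal tensor product with nuclear $C_0$-functions. It also commutes with suspensions and countable direct sums, and preserves $KK^{\Zn}$-equivalences via exterior products. Consequently, $\mathcal{C}$ is the preimage of the localising subcategory $\mathfrak{N}$ under an exact, sum-preserving functor, and is therefore localising. Concretely, given an exact triangle $\Sigma C\to B_1\to B_2\to C$ with two of the three terms in $\mathcal{C}$, tensoring by $A$ gives an exact triangle in $KK^{\Zn}$. Lemma \ref{category n} then shows that the third term lies in $\mathcal{C}$. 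Closure under direct sums uses $K^H_\ast(\bigoplus_k A\otimes B_k)\cong\bigoplus_k K^H_\ast(A\otimes B_k)$, and a direct sum of uniquely $n$-divisible groups is uniquely $n$-divisible.

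For generators, I would use the fact quoted in Proposition \ref{bootstrap} from \cite[Corollary 3.3]{Meyer and Nadareishvili}: $\mathcal{B}_{\Zn}$ is generated by the objects $C(\Zn/H)$ with $H\subset\Zn$ a subgroup. By Lemma \ref{IndRes}, $A\otimes C(\Zn/H)\cong\Ind^{\Zn}_H\Res^H_{\Zn}A$. For any subgroup $L\subset\Zn$, I then need $K^L_\ast(\Ind^{\Zn}_H\Res^H A)$. The group is abelian and finite, so the Mackey double coset formula, or equivalently restricting the induced algebra to $L$, gives
\[\Res^L_{\Zn}\Ind^{\Zn}_H\Res^H_{\Zn}A\cong\bigoplus_{x\in L\backslash\Zn/H}\Ind^L_{L\cap H}\Res^{L\cap H}A\]
up to conjugation, which is trivial here. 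Applying Proposition \ref{reciprocity} (1) with $A'=\C$ gives $K^L_\ast(\Ind^L_{L\cap H}\Res A)\cong K^{L\cap H}_\ast(A)$. This is uniquely $n$-divisible by hypothesis. A finite direct sum of such groups is again uniquely $n$-divisible, so $C(\Zn/H)\in\mathcal{C}$.

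The only step needing real care is this restriction-of-induction decomposition. If one prefers to avoid it, a simpler route uses Lemma \ref{IndRes} directly: $A\otimes C(\Zn/H)\cong C(\Zn/H)\otimes A$, and its restriction to $L$ is $C(\Zn/H)|_L\otimes A$. The finite $L$-set $\Zn/H$ decomposes into $L$-orbits, each isomorphic to $L/(L\cap H)$. Each orbit contributes $C(L/(L\cap H))\otimes A\cong\Ind^L_{L\cap H}\Res A$, whose $L$-equivariant $K$-theory is $K^{L\cap H}_\ast(A)$ by Proposition \ref{reciprocity} (1). This is elementary and is the route I would take. Since $\mathcal{C}$ is localising and contains the generators of $\mathcal{B}_{\Zn}$, we get $\mathcal{B}_{\Zn}\subset\mathcal{C}$, which is the lemma.
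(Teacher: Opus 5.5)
Your proposal is correct and follows essentially the same route as the paper. In both, the class of $B$ with $A\otimes B$ uniquely $n$-divisible is shown to be localising, and then the generators $C((\mathbb{Z}/n)/H)$ are checked using an orbit (double coset) decomposition together with induction--restriction reciprocity.

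The only difference is the order of steps. You restrict $C((\mathbb{Z}/n)/H)\otimes A$ to $L$ and split the $L$-set $(\mathbb{Z}/n)/H$ into orbits before invoking Lemma~\ref{IndRes} and Proposition~\ref{reciprocity}~(1). The paper instead first uses the adjunctions to rewrite $K^K_\ast$ of the induced algebra as $KK^H_\ast$ out of the restriction of $C((\mathbb{Z}/n)/K)$, and only then splits the $H$-set $(\mathbb{Z}/n)/K$. Your order is slightly more direct.
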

\begin{proof}Let $\mathcal{C}$ be the full subcategory of $KK^{\Zn}$ consisting of objects $B$ with $A\otimes B$ having uniquely $n$-divisible $K$-theory. By considering the functor $K^{\Zn}_\ast(A\otimes\cdot)$, we can check that $\mathcal{C}$ is a localising subcategory of $KK^{\Zn}$ in the same way as the proof of Lemma \ref{category n}. Thus, to see $\mathcal{B}_{\Zn}\subset\mathcal{C}$, it suffices to show that $C((\Zn)/H)$ is an object in $\mathcal{C}$ for each subgroup $H$ of $\Zn$. \par%
Let $H$ be a subgroup of $\Zn$. By Lemma \ref{IndRes} and Proposition \ref{reciprocity}, for any subgroup $K$ of $\Zn$, %
\begin{align*}K^K_\ast(\Res^K_{\Zn}\Ind^{\Zn}_H\Res^H_{\Zn}A)&\cong KK^{\Zn}_\ast(C((\Zn)/K),\Ind^{\Zn}_H\Res^H_{\Zn}A) \\ &\cong KK^H_\ast(\Res^{H}_{\Zn}C((\Zn)/K),\Res^H_{\Zn}A)\end{align*}%
holds. Let $X$ be a complete system of representatives of $H\backslash(\Zn)/K$. Under the equality %
\[(\Zn)/K=\bigsqcup_{h\in H/H\cap K}^{}\bigsqcup_{g\in X}^{}hgK,\]%
we get an $H$-equivariant $\ast$-isomorphism of $\Cstar$-algebras %
\[\Res^H_{\Zn}C((\Zn)/K)\cong (C(H/H\cap K))^\abs{X}.\]%
Again by Proposition \ref{reciprocity}, %
\[KK^H_\ast(C(H/H\cap K)^{\abs{X}}, \Res^H_{\Zn}A)\cong K^{H\cap K}_\ast(\Res^{H\cap K}_{\Zn}A)^{\abs{X}}.\]%
The right hand side is uniquely $n$-divisible. Therefore, $A\otimes C((\Zn)/H)$ has uniquely $n$-divisible $K$-theory and $C((\Zn)/H)$ is in $\mathcal{C}$. \end{proof}

\begin{cor}\label{tensor homology} Let $A$ be an object in $KK^{\Zn}$ having uniquely $n$-divisible $K$-theory. The functor $F^{\Zn}_\ast(A\otimes\cdot)|_{\mathcal{B}_{\Zn}}$ is a homological functor from $\mathcal{B}_{\Zn}$ to the category of $\mathbb{Z}/2$-graded left $\mathbb{Z}[\zeta_n]$-modules. \end{cor}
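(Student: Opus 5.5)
The corollary will follow by combining Lemma \ref{tensor n-div} with Lemma \ref{category n}. First, the functor $B\mapsto A\otimes B$ from $KK^{\Zn}$ to itself is a triangulated functor: it commutes with suspension, since $A\otimes\Sigma B\cong\Sigma(A\otimes B)$. It also sends exact triangles to exact triangles. Indeed, every exact triangle is isomorphic to a mapping cone sequence $\Sigma C\to C_f\to B\to C$ of an equivariant $\ast$-homomorphism $f$. Tensoring with $A$ (minimal tensor product) gives the sequence
\[\Sigma(A\otimes C)\to A\otimes C_f\to A\otimes B\overset{\id_A\otimes f}{\to} A\otimes C,\]
and $A\otimes C_f\cong C_{\id_A\otimes f}$ because $C_0(0,1)\otimes\cdot$ and the pullback defining the cone commute with $A\otimes\cdot$. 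This is the same device used in the proof of Corollary \ref{multiplication} (3). Exterior tensor product with $A$ is functorial on $KK^{\Zn}$, so $KK^{\Zn}$-equivalences are carried along, and hence so is the comparison diagram defining exact triangles.

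Second, I restrict to $\mathcal{B}_{\Zn}$. By Lemma \ref{tensor n-div}, for every $B\in\mathcal{B}_{\Zn}$ the object $A\otimes B$ lies in the localising subcategory $\mathfrak{N}$ of Lemma \ref{category n}. Thus an exact triangle $\Sigma C\to X\to Y\to C$ with all vertices in $\mathcal{B}_{\Zn}$ is sent to an exact triangle of $KK^{\Zn}$ whose vertices all lie in $\mathfrak{N}$. This is also an exact triangle of the triangulated subcategory $\mathfrak{N}$.

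Third, by Lemma \ref{category n}, $F^{\Zn}_\ast$ restricted to $\mathfrak{N}$ is homological; concretely, it is the direct summand $\psi_n K^{\Zn}_\ast$. So $F^{\Zn}_\ast(A\otimes X)\to F^{\Zn}_\ast(A\otimes Y)\to F^{\Zn}_\ast(A\otimes C)$ is exact. One can also see this directly: apply the idempotent $\psi_n$, which acts $R(\Zn)$-linearly on the long exact sequence of $K^{\Zn}_\ast$, to an exact sequence of $\mathbb{Z}[t,1/n]/(t^n-1)$-modules. The maps are $\mathbb{Z}[\zeta_n]$-linear because the connecting maps in equivariant $K$-theory are $R(\Zn)$-module maps, and the values are $\mathbb{Z}/2$-graded via Bott periodicity.

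The only point needing some care is the first step, namely that $A\otimes\cdot$ preserves exact triangles up to the required isomorphisms. This is standard, and is the same argument already used in Corollary \ref{multiplication} (3), so I do not expect a genuine obstacle.
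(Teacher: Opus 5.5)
Your proposal is correct and follows the paper's own proof, which simply says that the corollary follows directly from Lemma~\ref{category n} and Lemma~\ref{tensor n-div}. Your extra justification is the step the paper leaves unstated: $A\otimes\cdot$ carries mapping cone triangles to mapping cone triangles, so exact triangles in $\mathcal{B}_{\Zn}$ become exact triangles with vertices in $\mathfrak{N}$ (the same device the paper uses in Corollary~\ref{multiplication}~(3)).
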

\begin{proof}It directly follows from Lemma \ref{category n} and Lemma \ref{tensor n-div}.\end{proof}

\begin{prp}\label{F vanish} Let $A$ be an object in $KK^{\Zn}$ having uniquely $n$-divisible $K$-theory. If $F^{\Zn}_\ast(A)=0$, then $F^{\Zn}_\ast(A\otimes B)=0$ for any object $B$ in $\mathcal{B}_{\Zn}$. \end{prp}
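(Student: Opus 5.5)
The plan is a localising-subcategory argument, the same device used in Lemma \ref{tensor n-div}. Let $\mathcal{C}$ be the full subcategory of $KK^{\Zn}$ consisting of those $B$ with $F^{\Zn}_\ast(A\otimes B)=0$. By Lemma \ref{tensor n-div}, $A\otimes B$ has uniquely $n$-divisible $K$-theory whenever $B\in\mathcal{B}_{\Zn}$. So it is cleaner to work inside $\mathcal{B}_{\Zn}$ and set $\mathcal{C}'=\mathcal{C}\cap\mathcal{B}_{\Zn}$.

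\emph{Step 1: closure properties of $\mathcal{C}'$.} By Corollary \ref{tensor homology}, $F^{\Zn}_\ast(A\otimes\cdot)$ is homological on $\mathcal{B}_{\Zn}$. It commutes with countable direct sums, since $K$-theory does and $\psi_n$ is an idempotent. It also commutes with suspension, up to a degree shift. The long exact sequence attached to an exact triangle then gives two-out-of-three. Hence $\mathcal{C}'$ is closed under mapping cones, suspensions, countable sums and $KK^{\Zn}$-equivalence, so it is a localising subcategory of $KK^{\Zn}$ contained in $\mathcal{B}_{\Zn}$.

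\emph{Step 2: generators.} By \cite[Corollary 3.3]{Meyer and Nadareishvili}, $\mathcal{B}_{\Zn}$ is generated by the objects $C(\Zn/H)$ for subgroups $H\subset\Zn$. It therefore suffices to show $F^{\Zn}_\ast(A\otimes C(\Zn/H))=0$ for every $H$.
\begin{itemize}
\item For $H=\{1\}$, the object is $A\otimes C(\Zn/H)$ itself.
\item For $H=\Zn$, we have $C(\Zn/H)=\C$, and the hypothesis $F^{\Zn}_\ast(A)=0$ applies directly.
\item For a proper subgroup $H$ with $|H|=k<n$, Lemma \ref{IndRes} gives $A\otimes C(\Zn/H)\cong\Ind^{\Zn}_H\Res^H_{\Zn}A$.
\end{itemize}
The key observation is that on $K^{\Zn}_\ast(\Ind^{\Zn}_H B')$ the $R(\Zn)$-module structure factors through restriction to $R(H)$. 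Indeed, $K^{\Zn}_\ast(\Ind^{\Zn}_H B')\cong K^H_\ast(B')$ by Proposition \ref{reciprocity}, compatibly with the module structures via $\res^H_{\Zn}\colon R(\Zn)\to R(H)$; alternatively one can use the projection formula in Proposition \ref{Dell'ambrogio}. Now $t^k$ restricts to $1$ in $R(H)$, so $t^k-1$ acts by zero on this group.

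\emph{Step 3: conclusion.} On the uniquely $n$-divisible module $K^{\Zn}_\ast(A\otimes C(\Zn/H))$, the idempotent $\psi_n$ of Remark \ref{decomposition by psi} cuts out $F^{\Zn}_\ast$. The module is killed by $t^k-1$, which is a product of the $\Phi_d$ with $d\mid k$. Since the idempotents $\psi_d$ are orthogonal and $\psi_d$ acts as $1$ on the $\Phi_d$-torsion, the $\psi_n$-component is zero. Concretely, $\psi_n$ is divisible by $(t^n-1)/\Phi_n$ in $\mathbb{Z}[t,1/n]/(t^n-1)$, and $(t^n-1)/\Phi_n$ is a multiple of $t^k-1$, so $\psi_n$ acts by zero. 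Thus $F^{\Zn}_\ast(A\otimes C(\Zn/H))=0$, and all generators lie in $\mathcal{C}'$, so $\mathcal{B}_{\Zn}\subset\mathcal{C}'$.

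The main obstacle is Step 2: verifying carefully that the $R(\Zn)$-action on $K^{\Zn}_\ast(\Ind^{\Zn}_H\Res A)$ corresponds, under the reciprocity isomorphism, to the $R(H)$-action through restriction. I would check this by tracing the explicit isomorphism from the proof of Proposition \ref{reciprocity}(2) against the cup product with $R(\Zn)$.
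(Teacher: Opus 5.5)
Your proposal is correct and is essentially the paper's proof: the kernel of the homological functor $F^{\Zn}_\ast(A\otimes\cdot)$ on $\mathcal{B}_{\Zn}$ is localising, it contains $\C$ by hypothesis, and it contains $C(\Zn/H)$ for every proper $H$, because the $R(\Zn)$-action factors through $\res^H_{\Zn}$ while $t^{|H|}-1$ divides $(t^n-1)/\Phi_n$. For the compatibility you flag as the main obstacle, the paper uses Proposition~\ref{reciprocity}(1), which is the relevant adjunction here rather than (2), together with Remark~\ref{counit}; this writes the isomorphism as restriction followed by a counit, so it visibly intertwines $f\cdot$ with $\res^H_{\Zn}(f)\cdot$.
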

\begin{proof}Let $\mathcal{C}$ be the kernel of the functor $F^{\Zn}_\ast(A\otimes\cdot)|_{\mathcal{B}_{\Zn}}$. This functor is a homology by Corollary \ref{tensor homology}, and so $\mathcal{C}$ is a localising subcategory of $\mathcal{B}_{\Zn}$. Clearly $\mathcal{C}$ contains $\mathbb{C}$. Let $H$ be a proper subgroup of $\Zn$ and $i_H$ be the isomorphism $K^{\Zn}_\ast(A\otimes C((\Zn)/H))\cong K^H_\ast(A)$ of Proposition \ref{reciprocity} (1). By Remark \ref{counit}, for any $f\in R(\Zn)$ and $x\in K^{\Zn}_\ast(A\otimes C((\Zn)/H))$, we have $i_H(f\cdot x)=\Res^H_{\Zn}(f)\cdot i_H(x)$. Since $F^{\Zn}_\ast(A\otimes C((\Zn)/H))$ is the image of the multiplication by $(t^n-1)/\Phi_n(t)$ and $t^{\abs{H}}-1$ divides $(t^n-1)/\Phi_n(t)$, we get $\Res^H_{\Zn}((t^n-1)/\Phi_n(t))=0$ and $F^{\Zn}_\ast(A\otimes C((\Zn)/H))=0$. Therefore, $C((\Zn)/H)$ is in $\mathcal{C}$ and $\mathcal{C}$ equals $\mathcal{B}_{\Zn}$. \end{proof}
\begin{thm}\label{geometric resolution} Let $A$ be an object in $KK^{\Zn}$ having uniquely $n$-divisible $K$-theory. Then there is a $\Cstar$-algebra $\mathscr{F}$ with the trivial $\Zn$-action and an element $\alpha\in KK^{\Zn}(\mathscr{F}\otimes M_{n^\infty},A)$ such that $\mathscr{F}$ is an at most countable sum of $\C$ and $C_0(\R)$, and $\alpha$ induces a surjection $\alpha_\ast$ from $F^{\Zn}_\ast(\mathscr{F}\otimes M_{n^\infty})$ to $F^{\Zn}_\ast(A)$. Moreover, if $F^{\Zn}_\ast(A)$ is a free $\mathbb{Z}[\zeta_n,1/n]$-module, then $\mathscr{F}$ and $\alpha$ can be taken so that $\alpha_\ast$ is an isomorphism. \end{thm}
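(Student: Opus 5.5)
The plan is to compute $F^{\Zn}_\ast$ of the building blocks and then realize generators of $F^{\Zn}_\ast(A)$ by maps out of them, in the spirit of Schochet's geometric resolution. First, for $\mathscr{F}$ with trivial action, $K^{\Zn}_\ast(\mathscr{F}\otimes M_{n^\infty})\cong R(\Zn)\otimes K_\ast(\mathscr{F})\otimes\mathbb{Z}[1/n]$. Here $M_{n^\infty}$ carries the trivial action, and we use that equivariant $K$-theory of a trivial-action algebra is $R(G)\otimes K_\ast$. Hence
\[F^{\Zn}_\ast(\C\otimes M_{n^\infty})\cong \{x\in \mathbb{Z}[t,1/n]/(t^n-1)\mid \Phi_n x=0\}=\psi_n\,\mathbb{Z}[t,1/n]/(t^n-1)\cong\mathbb{Z}[\zeta_n,1/n],\]
by Remark \ref{decomposition by psi}, and the generator is $\psi_n\cdot[1]$. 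Similarly, $F^{\Zn}_\ast(C_0(\R)\otimes M_{n^\infty})$ is a free rank-one $\mathbb{Z}[\zeta_n,1/n]$-module concentrated in odd degree. Since $K^{H}_\ast(M_{n^\infty})$ is uniquely $n$-divisible for every $H$, $\C\otimes M_{n^\infty}$ lies in $\mathfrak{N}$, and $F^{\Zn}_\ast$ is additive on countable sums.

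Next, observe that $F^{\Zn}_\ast(A)$ is a $\mathbb{Z}[\zeta_n,1/n]$-module, because $n$ acts invertibly. Since $A$ is separable, $K^{\Zn}_\ast(A)$ is countable, so we can choose a countable generating set $\{x_j\}$ of homogeneous elements of $F^{\Zn}_\ast(A)$; in the free case, take a basis. For each $x_j$ of degree $0$ (resp.\ $1$) we need a morphism $\alpha_j\in KK^{\Zn}(M_{n^\infty},A)$ (resp.\ from $C_0(\R)\otimes M_{n^\infty}$) sending the generator $\psi_n[1]$ to $x_j$. We would build it as follows. The class $x_j\in K^{\Zn}_0(A)=KK^{\Zn}(\C,A)$ gives a map $\C\to A$. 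To extend it over $M_{n^\infty}$, write $M_{n^\infty}$ as the homotopy colimit of $\C\xrightarrow{n}\C\xrightarrow{n}\cdots$. Then the Milnor sequence of \cite[Lemma 2.4]{Meyer and Nest} gives
\[KK^{\Zn}_\ast(M_{n^\infty},A)\cong\varprojlim(K^{\Zn}_\ast(A),\cdot n),\]
and the $\varprojlim^1$ term vanishes because multiplication by $n$ on $K^{\Zn}_\ast(A)$ is bijective. Thus $K^{\Zn}_\ast(A)\cong KK^{\Zn}_\ast(M_{n^\infty},A)$, with the inverse given by restriction along the unital inclusion. So $x_j$ lifts to a unique $\alpha_j$, and $(\alpha_j)_\ast$ sends $[1]\mapsto x_j$. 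Moreover $(\alpha_j)_\ast$ is $R(\Zn)$-linear, so it sends $\psi_n[1]$ to $\psi_n x_j=x_j$, using $x_j\in F^{\Zn}_\ast(A)$.

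Finally, set $\mathscr{F}=\bigoplus_j \mathscr{F}_j$ with $\mathscr{F}_j\in\{\C,C_0(\R)\}$ according to the degree of $x_j$. Let $\alpha$ be the element induced by $(\alpha_j)_j$ via the universal property of countable direct sums in $KK^{\Zn}$, using $KK^{\Zn}(\bigoplus B_j,A)\cong\prod KK^{\Zn}(B_j,A)$. Then $\alpha_\ast$ on $F^{\Zn}_\ast(\mathscr{F}\otimes M_{n^\infty})\cong\bigoplus_j\mathbb{Z}[\zeta_n,1/n]$ is the $\mathbb{Z}[\zeta_n,1/n]$-linear map $e_j\mapsto x_j$. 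It is surjective since the $x_j$ generate, and it is an isomorphism when the $x_j$ form a basis.

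The main obstacle is the lifting step, which needs two things. The identification $KK^{\Zn}(M_{n^\infty},A)\cong K^{\Zn}_0(A)$ relies on $M_{n^\infty}$ being $KK^{\Zn}$-equivalent to the homotopy colimit, which holds by admissibility and nuclearity as in Proposition \ref{model action}. The vanishing of $\varprojlim^1$ relies on the uniquely $n$-divisible hypothesis; only divisibility for $H=\Zn$ is needed here. A secondary check is that the induced map on $F^{\Zn}_\ast$ is genuinely $\mathbb{Z}[\zeta_n]$-linear, which holds since Kasparov products commute with the $R(\Zn)$-action.
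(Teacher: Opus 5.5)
Your proposal is correct and follows the paper's proof essentially step by step. The shared steps are:
\begin{itemize}
\item the Milnor $\varprojlim^1$-sequence for $M_{n^\infty}$, viewed as a limit of the $M_{n^k}\simeq\C$ with connecting maps acting as multiplication by $n$;
\item vanishing of $\varprojlim^1$ from unique $n$-divisibility of $K^{\Zn}_\ast(A)$;
\item lifting a countable generating set (or basis) of $F^{\Zn}_\ast(A)$ to $KK^{\Zn}(M_{n^\infty},A)$ and $KK^{\Zn}(\Sigma M_{n^\infty},A)$;
\item assembling the lifts via $KK^{\Zn}(\bigoplus_j B_j,A)\cong\prod_j KK^{\Zn}(B_j,A)$.
\end{itemize}
Your explicit $\psi_n$-linearity argument, showing that each lift maps $F^{\Zn}_\ast(M_{n^\infty})\cong\mathbb{Z}[\zeta_n,1/n]$ onto the submodule generated by $x_j$, fills in a step the paper asserts without comment.
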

\begin{proof}Consider the inductive system $(M_{n^k},\iota_k)_{k=1}^{\infty}$, where $\iota_k\colon M_{n^k}\to M_{n^{k+1}}$ is the unital inclusion. It gives a projective system $(KK^{\Zn}_\ast(M_{n^k},A),\iota^{\ast}_k)$. Then its projective limit is isomorphic to $KK^{\Zn}_\ast(M_{n^\infty},A)$. Indeed, by \cite[Lemma 2.4, Proposition 2.6 and Lemma 2.7]{Meyer and Nest}, there is a short exact sequence %
\[\varprojlim{}^1 KK^G_{\ast+1}(M_{n^k},A)\rightarrowtail KK^G_\ast(M_{n^\infty},A)\twoheadrightarrow \varprojlim KK^G_\ast(M_{n^k},A),\]%
where $\varprojlim{}^1 KK^G_{\ast}(M_{n^k},A)$ is %
\begin{align*}\Cok\qty(\id-(\iota_{k-1}^\ast)_k\colon\prod_{k=1}^\infty KK^G_\ast(M_{n^k},A)\to\prod_{k=1}^\infty KK^G_\ast(M_{n^k},A)).\end{align*}%
Under the Morita equivalence $M_{n^k}\sim\C$, the map $\iota^\ast_k$ is identified with the multiplication-by-$n$ map on $K^{\Zn}_\ast(A)$. By combining it with the assumption that $A$ has uniquely $n$-divisible $K$-theory, $\varprojlim{}^1 KK^G_{\ast}(M_{n^k},A)$ vanishes. \par%
Pick a generating set (respectively, a basis if $F^{\Zn}_\ast(A)$ is free) $\{f_i\}_{i\in I}$ of the $\mathbb{Z}[\zeta_n,1/n]$-module $F^{\Zn}_0(A)$. Since $F^{\Zn}_0(A)\subset K^{\Zn}_0(A)$ is at most countable, the index set $I$ can be chosen to be at most countable. For each $i\in I$, let $\widetilde{f_i}$ be an element of $KK^{\Zn}(M_{n^\infty},A)$ corresponding to the element%
\[[(n^{-k}f_i)_k]\in\varprojlim K^{\Zn}(A)\cong \varprojlim KK^{\Zn}(M_{n^k},A).\]%
This $\widetilde{f_i}$ induces a surjection from $F^{\Zn}_0(M_{n^\infty})$ to the submodule of $F^{\Zn}_0(A)$ generated by $f_i$ (respectively, an isomorphism). Also, pick a generating set (respectively, a basis) $\{h_j\}_{j\in J}$ of $F^{\Zn}_1(A)$ and take $\widetilde{h_j}\in KK^{\Zn}(\Sigma M_{n^\infty},A)$ in a similar way. Now we define %
\[\mathscr{F}\coloneqq\C^I\oplus C_0(\R)^J,\]
\begin{align*}\alpha\coloneqq\qty((\widetilde{f_i})_i,(\widetilde{h_j})_j)\in &\qty(\prod_{i\in I}KK^{\Zn}(M_{n^\infty},A))\oplus \qty(\prod_{j\in J}KK^{\Zn}(\Sigma M_{n^\infty},A)) \\ &\cong KK^{\Zn}(F\otimes M_{n^\infty},A).\end{align*}%
By construction, $\alpha_\ast\colon F^{\Zn}_\ast(\mathscr{F}\otimes M_{n^\infty})\to F^{\Zn}_\ast(A)$ is a surjection (respectively, an isomorphism). \end{proof}

\begin{lem}\label{F(M tensor B)}For any object $B$ in $\mathcal{B}_{\Zn}$, the cup product gives an isomorphism of $\mathbb{Z}[\zeta_n]$-modules $F^{\Zn}_\ast(M_{n^\infty})\otimes_{\mathbb{Z}[\zeta_n]}F^{\Zn}_\ast(B)\cong F^{\Zn}_\ast(M_{n^\infty}\otimes B)$. \end{lem}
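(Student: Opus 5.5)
We prove the lemma with a localising-subcategory argument. Let $\mathcal{C}$ be the full subcategory of $\mathcal{B}_{\Zn}$ consisting of those $B$ for which the cup product map
\[\mu_B\colon F^{\Zn}_\ast(M_{n^\infty})\otimes_{\mathbb{Z}[\zeta_n]}F^{\Zn}_\ast(B)\to F^{\Zn}_\ast(M_{n^\infty}\otimes B)\]
is an isomorphism. We will show that $\mathcal{C}$ is localising and contains the generators $C(\Zn/H)$ of $\mathcal{B}_{\Zn}$.

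\textbf{Step 1: the source functor.} We first identify $F^{\Zn}_\ast(M_{n^\infty})$. Since the action on $M_{n^\infty}$ is trivial, $K^{\Zn}_0(M_{n^\infty})\cong R(\Zn)[1/n]$ and $K^{\Zn}_1(M_{n^\infty})=0$, by continuity of $K$-theory along the unital inclusions $M_{n^k}\to M_{n^{k+1}}$, which act as multiplication by $n$. Hence $F^{\Zn}_\ast(M_{n^\infty})=\psi_n R(\Zn)[1/n]\cong\mathbb{Z}[\zeta_n,1/n]$, concentrated in degree $0$ (Remark \ref{decomposition by psi}). This is a localization of $\mathbb{Z}[\zeta_n]$, so it is flat over $\mathbb{Z}[\zeta_n]$.

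\textbf{Step 2: both sides are homological.} Because of the flatness in Step 1, the source $B\mapsto \mathbb{Z}[\zeta_n,1/n]\otimes_{\mathbb{Z}[\zeta_n]}F^{\Zn}_\ast(B)$ is the localization of $F^{\Zn}_\ast(B)$ at $n$. We must check that $F^{\Zn}_\ast$ is homological on all of $KK^{\Zn}$, not only on $\mathfrak{N}$. The kernel of $\Phi_n$ is not obviously exact, but after inverting $n$ it equals the idempotent summand $\psi_n K^{\Zn}_\ast(B)[1/n]$. To use this we must show $F^{\Zn}_\ast(B)[1/n]=\psi_n(K^{\Zn}_\ast(B)[1/n])$. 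The kernel of $\Phi_n$ commutes with the localization $[1/n]$, since localization is exact, and Remark \ref{decomposition by psi} applies to the $\mathbb{Z}[t,1/n]/(t^n-1)$-module $K^{\Zn}_\ast(B)[1/n]$. Hence the source is homological and commutes with countable direct sums. The target is homological by Corollary \ref{tensor homology}, since $M_{n^\infty}$ has uniquely $n$-divisible $K$-theory: all $K^H_\ast(M_{n^\infty})=R(H)[1/n]$. The target also commutes with direct sums. The cup product is natural in $B$ and compatible with boundary maps up to sign, so the five lemma shows that $\mathcal{C}$ is closed under mapping cones, suspensions, countable sums and $KK^{\Zn}$-equivalence. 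Thus $\mathcal{C}$ is localising.

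\textbf{Step 3: the generators.} By \cite[Corollary 3.3]{Meyer and Nadareishvili}, the objects $C(\Zn/H)$ for subgroups $H\subset\Zn$ generate $\mathcal{B}_{\Zn}$.
For $H=\Zn$, i.e. $B=\C$, the map is $\mathbb{Z}[\zeta_n,1/n]\otimes\mathbb{Z}[\zeta_n]\to\mathbb{Z}[\zeta_n,1/n]$, which is clearly an isomorphism.
For proper $H$, the target vanishes by Proposition \ref{F vanish}, applied with $A=M_{n^\infty}$. For the source, the argument in the proof of Proposition \ref{F vanish}, applied to $A=\C$, shows that the element $(t^n-1)/\Phi_n$ acts as zero on $K^{\Zn}_\ast(C(\Zn/H))\cong R(H)$. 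So $F^{\Zn}_\ast(C(\Zn/H))$ is the $\Phi_n$-torsion of $R(H)$, and this is annihilated by $n$ because $\Phi_n$ and $(t^n-1)/\Phi_n$ generate an ideal containing $n$. After inverting $n$ the source therefore vanishes, and both sides are $0$.

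Hence $\mathcal{C}=\mathcal{B}_{\Zn}$, which proves the lemma. The main obstacle is Step 2, namely making the exactness of $F^{\Zn}_\ast(\cdot)[1/n]$ on all of $\mathcal{B}_{\Zn}$ rigorous and checking that the cup product is compatible with the connecting maps.
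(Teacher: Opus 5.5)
Your proof is correct, but it takes a different route from the paper's. The paper uses no localising argument. It applies your Step 1 continuity computation directly to $M_{n^\infty}\otimes B=\varinjlim_k M_{n^k}\otimes B$. Under the Morita equivalences $M_{n^k}\sim\C$, the connecting maps become multiplication by $n$, so $K^{\Zn}_\ast(M_{n^\infty}\otimes B)\cong\mathbb{Z}[1/n]\otimes_{\mathbb{Z}}K^{\Zn}_\ast(B)$. Taking $\Phi_n$-kernels commutes with inverting $n$, which is exactly your observation in Step 2. This yields $F^{\Zn}_\ast(M_{n^\infty}\otimes B)\cong F^{\Zn}_\ast(B)[1/n]\cong\mathbb{Z}[\zeta_n,1/n]\otimes_{\mathbb{Z}[\zeta_n]}F^{\Zn}_\ast(B)$, and the cup product realizes this identification.

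That direct route avoids the generators of $\mathcal{B}_{\Zn}$, the homologicality of the target, and the compatibility of cup products with connecting maps. The computation itself also works for arbitrary $B$. What your route buys is generality in the first variable. The only properties of $M_{n^\infty}$ you use are that it has uniquely $n$-divisible $K$-theory and that $F^{\Zn}_\ast(M_{n^\infty})$ is flat. So the same argument shows that the cup product $F^{\Zn}_\ast(A)\otimes_{\mathbb{Z}[\zeta_n]}F^{\Zn}_\ast(B)\to F^{\Zn}_\ast(A\otimes B)$ is bijective for every such $A$ with flat $F^{\Zn}_\ast(A)$ and every $B\in\mathcal{B}_{\Zn}$. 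That recovers Theorem \ref{projective case}, even for flat rather than projective modules, without the geometric resolution of Theorem \ref{geometric resolution}.

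Two small corrections are needed.
\begin{enumerate}
\item You cannot quote Proposition \ref{F vanish} with $A=M_{n^\infty}$, because its hypothesis $F^{\Zn}_\ast(A)=0$ fails. What you need is the computation for $C(\Zn/H)$ inside its proof, which uses only unique $n$-divisibility.
\item For $B=\C$ the map is not literally the identity. $F^{\Zn}_0(\C)$ is the free rank-one $\mathbb{Z}[\zeta_n]$-module generated by $u=(t^n-1)/\Phi_n$, so the cup product is multiplication by $u(\zeta_n)$. This is a unit in $\mathbb{Z}[\zeta_n,1/n]$: differentiating $t^n-1=\Phi_n u$ and evaluating at $\zeta_n$ gives $n\zeta_n^{n-1}=\Phi_n'(\zeta_n)u(\zeta_n)$.
\end{enumerate}
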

\begin{proof}By Lemma \ref{tensor n-div}, $M_{n^\infty}\otimes B$ has uniquely $n$-divisible $K$-theory. Using the idempotent $\psi_n$ defined in Remark \ref{decomposition by psi}, $F^{\Zn}_\ast(M_{n^\infty}\otimes B)=\psi_n K^{\Zn}_\ast(M_{n^\infty}\otimes B)$ holds. $M_{n^\infty}\otimes B$ is the inductive limit of the system $\qty(M_{n^k}\otimes B, \iota_k\otimes id_B)_k$, where $\iota_k\colon M_{n^k}\to M_{n^{k+1}}$ is the unital inclusion. Under the Morita equivalence $M_{n^k}\sim\C$, this inductive system is identified with the one that consists of copies of $B$ and the multiplication-by-$n$ map from $B$ to $B$. Thus, $K^{\Zn}_\ast(M_{n^\infty}\otimes B)$ is isomorphic to $\mathbb{Z}[1/n]\otimes_{\mathbb{Z}}K^{\Zn}_\ast(B)$, and $F^{\Zn}_\ast(M_{n^\infty}\otimes B)$ is isomorphic to $\mathbb{Z}[1/n]\otimes_{\mathbb{Z}}F^{\Zn}_\ast(B)$. On the other hand, $F^{\Zn}_\ast(M_{n^\infty})$ is isomorphic to $\mathbb{Z}[\zeta_n, 1/n]$. Therefore, the cup product map $K^{\Zn}_\ast(M_{n^\infty})\otimes_{R(\Zn)}K^{\Zn}_\ast(B)\to K^{\Zn}_\ast(M_{n^\infty}\otimes B)$ gives an isomorphism from $F^{\Zn}_\ast(M_{n^\infty})\otimes_{\mathbb{Z}[\zeta_n]}F^{\Zn}_\ast(B)\cong\mathbb{Z}[\zeta_n, 1/n]\otimes_{\mathbb{Z}[\zeta_n]}F^{\Zn}_\ast(B)$ to $F^{\Zn}_\ast(M_{n^\infty}\otimes B)\cong\mathbb{Z}[1/n]\otimes_{\mathbb{Z}}F^{\Zn}_\ast(B)$. \end{proof}

\begin{thm}\label{projective case} Let $A$ be an object in $KK^{\Zn}$ having uniquely $n$-divisible $K$-theory. If $F^{\Zn}_\ast(A)$ is a projective $\mathbb{Z}[\zeta_n, 1/n]$-module, then for any $B$ in $\mathcal{B}_{\Zn}$, two modules $F^{\Zn}_\ast(A)\otimes_{\mathbb{Z}[\zeta_n]}F^{\Zn}_\ast(B)$ and $F^{\Zn}_\ast(A\otimes B)$ are isomorphic by the cup product. \end{thm}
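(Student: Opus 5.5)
The plan is a Schochet-style reduction: first handle the free case using Theorem \ref{geometric resolution} and Lemma \ref{F(M tensor B)}, then pass to the projective case by splitting off a free summand.

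\textbf{Free case.} Assume $F^{\Zn}_\ast(A)$ is a free $\mathbb{Z}[\zeta_n,1/n]$-module. Theorem \ref{geometric resolution} gives $\mathscr{F}$ and $\alpha\in KK^{\Zn}(\mathscr{F}\otimes M_{n^\infty},A)$ with $\alpha_\ast$ an isomorphism on $F^{\Zn}_\ast$. Embed $\alpha$ in an exact triangle
\[\Sigma A\to C\to \mathscr{F}\otimes M_{n^\infty}\overset{\alpha}{\to} A.\]
Both $\mathscr{F}\otimes M_{n^\infty}$ and $A$ lie in the localising subcategory $\mathfrak{N}$ of Lemma \ref{category n}, so $C\in\mathfrak{N}$. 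Since $F^{\Zn}_\ast$ is homological on $\mathfrak{N}$ and $\alpha_\ast$ is bijective, $F^{\Zn}_\ast(C)=0$. By Proposition \ref{F vanish}, $F^{\Zn}_\ast(C\otimes B)=0$ for every $B\in\mathcal{B}_{\Zn}$.

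Tensoring the triangle with $B$ gives an exact triangle whose terms lie in $\mathfrak{N}$ by Lemma \ref{tensor n-div}. Corollary \ref{tensor homology} then shows that $(\alpha\otimes\id_B)_\ast\colon F^{\Zn}_\ast(\mathscr{F}\otimes M_{n^\infty}\otimes B)\to F^{\Zn}_\ast(A\otimes B)$ is an isomorphism.

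Next compute the source. $\mathscr{F}$ is a countable direct sum of copies of $\C$ and $C_0(\R)$ with trivial action. Both sides commute with countable direct sums, and suspension only shifts degree. Hence Lemma \ref{F(M tensor B)} gives
\[F^{\Zn}_\ast(\mathscr{F}\otimes M_{n^\infty}\otimes B)\cong F^{\Zn}_\ast(\mathscr{F}\otimes M_{n^\infty})\otimes_{\mathbb{Z}[\zeta_n]}F^{\Zn}_\ast(B),\]
implemented by the cup product.

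Finally, check naturality. For $x\in F^{\Zn}_\ast(\mathscr{F}\otimes M_{n^\infty})$ and $y\in F^{\Zn}_\ast(B)$, we need $(\alpha\otimes\id_B)_\ast(x\cupprod y)=\alpha_\ast(x)\cupprod y$. This is associativity and functoriality of the exterior Kasparov product. Combined with $\alpha_\ast\otimes\id$ being an isomorphism, this shows that the cup product $F^{\Zn}_\ast(A)\otimes_{\mathbb{Z}[\zeta_n]}F^{\Zn}_\ast(B)\to F^{\Zn}_\ast(A\otimes B)$ is an isomorphism.

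\textbf{Projective case.} The step I expect to be the main obstacle is passing from free to projective $\mathbb{Z}[\zeta_n,1/n]$-modules. My approach is an Eilenberg swindle. Write $P=F^{\Zn}_\ast(A)$ and choose $Q$ with $P\oplus Q$ free. Replacing $Q$ by a countable direct sum of copies of $P\oplus Q$ makes $P\oplus Q$ countably free. I need to realise $Q$ geometrically, as $F^{\Zn}_\ast(A')$ for some $A'\in\mathfrak{N}$.

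First try: realise $Q$ as the $F$-part of a cone. Since $Q$ is the image of an idempotent $e$ on a free module, take the free geometric object $X=\mathscr{F}\otimes M_{n^\infty}$ with $F^{\Zn}_\ast(X)\cong P\oplus Q$. The idempotent $e$ lifts to an element of $KK^{\Zn}(X,X)$. For this I would use the description of $KK^{\Zn}(X,X)$ via the projective limit argument of Theorem \ref{geometric resolution}: maps out of $M_{n^\infty}$ are determined by $n$-divisible elements. Then take $A'$ to be the homotopy colimit (mapping telescope) of $X\overset{e}{\to}X\overset{e}{\to}\cdots$. On $F^{\Zn}_\ast$ this colimit yields $Q$.

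Alternative: work directly with $A\oplus A'$. It lies in $\mathfrak{N}$ and has free $F$-part, so the free case applies to it. Both sides of the cup-product map are additive in $A$, so $A$ inherits the isomorphism as a direct summand.

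If the lifting becomes delicate, a simpler route is available. Projectivity of $P$ splits the surjection $\alpha_\ast$ from Theorem \ref{geometric resolution}. Build a cone $C$ with $F^{\Zn}_\ast(C)$ projective, giving the split short exact sequence $F^{\Zn}_{\ast}(C)\rightarrowtail F^{\Zn}_\ast(\mathscr{F}\otimes M_{n^\infty})\twoheadrightarrow P$. Tensor with $B$ and compare the two long exact sequences using the five lemma, applied to the known isomorphism for the free middle term. This needs care, since it is circular unless injectivity is argued on one side at a time. I would use the swindle as the primary plan.
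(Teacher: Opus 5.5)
Your free case matches the paper's argument step for step. Your explicit check that $(\alpha\otimes\id_B)_\ast(x\cupprod y)=\alpha_\ast(x)\cupprod y$ is a useful addition, since it shows the isomorphism really is the cup product.

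The projective case is also correct, but your route is longer than the paper's. Your primary plan realises a complement $Q$ by lifting an idempotent $e$ of $F^{\Zn}_\ast(X)$ to $KK^{\Zn}(X,X)$ and splitting it with a mapping telescope. This does work:
\begin{itemize}
\item since $X$ has uniquely $n$-divisible $K$-theory, the $\varprojlim$ argument of Theorem~\ref{geometric resolution} identifies $KK^{\Zn}(X,X)$ with the degree-preserving $R(\Zn)$-endomorphisms of $K^{\Zn}_\ast(X)$;
\item so $e$, extended by $0$ off $F^{\Zn}_\ast(X)$, lifts, and the lift is automatically idempotent;
\item $F^{\Zn}_\ast$ of the telescope is then the image of $e$.
\end{itemize}
However, it needs this extra computation of $KK^{\Zn}(X,X)$, plus control of $F^{\Zn}_\ast$ under homotopy colimits.

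The paper observes that your ``simpler route'' already gives you the complement. Let $C$ be the cone of the surjection $\alpha$ from Theorem~\ref{geometric resolution}. Projectivity of $F^{\Zn}_\ast(A)$ splits $F^{\Zn}_\ast(C)\rightarrowtail F^{\Zn}_\ast(\mathscr{F}\otimes M_{n^\infty})\twoheadrightarrow F^{\Zn}_\ast(A)$. Hence $C\oplus A\in\mathfrak{N}$ has free $F^{\Zn}_\ast$. Apply the free case to $C\oplus A$ and restrict to the summand $A$; this is exactly your ``Alternative'' with $A'=C$. That finishes the proof with no idempotent lifting, no telescope, and no five-lemma circularity.

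What your route gains is a more general realisation statement: every direct summand of a geometrically realised free module is $F^{\Zn}_\ast$ of some object of $\mathfrak{N}$. That generality is not needed here.
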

\begin{proof}Take $\mathscr{F}$ and $\alpha$ as in Theorem \ref{geometric resolution}. There is an exact trinagle %
\begin{align}\label{resolution triangle}\Sigma A\longrightarrow C\longrightarrow \mathscr{F}\otimes M_{n^\infty}\overset{\alpha}\longrightarrow A\end{align}%
in $KK^{\Zn}$. By lemma \ref{category n} and the choice of $\alpha$, the object $C$ has uniquely $n$-divisible $K$-theory and there is a short exact sequence of $\mathbb{Z}[\zeta_n,1/n]$-modules %
\begin{align}\label{resolution} F^{\Zn}_\ast(C)\rightarrowtail F^{\Zn}_\ast(\mathscr{F}\otimes M_{n^\infty})\overset{\alpha_\ast}\twoheadrightarrow F^{\Zn}_\ast(A).\end{align}%
Since $\alpha_\ast$ is surjective and $F^{\Zn}_\ast(A)$ is projective, we have $F^{\Zn}_\ast(\mathscr{F}\otimes M_{n^\infty})\cong F^{\Zn}_\ast(C\oplus A)$. If $F^{\Zn}_\ast(C\oplus A)\otimes_{\mathbb{Z}[\zeta_n]}F^{\Zn}_\ast(B)$ and $F^{\Zn}_\ast((C\oplus A)\otimes B)$ are isomorphic, then their direct summands $F^{\Zn}_\ast(A)\otimes_{\mathbb{Z}[\zeta_n]}F^{\Zn}_\ast(B)$ and $F^{\Zn}_\ast(A\otimes B)$ are also isomorphic. Thus, we can reduce the problem to the case that $F^{\Zn}_\ast(A)$ is a free $\mathbb{Z}[\zeta_n,1/n]$-module. \par%
Again by Theorem \ref{geometric resolution}, we can take an exact trinagle of the form (\ref{resolution triangle}) so that $\alpha_\ast\colon F^{\Zn}_\ast(\mathscr{F}\otimes M_{n^\infty})\to F^{\Zn}_\ast(A)$ is an isomorphism. Then $F^{\Zn}_\ast(C)$ vanishes by Lemma \ref{category n}, and so $F^{\Zn}_\ast(C\otimes B)$ vanishes by Lemma \ref{F vanish}. We also have an exact trinagle %
\[\Sigma A\otimes B\longrightarrow C\otimes B\longrightarrow \mathscr{F}\otimes M_{n^\infty}\otimes B\overset{\alpha\otimes id_B}\longrightarrow A\otimes B.\]%
Now $F^{\Zn}_\ast(C\otimes B)=0$ and Lemma \ref{category n} implies that $F^{\Zn}(\mathscr{F}\otimes M_{n^\infty}\otimes B)$ and $F^{\Zn}_\ast(A\otimes B)$ are isomorphic. By the construction of $\mathscr{F}$ and  Lemma \ref{F(M tensor B)}, $F^{\Zn}_\ast(\mathscr{F}\otimes M_{n^\infty})\otimes_{\mathbb{Z}[\zeta_n]}F^{\Zn}_\ast(B)$ is isomorphic to $F^{\Zn}(\mathscr{F}\otimes M_{n^\infty}\otimes B)$. Hence %
\begin{align*}F^{\Zn}_\ast(A)\otimes_{\mathbb{Z}[\zeta_n]}F^{\Zn}_\ast(B) &\cong F^{\Zn}_\ast(\mathscr{F}\otimes M_{n^\infty})\otimes_{\mathbb{Z}[\zeta_n]}F^{\Zn}_\ast(B) \\ &\cong F^{\Zn}_\ast(\mathscr{F}\otimes M_{n^\infty}\otimes B) \\ &\cong F^{\Zn}_\ast(A\otimes B).\end{align*}\end{proof}

We are now ready to prove the main result of this section. 

\begin{proof}[Proof of Theorem \ref{Kunneth}]Take $\mathscr{F}$ and $\alpha$ as in Theorem \ref{geometric resolution}. By considering an exact triangle of the form (\ref{resolution triangle}), we get a short exact sequence (\ref{resolution}) of $\mathbb{Z}[\zeta_n, 1/n]$-modules. In this sequence, the module $F^{\Zn}_\ast(C)$ is a submodule of the free $\mathbb{Z}[\zeta_n, 1/n]$-module $F^{\Zn}_\ast(\mathscr{F}\otimes M_{n^\infty})$. By Remark \ref{Dedekind} and Remark \ref{Z[zeta] is Dedekind}, $\mathbb{Z}[\zeta_n,1/n]$ is a Dedekind domain. Thus, $F^{\Zn}_\ast(C)$ is projective by Remark \ref{hereditary}. Let $\pi\colon C\to\mathscr{F}\otimes M_{n^\infty}$ be the morphism appearing in the triangle (\ref{resolution triangle}). Now we have a commutative diagram %
\[\begin{tikzcd} \text{Tor}^{\mathbb{Z}[\zeta_{n}]}_1(F^{\Zn}_\ast(\mathscr{F}\otimes M_{n^\infty}),F^{\Zn}_\ast(B)) \arrow[d] \\ 
\text{Tor}^{\mathbb{Z}[\zeta_{n}]}_1(F^{\Zn}_\ast(A),F^{\Zn}_\ast(B)) \arrow[d] & F^{\Zn}_{\ast+1}(A\otimes B) \arrow[d]\\ 
F^{\Zn}_\ast(C)\otimes_{\mathbb{Z}[\zeta_{n}]}F^{\Zn}_\ast(B) \arrow[r,"\omega_C","\cong"'] \arrow[d,"\pi_\ast\otimes \id"] 
& F^{\Zn}_\ast(C\otimes B) \arrow[d,"(\pi\otimes \id)_\ast"] \\ 
F^{\Zn}_\ast(\mathscr{F}\otimes M_{n^\infty})\otimes_{\mathbb{Z}[\zeta_{n}]}F^{\Zn}_\ast(B) \arrow[r,"\omega_{\mathscr{F}}","\cong"'] 
\arrow[d,twoheadrightarrow,"\alpha_\ast\otimes \id"] 
& F^{\Zn}_\ast(\mathscr{F}\otimes M_{n^\infty}\otimes B) \arrow[d,"(\alpha\otimes \id)_\ast"] \\ 
F^{\Zn}_\ast(A)\otimes_{\mathbb{Z}[\zeta_{n}]}F^{\Zn}_\ast(B) \arrow[r,"\omega_A"] & F^{\Zn}_\ast(A\otimes B) .\end{tikzcd}\]%
In the above diagram, two columns are exact and the horizontal maps $\omega_C,\ \omega_{\mathscr{F}}$ and $\omega_A$ are all cup product maps. By theorem \ref{projective case}, $\omega_C$ and $\omega_{\mathscr{F}}$ are isomorphisms. \par%
The remainder of the proof is a diagram chase. If $x\in\Ker\omega_A$, then there are $y\in F^{\Zn}_\ast(\mathscr{F}\otimes M_{n^\infty})\otimes_{\mathbb{Z}[\zeta_{n}]}F^{\Zn}_\ast(B)$ and $z\in F^{\Zn}_\ast(C)\otimes_{\mathbb{Z}[\zeta_{n}]}F^{\Zn}_\ast(B)$ such that $(\alpha_\ast\otimes \id)(y)=x$ and $(\pi\otimes \id)_\ast\circ\omega_C(z)=\omega_{\mathscr{F}}(y)$. Thus %
\[x=(\alpha_\ast\otimes \id)\circ (\pi_\ast\otimes \id)(z)=0\] %
and $\omega_A$ is injective. The above diagram also implies $\Cok\omega_A=\Cok(\alpha\otimes \id)_\ast$. It is isomorphic to %
\[\Ker\qty((\pi\otimes \id)_\ast\colon F^{\Zn}_{\ast-1}(C\otimes B)\to F^{\Zn}_{\ast-1}(\mathscr{F}\otimes M_{n^\infty}\otimes B))\]%
via the map $F^{\Zn}_\ast(A\otimes B)\to F^{\Zn}_{\ast-1}(C\otimes B)$, and this is isomorphic to %
\[\Ker\qty(\pi_\ast\otimes \id\colon F^{\Zn}_\ast(C)\otimes_{\mathbb{Z}[\zeta_{n}]}F^{\Zn}_{\ast-1}(B)\to F^{\Zn}_\ast(\mathscr{F}\otimes M_{n^\infty})\otimes_{\mathbb{Z}[\zeta_{n}]}F^{\Zn}_{\ast-1}(B))\] %
via $\omega_C$. Since $F^{\Zn}_\ast(\mathscr{F}\otimes M_{n^\infty})$ is flat, $\text{Tor}^{\mathbb{Z}[\zeta_{n}]}_1(F^{\Zn}_\ast(\mathscr{F}\otimes M_{n^\infty}),F^{\Zn}_{\ast-1}(B))$ vanishes and $\Ker(\pi_\ast\otimes \id)$ is isomorphic to $\text{Tor}^{\mathbb{Z}[\zeta_{n}]}_1(F^{\Zn}_\ast(A),F^{\Zn}_{\ast-1}(B))$. Hence, 
\[\Cok\omega_A\cong\text{Tor}^{\mathbb{Z}[\zeta_{n}]}_1(F^{\Zn}_\ast(A),F^{\Zn}_{\ast-1}(B)).\]
This completes the proof. \end{proof}

\section{$K$-theory of strongly self-absorbing actions}

The purpose of this section is to compute the ring structure of the equivariant $K$-theory
of strongly self-absorbing action. Before starting the computation, we see the following theorem. It will play a crucial role in the discussion of Subsection 5.1 and 5.3. 

\begin{thm}\label{p-div}Let $G$ be a $p$-group and $A$ be an object in $KK^G$. Then $K^G_\ast(A)$ is uniquely $p$-divisible if $K_\ast(A)$ is uniquely $p$-divisible. \end{thm}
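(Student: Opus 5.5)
The plan is to translate unique $p$-divisibility into a vanishing statement, and then prove that vanishing by induction on $|G|$.

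\textbf{Reduction to a vanishing statement.} Let $C_p$ be the mapping cone of the multiplication-by-$p$ map on $\C$, equipped with the trivial $G$-action. Tensoring the triangle $\Sigma\C\to C_p\to\C\overset{p}\to\C$ with $A$ gives, for every subgroup $H\subset G$, a long exact sequence
\[\cdots\to K^H_{\ast+1}(A)\overset{p}\to K^H_{\ast+1}(A)\to K^H_\ast(A\otimes C_p)\to K^H_\ast(A)\overset{p}\to K^H_\ast(A)\to\cdots.\]
Hence $K^H_\ast(A)$ is uniquely $p$-divisible if and only if $K^H_\ast(B)=0$, where $B\coloneqq A\otimes C_p$. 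By the same sequence, every $K^H_\ast(B)$ is annihilated by $p^2$. The statement therefore becomes: if $K_\ast(B)=0$, then $K^G_\ast(B)=0$.

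\textbf{Induction on $|G|$.} Since $G$ is a $p$-group, it has a normal subgroup $H$ of index $p$. By induction, $K^{H'}_\ast(B)=0$ for every proper subgroup $H'\subsetneq G$, in particular for $H'=H$.

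Put $M\coloneqq K^G_\ast(B)$, an $R(G)$-module. The relations in Proposition \ref{Dell'ambrogio} show that $I^G_H\circ\res^H_G$ is multiplication by $[C(G/H)]=\sum_{i=0}^{p-1}t^i$, where $t$ is a character of $G$ pulled back from a generator of $(G/H)^\wedge$. Since this composite factors through $K^H_\ast(B)=0$, we get $(1+t+\cdots+t^{p-1})M=0$. More generally, $M$ is annihilated by every element of the ideal generated by the classes $[C(G/H')]$ with $H'\subsetneq G$.

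\textbf{Passing from annihilators to vanishing.} This is the step I expect to be the main obstacle. The annihilator information alone does not suffice: the trivial module $\Z/p$ is killed by $1+t+\cdots+t^{p-1}\equiv p$. So a genuinely topological input is needed, and I would try the Atiyah--Segal completion theorem in Phillips' $\Cstar$-algebraic form.

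The argument would run as follows. Because $K_\ast(B)=0$, the cellular (Atiyah--Hirzebruch type) filtration of $EG$ shows that the $I$-adic completion of $K^G_\ast(B)$ vanishes, where $I$ is the augmentation ideal. For a $p$-group, Atiyah's theorem says the $I$-adic and $p$-adic topologies on $R(G)$ coincide. Since $M$ is killed by $p^2$, it is $p$-adically complete, hence $I$-adically complete, so $M$ equals its own completion, which is $0$.

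\textbf{Fallback.} If the completion step is awkward to make rigorous, I would instead reduce to the case $G=\Z/p$ via Green--Julg: $K^G_\ast(B)\cong K_\ast\bigl((B\rtimes H)\rtimes_{\text{tw}}\Z/p\bigr)$, and the twisted action is Morita equivalent to a genuine one. The input $K_\ast(B\rtimes H)=K^H_\ast(B)=0$ is then exactly the induction hypothesis. For $\Z/p$, I would combine $(1+t+\cdots+t^{p-1})M=0$ with the same computation applied to the dual action via Takai duality, and derive $M=0$ from the structure of $\Z[t]/(t^p-1,p^2)$-modules.
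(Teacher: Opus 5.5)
\textbf{There is a genuine gap: neither of your two closing arguments goes through as written.} Your reduction to $B=A\otimes C_p$ (with $p^2K^H_\ast(B)=0$), the induction on $|G|$, and the relation $(1+t+\cdots+t^{p-1})M=0$ are all fine. You are also right that this information alone cannot force $M=0$.

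\textbf{The completion route.} The step ``the cellular filtration of $EG$ shows that the $I$-adic completion of $K^G_\ast(B)$ vanishes'' needs the Atiyah--Segal completion theorem for an arbitrary separable $G$-$\Cstar$-algebra. As far as I know, the classical theorem and Phillips' $\Cstar$-algebraic version both assume that $K^G_\ast$ is finitely generated over $R(G)$. The proof goes through Artin--Rees, and $I$-adic completion is neither exact nor compatible with the infinite colimits needed to reach a general $B$. Here $M$ is an arbitrary $p^2$-torsion module. Your observation that $I^NM=0$ for large $N$ is correct, but it only gives $M=M^\wedge_I$. Identifying $M^\wedge_I$ with the Borel side (which does vanish, cell by cell) is exactly the step that is not available. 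A pro- or derived completion theorem in $KK^G$ valid for all objects would be a much stronger result than the one you are proving.

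\textbf{The fallback.} Its reduction step is essentially the paper's proof. The paper also combines Green's decomposition with the Packer--Raeburn untwisting, but it quotients by a central $N\cong\Z/p$ and applies induction to $G/N$. You instead apply induction to $H$ and are left with $G/H\cong\Z/p$. Either way everything rests on the case $G=\Z/p$, which the paper imports from Meyer's Theorem 7.2.

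Your sketch of that case cannot work. The restriction--induction computation for the dual action on $B'\rtimes\Z/p$ again yields only $\Phi_p(t)M=0$: the norm $\sum_\chi\hat\alpha_{\chi\ast}$ factors through $K^{\widehat{\Z/p}}_\ast(B'\rtimes\Z/p)\cong K_\ast(B')=0$. The module $\Z/p$ with $t=1$ satisfies $t^p=1$, $p^2=0$ and $\Phi_p(t)=0$, so no module-theoretic analysis can finish.

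\textbf{What is missing} is that $1-t$ acts invertibly on $M$. This does hold. Since $KK^{\Z/p}_\ast(C(\Z/p),B')\cong K_\ast(B')=0$, Meyer's two exact triangles through $C_u$ make $\alpha_{20}^\ast$ and $\alpha_{02}^\ast$ isomorphisms, and $\alpha_{02}\otimes\alpha_{20}=1-t$. Alternatively, use the Pimsner--Voiculescu sequence for the $\Z$-action generated by a dual automorphism. Once $1-t$ is invertible, write $\Phi_p(t)=(t-1)^{p-1}+p\,g(t)$ with $g\in\Z[t]$. This shows that $p$ is invertible on $M$, and $p^2M=0$ then gives $M=0$.
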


\begin{proof}Assume $\abs{G}=p^m$. We prove it by induction of $m$. The case $m=0$ is trivial. \par
Suppose that the statement holds for groups of order $p^{m-1}$. Since the center of a $p$-group is nontrivial, $G$ has a normal subgroup $N$ that is isomorphic to $\mathbb{Z}/p$. By \cite[Proposition 1]{Green}, there is a twisted action of $G/N$ on $A\rtimes N$ with $A\rtimes G\cong (A\rtimes N)\rtimes G/N$. This twisted action is Morita equivalent to an action of $G/N$ on $(A\rtimes N)\otimes\mathcal{K}$ by \cite[Theorem 3.4]{Twisted}. Thus %
\[K^G_\ast(A)\cong K_\ast(A\rtimes G)\cong K_\ast(((A\rtimes N)\otimes\mathcal{K})\rtimes G/N)\cong K^{G/N}_\ast((A\rtimes N)\otimes\mathcal{K}).\]%
Since $\abs{G/N}=p^{m-1}$, the induction hypothesis implies that $K^{G/N}_\ast((A\rtimes N)\otimes\mathcal{K})$ is uniquely $p$-divisible if $K_\ast((A\rtimes N)\otimes\mathcal{K})$ is uniquely $p$-divisible. It is isomorphic to $K^N_\ast(A)$. By \cite[Theorem 7.2]{Meyer}, this group is uniquely $p$-divisible if $K_\ast(A)$ is uniquely $p$-divisible. \end{proof}

\begin{cor}\label{eq of p-div}Let $G$ be a $p$-group and $A$ be an object in the $G$-equivariant bootstrap class. Then $K_\ast(A)$ is uniquely $p$-divisible if and only if $A$ has uniquely $p$-divisible $K$-thoery if and only if $p\cdot\id\in KK^G(A,A)$ is invertible. \end{cor}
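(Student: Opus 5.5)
We prove the cycle of implications (c)$\Rightarrow$(a)$\Rightarrow$(b)$\Rightarrow$(c). Here (a) means $K_\ast(A)$ is uniquely $p$-divisible, (b) means $A$ has uniquely $p$-divisible $K$-theory, and (c) means $p\cdot\id\in KK^G(A,A)$ is invertible.

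For (c)$\Rightarrow$(a), apply the restriction functor $\Res^{\{e\}}_G$. It sends $p\cdot\id_A$ to $p\cdot\id_A\in KK(A,A)$, which is therefore invertible. Multiplication by $p$ on $K_\ast(A)=KK_\ast(\C,A)$ is $(p\cdot\id_A)_\ast$, hence bijective. The same argument with $\Res^H_G$ also shows directly that (c) implies (b).

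For (a)$\Rightarrow$(b), let $H\subset G$ be any subgroup. Then $H$ is a $p$-group and $\Res^H_G A$ has the same underlying $K$-theory $K_\ast(A)$. Theorem \ref{p-div} applied to the pair $(H,\Res^H_G A)$ shows that $K^H_\ast(A)$ is uniquely $p$-divisible. This holds for every subgroup $H$, which is exactly (b).

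For (b)$\Rightarrow$(c), I would use Proposition \ref{bootstrap} with $f=p\cdot\id_A\in KK^G(A,A)$. Since $A\in\mathcal{B}_G$ and $G$ is finite, $f$ is invertible as soon as $f_\ast\colon K^H_\ast(A)\to K^H_\ast(A)$ is invertible for each cyclic subgroup $H\subset G$. The map $f_\ast$ is multiplication by $p$, because the Kasparov product with $p\cdot\id_A$ is additive and commutes with the restriction functors. By (b) this map is bijective, so $f$ is invertible.

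The only point needing care is the identification of $(p\cdot\id_A)_\ast$ with multiplication by $p$ on each $K^H_\ast(A)=KK^H_\ast(\C,A)$. This follows from bilinearity of the Kasparov product and from $\Res^H_G(p\cdot\id_A)=p\cdot\id_{\Res^H_G A}$. Everything else is a direct citation of Theorem \ref{p-div} and Proposition \ref{bootstrap}.
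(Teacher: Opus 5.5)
Your proof is correct and follows the paper's approach: Theorem~\ref{p-div} is applied to each subgroup $H$, itself a $p$-group, to pass from $K_\ast(A)$ to all $K^H_\ast(A)$, and Proposition~\ref{bootstrap} links this to invertibility of $p\cdot\id\in KK^G(A,A)$. The only difference is that you arrange the equivalences as a cycle of implications and spell out the trivial directions (restriction to subgroups, and identifying $(p\cdot\id)_\ast$ with multiplication by $p$), which the paper's two-line proof leaves implicit.
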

\begin{proof}By Theorem \ref{p-div}, $K_\ast(A)$ is uniquely $p$-divisible if and only if $A$ has uniquely $p$-divisible $K$-thoery. By Proposition \ref{bootstrap}, $A$ has uniquely $p$-divisible $K$-theory if and only if $p\cdot\id\in KK^G(A,A)$ is invertible. \end{proof}

The following corollary is a powerful tool to compute $K^G_\ast(D)$ when $K_\ast(D)$ is not uniquely $p$-divisible. 

\begin{cor}\label{embedding}Let $G$ be a $p$-group and $D$ be a strongly self-absorbing $G$-$\Cstar$-algebra. Suppose that $D$ is in the $G$-equivariant bootstrap class $\mathcal{B}_G$ and $K_\ast(D)$ is not uniquely $p$-divisible. \begin{itemize}
\item[$(1)$] For the unital inclusion $\nu\colon\C\to D$, its mapping cone $C_\nu$ has uniquely $p$-divisible $K$-theory. 
\item[$(2)$] The multiplication by $p$ on $K^G_\ast(D)$ is injective. Consequently, the unital inclusion $\mathbb{C}\hookrightarrow M_{p^\infty}$ induces an injection $K^G_\ast(D)\hookrightarrow K^G_\ast(D\otimes M_{p^\infty})$. \end{itemize}\end{cor}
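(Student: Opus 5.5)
Part (1) comes first; part (2) follows from it through the mapping cone sequence. Since $D\in\mathcal{B}_G$ and $\C\in\mathcal{B}_G$, the cone $C_\nu$ also lies in $\mathcal{B}_G$. By Corollary \ref{eq of p-div}, it is therefore enough to show that the non-equivariant groups $K_\ast(C_\nu)$ are uniquely $p$-divisible. To do this I will forget the action and use the non-equivariant structure of $D$. By Corollary \ref{multiplication} (3) with $G$ trivial, $K_\ast(D\otimes C_\nu)=0$. The underlying algebra $D$ is strongly self-absorbing (take the trivial subgroup) and satisfies the UCT, because it lies in the bootstrap class. The Toms--Winter computation then applies: $K_0(D)$ is a localization $\Z_{\mathfrak{P}}=\Z[1/q : q\in\mathfrak{P}]$ of $\Z$, with $K_1(D)=0$, or $K_\ast(D)=0$. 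The hypothesis says $K_\ast(D)$ is not uniquely $p$-divisible, so $K_\ast(D)\neq 0$ and $p\notin\mathfrak{P}$. Schochet's K\"unneth formula applies since $D$ is bootstrap and $K_\ast(D)$ is torsion-free, and it gives
\[0=K_\ast(D\otimes C_\nu)\cong \Z_{\mathfrak{P}}\otimes K_\ast(C_\nu).\]
Hence $K_\ast(C_\nu)$ is $\mathfrak{P}$-torsion, i.e.\ every element is killed by a product of primes in $\mathfrak{P}$. Since $p\notin\mathfrak{P}$, multiplication by $p$ is bijective on such a group, so $K_\ast(C_\nu)$ is uniquely $p$-divisible. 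Corollary \ref{eq of p-div} then shows that $C_\nu$ has uniquely $p$-divisible $K$-theory.

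The step I expect to be the real obstacle is pinning down the non-equivariant $K$-theory of $D$ rigorously. If citing Toms--Winter is not acceptable, there is a self-contained route. Non-equivariantly, $K_\ast(D)$ is a commutative ring by Proposition \ref{ring structure} with $G$ trivial. The sequence $K_\ast(C_\nu)\to\Z\to K_\ast(D)$, combined with $K_\ast(D\otimes C_\nu)=0$ and Schochet, should force $K_0(D)$ to be a subring of $\Q$ and $K_1(D)=0$. Either way, what is really used is only that $K_\ast(C_\nu)$ is torsion with respect to primes invertible in $K_\ast(D)$, and that $p$ is not among them.

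For (2), I will use the long exact sequence of $K^G_\ast$ for the triangle $\Sigma D\to C_\nu\to\C\overset{\nu}{\to}D$:
\[K^G_\ast(C_\nu)\to R(G)\ (\text{or }0)\overset{\nu_\ast}{\to}K^G_\ast(D)\to K^G_{\ast-1}(C_\nu).\]
Suppose $px=0$ with $x\in K^G_\ast(D)$. Its image in $K^G_{\ast-1}(C_\nu)$ is killed by $p$, hence is zero by (1). So $x=\nu_\ast(r)$ for some $r\in K^G_\ast(\C)$. Because $p$ acts bijectively on $K^G_\ast(C_\nu)$, the element $pr\in\ker\nu_\ast=\operatorname{im}(K^G_\ast(C_\nu))$ lifts to some $pc$, and $r-c'$ lies in the kernel of multiplication by $p$ on $R(G)$ for a suitable lift $c'$ of $c$. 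That kernel is zero since $R(G)$ is torsion-free, so $r$ lies in the image of $K^G_\ast(C_\nu)$ and $x=\nu_\ast(r)=0$. (Alternatively, $p$-torsion in $K^G_\ast(D)$ injects into the $p$-torsion of $K^G_{\ast-1}(C_\nu)\oplus$ the $p$-torsion of $\operatorname{im}\nu_\ast$, and both vanish.) Finally, $K^G_\ast(D\otimes M_{p^\infty})\cong\varinjlim(K^G_\ast(D),\,p)$ as in the proof of Lemma \ref{F(M tensor B)}, and injectivity of $p$ makes the canonical map into this colimit injective.
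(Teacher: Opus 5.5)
Your argument is correct and follows the paper's proof. Part (1) is reduced, via Corollary \ref{eq of p-div} and the Toms--Winter computation, to unique $p$-divisibility of the non-equivariant $K_\ast(C_\nu)$. Part (2) is the same four-lemma chase on the exact sequence $K^G_\ast(C_\nu)\to R(G)\to K^G_\ast(D)\to K^G_{\ast-1}(C_\nu)$.

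The only deviation is in (1): you show that $K_\ast(C_\nu)$ is $\mathfrak{P}$-torsion by detouring through $K_\ast(D\otimes C_\nu)=0$ and Schochet's K\"unneth formula. The paper instead reads off $K_0(C_\nu)=0$ and $K_1(C_\nu)\cong\mathbb{Z}_T/\mathbb{Z}$ directly from the six-term sequence, using $K_1(D)=0$ and that $\nu_\ast$ is the inclusion $\mathbb{Z}\hookrightarrow\mathbb{Z}_T$, so no K\"unneth formula is needed.
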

\begin{proof}\begin{itemize}
\item[(1)]By \cite[Proposition 5.1]{Toms and Winter}, $K_0(D)$ is a localization $\mathbb{Z}_T$ of $\mathbb{Z}$ by the set $T$ consisting of all invertible integers in $K_0(D)$. The map $\nu_\ast\colon K_0(\mathbb{C})\to K_0(D)$ is identified with the natural inclusion $\mathbb{Z}\hookrightarrow\mathbb{Z}_T$. The 6-term exact sequence obtained from the exact triangle %
\[C_\nu\rightarrow \mathbb{C}\overset{\nu}{\rightarrow} D\rightarrow\Sigma^{-1}C_\nu\]%
shows that $K_0(C_\nu)=0$ and $K_1(C_\nu)\cong\mathbb{Z}_T/\mathbb{Z}$. Take $a/b\in\mathbb{Z}_T$ $(a\in\mathbb{Z},\ b\in T)$. Since $\mathbb{Z}_T$ is not uniquely $p$-divisible, $b$ is coprime to $p$. Thus there are integers $k,l$ such that $k$ is positive and $p^k-1=lb$ holds. Now we have the formula %
\[p^k\frac{a}{b}=\frac{a}{b}+la.\]%
It implies that $\mathbb{Z}_T/\mathbb{Z}\cong K_1(C_\nu)$ is uniquely $p$-divisible. Since $\C$ and $D$ are in $\mathcal{B}_G$, so is $C_\nu$. Therefore, $C_\nu$ has uniquely $p$-divisible $K$-theory by Corollary \ref{eq of p-div}. 
\item[(2)]Consider the exact sequence %
\[K^G_\ast(C_\nu)\to K^G_\ast(\C)\to K^G_\ast(D)\to K^G_{\ast-1}(C_\nu).\]%
$K^G_\ast(C_\nu)$ is uniquely $p$-divisible by (1). Since the multiplication by $p$ is injective on $K^G_\ast(\mathbb{C})$,  it is also injective on $K^G_\ast(D)$ by the four lemma.  \end{itemize}\end{proof}

\subsection{Actions of cyclic $p$-groups}

In this subsection we prove the following theorem. 

\begin{thm}\label{cyclic case}Let $G$ be a cyclic $p$-group and $D$ be a strongly self-absorbing $G$-$\Cstar$-algebra that belongs to the $G$-equivariant bootstrap class. Then $K^G_1(D)=0$, and $K^G_0(D)$ is ring isomorphic to $0$ or a localization of the representation ring $R(G)$. Moreover, if $K_0(D)$ is not uniquely $p$-divisible, then the unital inclusion $\C\hookrightarrow D$ induces an injective ring homomorphism $R(G)\hookrightarrow K^G_0(D)$.\end{thm}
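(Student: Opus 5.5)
The plan is to split according to whether $K_\ast(D)$ is uniquely $p$-divisible. The divisible case is handled with the K\"unneth formula (Theorem \ref{Kunneth}), and the non-divisible case is reduced to it via $D\otimes M_{p^\infty}$ and Corollary \ref{embedding}. Throughout, $G=\mathbb{Z}/p^m$, $n=p^m$, and $R=R(G)$.

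\textbf{Case 1: $K_\ast(D)$ uniquely $p$-divisible.} By Corollary \ref{eq of p-div}, $K^H_\ast(D)$ is uniquely $p$-divisible for every $H\subset G$. So $K^G_\ast(D)$ is a module over $R[1/p]\cong\prod_{j=0}^m\mathbb{Z}[\zeta_{p^j},1/p]$ and splits along the idempotents $\psi_{p^j}$ of Remark \ref{decomposition by psi}. The summand $\psi_{p^j}K^G_\ast(D)$ is identified with $F^{\mathbb{Z}/p^j}_\ast(D)$ for the subgroup of order $p^j$; I would get this identification from the restriction maps, as in \cite{Meyer and Nadareishvili}. Since the multiplication is compatible with restriction, it suffices to treat the top piece $F=F^{\mathbb{Z}/n}_\ast(D)$ for each cyclic $p$-group (and then apply this to each subgroup).

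Apply Theorem \ref{Kunneth} with $A=B=D$. Composing the cup product with $(\id\otimes\nu)_\ast^{-1}$ gives the multiplication $F\otimes_{\mathbb{Z}[\zeta_n]}F\to F$, and the first map of the sequence is injective. This multiplication is surjective because $x=x\cdot[1_D]$, so the cup product is an isomorphism and the Tor term vanishes. In particular $\mathrm{Tor}_1(F_0,F_0)=0$, so $F_0$ is flat over the Dedekind domain $\mathbb{Z}[\zeta_n]$ by Lemma \ref{Tor(M,M)}; similarly $F_1$ is flat.

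\begin{itemize}
\item[(a)] To show $F_1=0$: in degree $1$ the isomorphism reads $(F_0\otimes F_1)\oplus(F_1\otimes F_0)\cong F_1$, with both summands mapping isomorphically onto $F_1$ by the unit. Injectivity of $(x,y)\mapsto x+y$ on $F_1\oplus F_1$ then forces $F_1=0$.
\item[(b)] To identify $F_0$: it is torsion-free and satisfies $F_0\otimes F_0\cong F_0$ via multiplication. Tensoring with $\mathbb{Q}(\zeta_n)$ shows its rank is $0$ or $1$. Hence $F_0$ is $0$ or embeds as an overring of $\mathbb{Z}[\zeta_n]$, the embedding determined by the image of $1$. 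By Theorem \ref{overring} and the finiteness of class groups, it is then a localization.
\end{itemize}

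Consequently $K^G_1(D)=0$, and $K^G_0(D)\cong\prod_j L_j$ with each $L_j$ equal to $0$ or a localization of $\mathbb{Z}[\zeta_{p^j},1/p]$. Such a product is the localization of $R$ at the multiplicative set of elements whose $j$-th components lie in the corresponding sets (arbitrary when $L_j=0$), and the identification is compatible with $\nu_\ast$.

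\textbf{Case 2: $K_\ast(D)$ not uniquely $p$-divisible.} The algebra $D\otimes M_{p^\infty}$ is strongly self-absorbing, lies in $\mathcal{B}_G$, and has uniquely $p$-divisible $K$-theory, so Case 1 applies to it. Corollary \ref{embedding}(2) embeds $K^G_\ast(D)$ into $K^G_\ast(D\otimes M_{p^\infty})=K^G_\ast(D)[1/p]$, which gives $K^G_1(D)=0$. By Corollary \ref{embedding}(1), $K^G_\ast(C_\nu)$ is uniquely $p$-divisible. The exact sequence $K^G_0(C_\nu)\to R\xrightarrow{\nu_\ast}K^G_0(D)\to K^G_1(C_\nu)\to 0$ then shows $\ker\nu_\ast$ is a $p$-divisible subgroup of the free abelian group $R$, hence zero. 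So $\nu_\ast$ is injective, and $\mathcal{A}=K^G_0(D)$ is a ring with $R\subset\mathcal{A}\subset R_S[1/p]$ and $\mathcal{A}/R$ uniquely $p$-divisible.

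\textbf{Main obstacle.} The remaining step is to show that such an $\mathcal{A}$ equals $R_{S_D}$. The inclusion $R_{S_D}\subset\mathcal{A}$ is formal, so the content is the reverse inclusion. I would first exploit $\mathcal{A}=R+p^k\mathcal{A}$ for all $k$, which says $R$ and $\mathcal{A}$ have the same $p$-adic completion. This should exhibit $\mathcal{A}$ as the fiber product of $\mathcal{A}[1/p]=R_S[1/p]$ and $R^\wedge_p$ over $R^\wedge_p[1/p]$. I then expect that an element $s\in S$ becomes a unit in $\mathcal{A}$ exactly when it is a unit in $R^\wedge_p$. Writing $y=r/s$ with $s$ a $p$-adic unit and using Corollary \ref{res invertible} to check invertibility of $s$ in $\mathcal{A}$, this would give $\mathcal{A}=R_{S_D}$.
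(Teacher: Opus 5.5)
Your Case 1 is the paper's argument with the induction $K^G_\ast(D)\cong F^G_\ast(D)\oplus K^H_\ast(D)$ unrolled. In Case 2, your proof that $\nu_\ast$ is injective is correct and much shorter than Proposition~\ref{K includes R(G)}: the kernel is the image of the uniquely $p$-divisible group $K^G_0(C_\nu)$, hence a $p$-divisible subgroup of $R\cong\Z^{p^m}$, hence zero. The gap is the step you call the main obstacle, and it is in fact the whole content of the non-divisible case. Your fiber-product description $\mathcal{A}=\mathcal{A}[1/p]\times_{\hat R[1/p]}\hat R$ is fine. Since $\hat R=\Z_p[t]/(t^{p^m}-1)$ is local, it gives a clean criterion: $s\in R$ is invertible in $\mathcal{A}$ iff it is invertible in $\mathcal{A}[1/p]$ and $s(1)\not\equiv 0 \pmod p$. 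This is a neat substitute for the invertibility statements in Lemmas~\ref{[f] invertible}, \ref{pi(f) invertivle} and \ref{cyclic lift}. But ``writing $y=r/s$ with $s$ a $p$-adic unit'' is exactly what has to be proved, and nothing in your sketch produces such an $s$. The denominators supplied by $\mathcal{A}\subset R_S[1/p]$ have the form $sp^k$, which are not $p$-adic units when $k>0$, and elements of $S$ need not be $p$-adic units either. Corollary~\ref{res invertible} cannot help: $G$ is itself cyclic, so that corollary is tautological here.

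What is missing is an arithmetic gluing argument, and it needs a finiteness input. The paper works with the pair $K^G_0(D)/\operatorname{Im}\Phi_{p^m}$ and $K^H_0(D)$:
\begin{itemize}
\item the former is an overring, hence a localization, of $\Z[\zeta_{p^m}]$ (Corollary~\ref{K/Im is a localization}, via Theorem~\ref{overring});
\item the latter is a localization of $R(H)$ by induction;
\item after raising both denominators to the power $\lvert(\Z[t]/(t^{p^{m-1}}-1,p))^\times\rvert$, so that they reduce to $1$ in $\Z[t]/(t^{p^{m-1}}-1,p)$, Lemma~\ref{cyclic lift} glues them to some $\widetilde{f_1}\in S_D$;
\item finally, $\widetilde{f_1}x\in\operatorname{Im}\nu_\ast$ is checked on the two summands of $K^G_1(C_\nu)$.
\end{itemize}
Within your framework you could instead use the conductor $I=\{r\in R: ry\in R\}$. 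It contains a nonzerodivisor invertible in $\mathcal{A}[1/p]$, and $R/I$ is finite of order prime to $p$, so $I\not\subset\mathfrak{m}=(p,t-1)$. Moreover $I$ is locally principal, and $V(I)$ avoids $\mathfrak{m}$ and every prime surviving in $\mathcal{A}[1/p]$. Finiteness of $\operatorname{Pic}(R)$ then gives $I^h=xR$, and $x$ is the needed denominator. Some such argument must be supplied.

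Two smaller points in Case 1. First, restriction identifies $\psi_{p^j}K^G_\ast(D)$ only with the $G/H_j$-invariants of $F^{H_j}_\ast(D)$. You therefore need the conjugation action to be trivial; this is the paper's Lemma~\ref{s=1}, and it follows once $F^{H_j}_0(D)$ is known to be a localization of $\Z[\zeta_{p^j}]$. Second, in (b) a dimension count does not bound the rank, which could be countably infinite. The argument of (a) does bound it: multiplication kills $e\otimes u-u\otimes e$, where $u=\psi_{p^m}[1_D]$ is the unit of $F_0$ (not $[1_D]$).
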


Throughout this subsection $G$ denotes the group $\mathbb{Z}/p^m$ ($p$ is a prime number and $m$ is a non-negative integer), $H$ denotes the subgroup $pG\cong\mathbb{Z}/p^{m-1}$ and $D$ denotes a strongly self-absorbing $G$-$\Cstar$-algebra which is in the $G$-equivariant bootstrap class $\mathcal{B}_G$. We prove Theorem \ref{cyclic case} by induction on $m$. The case $m=0$ is the non-equivariant case: this is Toms and Winter's work \cite[Proposition 5.1]{Toms and Winter}. In what follows in this subsection, we assume the following induction hypothesis. 

\begin{hyp}\label{induction hypothesis} Theorem \ref{cyclic case} holds for cyclic groups of order $p^{m-1}$. \end{hyp}%
Let $\alpha^H_{01}$ be the element of $KK^G(\mathbb{C},C(G/H))$ given by the unital inclusion $\mathbb{C}\hookrightarrow C(G/H)$ and $\alpha^H_{10}$ be the element of $KK^G(C(G/H),\mathbb{C})$ given by the composition of the unital inclusion $C(G/H)\hookrightarrow B(l_2(G/H))$ with the Morita equivalence $B(\ell_2(G/H))\sim\mathbb{C}$. We first compute $\alpha^H_{10}\otimes \alpha^H_{01}$ and $\alpha^H_{01}\otimes\alpha^H_{10}$. Let $s_H\in KK^G(C(G/H),C(G/H))$ be the element given by the homomorphism $C(G/H)\ni f\mapsto f(\cdot+1)\in C(G/H)$ and $t\in R(G)$ be the element given by the irreducible representation %
\[G=\mathbb{Z}/p^{m}\ni n\mapsto\exp (\frac{2n\pi i}{p^{m}})\in U(1)\]%
of $G$. Then the representation ring $R(G)$ equals $\mathbb{Z}[t]/(t^{p^m}-1)$. It has the $p^m$-th cyclotomic polynomial $\Phi_{p^m}(t)=1+t^{p^{m-1}}+\cdots +t^{(p-1)p^{m-1}}$. \par%
The behavior of elements $\alpha^H_{01}, \alpha^H_{10}$ has been studied in \cite{Meyer} when $G=\mathbb{Z}/p$. Let $\pullback\colon KK^{\mathbb{Z}/p}\to KK^G$ be the functor that regards an action of $\mathbb{Z}/p$ as an action of $G$ via the natural projection $G\to G/H\cong\mathbb{Z}/p$. Namely, $\pullback$ is the pullback functor via $G\to \mathbb{Z}/p$. It preserves Kasparov products. If $f\colon A\to B$ is a $\mathbb{Z}/p$-equivariant $\ast$-homomorphism and $C_f$ is its mapping cone, then clearly %
\[\pullback(\Sigma B)\to \pullback(C_f)\to \pullback(A)\overset{\pullback(f)}{\to} \pullback(B)\]%
is also a mapping cone sequence. Hence, the functor $\pullback$ preserves exact triangles. Define $\alpha_{01}\in KK^{\mathbb{Z}/p}(\C,C(\mathbb{Z}/p))$, $\alpha_{10}\in KK^{\mathbb{Z}/p}(C(\mathbb{Z}/p),\C)$ and $s\in KK^{\mathbb{Z}/p}(C(\mathbb{Z}/p),C(\mathbb{Z}/p))$ similarly to $\alpha^H_{01}$, $\alpha^H_{10}$ and $s_H$, respectively. Then we have $\pullback(\alpha_{01})=\alpha^H_{01}$, $\pullback(\alpha_{10})=\alpha^H_{10}$ and $\pullback(s)=s_H$. 

\begin{lem}\label{composition of alpha}$\alpha^H_{10}\otimes\alpha^H_{01}=1+s_H+\cdots +s_H^{p-1}\in KK^G(C(G/H),C(G/H))$, and $\alpha^H_{01}\otimes\alpha^H_{10}=\Phi_{p^m}(t)\in R(G)$. \end{lem}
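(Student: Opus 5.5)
The plan is to reduce both identities to the case $G=\mathbb{Z}/p$ via the functor $\pullback$, and to use the explicit description of $\alpha_{01}$, $\alpha_{10}$ as Kasparov cycles there. Since $\pullback$ preserves Kasparov products and we have $\pullback(\alpha_{01})=\alpha^H_{01}$, $\pullback(\alpha_{10})=\alpha^H_{10}$ and $\pullback(s)=s_H$, the first identity follows once we show $\alpha_{10}\otimes\alpha_{01}=1+s+\cdots+s^{p-1}$ in $KK^{\mathbb{Z}/p}(C(\mathbb{Z}/p),C(\mathbb{Z}/p))$.

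For the second identity, $\pullback$ maps $R(\mathbb{Z}/p)=\mathbb{Z}[u]/(u^p-1)$ to $R(G)$ by sending the generator $u$ to $t^{p^{m-1}}$. This is because a character of $G/H$ pulled back to $G$ is $n\mapsto \exp(2\pi i n/p)=\exp(2\pi i n p^{m-1}/p^m)$. Hence $1+u+\cdots+u^{p-1}$ maps to $1+t^{p^{m-1}}+\cdots+t^{(p-1)p^{m-1}}=\Phi_{p^m}(t)$, and it suffices to show $\alpha_{01}\otimes\alpha_{10}=1+u+\cdots+u^{p-1}$ in $R(\mathbb{Z}/p)$.

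The $\mathbb{Z}/p$ computations are done directly with cycles, as in \cite{Meyer}.
\begin{itemize}
\item[(a)] $\alpha_{01}\otimes\alpha_{10}$ is represented by composing the unital inclusion $\C\to C(\mathbb{Z}/p)\subset \mathcal{K}(\ell_2(\mathbb{Z}/p))$ with the Morita equivalence. This gives the class of the finite-dimensional $\mathbb{Z}/p$-Hilbert space $\ell_2(\mathbb{Z}/p)$ with its translation action, i.e. the regular representation. The regular representation of a cyclic group is the sum of all characters, $1+u+\cdots+u^{p-1}$.
\item[(b)] $\alpha_{10}\otimes\alpha_{01}$ is represented by the Kasparov $C(\mathbb{Z}/p)$-$C(\mathbb{Z}/p)$ module $(\ell_2(\mathbb{Z}/p)\otimes C(\mathbb{Z}/p),\ \text{mult}\otimes 1,\ 0)$ with the diagonal action. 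Decompose $\ell_2(\mathbb{Z}/p)=\bigoplus_x \C\delta_x$; then $C(\mathbb{Z}/p)$ acts on $\delta_x\otimes C(\mathbb{Z}/p)$ through evaluation at $x$. The diagonal action permutes these summands, so the module is not a naive equivariant direct sum. To handle this, I would untwist with the equivariant unitary $W(\delta_x\otimes f)=\delta_x\otimes f(\cdot+x)$. This identifies the module with $\bigoplus_{k} (C(\mathbb{Z}/p), s^k)$, where $\mathbb{Z}/p$ acts by translation on the index $k$ jointly with the base, and each summand is the bimodule of the automorphism $s^k$.
\end{itemize}

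Checking that this unitary intertwines the group actions and the left actions is the main obstacle, because of the bookkeeping of signs and shift directions. An alternative would be to reduce (b) via Proposition \ref{reciprocity}: $KK^{\mathbb{Z}/p}(C(\mathbb{Z}/p),C(\mathbb{Z}/p))\cong KK(\C,C(\mathbb{Z}/p))=\mathbb{Z}^p$. One would then compare images on both sides, which is a finite computation: the element $\alpha_{10}\otimes\alpha_{01}$ corresponds to the constant function $1$, matching $\sum_k s^k$. I expect this reciprocity route to be cleaner, since the image of $s^k$ under the adjunction is the indicator of a single point.
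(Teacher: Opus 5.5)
Your reduction via $\pullback$, including $\pullback(u)=t^{p^{m-1}}$, is exactly the paper's proof. The paper takes the two $\mathbb{Z}/p$ identities from \cite[Lemma 4.10 (4)]{Meyer}, whereas you propose to check them by hand. Step (a) is fine. Your reciprocity alternative for (b) is also correct. Take the injective map $x\mapsto\iota\otimes x$ of Proposition \ref{reciprocity} (2), with $\iota(\lambda)=\lambda\delta_0$ and $x$ regarded non-equivariantly. Then $\iota\otimes\alpha_{10}=1$ as in Remark \ref{alpha and res-ind}, so $\alpha_{10}\otimes\alpha_{01}\mapsto[1]$. On the other side $s^k\mapsto[\delta_{-k}]$, and $\sum_k[\delta_{-k}]=[1]$.

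The explicit untwisting in (b), however, is wrong as written. The map $W(\delta_x\otimes f)=\delta_x\otimes f(\cdot+x)$ is not right $C(\mathbb{Z}/p)$-linear, since $W(\xi\cdot g)$ also translates $g$. It also leaves the left action unchanged: it is still the scalar $h(x)$ on the $x$-th summand, so no $s^k$ can appear. Moreover, if the group still moved the new index $k$, the module would not split equivariantly as a sum of the classes $[s^k]$.

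The unitary you want re-indexes the $\ell_2$-coordinate relative to the base point: $V(\delta_x\otimes\delta_y)=\delta_{x-y}\otimes\delta_y$.
\begin{itemize}
\item[(i)] $V$ is right-linear and unitary.
\item[(ii)] It turns the diagonal action $\delta_x\otimes\delta_y\mapsto\delta_{x+g}\otimes\delta_{y+g}$ into translation of $y$ alone, so each summand $\delta_k\otimes C(\mathbb{Z}/p)$ is invariant.
\item[(iii)] On that summand, $h$ now acts by multiplication by $h(\cdot+k)=s^k(h)$.
\end{itemize}
Hence the product is $\bigoplus_k(C(\mathbb{Z}/p),s^k,0)=\sum_k s^k$. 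With either this correction or the reciprocity route, your argument gives a self-contained proof of the cited lemma and hence of the statement.
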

\begin{proof} Let $t_0\in R(\mathbb{Z}/p)$ be the element given by the irreducible representation %
\[\mathbb{Z}/p\ni n\mapsto \exp(\frac{2n\pi i}{p})\in U(1)\]%
of $\mathbb{Z}/p$. By \cite[Lemma 4.10 (4)]{Meyer}, $\alpha_{10}\otimes\alpha_{01}=1+s+\cdots +s^{p-1}$ and $\alpha_{01}\otimes\alpha_{10}=1+t_0+\cdots +t_0^{p-1}$. By the definition of $\pullback\colon KK^{\mathbb{Z}/p}\to KK^G$, $\pullback(t_0)\in R(G)$ is represented by the irreducible representation %
\[G=\mathbb{Z}/p^m\ni n\mapsto \exp(\frac{2n\pi i}{p})\in U(1).\]%
It implies $\pullback(t_0)=t^{p^{m-1}}$. Therefore %
\begin{align*}&\alpha^H_{10}\otimes\alpha^H_{01}=\pullback(1+s+\dots +s^{p-1})=1+s_H+\dots +s_H^{p-1}, \\
&\alpha^H_{01}\otimes\alpha^H_{10}=\pullback(1+t_0+\dots +t_0^{p-1})=1+t^{p^{m-1}}+\dots +t^{(p-1)p^{m-1}}=\Phi_{p^m}(t). \end{align*} \end{proof}

\begin{rem}\label{alpha02}We continue the argument of \cite{Meyer} in more detail. Let $C_u$ be the mapping cone of $\alpha_{01}$ (this is denoted by $D$ in \cite{Meyer}). There is the mapping cone sequence %
\[\Sigma C(\mathbb{Z}/p)\longrightarrow C_u\overset{\alpha_{20}}{\longrightarrow}\C\overset{\alpha_{01}}{\longrightarrow}C(\mathbb{Z}/p).\]%
Also, by \cite[Proposition 5.5]{Meyer}, there is some $\alpha_{02}\in KK^{\mathbb{Z}/p}(\C,C_u)$ and an exact triangle %
\[\Sigma\C\overset{\Sigma\alpha_{02}}{\longrightarrow}\Sigma C_u\longrightarrow C(\mathbb{Z}/p)\overset{\alpha_{10}}{\longrightarrow}\C.\]%
\cite[Lemma 5.52]{Meyer} shows that $\alpha_{02}$ and $\alpha_{20}$ satisfy $\alpha_{02}\otimes\alpha_{20}=1-t_0\in R(\mathbb{Z}/p)$ under $R(\mathbb{Z}/p)\cong\mathbb{Z}/(t_0^p-1)$. Thus, setting $\alpha^H_{02}\coloneqq\pullback(\alpha_{02})\in KK^G(\C,C_u)$ and $\alpha^H_{20}\coloneqq\pullback(\alpha_{20})\in KK^G(C_u,\C)$, we have $\alpha^H_{02}\otimes\alpha^H_{20}=\pullback(1-t_0)=1-t^{p^{m-1}}$. It will be used in the latter of this subsection. \end{rem}

$s_H$ acts on $KK^G(C(G/H),D)$ by the Kasparov product. By Proposition \ref{reciprocity} (2), the map %
\[\iota\colon \C\ni\lambda\mapsto\lambda\delta_{H}\in C(G/H)\]%
gives the isomorphism $KK^G(C(G/H),D)\overset{\cong}{\to}K^H_0(D),\ \widetilde{x}\mapsto\iota\otimes\Res^H_G(\widetilde{x})$. Via this isomorphism, $s_H$ acts on $K^H_0(D)$. Given $x,y\in K^H_0(D)$, let $\widetilde{x},\widetilde{y}$ be the elements of $KK^G(C(G/H),D)$ with $\iota\otimes\Res^H_G(\widetilde{x})=x$ and $\iota\otimes\Res^H_G(\widetilde{y})=y$, respectively. Then %
\[(\iota\otimes \Res^H_G(\widetilde{x}))\cupprod(\iota\otimes \Res^H_G(\widetilde{y}))=(\iota\cupprod\iota)\otimes\Res^H_G(\widetilde{x}\cupprod \widetilde{y})=\iota\otimes\Res^H_G(\mu\otimes(\widetilde{x}\cupprod \widetilde{y}))\]%
in $K^H_0(D\otimes D)$, where $\mu\in KK^G(C(G/H),C(G/H)\otimes C(G/H))$ is the element given by $C(G/H)\to C(G/H)\otimes C(G/H),\ \delta_{gH}\to\delta_{gH}\otimes\delta_{gH}$. This formula implies that the multiplication $xy=(\id\otimes\nu)_\ast^{-1}(x\cupprod y)\in K^H_0(D)$ corresponds to $(\id\otimes\nu)_\ast^{-1}(\mu\otimes (\widetilde{x}\cupprod\widetilde{y}))\in KK^G(C(G/H),D)$. Also, the action of $s_H$ maps $x$ to $s_H(x)\coloneqq\iota\otimes\Res^H_G(s_H\otimes\widetilde{x})$. Since $\mu\otimes(s_H\cupprod s_H)=s_H\otimes\mu$,  %
\begin{align*}(\id\otimes\nu)_\ast(s_H(x)\cdot s_H(y))&=(\iota\otimes \Res^H_G(s_H\otimes \widetilde{x}))\cupprod(\iota\otimes \Res^H_G(s_H\otimes \widetilde{y})) \\
&=\iota\otimes\Res^H_G(s_H\otimes\mu\otimes(\widetilde{x}\cupprod \widetilde{y})) \\
&=(\id\otimes\nu)_\ast(s_H(xy)).\end{align*}%
Thus, $s_H$ acts on $K^H(D)$ as a ring automorphism. In the same way, $s_H$ also gives a ring automorphism of $R(H)$. 

\begin{lem}\label{s=1}$s_H$ acts as the identity on $R(H)$ and on $K^H_0(D)$. Consequently, $\alpha^H_{10}\otimes\alpha^H_{01}$ acts as the multiplication by $p$ on $R(H)$ and on $K^H_0(D)$. \end{lem}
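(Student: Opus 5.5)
The plan is to identify $s_H$ with a concrete map on $K^H_0$, check that it is trivial on $R(H)$, and then extend to $K^H_0(D)$ using the ring automorphism property just shown together with Hypothesis \ref{induction hypothesis}.

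\emph{Identifying $s_H$.} Let $g=1\in G=\mathbb{Z}/p^m$ be the generator. Under the isomorphism $KK^G(C(G/H),D)\cong K^H_0(D)$, $\widetilde{x}\mapsto \iota\otimes\Res^H_G(\widetilde{x})$, the element $s_H(x)=\iota\otimes\Res^H_G(s_H\otimes\widetilde{x})$ is obtained from $\widetilde{x}$ through $\iota\otimes s_H$. This is the inclusion $\lambda\mapsto\lambda\delta_{g^{\pm1}H}$ at a neighbouring coset. Since $\widetilde{x}$ is $G$-equivariant, evaluating at the coset $gH$ instead of $H$ should amount to transporting by the automorphism $\alpha_g$ of $D$. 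Because $G$ is abelian, $\alpha_g$ is $H$-equivariant, and I expect to show $s_H(x)=(\alpha_{g^{\pm1}})_\ast(x)$, i.e.\ $s_H$ is the conjugation map $\con_{g,H}$ of Definition \ref{Mackey}. This is a routine unwinding of the proof of Proposition \ref{reciprocity}(2), where the unit map $\iota_B$ is written explicitly.

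\emph{Triviality on $R(H)$ and naturality.} For $D=\C$ the action is trivial, so $s_H=\con_{g,H}$ acts on $R(H)$ by conjugating representations of $H$ by $g$. As $G$ is abelian this is the identity. Alternatively one can argue directly: $s_H$ is a ring automorphism of $R(H)=\mathbb{Z}[t]/(t^{p^{m-1}}-1)$ that preserves the class of each one-dimensional representation. Next, since $\nu\colon\C\to D$ is $G$-equivariant and $s_H$ acts by left Kasparov multiplication while $\nu$ acts on the right, we get $s_H\circ\nu_\ast=\nu_\ast\circ s_H=\nu_\ast$ on $R(H)\to K^H_0(D)$. Hence $s_H$ fixes the image of $R(H)$ in $K^H_0(D)$ pointwise.

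\emph{Extension to $K^H_0(D)$, the main obstacle.} By Hypothesis \ref{induction hypothesis} applied to $\Res^H_G D$, which is strongly self-absorbing for $H$ and lies in $\mathcal{B}_H$, we have $K^H_1(D)=0$. Moreover $K^H_0(D)$ is either $0$, which is trivial, or a localization $R(H)_S$. The crux is that this localization should be realized through $\nu_\ast$, so that every element has the form $\nu_\ast(a)\nu_\ast(b)^{-1}$ with $\nu_\ast(b)$ invertible. I expect the inductive statement to give this, since $K^H_0(D)$ is generated as a ring over $R(H)$ by inverses of the elements of $S_D$, as in Remark \ref{uniqueness of h}. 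If the hypothesis only supplies an abstract ring isomorphism, I would first upgrade it using that $[1_D]$ corresponds to $1$ and that ring endomorphisms of $R(H)_S$ are determined on $R(H)$. Given this, $s_H$ is a ring automorphism (established just before the statement) that fixes $\nu_\ast(a)$ and $\nu_\ast(b)$, hence also $\nu_\ast(b)^{-1}$. So $s_H=\id$, by the uniqueness part of Remark \ref{localization}.

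\emph{The consequence.} By Lemma \ref{composition of alpha}, $\alpha^H_{10}\otimes\alpha^H_{01}=1+s_H+\cdots+s_H^{p-1}$. Since $s_H$ acts as the identity, this element acts as $p$ on $R(H)$ and on $K^H_0(D)$.
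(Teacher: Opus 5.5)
Your argument is correct and has the same skeleton as the paper's proof. First you show that $s_H$ is trivial on $R(H)$. Then you pass to $K^H_0(D)$, using that $s_H$ is a ring automorphism fixing $\nu_\ast(R(H))$ and that $K^H_0(D)$ is a localization of $R(H)$ through $\nu_\ast$.

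The difference is in the first step. The paper checks it by hand on the ring generator $t_H$. It represents $\widetilde{t_H}\in KK^G(C(G/H),\C)$ by a Hilbert space $E\subset\ell_2(G)$ and notes that $s_H\otimes\widetilde{t_H}$ is $E$ with the shifted $C(G/H)$-action. It then exhibits the translation unitary $f\mapsto f(\cdot-1)$ as an equivalence.

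You instead identify $s_H$ with $(\alpha_{g^{\pm1}})_\ast$ on $K^H_0(A)$ for an arbitrary $G$-$\Cstar$-algebra $A$. This is cleanest if you note two things. First, $s_H$ is itself the class of the translation automorphism $f\mapsto f(\cdot+1)$ of $C(G/H)$, which is $G$-equivariant because $G$ is abelian. Second, $[\beta_g]\otimes\widetilde{x}=\widetilde{x}\otimes[\alpha_g]$, where the group unitary $U_g$ on the Kasparov module implements the equivalence. That is the same mechanism as the paper's translation unitary, but applied to every class at once. It makes triviality on $R(H)$ immediate, since the action on $\C$ is trivial, and it gives a conceptual reason: $s_H$ is the Mackey conjugation map. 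You also make explicit, via associativity of the Kasparov product, that $s_H$ fixes $\nu_\ast(R(H))$ inside $K^H_0(D)$; the paper leaves this implicit in its appeal to Remark \ref{localization}. The paper's route is more elementary but relies on $R(H)$ being generated by $t_H$.

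Two minor points.

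First, your ``alternative'' argument, that $s_H$ preserves each one-dimensional representation, is exactly what has to be proved. It is therefore not an independent argument.

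Second, your fallback for a merely abstract ring isomorphism $R(H)_S\cong K^H_0(D)$ does not work as stated. Uniqueness of ring maps out of a localization does not tell you that $\nu_\ast(t_H)$ corresponds to the image of $t_H$. What is needed is that the localization is realized by $\nu_\ast$. The proof of Theorem \ref{cyclic case} does supply this: Theorem \ref{F is a localization} embeds $\mathbb{Z}[\zeta_{p^m}]$ through the module structure, and Theorem \ref{cyclic not p-div case} extends $\nu_\ast$ itself. The paper relies on this implicitly at the same point, so your primary expectation is the right one.
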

\begin{proof}By Hypothesis \ref{induction hypothesis}, $K^H_0(D)$ is isomorphic to a localization of $R(H)$. Thus, if $s_H$ acts trivially on $R(H)$, then $s_H$ also acts trivially on $K^H_0(D)$ by Remark \ref{localization}. Let $t_H\in R(H)$ be the irreducible representation %
\[H\cong\mathbb{Z}/p^{m-1}\ni n\mapsto\exp (\frac{2n\pi i}{p^{m-1}})\in U(1)\]%
of $H$. Since $R(H)$ is generated by $t_H$ as a ring, the action of $s_H$ on $R(H)$ is trivial if $s_H$ fixes $t_H$. The isomorphism $R(H)=KK^H(\mathbb{C},\mathbb{C})\cong KK^G(C(G/H),\mathbb{C})$ of Proposition \ref{reciprocity} (2) maps $t_H$ to the $G$-Hilbert space%
\[E=\qty{f\in \ell_2 (G)\mid f(g+n)=\exp (\frac{2n\pi i}{p^{m-1}})f(g),\ g\in G,\ n\in H}\]%
with a left $C(G/H)$-action by multiplication. We write it as $\widetilde{t_H}$. The element $s_H\otimes\widetilde{t_H}\in KK^G(C(G/H),\mathbb{C})$ is represented by the same Hilbert space $E$ but with the different left $C(G/H)$-action %
\[(\vp\cdot f)(g)=\vp(g+1)f(g)\quad (\vp\in C(G/H),\ f\in E,\ g\in G).\]%
The unitary $E\to E$, $f\mapsto f(\cdot -1)$ gives an equivalence of $s_H\otimes\widetilde{t_H}$ and $\widetilde{t_H}$. This completes the proof. \end{proof}

\begin{rem}\label{alpha and res-ind}By Proposition \ref{reciprocity} (2), the isomorphism $KK^G(C(G/H),A)\cong K^H_0(A)$ is given by the $H$-equivariant map %
\[\iota\colon\C\ni z\mapsto z\delta_{H}\in C(G/H).\]%
We also write the element of $KK^H(\C,C(G/H))$ given by $\iota$ as $\iota$. The composition of $\iota$ with the diagonal inclusion $C(G/H)\to B(\ell_2(G/H))$ equals the inclusion of $\C$ into a one-dimensional corner of $B(\ell_2(G/H))$. Hence %
\[\iota\otimes\Res^H_G(\alpha^H_{10})=\id_{\C}\in KK^H(\C ,\C),\]%
that is, the map $K^G_0(A)\to KK^G(C(G/H),A)$ induced by $\alpha^H_{10}$ is identified with the restriction $\res^H_G\colon K^G_\ast(A)\to K^H_\ast(A)$. Also, $I^G_H\colon K^H_\ast(A)\to K^G_\ast(A)$ is, by definition, just the composition of the isomorphism $K^H_\ast(A)\overset{\cong}\to KK^G(C(G/H),A)$ of Proposition \ref{reciprocity} (2) with the map $(\alpha^H_{01})^\ast\colon KK^G_\ast(C(G/H),A)\to K^G_\ast(A)$. Hence, $(\alpha^H_{01})^\ast$ is identified with the induction $I^G_H$. In particular, by Lemma \ref{composition of alpha} and Lemma \ref{s=1}, %
\begin{align}\label{indres=Phi}&I^G_H\circ\res^H_G=\Phi_{p^m}\colon K^G_\ast(D)\to K^G_\ast(D),\quad x\mapsto\Phi_{p^m}x, \\
\label{resind=p}&\res^H_G\circ I^G_H=p\colon K^H_0(D)\to K^H_0(D),\quad y\mapsto py.\end{align} \end{rem}

\begin{cor}\label{part comming from H}If $K_\ast(D)$ is uniquely $p$-divisible, then the restriction map $K^G_\ast(D)\to K^H_\ast(D)$ induces an isomorphism \[\Phi_{p^m}K^G_0(D)\cong K^H_0(D).\]\end{cor}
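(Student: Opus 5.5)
We would work entirely with the two identities of Remark \ref{alpha and res-ind}, namely $I^G_H\circ\res^H_G=\Phi_{p^m}$ on $K^G_\ast(D)$ and $\res^H_G\circ I^G_H=p$ on $K^H_0(D)$. The hypothesis that $K_\ast(D)$ is uniquely $p$-divisible enters through Theorem \ref{p-div}: since $H$ is a $p$-group, $K^H_\ast(D)$ is uniquely $p$-divisible, and likewise $K^G_\ast(D)$. The strategy is to show that $\res^H_G$ and the map $p^{-1}I^G_H$ are mutually inverse between $\Phi_{p^m}K^G_0(D)$ and $K^H_0(D)$, after restricting to the appropriate subgroup.

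First we check that the restriction map is well defined as a map $\Phi_{p^m}K^G_0(D)\to K^H_0(D)$; this is automatic. Conversely, $I^G_H$ lands in $\Phi_{p^m}K^G_0(D)$. To see this, use the projection formula of Proposition \ref{Dell'ambrogio}, $x\cdot I^G_H(y)=I^G_H(\res^H_G(x)y)$. Since $H=pG$, the restriction of $t$ to $H$ is $t_H$, which has order $p^{m-1}$, so $\res^H_G(t^{p^{m-1}})=1$. Hence $\res^H_G(\Phi_{p^m})=p$, and therefore
\[\Phi_{p^m}\cdot I^G_H(y)=I^G_H(py)=p\,I^G_H(y).\]
By unique $p$-divisibility this gives $I^G_H(y)=\Phi_{p^m}\cdot p^{-1}I^G_H(y)\in\Phi_{p^m}K^G_0(D)$. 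So the map $J:=p^{-1}I^G_H$ goes from $K^H_0(D)$ into $\Phi_{p^m}K^G_0(D)$.

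Next we verify the two compositions. By \eqref{resind=p}, $\res^H_G\circ J=p^{-1}\cdot p=\id$ on $K^H_0(D)$. For $z=\Phi_{p^m}x$, \eqref{indres=Phi} gives $I^G_H\res^H_G(z)=\Phi_{p^m}z=\Phi_{p^m}^2x$. Since $\Phi_{p^m}(t)\equiv p \pmod{t^{p^{m-1}}-1}$ and $(t^{p^{m-1}}-1)\Phi_{p^m}=t^{p^m}-1=0$ in $R(G)$, we have $\Phi_{p^m}^2=p\Phi_{p^m}$. Thus $J\res^H_G(z)=p^{-1}p\Phi_{p^m}x=z$, and $\res^H_G$ is an isomorphism with inverse $J$.

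The main point needing care is the identity $\res^H_G(\Phi_{p^m})=p$ and the relation $\Phi_{p^m}^2=p\Phi_{p^m}$; both are elementary polynomial facts once one identifies $\res^H_G(t)=t_H$. The other input to watch is that the unique $p$-divisibility of the equivariant groups follows from Theorem \ref{p-div}, with Corollary \ref{eq of p-div} not needed. We also note that \eqref{resind=p} was stated only in degree $0$, which is consistent with the statement being about $K_0$.
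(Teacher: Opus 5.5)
Your proposal is correct and follows the paper's proof: your inverse $p^{-1}I^G_H(y)$ is exactly the preimage $p^{-2}\Phi_{p^m}I^G_H(y)$ that the paper uses for surjectivity. Your check of $J\circ\res^H_G=\id$ via $\Phi_{p^m}^2=p\,\Phi_{p^m}$ replaces the paper's injectivity argument, which uses $\res^H_G(\Phi_{p^m}x)=p\,\res^H_G(x)$ together with \eqref{indres=Phi}, but draws on the same ingredients; you are also right that only Theorem \ref{p-div}, not Corollary \ref{eq of p-div}, is needed for the divisibility input.
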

\begin{proof}Since the representation ring $R(H)$ is isomorphic to $\mathbb{Z}[t]/(t^{p^{m-1}}-1)$, we have $\res^H_G(\Phi_{p^m})=p$. If $\res^H_G(\Phi_{p^m} x)=0$ for $x\in K^G_0(D)$, then $p\res^H_G(x)=0$, so $\res^H_G(x)=0$ because $D$ has uniquely $p$-divisible $K$-theory by Corollary \ref{eq of p-div}. It implies $\Phi_{p^m}x=0$ by (\ref{indres=Phi}). Hence, the map $\res^H_G\colon \Phi_{p^m}K^G_0(D)\to K^H_0(D)$ is injective. To see surjectivity, take $y\in K^H_0(D)$. By $\res^H_G(\Phi_{p^m})=p$ and (\ref{resind=p}), %
\[\res^H_G(p^{-2}\Phi_{p^m} I^G_H(y))=p^{-2}\cdot p\cdot \res^H_G I^G_H(y)=y\]%
holds. This proves the surjectivity. \end{proof}
For the moment, we focus on the case that $K_\ast(D)$ is uniquely p-divisible. Since $D$ has uniquely $p$-divisible $K$-theory by Corollary \ref{eq of p-div}, $K^G_\ast(D)$ is decomposed as %
\[K^G_\ast(D)=\qty(1-\frac{\Phi_{p^m}}{p})K^G_\ast(D)\oplus \frac{\Phi_{p^m}}{p}K^G_\ast(D)\]%
by the idempotent $\Phi_{p^m}/p\in R(G)_P$, where $P$ is the multiplicative subset of $\mathbb{Z}$ generated by $1$ and $p$. The direct summand $(1-\Phi_{p^m}/p)K^G_\ast(D)$ equals $F^G_\ast(D)$ because this is the kernel of the multiplication by $\Phi_{p^m}$. (In fact, $1-\Phi_{p^m}/p$ equals $\psi_{p^m}$ defined in Remark \ref{decomposition by psi}.) In particular, $F^G_\ast(D)$ is a unital ring with the unity $1-\Phi_{p^m}/p$. Another component $(\Phi_{p^m}/p)K^G_\ast(D)$ is isomorphic to $K^H_\ast(D)$ by Corollary \ref{part comming from H}. Hence %
\begin{align}\label{K decompose}K^G_\ast(D)\cong F^G_\ast(D)\oplus K^H_\ast(D).\end{align}%
Similarly we have an isomorphism of rings %
\begin{align}\label{R decompose}R(G)_P\cong \mathbb{Z}[\zeta_{p^m},1/p]\oplus R(H)_P.\end{align}

\begin{thm}\label{F is a localization}If $K_\ast(D)$ is uniquely $p$-divisible, then $F^G_1(D)=0$, and $F^G_0(D)$ is zero or isomorphic to a localization of $\mathbb{Z}[\zeta_{p^m}]$ as a ring. \end{thm}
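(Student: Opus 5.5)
Because $K_\ast(D)$ is uniquely $p$-divisible, Corollary \ref{eq of p-div} gives that $D$ has uniquely $p^m$-divisible $K$-theory. So I will apply Theorem \ref{Kunneth} with $n=p^m$ and $A=B=D$; this is legitimate since $D\in\mathcal{B}_G$. The result is a short exact sequence
\[F^G_\ast(D)\otimes_{\mathbb{Z}[\zeta_{p^m}]}F^G_\ast(D)\rightarrowtail F^G_\ast(D\otimes D)\twoheadrightarrow \Tor^{\mathbb{Z}[\zeta_{p^m}]}_1(F^G_\ast(D),F^G_{\ast-1}(D)).\]
Next I compare $F^G_\ast(D\otimes D)$ with $F^G_\ast(D)$. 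By Corollary \ref{multiplication} (1), $(\id\otimes\nu)_\ast$ is an $R(G)$-linear isomorphism $K^G_\ast(D)\cong K^G_\ast(D\otimes D)$. It therefore restricts to an isomorphism $F^G_\ast(D)\cong F^G_\ast(D\otimes D)$. Under this identification, the cup product map $\omega$ becomes the multiplication of the ring $F^G_\ast(D)$; by (\ref{K decompose}) this is a unital ring with unit $e=1-\Phi_{p^m}/p$.

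\textbf{Step 1: $\omega$ is an isomorphism and the Tor term vanishes.} The map $F\otimes_{\mathbb{Z}[\zeta]} F\to F$, $x\otimes y\mapsto xy$, is surjective because $x=x e$. Since it is also injective by the Künneth sequence, it is an isomorphism. Exactness then forces $\Tor_1^{\mathbb{Z}[\zeta_{p^m}]}(F^G_\ast(D),F^G_{\ast-1}(D))=0$. I will split this into graded pieces. The $\Z/2$-graded tensor product surjecting isomorphically onto $F_0\oplus F_1$ gives the following:
\begin{itemize}
\item[(a)] $F_0\otimes F_0\cong F_0$, and $F_1\otimes F_1$ maps injectively into $F_0$.
\item[(b)] $F_0\otimes F_1\cong F_1$ via multiplication.
\item[(c)] $\Tor_1(F_0,F_1)=\Tor_1(F_1,F_0)=\Tor_1(F_0,F_0)=\Tor_1(F_1,F_1)=0$, using the degree shift over both parities.
\end{itemize}
By Lemma \ref{Tor(M,M)} over the Dedekind domain $\mathbb{Z}[\zeta_{p^m}]$ (Remark \ref{Z[zeta] is Dedekind}), $F_0$ and $F_1$ are flat, hence torsion-free.

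\textbf{Step 2: $F_0$ is a localization.} The ring $F_0$ is torsion-free and satisfies $F_0\otimes F_0\cong F_0$ via multiplication. If $F_0\neq 0$, tensor with $\mathbb{Q}(\zeta_{p^m})$ to get a nonzero vector space $V$ with $V\otimes V\cong V$. Such a space is one-dimensional, so $F_0$ embeds into $\mathrm{Frac}(\mathbb{Z}[\zeta_{p^m}])$ by Remark \ref{Frac=K}. The unit $e$ corresponds to a nonzero element, and I will normalize the embedding by sending $e$ to $1$ (multiplication by $e^{-1}$ in the field). This makes $F_0$ a subring containing $\mathbb{Z}[\zeta_{p^m}]\cdot 1$, that is, an overring. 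Theorem \ref{overring}, together with finiteness of the class group, then shows that $F_0$ is a localization of $\mathbb{Z}[\zeta_{p^m}]$. To make the normalization honest, I will check that the ring structure of $F_0$ is compatible with the field multiplication. This holds because the multiplication is $\mathbb{Z}[\zeta]$-bilinear with unit $e$, so $x\cdot y$ corresponds to $xy/e$ inside the one-dimensional space.

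\textbf{Step 3: $F_1=0$.} By (b), $F_1\cong F_0\otimes F_1$. If $F_0=0$, this already gives $F_1=0$. Otherwise $F_1$ is a torsion-free module over the localization $F_0$, and multiplication gives an injection $F_1\otimes F_1\hookrightarrow F_0$. The key point is graded commutativity: the flip acts trivially by Corollary \ref{multiplication} (2), but on odd degrees the cup product is anticommutative. Hence for $x,y\in F_1$ we get $xy=-yx=-xy$, so $2xy=0$, and torsion-freeness forces $xy=0$. Thus the injective map $F_1\otimes F_1\to F_0$ is zero, so $F_1\otimes_{\mathbb{Z}[\zeta]}F_1=0$. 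For a torsion-free module over a Dedekind domain this implies $F_1=0$: tensoring with the fraction field gives $V_1\otimes V_1=0$, hence $V_1=0$.

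I expect this sign argument to be the main obstacle. It requires verifying that the flip on $\Sigma D\otimes\Sigma D$ introduces the Bott sign $-1$ on $K_1\times K_1\to K_0$. If the sign is delicate, the fallback is to replace the multiplication argument by a rank count over $\mathbb{Q}(\zeta)$ using (a) and (b): these give $r_0^2+r_1^2=r_0$ and $2r_0r_1=r_1$, which for nonnegative ranks with $r_0\in\{0,1\}$ forces $r_1=0$.
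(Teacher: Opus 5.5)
Your route is essentially the paper's: the K\"unneth formula with $A=B=D$, identifying the cup product with the ring multiplication via $(\id\otimes\nu)_\ast$, vanishing of $\Tor_1$ and flatness via Lemma \ref{Tor(M,M)}, rationalization, and then Theorem \ref{overring}. The gap is in Step 2, at ``such a space is one-dimensional.'' As stated this is false: an abstract isomorphism $V\otimes V\cong V$ holds for every infinite-dimensional $V$, and $V=F_0\otimes_{\mathbb{Z}[\zeta_{p^m}]}\mathbb{Q}(\zeta_{p^m})$ could a priori be infinite-dimensional. Your fallback rank count in Step 3 inherits the problem. For infinite cardinals, $r_0^2+r_1^2=r_0$ and $2r_0r_1=r_1$ force nothing unless $r_0\le 1$ is already known.

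The missing idea is that the isomorphism is multiplication with a two-sided unit $e$. For every $v\in V$, both $v\otimes e$ and $e\otimes v$ map to $v$, so injectivity gives $v\otimes e=e\otimes v$. If $v$ and $e$ were linearly independent, these would be distinct basis vectors of $V\otimes V$. Hence $v\in\mathbb{Q}(\zeta_{p^m})e$. This is exactly the paper's argument with a basis containing the unit and the kernel elements $e_i\otimes e_{i_0}-e_{i_0}\otimes e_i$.

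The sign argument in Step 3 is also not right as written. Triviality of the flip (Corollary \ref{multiplication} (2)) together with the graded sign yields the single relation $xy=-yx$ for odd $x,y$. The further equality $-yx=-xy$ would need ordinary commutativity on odd elements, which is not an additional fact available to you; it is the same flip statement read with the opposite sign. What you legitimately get is $x^2=-x^2$, hence $x^2=0$ by torsion-freeness of $F_0$. That can be pushed through: injectivity of $F_1\otimes F_1\to F_0$ gives $x\otimes x=0$, and passing to $V_1\otimes V_1$ gives $x=0$.

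But no sign consideration is needed at all, because the same unit trick kills $F_1$ immediately. In degree one, the multiplication $(F_0\otimes F_1)\oplus(F_1\otimes F_0)\to F_1$ is injective, yet it is already surjective on the summand $F_0\otimes F_1$ since $x=ex$. Hence $F_1\otimes F_0=0$, and since $x=xe$ this gives $F_1=0$. This is the paper's argument. Equivalently, your own item (a) already forces $F_1\otimes F_1=0$, because $F_0\otimes F_0\to F_0$ is onto while the map on $(F_0\otimes F_0)\oplus(F_1\otimes F_1)$ is injective.
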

\begin{proof}$D$ is uniquely $p$-divisible by Corollary \ref{eq of p-div}. By Theorem \ref{Kunneth}, we get a K\"{u}nneth formula %
\[F^G_\ast(D)\otimes_{\mathbb{Z}[\zeta_{p^m}]} F^G_\ast(D)\rightarrowtail F^G_\ast(D\otimes D)\twoheadrightarrow\text{Tor}^{\mathbb{Z}[\zeta_{p^m}]}_1(F^G_\ast(D),F^G_{\ast+1}(D)).\]%
The first homomorphism is the ring multiplication of $F^G_\ast(D)$ under the isomorphism $D\otimes D\cong D$. Thus, it is also surjective and $\text{Tor}^{\mathbb{Z}[\zeta_{p^m}]}_1(F^G_\ast(D),F^G_{\ast+1}(D))$ vanishes. In particular we get an isomorphism %
\[\qty(F^G_0(D)\otimes_{\mathbb{Z}[\zeta_{p^m}]} F^G_1(D))\oplus\qty(F^G_1(D)\otimes_{\mathbb{Z}[\zeta_{p^m}]} F^G_0(D))\cong F^G_1(D),\]%
but this map is surjective even if we restrict its domain to one of direct summands. Hence $F^G_1(D)$ must be zero. 

Suppose $F^G_0(D)$ is nonzero. By Lemma \ref{Tor(M,M)}, $F^G_0(D)$ is a flat $\mathbb{Z}[\zeta_{p^m}]$-module and includes $\mathbb{Z}[\zeta_{p^m}]$ as a subring. The flatness also implies the inclusion $F^G_0(D)\subset F^G_0(D)\otimes_{\mathbb{Z}[\zeta_{p^m}]}\mathbb{Q}[\zeta_{p^m}]$. The above K\"{u}nneth formula gives an isomorphism of $\mathbb{Q}[\zeta_{p^m}]$-vector spaces and of rings%
\[\qty(F^G_0(D)\otimes_{\mathbb{Z}[\zeta_{p^m}]}\mathbb{Q}[\zeta_{p^m}])\otimes_{\mathbb{Q}[\zeta_{p^m}]}\qty(F^G_0(D)\otimes_{\mathbb{Z}[\zeta_{p^m}]}\mathbb{Q}[\zeta_{p^m}])\cong F^G_0(D)\otimes_{\mathbb{Z}[\zeta_{p^m}]}\mathbb{Q}[\zeta_{p^m}].\]%
Take a basis $(e_i)_{i\in I}$ of the $\mathbb{Q}[\zeta_{p^m}]$-vector space $F^G_0(D)\otimes_{\mathbb{Z}[\zeta_{p^m}]}\mathbb{Q}[\zeta_{p^m}]$ so that $e_{i_0}$ is the unit for some $i_0$ in $I$. The left hand side of the above formula has a basis $\qty(e_i\otimes e_j)_{(i,j)\in I\times I}$, but $e_i\otimes e_{i_0}-e_{i_0}\otimes e_i$ belongs to the kernel of this isomorphism for any $i$ in $I$. Hence, we get $I=\qty{i_0}$ and $F^G_0(D)\otimes_{\mathbb{Z}[\zeta_{p^m}]}\mathbb{Q}[\zeta_{p^m}]\cong\mathbb{Q}[\zeta_{p^m}]$. The flatness of $\mathbb{Q}[\zeta_{p^m}]$ implies $F^G_0(D)\subset\mathbb{Q}[\zeta_{p^m}]$. Now $F^G_0(D)$ is an overring of $\mathbb{Z}[\zeta_{p^m}]$. By Remark \ref{Z[zeta] is Dedekind} and Theorem \ref{overring}, $F^G_0(D)$ can be written as a localization of $\mathbb{Z}[\zeta_{p^m}]$. \end{proof}

\begin{cor}\label{cyclic p-div case}Theorem \ref{cyclic case} holds if $G=\mathbb{Z}/p^m$ and $K_\ast(D)$ is uniquely $p$-divisible. \end{cor}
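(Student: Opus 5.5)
We combine the decomposition (\ref{K decompose}) with Theorem \ref{F is a localization} and the induction hypothesis (Hypothesis \ref{induction hypothesis}) applied to $H=pG\cong\mathbb{Z}/p^{m-1}$. Since $K_\ast(D)$ is uniquely $p$-divisible, Corollary \ref{eq of p-div} gives that $D$ has uniquely $p$-divisible $K$-theory. Hence $p$ is invertible in $K^G_0(D)$, and the idempotent $\Phi_{p^m}/p$ splits $K^G_\ast(D)$ as a product of rings
\[K^G_\ast(D)\cong F^G_\ast(D)\times K^H_\ast(D).\]
The second factor is identified with $K^H_\ast(D)$ by the restriction map, which is a ring homomorphism (Corollary \ref{part comming from H}). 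I would first check that this identification is multiplicative. The restriction is induced by a functor compatible with cup products and with $(\id\otimes\nu)_\ast$, so the idempotent decomposition is one of commutative rings.

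For the vanishing of $K^G_1(D)$: $F^G_1(D)=0$ by Theorem \ref{F is a localization}, and $K^H_1(D)=0$ by the induction hypothesis. So $K^G_1(D)=0$.

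For $K^G_0(D)$: by Theorem \ref{F is a localization}, $F^G_0(D)$ is $0$ or a localization $\mathbb{Z}[\zeta_{p^m}]_{S_1}$ with $p\in S_1$. By the induction hypothesis, $K^H_0(D)$ is $0$ or $R(H)_{S_2}$, and $p$ is invertible there. The natural map $R(G)\to K^G_0(D)$ factors through $R(G)_P$, which by (\ref{R decompose}) is $\mathbb{Z}[\zeta_{p^m},1/p]\times R(H)_P$. The componentwise map to $F^G_0(D)\times K^H_0(D)$ is the product of the canonical localization maps, since the components of $\nu_\ast$ are the unit maps of each factor.

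The key step is to show that a product of localizations of the two factors of $R(G)_P$ is itself a localization of $R(G)$. Take $S\subset R(G)$ to be the preimage of the units of the target. For a product $\mathbb{Z}[\zeta,1/p]_{S_1}\times R(H)_{S_2}$, I would use the following elements:
\begin{itemize}
\item For $s_1\in S_1$ lifted to $R(G)_P$ as $(s_1,1)$: clear the $p$-denominators by multiplying by a power of $p$ (a unit), to get an element of $R(G)$ mapping to a unit.
\item Similarly $(1,s_2)$ for $s_2\in S_2$.
\item To kill a factor that is $0$: use the idempotent times a suitable power of $p$, namely $\Phi_{p^m}$ or $p-\Phi_{p^m}$, which maps to a unit in the surviving factor and to $0$ in the killed one.
\end{itemize}
Then I would apply Remark \ref{localization} to conclude that $R(G)_S\to K^G_0(D)$ is surjective. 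For injectivity, check that each element of $R(G)_S$ mapping to $0$ is already $0$, using that every element of $R(G)_S$ has the form $r/s$. The main care point is this bookkeeping: $(1,s_2)$ is not in $R(G)$, so its lift is not canonical, and one must verify that $R(G)_S$ splits along the same idempotent. This holds because $p\in S$ whenever the target is nonzero.

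The final claim, injectivity of $R(G)\to K^G_0(D)$ when $K_0(D)$ is not uniquely $p$-divisible, is vacuous in this case. Finally, if both factors vanish then $K^G_0(D)=0$, which is excluded since $[1_D]\neq0$ as $D$ is nonzero.
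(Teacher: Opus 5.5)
Your proof is correct and follows the paper's route: you split $K^G_\ast(D)\cong F^G_\ast(D)\oplus K^H_\ast(D)$ by the idempotent $\Phi_{p^m}/p$, then apply Theorem~\ref{F is a localization} to the first factor and the induction hypothesis to the second. Your bookkeeping with $p\in S_D$ and the elements $\Phi_{p^m}$, $p-\Phi_{p^m}$ shows the product is exactly $R(G)_{S_D}$, which usefully fills in a step the paper only asserts. The one slip is your last sentence: $K^G_0(D)=0$ is not excluded, since e.g.\ $\mathcal{O}_2$ with the trivial action is strongly self-absorbing and $KK^G$-contractible (hence in $\mathcal{B}_G$) with $[1_D]=0$. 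This is harmless, because it is precisely the ``$0$'' alternative in Theorem~\ref{cyclic case}, and your choice $S=R(G)$ already covers it.
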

\begin{proof}Let $P\subset\mathbb{Z}$ be the multiplicative subset generated by $1$ and $p$. Recall the isomorphisms of (\ref{K decompose}) and (\ref{R decompose}): %
\begin{align*}K^G_\ast(D)\cong F^G_\ast(D)\oplus K^H_\ast(D), \\
R(G)_P\cong \mathbb{Z}[\zeta_{p^m},1/p]\oplus R(H)_P.\end{align*}%
$K^G_1(D)=0$ since $F^G_1(D)=0$ and $K^H_1(D)=0$. Theorem \ref{F is a localization} and the Hypothesis \ref{induction hypothesis} implies that $K^G_0(D)$ is isomorphic to a localization of the ring $R(G)_P$. \end{proof}

We now consider the case that $K_\ast(D)$ is not uniquely $p$-divisible. This situation is more complicated because we cannot use Theorem \ref{Kunneth} directly. However, by Corollary \ref{embedding} (2), we can embed $D$ into another strongly self-absorbing $G$-$\Cstar$-algebra that has uniquely $p$-divisible $K$-theory. 

\begin{cor}\label{K1 vanish (cyclic not p-div)}If $K_\ast(D)$ is not uniquely $p$-divisible, then $K^G_1(D)=0$.\end{cor}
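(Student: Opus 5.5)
The plan is to reduce to the uniquely $p$-divisible case, which is already settled by Corollary \ref{cyclic p-div case}, using the embedding of Corollary \ref{embedding} (2). Put $E\coloneqq D\otimes M_{p^\infty}$, where $M_{p^\infty}$ carries the trivial $G$-action. By Corollary \ref{embedding} (2), the unital inclusion $\C\hookrightarrow M_{p^\infty}$ induces an injection $K^G_\ast(D)\hookrightarrow K^G_\ast(E)$, and this holds in both degrees. So it suffices to show $K^G_1(E)=0$.

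To apply Corollary \ref{cyclic p-div case} to $E$, I will check its three hypotheses in turn.
\begin{itemize}
\item[(a)] \emph{$E$ is strongly self-absorbing.} $M_{p^\infty}$ with the trivial action is a strongly self-absorbing $G$-$\Cstar$-algebra, because a trivial action on a strongly self-absorbing $\Cstar$-algebra is strongly self-absorbing. Tensor products of strongly self-absorbing $G$-$\Cstar$-algebras are again strongly self-absorbing by Szab\'o \cite{Szabo}. Also $E\neq\C$.
\item[(b)] \emph{$E$ lies in $\mathcal{B}_G$.} Let $\mathcal{C}$ be the full subcategory of objects $B$ with $D\otimes B\in\mathcal{B}_G$. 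Since $\mathcal{B}_G$ is localising and tensoring with $D$ preserves mapping cones, suspensions, countable direct sums and $KK^G$-equivalences, $\mathcal{C}$ is localising. It contains $\C$. As in the proof of Theorem \ref{geometric resolution}, $M_{p^\infty}$ with trivial action is the (admissible, nuclear) inductive limit of $M_{p^k}\sim\C$, so it is $KK^G$-equivalent to a homotopy colimit of copies of $\C$ \cite{Meyer and Nest}. Hence $M_{p^\infty}$ lies in the localising subcategory generated by $\C$, so $M_{p^\infty}\in\mathcal{C}$ and $E\in\mathcal{B}_G$.
\item[(c)] \emph{$K_\ast(E)$ is uniquely $p$-divisible.} We have $K_\ast(E)\cong K_\ast(D)\otimes\mathbb{Z}[1/p]$ by the same inductive-limit computation as in Lemma \ref{F(M tensor B)}.
\end{itemize}

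With (a)--(c) in hand, Corollary \ref{cyclic p-div case} gives $K^G_1(E)=0$. Note that this corollary is available here because it was proved under Hypothesis \ref{induction hypothesis}, which is also in force for the present statement. Injectivity of $K^G_1(D)\to K^G_1(E)$ then yields $K^G_1(D)=0$.

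I expect the main point needing care to be (b), namely membership of $E$ in $\mathcal{B}_G$. The other steps are direct citations. The localising-subcategory argument above should handle (b) routinely.
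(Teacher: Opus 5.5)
Your proposal is correct and is the paper's argument: the paper proves this corollary in one line by applying Corollary \ref{cyclic p-div case} to $D\otimes M_{p^\infty}$ and using the injection $K^G_\ast(D)\hookrightarrow K^G_\ast(D\otimes M_{p^\infty})$ from Corollary \ref{embedding} (2). Your checks (a)--(c) make explicit the hypotheses on $D\otimes M_{p^\infty}$ (strong self-absorption, membership in $\mathcal{B}_G$, unique $p$-divisibility of $K_\ast$) that the paper uses without comment.
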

\begin{proof}This follows from Corollary \ref{cyclic p-div case} and Corollary \ref{embedding} (2). \end{proof}

\begin{lem}\label{[f] invertible}Suppose that $K_\ast(D)$ is not uniquely $p$-divisible. Let $[\cdot]$ be the natural projection %
\[R(H)=\mathbb{Z}[t]/(t^{p^{m-1}}-1)\ni f\mapsto [f]\in\mathbb{Z}[t]/(t^{p^{m-1}}-1,p).\]%
If $f\in R(H)$ is invertible in the ring $K^H_0(D)$, then $[f]\in\mathbb{Z}[t]/(t^{p^{m-1}}-1,p)$ is invertible. Thus the map $[\cdot]$ extends to a map from $K^H_0(D)$ to $\mathbb{Z}[t]/(t^{p^{m-1}}-1,p)$. \end{lem}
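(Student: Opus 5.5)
The plan is to reduce invertibility of $[f]$ to a non-equivariant statement about dimensions, by restricting all the way down to the trivial subgroup. The ring
\[\mathbb{Z}[t]/(t^{p^{m-1}}-1,p)\cong \mathbb{F}_p[t]/\bigl((t-1)^{p^{m-1}}\bigr)\]
is local with maximal ideal generated by $t-1$, since $t^{p^{m-1}}-1=(t-1)^{p^{m-1}}$ in characteristic $p$. Hence $[f]$ is invertible if and only if its image under $t\mapsto 1$ in $\mathbb{F}_p$ is nonzero. Concretely, this means $\varepsilon(f)\not\equiv 0 \pmod p$, where $\varepsilon\colon R(H)\to\mathbb{Z}$ is the augmentation (virtual dimension). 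It therefore suffices to show that $p\nmid \varepsilon(f)$ whenever $f$ is invertible in $K^H_0(D)$.

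\textbf{Step 1: restriction is a ring map.} I would first check that $\res^{\{e\}}_H\colon K^H_0(D)\to K_0(D)$ is a unital ring homomorphism for the multiplication of Definition \ref{product}. The restriction functor is compatible with cup products and with $(\id\otimes\nu)_\ast$, and it sends $[1_D]$ to $[1_D]$. On $R(H)$, via $\nu_\ast$, it is exactly $\varepsilon$. Consequently, if $\nu_\ast(f)$ is invertible in $K^H_0(D)$, then $\nu_\ast(\varepsilon(f))$ is invertible in $K_0(D)$.

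\textbf{Step 2: $p\nmid\varepsilon(f)$.} By \cite[Proposition 5.1]{Toms and Winter}, $K_0(D)\cong\mathbb{Z}_T$, where $T$ is the set of integers invertible in $K_0(D)$. Since $K_\ast(D)$ is not uniquely $p$-divisible, $p\notin T$. As $T$ is saturated, $\varepsilon(f)\in T$ forces $p\nmid \varepsilon(f)$. By the local-ring remark above, $[f]$ is then invertible.

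\textbf{Step 3: the extension.} By Hypothesis \ref{induction hypothesis} applied to $\Res^H_G D$, we have $K_0(D)$ not uniquely $p$-divisible. The "moreover" clause of Theorem \ref{cyclic case} for $H$ then says that $\nu_\ast\colon R(H)\to K^H_0(D)$ is injective, and that $K^H_0(D)$ is a localization of $R(H)$. Necessarily this localization is $R(H)_S$ with $S=S_D^H$ the set of elements becoming invertible, identified via $\nu_\ast$. By Step 2, $[\cdot]$ sends $S$ to units. The universal property in Remark \ref{localization} then yields the unique extension $K^H_0(D)\cong R(H)_S\to\mathbb{Z}[t]/(t^{p^{m-1}}-1,p)$, given by $f/s\mapsto [f][s]^{-1}$.

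The only delicate point is Step 1, namely verifying the ring-homomorphism property of restriction with respect to the twisted multiplication. I expect this to follow formally from naturality of $\Res$ on $(\id\otimes\nu)_\ast$ and on the cup product.
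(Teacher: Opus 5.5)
Your proposal is correct and follows essentially the paper's proof. You restrict to the trivial subgroup to see that $f(1)$ is invertible in $K_0(D)\cong\mathbb{Z}_T$ with $p\notin T$, deduce that $[f]$ is a unit, and extend via Remark \ref{localization}; the only cosmetic difference is that the paper obtains the unit property from the Frobenius identity $[f^{p^{m-1}}]=[f(1)]$, whereas you note that $\mathbb{F}_p[t]/\bigl((t-1)^{p^{m-1}}\bigr)$ is local with nilpotent maximal ideal $(t-1)$, which is the same fact.
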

\begin{proof}Note that $K^H_0(D)$ includes $R(H)$ as a subring by Hypothesis \ref{induction hypothesis}. Since $f$ is invertible in $K^H_0(D)$, the element $f(1)\in\mathbb{Z}$ is invertible in $K_0(D)$ and so $[f(1)]\in\mathbb{Z}/p$ is invertible. If we write $f=\sum_{i=0}^{p^{m-1}-1}a_it^i$, then %
\[\qty[f^{p^{m-1}}]=\qty[\qty(\sum_{i=0}^{p^{m-1}-1}a_it^i)^{p^{m-1}}]=\qty[\sum_{i=0}^{p^{m-1}-1}a_i^{p^{m-1}}]=\qty[\sum_{i=0}^{p^{m-1}-1}a_i]=\qty[f(1)]\]%
holds in $\mathbb{Z}[t]/(t^{p^{m-1}}-1,p)$. Thus $[f]$ is invertible. Since $K^H_0(D)$ is a localization of $R(H)$ by Hypothesis \ref{induction hypothesis}, the last part of the statement follows from Remark \ref{localization}. \end{proof}

\begin{lem}\label{surjectivity of restriction}If $K_\ast(D)$ is not uniquely $p$-divisible, then the restriction map $K^G_0(D)\to K^H_0(D)$ is surjective. \end{lem}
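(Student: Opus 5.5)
The plan is to show that the image of $\res^H_G\colon K^G_0(D)\to K^H_0(D)$ is all of $K^H_0(D)$ by proving two things: it contains $R(H)$, and it contains the inverse of every element of $R(H)$ that becomes invertible in $K^H_0(D)$.

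\emph{Reduction to inverses.} By Hypothesis \ref{induction hypothesis} and the last part of Theorem \ref{cyclic case}, $K^H_0(D)$ is a localization of $R(H)$ containing $R(H)$ as a subring. The restriction $R(G)=\mathbb{Z}[t]/(t^{p^m}-1)\to R(H)=\mathbb{Z}[t]/(t^{p^{m-1}}-1)$ sends $t$ to $t_H$, so it is surjective. Since $\res^H_G$ is a ring homomorphism compatible with $\nu_\ast$, its image is a subring of $K^H_0(D)$ containing $R(H)$. By Remark \ref{localization}, it therefore suffices to show that for every $f\in R(H)$ invertible in $K^H_0(D)$, the inverse $f^{-1}$ lies in the image.

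\emph{Replacing $f$ by a power.} By Lemma \ref{[f] invertible} and its proof, $[f^{p^{m-1}}]=[f(1)]$ in $\mathbb{Z}[t]/(t^{p^{m-1}}-1,p)$, with $f(1)$ prime to $p$. Set $N=p^{m-1}$ and $a=f(1)^N\in\mathbb{Z}$. Then
\[f^N=a+p\,r,\qquad r\in R(H),\quad r(1)=0.\]
Here $a$ is invertible in $K_0(D)$. Since $f^{-1}=f^{N-1}(f^N)^{-1}$, it suffices to find a preimage of $(f^N)^{-1}$. Let $\tilde r\in R(G)$ be a lift of $r$ and put
\[g=a+\Phi_{p^m}\tilde r\in R(G).\]
Because $\res^H_G(\Phi_{p^m})=p$, we get $\res^H_G(g)=f^N$. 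It then suffices to prove that $\nu_\ast(g)$ is invertible in $K^G_0(D)$, since then $\res^H_G(\nu_\ast(g)^{-1})=(f^N)^{-1}$.

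\emph{Invertibility of $g$.} First I would check this after tensoring with $M_{p^\infty}$. Put $D'=D\otimes M_{p^\infty}$; it is strongly self-absorbing and has uniquely $p$-divisible $K$-theory, so Corollary \ref{cyclic p-div case} and the decompositions (\ref{K decompose}) and (\ref{R decompose}) apply. In $K^G_0(D')\cong F^G_0(D')\oplus K^H_0(D')$:
\begin{itemize}
\item the $F$-component of $g$ is the integer $a$, because $\Phi_{p^m}$ kills $F^G$;
\item the $H$-component is $f^N$, which is invertible.
\end{itemize}
So $g$ is invertible in $K^G_0(D')$ provided the integer $a$ is invertible there. I would deduce this from the fact that $a$ is invertible in $K_0(D)$, by showing that $D$ absorbs $M_{a^\infty}$ with trivial action equivariantly, using Szab\'o's absorption results for strongly self-absorbing actions tensored with trivially acting strongly self-absorbing algebras.

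To descend from $D'$ to $D$, I would use the exact sequence
\[K^G_0(C_\nu)\to R(G)\to K^G_0(D)\to K^G_1(C_\nu)\]
together with Corollary \ref{embedding}, which says $C_\nu$ has uniquely $p$-divisible $K$-theory. Multiplication by $g$ acts on all four terms. On the $C_\nu$-terms it is an isomorphism if it is one after inverting $p$, and I would check that through the same $F$/$H$ decomposition of $K^G_\ast(C_\nu)$. The five lemma (or a four-lemma) argument then shows that multiplication by $g$ on $K^G_0(D)$ is bijective, so $g$ is invertible in $K^G_0(D)$.

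\emph{Main obstacle.} I expect the hardest step to be the last one: transferring invertibility of $g$ from $D\otimes M_{p^\infty}$ (or from the restrictions to subgroups) back to $K^G_0(D)$ itself. Corollary \ref{res invertible} is circular here, because it requires checking the restriction to $G$ itself. So the argument has to go through $C_\nu$, controlling multiplication by $g$ on $K^G_\ast(C_\nu)$ via the decomposition into the $\Phi_{p^m}$-torsion part and the part coming from $H$. Establishing invertibility of the integer $a$ equivariantly is a secondary point I would also need to verify.
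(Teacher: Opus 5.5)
There is a genuine gap. The step that fails is your claim that the integer $a$ becomes invertible in $K^G_0(D\otimes M_{p^\infty})$ because $D$ equivariantly absorbs $M_{a^\infty}$ with trivial action. This is false in general, so no absorption result can supply it.

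Take $p=5$ and $m=1$ (so $H=\{e\}$), and let $D=M^S$ be the model of Proposition \ref{model action} with $S$ generated by $x=1+t\in R(\mathbb{Z}/5)$.
\begin{itemize}
\item $K_0(D)=\mathbb{Z}[1/2]$, which is not uniquely $5$-divisible, so the hypotheses of the lemma hold.
\item $K^G_0(D)=R(G)_x$ embeds in $\mathbb{Q}\times\mathbb{Q}(\zeta_5)$ via $y/x^k\mapsto\bigl(y(1)/2^k,\,y(\zeta_5)/(1+\zeta_5)^k\bigr)$.
\item Since $1+\zeta_5$ is a unit (its norm is $\Phi_5(-1)=1$), every cyclotomic component lies in $\mathbb{Z}[\zeta_5]$. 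After tensoring with $M_{5^\infty}$, it lies in $\mathbb{Z}[\zeta_5,1/5]$.
\item Hence $2$ is invertible neither in $K^G_0(D)$ nor in $K^G_0(D\otimes M_{5^\infty})$. In particular $D$ does not absorb $M_{2^\infty}$ equivariantly.
\end{itemize}

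Now run your construction with $f=2\in R(H)=\mathbb{Z}$. You get $N=1$, $a=2$ and $r=0$. Any lift $\tilde r$ of $r$ has $\tilde r(1)=0$, and $\Phi_5\tilde r=\tilde r(1)\Phi_5=0$ in $R(\mathbb{Z}/5)$. So $g=2$ for every choice, and $g$ is not invertible.

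Your method forces the cyclotomic component of the lift to be an integer, and that is exactly what can fail. The actual preimage of $1/2$ here is $1/(1+t)$.

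There is a second, independent problem in your descent step. Multiplication by $g$ on $R(G)$ is not surjective (e.g.\ for $g=a$ with $a\neq\pm1$). So the four lemma applied to $R(G)\to K^G_0(D)\to K^G_1(C_\nu)$ cannot give surjectivity on $K^G_0(D)$.

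The missing idea is that you do not need invertible lifts at all. By Lemma \ref{s=1} and Remark \ref{alpha and res-ind}, $\res^H_G\circ I^G_H$ is multiplication by $p$ on $K^H_0(D)$. Hence the image of $\res^H_G$ contains $p\,K^H_0(D)$, as well as $R(H)$.

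Now write an arbitrary element as $f_0/f_1$ with $f_1$ invertible in $K^H_0(D)$. Lemma \ref{[f] invertible} gives $h_0,h_1\in R(H)$ with $h_0f_1-1=ph_1$. Then $f_0/f_1=h_0f_0-ph_1\cdot(f_0/f_1)$ lies in $R(H)+p\,K^H_0(D)$, which is contained in the image. This is the paper's proof.
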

\begin{proof}Its image contains $p\cdot K^H_0(D)$ by Lemma \ref{s=1} and Remark \ref{alpha and res-ind}, and also contains $R(H)$ since the restriction map $R(G)\to R(H)$ is surjective. Hypothesis \ref{induction hypothesis} implies that any element of $K^H_0(D)$ can be written as a fraction $f_0/f_1$ for some $f_0, f_1\in R(H)$ such that $f_1$ is invertible in $K^H_0(D)$. By Lemma \ref{[f] invertible}, we can take $h_0, h_1\in R(H)$ so that $h_0f_1-1=ph_1$ holds. Now we have %
\[\frac{f_0}{f_1}=h_0f_0-ph_1\frac{f_0}{f_1}\in R(H)+p\cdot K^H_0(D)\subset\Im\qty(\res^H_G\colon K^G_\ast(D)\to K^H_\ast(D)).\]%
 \end{proof}

Let $\alpha^H_{02},\alpha^H_{20}$ be the elements defined in Remark \ref{alpha02}. There are two long exact sequences %
\begin{align*}KK^G_1(C(G/H),D)\to KK^G(C_u,D)\overset{(\alpha^H_{02})^\ast}{\to} K^G(D)\overset{(\alpha^H_{10})^\ast}{\to} KK^G(C(G/H),D), \\
KK^G_1(C(G/H),D)\leftarrow KK^G(C_u,D)\overset{(\alpha^H_{20})^\ast}{\leftarrow} K^G(D)\overset{(\alpha^H_{01})^\ast}{\leftarrow} KK^G(C(G/H),D).\end{align*} %
Now $(\alpha^H_{10})^\ast$ is surjective by Remark \ref{alpha and res-ind} and Lemma \ref{surjectivity of restriction}. $(\alpha^H_{01})^\ast$ is injective by $(\alpha^H_{10})^\ast\circ(\alpha^H_{01})^\ast=p$ (Lemma \ref{s=1}) and Corollary \ref{embedding} (2). Also, by Proposition \ref{reciprocity} (2) and Hypothesis \ref{induction hypothesis}, $KK^G_1(C(G/H),D)\cong K^H_1(D)=0$. It implies that $(\alpha^H_{02})^\ast$ is injective and $(\alpha_{20})^\ast$ is surjective. Hence, we obtain the following short exact sequences %
\begin{align*}&KK^G(C_u,D)\overset{(\alpha^H_{02})^\ast}{\rightarrowtail}K^G_0(D)\overset{(\alpha^H_{10})^\ast}{\twoheadrightarrow}KK^G(C(G/H),D),\\
&KK^G(C_u,D)\overset{(\alpha^H_{20})^\ast}{\twoheadleftarrow}K^G_0(D)\overset{(\alpha^H_{01})^\ast}{\leftarrowtail}KK^G(C(G/H),D).\end{align*}%
By Remark \ref{alpha02}, $\alpha^H_{02}\otimes\alpha^H_{20}=1-t^{p^{m-1}}\in R(G)\cong \mathbb{Z}[t]/(t^{p^m}-1)$. Combining it with the above exact sequences, on $K^G_0(D)$, %
\begin{align}\label{Im Phi}&\Ker(1-t^{p^{m-1}})=\Ker(\alpha^H_{20})^\ast=\Im(\alpha^H_{01})^\ast=\Im(\alpha^H_{01})^\ast(\alpha^H_{10})^\ast=\Im\Phi_{p^m}, \\
\label{Ker Phi}&\Im(1-t^{p^{m-1}})=\Im(\alpha^H_{02})^\ast=\Ker(\alpha^H_{10})^\ast=\Ker(\alpha^H_{01})^\ast(\alpha^H_{10})^\ast=\Ker\Phi_{p^m},\end{align}%
where the last equalities of both equations follow from Lemma \ref{composition of alpha}. Thus there is an isomorphism of $\mathbb{Z}[\zeta_{p^m}]$-modules %
\[F^G_0(D)=\Ker\Phi_{p^m}=\Im(1-t^{p^{m-1}})\cong K^G_0(D)/\Ker(1-t^{p^{m-1}})=K^G_0(D)/\Im\Phi_{p^m},\]%
where the middle isomorphism is given by the multiplication map %
\[1-t^{p^{m-1}}\colon K^G_0(D)/\Ker(1-t^{p^{m-1}})\to \Im(1-{t^{p^{m-1}}}).\] %
Note that this isomorphism does not preserve the multiplicative structure. Indeed, $F^G_0(D)$ is not unital when $D$ does not have uniquely $p$-divisible $K$-theory. However, the module $K^G_0(D)/\Im\Phi_{p^m}$ always forms a unital ring. 

\begin{lem}\label{includes integers ring}If $K_\ast(D)$ is not uniquely $p$-divisible, then $K^G_0(D)/\text{Im}\Phi_{p^m}$ contains 
	$\mathbb{Z}[\zeta_{p^m}]$ as a subring. \end{lem}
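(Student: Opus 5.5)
The plan is to show that the composite ring homomorphism
\[R(G)\xrightarrow{\nu_\ast}K^G_0(D)\longrightarrow K^G_0(D)/\Im\Phi_{p^m}\]
has kernel exactly $\Phi_{p^m}R(G)$. Then it factors through an injection $\mathbb{Z}[\zeta_{p^m}]\cong R(G)/(\Phi_{p^m})\hookrightarrow K^G_0(D)/\Im\Phi_{p^m}$, which gives the claim.

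First I check that the target is a ring and the composite is a ring map. The $R(G)$-module structure on $K^G_0(D)$ is multiplication by $\nu_\ast(\cdot)$ in the ring of Proposition \ref{ring structure}, since $(\nu\otimes\id)_\ast(x)=[1_D]\cupprod x$. Hence $\Im\Phi_{p^m}=\nu_\ast(\Phi_{p^m})K^G_0(D)$ is an ideal, the quotient is a unital commutative ring, and the composite is a ring homomorphism. Moreover $\Phi_{p^m}R(G)$ lies in its kernel.

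Next I show that $\nu_\ast\colon R(G)\to K^G_0(D)$ is injective. In the exact sequence $K^G_0(C_\nu)\to R(G)\xrightarrow{\nu_\ast}K^G_0(D)$, the group $K^G_0(C_\nu)$ is uniquely $p$-divisible by Corollary \ref{embedding} (1). So $\Ker\nu_\ast$ is a $p$-divisible subgroup of $R(G)\cong\mathbb{Z}^{p^m}$. A finitely generated free abelian group has no nonzero $p$-divisible subgroups, so $\Ker\nu_\ast=0$.

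Now let $x\in R(G)$ with $\nu_\ast(x)\in\Im\Phi_{p^m}$. By \eqref{Im Phi}, $\Im\Phi_{p^m}=\Ker(1-t^{p^{m-1}})$ on $K^G_0(D)$. Hence $\nu_\ast\bigl((1-t^{p^{m-1}})x\bigr)=0$, and injectivity of $\nu_\ast$ gives $(1-t^{p^{m-1}})x=0$ in $R(G)=\mathbb{Z}[t]/(t^{p^m}-1)$. Lifting $x$ to $\mathbb{Z}[t]$, this says $t^{p^m}-1=(t^{p^{m-1}}-1)\Phi_{p^m}(t)$ divides $(t^{p^{m-1}}-1)x$. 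Since $\mathbb{Z}[t]$ is a domain, $\Phi_{p^m}$ divides $x$, so $x\in\Phi_{p^m}R(G)$.

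The only point needing care is the injectivity of $\nu_\ast$. The key input there is the unique $p$-divisibility of $K^G_\ast(C_\nu)$, which is already available from Corollary \ref{embedding}. The rest is the exactness relations \eqref{Im Phi} and divisibility in $\mathbb{Z}[t]$.
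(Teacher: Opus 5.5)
Your proof is correct, but it takes a different and shorter route than the paper's.

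\textbf{The paper's route.} It uses the isomorphism $K^G_0(D)/\operatorname{Im}\Phi_{p^m}\cong F^G_0(D)$ given by multiplication by $1-t^{p^{m-1}}$, which rests on both \eqref{Im Phi} and \eqref{Ker Phi}. This reduces the lemma to injectivity of $\nu_\ast\colon F^G_0(\C)\to F^G_0(D)$. The paper proves that injectivity by passing to $D\otimes M_{p^\infty}$ and invoking Theorem \ref{F is a localization}, hence the equivariant K\"unneth formula and the flatness argument. It also needs a separate argument to rule out $F^G_0(D)=0$.

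\textbf{Your route.} You get injectivity of $\nu_\ast$ on all of $R(G)$ directly from Corollary \ref{embedding}~(1), since a $p$-divisible subgroup of $R(G)\cong\mathbb{Z}^{p^m}$ must vanish. You then finish by cancellation in the domain $\mathbb{Z}[t]$.

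\textbf{Consequences.} This reverses the paper's logical order. Proposition \ref{K includes R(G)}, which the paper derives from this lemma together with the induction hypothesis, becomes an immediate consequence of Corollary \ref{embedding}. Moreover, you only use the inclusion $\operatorname{Im}\Phi_{p^m}\subseteq\Ker(1-t^{p^{m-1}})$. That inclusion holds trivially, because $(1-t^{p^{m-1}})\Phi_{p^m}=1-t^{p^m}=0$ in $R(G)$. So your argument needs neither \eqref{Im Phi} nor Hypothesis \ref{induction hypothesis}.

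\textbf{Comparison.} The paper's version keeps everything in the $F^G_0$-picture used in the very next step, where $K^G_0(D)/\operatorname{Im}\Phi_{p^m}$ is embedded into $F^G_0(D\otimes M_{p^\infty})$. It is heavier, though, and I see nothing it establishes for this lemma that your argument misses.
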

\begin{proof}A similar argument as above gives $F^G_0(\mathbb{C})\cong\mathbb{Z}[\zeta_{p^m}]=R(G)/\operatorname{Im}\Phi_{p^m}$. Let $\nu\colon\C\to D$ be the unital inclusion. $\nu_\ast\colon R(G)\to K^G_0(D)$ induces a ring homomorphism $\mathbb{Z}[\zeta_{p^m}]\to K^G_0(D)/\operatorname{Im}\Phi_{p^m}$. Since the diagram %
\[\begin{tikzcd}\mathbb{Z}[\zeta_{p^m}] \arrow[r,"1-t^{p^{m-1}}"] \arrow[d] & F^G_0(\C) \arrow[d,"\nu_\ast"] \\
K^G_0(D)/\operatorname{Im}\Phi_{p^m} \arrow[r,"1-t^{p^{m-1}}"] & F^G_0(D)\end{tikzcd}\] %
commutes, it suffices to show that $\nu_\ast\colon F^G_0(\mathbb{C})\to F^G_0(D)$ is injective. If $F^G_0(D)\neq 0$, then $F^G_0(D\otimes M_{p^\infty})\neq 0$ by Corollary \ref{embedding} (2) and $F^G_0(D\otimes M_{p^\infty})$ includes $F^G_0(\mathbb{C})$ by Theorem \ref{F is a localization}. Since the map $F^G_0(\C)\to F^G_0(M_{p^\infty}\otimes D)$ factors through $F^G_0(D)$, $F^G_0(D)$ includes $F^G_0(\mathbb{C})$ if $F^G_0(D)$ is nonzero. \par %
Suppose $F^G_0(D)=0$. It implies $\text{Im}(1-t^{p^{m-1}})=0$, so $\Ker(1-t^{p^{m-1}})$ equals $K^G_0(D)$. Moreover, since $\Ker\Phi_{p^m}=F^G_0(D)=0$ and $\operatorname{Im}\Phi_{p^m}=\Ker(1-t^{p^{m-1}})=K^G_0(D)$, the multiplication by $\Phi_{p^m}$ on $K^G_0(D)$ is bijective. On the other hand, the equality $\Ker(1-t^{p^{m-1}})=K^G_0(D)$ gives that $\Phi_{p^m}=p$ on $K^G_0(D)$. It contradicts the assumption that $K^G_0(D)$ is not uniquely $p$-divisible. \end{proof}

Corollary \ref{embedding} (2) gives an inclusion %
\[K^G_0(D)/\text{Im}\Phi_{p^m}\subset K^G_0(D\otimes M_{p^\infty})/\text{Im}\Phi_{p^m}\cong F^G_0(D\otimes M_{p^\infty}).\]%
Hence by Theorem \ref{F is a localization} and Lemma \ref{includes integers ring} $K^G_0(D)/\text{Im}\Phi_{p^m}$ is an overring of $\mathbb{Z}[\zeta_{p^m}]$. It implies the following corollary. 

\begin{cor}\label{K/Im is a localization}If $K_\ast(D)$ is not uniquely $p$-divisible, then the ring $K^G_0(D)/\operatorname{Im}\Phi_{p^m}$ is isomorphic to a localizaton of $\mathbb{Z}[\zeta_{p^m}]$. \end{cor}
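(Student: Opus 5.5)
The argument is essentially contained in the paragraph preceding the statement; I would make it precise in three steps.

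\emph{Step 1: an injective ring map into a localization.} Let $\kappa\colon D\to D\otimes M_{p^\infty}$ be $\id_D\otimes$ (unital inclusion). It is a unital equivariant $\ast$-homomorphism, and $\kappa_\ast$ is compatible with the multiplications of Definition \ref{product}. This needs a little care because $D\otimes M_{p^\infty}$ is again strongly self-absorbing, since $M_{p^\infty}$ with trivial action is. Moreover $\kappa_\ast$ is $R(G)$-linear and injective on $K^G_0(D)$ by Corollary \ref{embedding} (2). Since $D\otimes M_{p^\infty}$ lies in $\mathcal{B}_G$ and has uniquely $p$-divisible $K$-theory, the decomposition (\ref{K decompose}) applies to it. Hence $K^G_0(D\otimes M_{p^\infty})/\operatorname{Im}\Phi_{p^m}\cong F^G_0(D\otimes M_{p^\infty})$ as rings, the quotient corresponding to the summand cut out by the idempotent $1-\Phi_{p^m}/p$. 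By Theorem \ref{F is a localization}, this ring is $0$ or a localization of $\mathbb{Z}[\zeta_{p^m}]$.

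\emph{Step 2: injectivity on the quotient.} I must check that $\kappa_\ast$ induces an injection
\[K^G_0(D)/\operatorname{Im}\Phi_{p^m}\to K^G_0(D\otimes M_{p^\infty})/\operatorname{Im}\Phi_{p^m}.\]
I expect this to be the main obstacle. I would use (\ref{Im Phi}): on $K^G_0(D)$ we have $\operatorname{Im}\Phi_{p^m}=\Ker(1-t^{p^{m-1}})$. On $K^G_0(D\otimes M_{p^\infty})$ likewise $\operatorname{Im}\Phi_{p^m}=\Ker(1-t^{p^{m-1}})$, because there $\Phi_{p^m}/p$ and $1-\Phi_{p^m}/p$ are complementary idempotents and $1-t^{p^{m-1}}$ is injective on $F^G_0$. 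Indeed, $F^G_0$ is a $\mathbb{Z}[\zeta_{p^m},1/p]$-module and $1-\zeta_{p^m}^{p^{m-1}}$ divides $p$. Now take $x\in K^G_0(D)$ with $\kappa_\ast(x)\in\operatorname{Im}\Phi_{p^m}$. Then $(1-t^{p^{m-1}})\kappa_\ast(x)=0$, so $\kappa_\ast((1-t^{p^{m-1}})x)=0$. Injectivity of $\kappa_\ast$ gives $x\in\Ker(1-t^{p^{m-1}})=\operatorname{Im}\Phi_{p^m}$. The map is a ring homomorphism because $\kappa_\ast$ is.

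\emph{Step 3: conclusion.} By Lemma \ref{includes integers ring}, $K^G_0(D)/\operatorname{Im}\Phi_{p^m}$ contains $\mathbb{Z}[\zeta_{p^m}]$ as a subring, so it is nonzero. By Step 2 it embeds in a nonzero target ring, which by Step 1 is a localization $\mathbb{Z}[\zeta_{p^m}]_T\subset\mathbb{Q}[\zeta_{p^m}]$. The embedding restricts to the standard inclusion on the copy of $\mathbb{Z}[\zeta_{p^m}]$ coming from $\nu_\ast$, since both are induced by unital inclusions of $\C$. Hence $K^G_0(D)/\operatorname{Im}\Phi_{p^m}$ is a subring of $\mathbb{Q}[\zeta_{p^m}]=\operatorname{Frac}(\mathbb{Z}[\zeta_{p^m}])$ (Remark \ref{Frac=K}) containing $\mathbb{Z}[\zeta_{p^m}]$, that is, an overring. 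Since $\mathbb{Z}[\zeta_{p^m}]$ is the ring of integers of a number field, its class group is finite (Remark \ref{Z[zeta] is Dedekind}). Theorem \ref{overring} then shows that this overring equals $\mathbb{Z}[\zeta_{p^m}]_S$ for some multiplicative subset $S$, as claimed.
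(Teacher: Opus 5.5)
Your proposal is correct and follows the paper's own argument. You embed $K^G_0(D)/\operatorname{Im}\Phi_{p^m}$ via Corollary \ref{embedding} (2) into $K^G_0(D\otimes M_{p^\infty})/\operatorname{Im}\Phi_{p^m}\cong F^G_0(D\otimes M_{p^\infty})$, use Theorem \ref{F is a localization} and Lemma \ref{includes integers ring} to get an overring of $\mathbb{Z}[\zeta_{p^m}]$, and conclude with Theorem \ref{overring}. Your Step 2 is a useful addition: using (\ref{Im Phi}), it shows that injectivity of $\kappa_\ast$ survives passage to the quotients by $\operatorname{Im}\Phi_{p^m}$, which the paper only asserts. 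On the target side you only need the automatic inclusion $\operatorname{Im}\Phi_{p^m}\subset\Ker(1-t^{p^{m-1}})$, not equality.
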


Recall that the $p^m$-th cyclotomic polynomial $\Phi_{p^m}$ is defined by $\Phi_{p^m}(t)=1+t^{p^{m-1}}+\cdots +t^{(p-1)p^{m-1}}$. The formula %
\[\Phi_{p^m}(t)-p=(t^{p^{m-1}}-1)(t^{(p-2)p^{m-1}}+2t^{{(p-3)}p^{m-1}}+\cdots +p-1)\]%
gives the natural projection $\mathbb{Z}[t]/(\Phi_{p^m}(t),p)\to\mathbb{Z}[t]/(t^{p^{m-1}}-1,p)$. By composing it with the projection $\mathbb{Z}[t]/(\Phi_{p^m}(t))\to\mathbb{Z}[t]/(\Phi_{p^m}(t),p)$, we obtain the map %
\[\pi\colon\mathbb{Z}[\zeta_{p^m}]=\mathbb{Z}[t]/(\Phi_{p^m}(t))\to\mathbb{Z}[t]/(\Phi_{p^m}(t),p)\to\mathbb{Z}[t]/(t^{p^{m-1}}-1,p).\]%
Note that the diagram %
\[\begin{tikzcd}R(G)=\mathbb{Z}[t]/(t^{p^m}-1) \arrow[r,"\res^H_G"] \arrow[d,twoheadrightarrow] & R(H)=\mathbb{Z}[t]/(t^{p^{m-1}}-1) \arrow[d,"{[\cdot]}"] \\
\mathbb{Z}[\zeta_{p^m}] \arrow[r,"\pi"] & \mathbb{Z}[t]/(t^{p^{m-1}}-1,p) \end{tikzcd}\]%
commutes, where $[\cdot]$ is the map defined in Lemma \ref{[f] invertible}. 

\begin{lem}\label{pi(f) invertivle}Suppose that $K_\ast(D)$ is not uniquely $p$-divisible. If $f\in\mathbb{Z}[\zeta_{p^m}]$ is invertible in $K^G_0(D)/\operatorname{Im}\Phi_{p^m}$, then $\pi(f)\in\mathbb{Z}[t]/(t^{p^{m-1}}-1,p)$ is also invertible. Thus the map $\pi$ extends to a map from $K^G_0(D)/\operatorname{Im}\Phi_{p^m}$ to $\mathbb{Z}[t]/(t^{p^{m-1}}-1,p)$. \end{lem}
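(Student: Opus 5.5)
The plan mirrors Lemma \ref{[f] invertible}: reduce invertibility of $\pi(f)$ to invertibility of a single integer modulo $p$, and obtain that integer's invertibility from the non-equivariant $K_0(D)$.

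Let $f\in\mathbb{Z}[\zeta_{p^m}]$ be invertible in $K^G_0(D)/\operatorname{Im}\Phi_{p^m}$, and lift it to $\tilde f\in R(G)$. Then there are $y\in K^G_0(D)$ and $z\in K^G_0(D)$ with
\[\tilde f\, y=1+\Phi_{p^m}z \quad\text{in } K^G_0(D).\]
Apply the restriction $\res^{\{e\}}_G\colon K^G_0(D)\to K_0(D)$, which is a ring homomorphism. The augmentation sends $\Phi_{p^m}$ to $p$, so we get
\[\tilde f(1)\,y_0=1+p z_0 \quad\text{in } K_0(D)\cong\mathbb{Z}_T.\]

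Since $K_\ast(D)$ is not uniquely $p$-divisible, $p\notin T$. Moreover, the multiplicative set $T$ of integers invertible in $\mathbb{Z}_T$ is saturated, so every element of $T$ is prime to $p$. Hence $\mathbb{Z}_T$ maps onto $\mathbb{Z}/p$ via reduction modulo $p$, a ring homomorphism. Reducing the displayed equation shows that $[\tilde f(1)]\in\mathbb{Z}/p$ is invertible.

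Next I run the Frobenius computation from Lemma \ref{[f] invertible}. Write $\pi(f)=[\sum_i a_i t^i]$ in $\mathbb{Z}[t]/(t^{p^{m-1}}-1,p)$, where the $a_i$ are the coefficients of $\tilde f$ (reduced via the commutative square relating $\res^H_G$, $[\cdot]$ and $\pi$). Then
\[\pi(f)^{p^{m-1}}=\Bigl[\sum_i a_i^{p^{m-1}}\Bigr]=[\tilde f(1)],\]
using $t^{ip^{m-1}}=1$ and Fermat's little theorem. This is a unit of $\mathbb{F}_p$, so $\pi(f)$ is invertible.

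For the extension statement, Corollary \ref{K/Im is a localization} (or its proof) shows that $K^G_0(D)/\operatorname{Im}\Phi_{p^m}$ is the localization of $\mathbb{Z}[\zeta_{p^m}]$ at the set of elements invertible in it. The universal property of Remark \ref{localization} then extends $\pi$ uniquely.

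The main obstacle is the middle step: $\mathbb{Z}$ need not sit inside the localization in a way that lets $f$ itself restrict. Working with the lift $\tilde f$ and the relation modulo $\operatorname{Im}\Phi_{p^m}$ handles this, since the error term becomes a multiple of $p$ after restriction to the trivial group. It must also be checked that the mod-$p$ reduction of $\mathbb{Z}_T$ is well defined, which uses $p\notin T$.
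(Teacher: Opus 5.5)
Your argument is correct, but it takes a different route from the paper's.

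The paper restricts only to the index-$p$ subgroup $H=pG$. From $\widetilde{f^{-1}}\widetilde f-1=\Phi_{p^m}\widetilde h$ it gets $\res^H_G(\widetilde{f^{-1}}\widetilde f-1)=p\cdot\res^H_G(\widetilde h)$. It then applies the ring homomorphism $[\cdot]\colon K^H_0(D)\to\mathbb{Z}[t]/(t^{p^{m-1}}-1,p)$ from Lemma \ref{[f] invertible} and reads off the explicit inverse $\pi(f)^{-1}=[\res^H_G(\widetilde{f^{-1}})]$.

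You instead restrict all the way to $\{e\}$ and reduce modulo $p$ in $K_0(D)\cong\mathbb{Z}_T$, which gives $[\widetilde f(1)]\in\mathbb{F}_p^\times$. You then recover invertibility of $\pi(f)$ from the Frobenius identity $\pi(f)^{p^{m-1}}=[\widetilde f(1)]$. In effect you inline the proof of Lemma \ref{[f] invertible} at the level of $G$ instead of citing that lemma at the level of $H$.

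Your version is economical in its input. It needs only the non-equivariant Toms--Winter description of $K_\ast(D)$, including $K_1(D)=0$, which is what justifies $p\notin T$. It does not need the induction hypothesis that $K^H_0(D)$ is a localization of $R(H)$, on which the extension of $[\cdot]$ to $K^H_0(D)$ depends.

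The paper's version gives an explicit inverse. That also shows the extended $\pi$ agrees with $[\cdot]\circ\res^H_G$ on lifts from $K^G_0(D)$, although this compatibility is not part of the statement. The extension part is the same in both proofs, resting on Corollary \ref{K/Im is a localization} and Remark \ref{localization}.
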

\begin{proof}Let $\widetilde{f}, \widetilde{f^{-1}}\in K^G_0(D)$ be lifts of $f, f^{-1}\in K^G_0(D)/\text{Im}\Phi_{p^m}$, respectively. Since $\widetilde{f^{-1}}\widetilde{f}-1$ is in $\operatorname{Im}\Phi_{p^m}$, we may take $\widetilde{h}\in K^G_0(D)$ with $\widetilde{f^{-1}}\widetilde{f}-1=\Phi_{p^m}\widetilde{h}$. By restricting it to $K^H_0(D)$, we get %
\[\res^H_G(\widetilde{f^{-1}}\widetilde{f}-1)=p\cdot\res^H_G(\widetilde{h}).\]%
This equation and Lemma \ref{[f] invertible} imply $[\res^H_G(\widetilde{f^{-1}}\widetilde{f}-1)]=0$ in $\mathbb{Z}[t]/(t^{p^{m-1}}-1,p)$. Therefore $\pi(f)=[\res^H_G(\widetilde{f})]$ has the inverse $[\res^H_G(\widetilde{f^{-1}})]$. The last part of the statement follows from Corollary \ref{K/Im is a localization} and Remark \ref{localization}. \end{proof}

\begin{prp}\label{K includes R(G)}The unital inclusion $\nu\colon\mathbb{C}\to D$ gives an embedding $R(G)\hookrightarrow K^G_0(D)$. \end{prp}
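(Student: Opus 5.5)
Throughout, as in the surrounding discussion, $K_\ast(D)$ is assumed not to be uniquely $p$-divisible; this is the case needed for the last assertion of Theorem \ref{cyclic case}. The plan is to split $R(G)$ along the restriction to $H=pG$ and treat the two pieces separately. The first piece is handled by Hypothesis \ref{induction hypothesis}, the second by the injectivity on the cyclotomic part established inside the proof of Lemma \ref{includes integers ring}.

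\textbf{Step 1: reduce to the kernel of restriction.} Take $x\in R(G)$ with $\nu_\ast(x)=0$ in $K^G_0(D)$. Restriction commutes with $\nu_\ast$, so $\nu^H_\ast(\res^H_G x)=\res^H_G(\nu_\ast x)=0$ in $K^H_0(D)$. Since $K_0(D)$ is not uniquely $p$-divisible, Hypothesis \ref{induction hypothesis} for $H\cong \mathbb{Z}/p^{m-1}$ gives that $R(H)\to K^H_0(D)$ is injective. Hence $\res^H_G(x)=0$ in $R(H)$.

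\textbf{Step 2: identify that kernel.} With $R(G)=\mathbb{Z}[t]/(t^{p^m}-1)$ and $R(H)=\mathbb{Z}[t]/(t^{p^{m-1}}-1)$, the restriction sends $t\mapsto t$. Its kernel is therefore the ideal generated by $1-t^{p^{m-1}}$, so $x=(1-t^{p^{m-1}})y$ for some $y\in R(G)$. By the identities (\ref{Im Phi}) and (\ref{Ker Phi}), applied to $\C$ (the ``similar argument'' at the start of the proof of Lemma \ref{includes integers ring}), this ideal is exactly $F^G_0(\C)=\Ker\Phi_{p^m}\subset R(G)$. Thus $x\in F^G_0(\C)$.

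\textbf{Step 3: injectivity on the cyclotomic part.} It remains to show that $\nu_\ast\colon F^G_0(\C)\to F^G_0(D)$ is injective; this is the main point. The proof of Lemma \ref{includes integers ring} establishes exactly this. First, $F^G_0(D)\neq 0$, since otherwise multiplication by $p$ on $K^G_0(D)$ would be bijective, contradicting the standing assumption. Next, $F^G_0(D)$ embeds into $F^G_0(D\otimes M_{p^\infty})$ by Corollary \ref{embedding} (2), and the latter is nonzero. By Theorem \ref{F is a localization} it is a localization of $\mathbb{Z}[\zeta_{p^m}]$ containing $F^G_0(\C)\cong\mathbb{Z}[\zeta_{p^m}]$. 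Finally, $F^G_0(\C)\to F^G_0(D\otimes M_{p^\infty})$ factors through $F^G_0(D)$. I would simply quote this part of that argument. Then $\nu_\ast(x)=0$ forces $x=0$.

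The only delicate point I foresee is Step 3. One must make sure the map $F^G_0(\C)\to F^G_0(D\otimes M_{p^\infty})$ is really the unital inclusion of $\mathbb{Z}[\zeta_{p^m}]$ into the localization from Theorem \ref{F is a localization}, and not merely an abstract ring map. Since both are induced by the unital inclusion $\C\to D\otimes M_{p^\infty}$, the unit goes to the unit $1-\Phi_{p^m}/p$. A localization of the domain $\mathbb{Z}[\zeta_{p^m}]$ receives $\mathbb{Z}[\zeta_{p^m}]$ injectively, so this map is injective.
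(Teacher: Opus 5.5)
Your proof is correct. It rests on the same two facts as the paper's proof: injectivity of $\nu_\ast$ on $F^G_0(\C)$ (established in the proof of Lemma~\ref{includes integers ring}) and injectivity of $\nu^H_\ast\colon R(H)\to K^H_0(D)$ (Hypothesis~\ref{induction hypothesis}).

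The paper applies these facts in the opposite order:
\begin{itemize}
\item[(a)] For $f\in\Ker\nu_\ast$ it first notes $(t^{p^{m-1}}-1)f\in F^G_0(\C)\cap\Ker\nu_\ast=0$.
\item[(b)] Hence $f\in\Ker(t^{p^{m-1}}-1)=\operatorname{Im}(I^G_H\colon R(H)\to R(G))$, by the analogue of (\ref{Im Phi}) for $\C$.
\item[(c)] Writing $f=I^G_H(h)$, it then combines $\res^H_G I^G_H=p$ from (\ref{resind=p}), injectivity on $R(H)$, and torsion-freeness of $R(H)$ to get $h=0$.
\end{itemize}
You instead restrict first and then use $\Ker\res^H_G=(1-t^{p^{m-1}})R(G)=\Ker\Phi_{p^m}=F^G_0(\C)$. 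This identification is elementary, since $t^{p^m}-1=\Phi_{p^m}(t)(t^{p^{m-1}}-1)$ in the domain $\Z[t]$. Your order is slightly more direct, because it never needs the induction map, (\ref{resind=p}), or torsion-freeness of $R(H)$.

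One small correction to your final remark. $F^G_0(\C)=(1-t^{p^{m-1}})R(G)$ contains no unit. Its generator $1-t^{p^{m-1}}$ is sent to $(1-\zeta_{p^m}^{p^{m-1}})$ times the unit $(1-\Phi_{p^m}/p)[1]$ of $F^G_0(D\otimes M_{p^\infty})$, not to the unit itself. Injectivity still holds, for three reasons: that ring is a localization of the domain $\Z[\zeta_{p^m}]$; the $R(G)$-module action on it coincides with multiplication by the image of $R(G)$; and $1-\zeta_{p^m}^{p^{m-1}}\neq 0$.
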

\begin{proof}We already know that $\nu_\ast|_{F^G_0(\C)}\colon F^G_0(\C)\to K^G_0(D)$ is injective (Lemma \ref{includes integers ring}). Take $f\in\Ker\nu_\ast$. Then $(t^{p^{m-1}}-1)f\in\Ker\nu_\ast\cap F^G_\ast(D)=0$, so $f$ is in $\Ker(t^{p^{m-1}}-1)$. By (\ref{Im Phi}) and Remark \ref{alpha and res-ind}, %
\[\Ker(t^{p^{m-1}}-1)=\operatorname{Im}(\alpha^H_{01})^\ast=\operatorname{Im}\qty(I^G_H\colon R(H)\to R(G)).\] %
Hence, we can choose $h\in R(H)$ so that $f=I^G_H(h)$. (\ref{resind=p}) implies %
\[0=\res^H_G(\nu_\ast(f))=\nu_\ast(\res^H_G(f))=\nu_\ast(\res^H_G I^G_H(h))=\nu_\ast(p\cdot h)\]%
in $K^H_0(D)$. By Hypothesis \ref{induction hypothesis}, $\nu_\ast\colon R(H)\to K^H_0(D)$ is injective. Hence $p\cdot h=h=0$ in $R(H)$ and $f=I^G_H(h)=0$.  \end{proof}

\begin{lem}\label{cyclic lift} If $f\in\mathbb{Z}[\zeta_{p^m}]$ and $h\in R(H)$ satisfy $[f]=\pi(h)$, then there exists an element $\widetilde{f}\in R(G)$ such that $\widetilde{f}+\operatorname{Im}\Phi_{p^m}=f$ and $\res^H_G(\widetilde{f})=h$. Moreover, if $K_\ast(D)$ is not uniquely $p$-divisible, $f$ is invertible in $K^G_0(D)/\operatorname{Im}\Phi_{p^m}$ and $h$ is invertible in $K^H_0(D)$, then $\widetilde{f}$ is invertible in $K^G_0(D)$. \end{lem}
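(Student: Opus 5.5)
Read the hypothesis as saying that $f$ and $h$ have the same image in $\mathbb{Z}[t]/(t^{p^{m-1}}-1,p)$, i.e.\ $\pi(f)=[h]$. This is what the commutative square before Lemma \ref{pi(f) invertivle} requires of a pair $(f,h)$ coming from one element of $R(G)$. The plan is to construct $\widetilde{f}$ by correcting an arbitrary lift, and then to prove invertibility by splitting $K^G_0(D)$ into $\operatorname{Im}\Phi_{p^m}$ and the quotient $K^G_0(D)/\operatorname{Im}\Phi_{p^m}$, with a five-lemma type argument.

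\emph{Existence of the lift.} Pick any $g\in R(G)$ mapping to $f$ under $R(G)\twoheadrightarrow\mathbb{Z}[\zeta_{p^m}]=R(G)/\operatorname{Im}\Phi_{p^m}$. By the commutative square, $[\res^H_G(g)]=\pi(f)=[h]$. Hence $\res^H_G(g)-h$ lies in the kernel of $R(H)\to\mathbb{Z}[t]/(t^{p^{m-1}}-1,p)$, which is $pR(H)$. Write $\res^H_G(g)-h=pk$ with $k\in R(H)$, and choose $\widetilde{k}\in R(G)$ with $\res^H_G(\widetilde{k})=k$; this is possible because restriction $R(G)\to R(H)$ is surjective. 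Put $\widetilde{f}:=g-\Phi_{p^m}\widetilde{k}$. Since $\res^H_G(\Phi_{p^m})=p$, we get $\res^H_G(\widetilde{f})=\res^H_G(g)-pk=h$. Clearly $\widetilde{f}\equiv g\equiv f$ modulo $\operatorname{Im}\Phi_{p^m}$.

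\emph{Invertibility.} Assume $K_\ast(D)$ is not uniquely $p$-divisible. By (\ref{Im Phi}) and Remark \ref{alpha and res-ind} we have $\operatorname{Im}\Phi_{p^m}=\operatorname{Im}(I^G_H)$, where $I^G_H\colon K^H_0(D)\to K^G_0(D)$ is injective; this injectivity was shown just before (\ref{Im Phi}), using Lemma \ref{s=1} and Corollary \ref{embedding} (2). So we have a short exact sequence of $R(G)$-modules
\[K^H_0(D)\overset{I^G_H}{\rightarrowtail}K^G_0(D)\twoheadrightarrow K^G_0(D)/\operatorname{Im}\Phi_{p^m},\]
and multiplication by $\widetilde{f}$ acts on all three terms compatibly. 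On the right-hand term it is multiplication by $f$, which is bijective because $f$ is invertible in the quotient ring. On the left-hand term, Frobenius reciprocity (Proposition \ref{Dell'ambrogio}) gives $\widetilde{f}\cdot I^G_H(y)=I^G_H(\res^H_G(\widetilde{f})\,y)=I^G_H(hy)$. Since $h$ is invertible in $K^H_0(D)$, this is bijective too. By the five lemma, multiplication by $\widetilde{f}$ is bijective on $K^G_0(D)$. Taking the unique $u$ with $\widetilde{f}u=[1_D]$ gives the inverse, since the ring is commutative (Proposition \ref{ring structure}).

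The main point needing care is that these identifications respect the relevant module and ring structures. The reciprocity formula must apply to products in $K^G_0(D)$ with elements of $R(G)$, and $\operatorname{Im}\Phi_{p^m}$ must be an ideal so that the quotient is a ring. Both hold because $\widetilde{f}\in R(G)$ acts through the $R(G)$-module structure, which agrees with the ring multiplication by $\nu_\ast(\widetilde{f})$.
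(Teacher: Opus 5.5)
Your proof is correct and follows the paper's argument. Your lift $g-\Phi_{p^m}\widetilde{k}$ coincides with the paper's $F-I^G_H(h')$, because $\Phi_{p^m}\widetilde{k}=I^G_H(\res^H_G(\widetilde{k}))$ in $R(G)$, and your invertibility step uses the same two facts, $\operatorname{Im}\Phi_{p^m}=\operatorname{Im}I^G_H$ and Frobenius reciprocity. The only difference is packaging: the paper writes the inverse explicitly as $\widetilde{f^{-1}}-I^G_H(y\cdot h^{-1})$, where $\widetilde{f^{-1}}\widetilde{f}-1=I^G_H(y)$, which is what your five-lemma argument gives when unwound and which shows that injectivity of $I^G_H$ is not needed.
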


\begin{proof}Take $F\in R(G)$ with $F+\operatorname{Im}\Phi_{p^m}=f$.  Then $\pi(\res^H_G(F)-h)=0$ and so 
\[\res^H_G(F)-h=p\cdot h^\prime\]
for some $h^\prime\in R(H)$. Put $\widetilde{f_1}\coloneqq F-I^H_G(h^\prime)$. Since $\operatorname{Im} I^G_H=\operatorname{Im}\Phi_{p^m}$ by Remark \ref{alpha and res-ind} and (\ref{Im Phi}), we have $\widetilde{f}+\operatorname{Im}\Phi_{p^m}=f$. Also, by (\ref{resind=p}), $\res^H_G(\widetilde{f})=\res^H_G(F)-p\cdot h^\prime=h$. 

If $\widetilde{f^{-1}}\in K^G_0(D)$ is any lift of $f^{-1}\in K^G_0(D)/\operatorname{Im}\Phi_{p^m}$, then $\widetilde{f^{-1}}\widetilde{f}-1$ is in $\operatorname{Im}\Phi_{p^m}$. Remark \ref{alpha and res-ind} and (\ref{Im Phi}) implies that %
\[\widetilde{f^{-1}}\widetilde{f}-1=I^G_H(y) \]%
holds for some $y\in K^H_0(D)$. Then $\widetilde{f^{-1}}-I^G_H(y\cdot h^{-1})$ is the inverse of $\widetilde{f}$. Indeed, by Proposition \ref{Dell'ambrogio}, $I^G_H(y\cdot h^{-1})\cdot \widetilde{f}=I^G_H(y\cdot h^{-1}\cdot\res^H_G(\widetilde{f}))=I^G_H(y)$ and we have 
\[\left( \widetilde{f^{-1}}-I^G_H(y\cdot h^{-1}) \right) \widetilde{f}=1+I^G_H(y)-I^G_H(y)=1.\]\end{proof}

\begin{thm}\label{cyclic not p-div case}Theorem \ref{cyclic case} holds if $G=\mathbb{Z}/p^m$ and $K_\ast(D)$ is not uniquely $p$-divisible. \end{thm}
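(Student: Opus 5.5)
The plan is to combine the two pieces of information already available: the quotient $K^G_0(D)/\operatorname{Im}\Phi_{p^m}$, which is a localization of $\mathbb{Z}[\zeta_{p^m}]$ by Corollary \ref{K/Im is a localization}, and the restriction $K^H_0(D)$, which is a localization of $R(H)$ containing $R(H)$ by Hypothesis \ref{induction hypothesis}. From these I will show that every element of $K^G_0(D)$ has the form $y/\widetilde{s}$ with $y,\widetilde{s}\in R(G)$ and $\widetilde{s}$ invertible in $K^G_0(D)$.

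Two parts of the statement are already settled. The vanishing $K^G_1(D)=0$ is Corollary \ref{K1 vanish (cyclic not p-div)}. Injectivity of $\nu_\ast\colon R(G)\to K^G_0(D)$ is Proposition \ref{K includes R(G)}. By Remark \ref{localization}, $\nu_\ast$ then induces an injective ring map $R(G)_{S_D}\to K^G_0(D)$, so only surjectivity remains.

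\textbf{Step 1 (detection).} First I record a detection principle. If $z\in K^G_0(D)$ maps to $0$ in $K^G_0(D)/\operatorname{Im}\Phi_{p^m}$ and $\res^H_G(z)=0$, then $z=0$. Indeed, by (\ref{Im Phi}) and Remark \ref{alpha and res-ind} we can write $z=I^G_H(w)$. Then (\ref{resind=p}) gives $0=\res^H_G I^G_H(w)=pw$. Since multiplication by $p$ is injective on $K^H_0(D)$ (Corollary \ref{embedding} (2) applied to $\Res^H_G D$), we get $w=0$.

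\textbf{Step 2 (choosing a common denominator).} Fix $x\in K^G_0(D)$. Write its image in the quotient as $f_0/f_1$ with $f_i\in\mathbb{Z}[\zeta_{p^m}]$ and $f_1$ invertible there. Write $\res^H_G(x)=h_0/h_1$ with $h_i\in R(H)$ and $h_1$ invertible in $K^H_0(D)$. By Lemma \ref{pi(f) invertivle}, $\pi(f_1)$ is a unit of the finite ring $\mathbb{Z}[t]/(t^{p^{m-1}}-1,p)$. By Lemma \ref{[f] invertible}, $[h_1]$ is also a unit there. Let $e$ be the order of the (finite) unit group. Then $\pi(f_1^e)=1=[h_1^e]$.

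Lemma \ref{cyclic lift}, applied to $f=f_1^e$ and $h=h_1^e$, produces $\widetilde{s}\in R(G)$ that lifts $f_1^e$, restricts to $h_1^e$, and is invertible in $K^G_0(D)$.

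\textbf{Step 3 (the numerator lies in $R(G)$).} Consider $\widetilde{s}x$. Its image in the quotient is $f_0f_1^{e-1}\in\mathbb{Z}[\zeta_{p^m}]$, and its restriction is $h_0h_1^{e-1}\in R(H)$. To apply the first part of Lemma \ref{cyclic lift} to this pair, I need the compatibility $\pi(f_0f_1^{e-1})=[h_0h_1^{e-1}]$. This holds because the extended maps $\pi$ (Lemma \ref{pi(f) invertivle}) and $[\cdot]$ (Lemma \ref{[f] invertible}) are ring homomorphisms. Moreover, $\pi(\overline{z})=[\res^H_G z]$ holds for every $z\in K^G_0(D)$, as the proof of Lemma \ref{pi(f) invertivle} shows. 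Lemma \ref{cyclic lift} then gives $y\in R(G)$ with the same image in the quotient and the same restriction as $\widetilde{s}x$.

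By Step 1, $\widetilde{s}x=y$, so $x=y/\widetilde{s}$ with $\widetilde{s}\in S_D$. This proves surjectivity, so $K^G_0(D)\cong R(G)_{S_D}$ as rings.

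\textbf{Main obstacle.} The hard point is Step 2: a naive lift of $f_1$ need not restrict to an invertible element of $K^H_0(D)$. Raising both denominators to the exponent $e$ of the finite unit group is what forces the compatibility $\pi(f)=[h]$ needed in Lemma \ref{cyclic lift}. I would also check carefully that the extended map $\pi$ agrees with $[\res^H_G(\cdot)]$ on all of $K^G_0(D)$, not only on $R(G)$, since Step 3 depends on this.
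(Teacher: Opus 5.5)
Your proof is correct. Step 2 is the same as the paper's: raise both denominators to the order of the unit group of $\mathbb{Z}[t]/(t^{p^{m-1}}-1,p)$, then lift with Lemma \ref{cyclic lift}. The two arguments differ only in how they show that $\widetilde{s}x$ lies in $\nu_\ast(R(G))$.

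\textbf{Paper's route.} It passes to the mapping cone. It uses the short exact sequence $R(G)\rightarrowtail K^G_0(D)\twoheadrightarrow K^G_1(C_\nu)$ and the unique $p$-divisibility of $K^G_1(C_\nu)$ (Corollary \ref{embedding} (1)). Splitting $K^G_1(C_\nu)$ by the idempotent $\Phi_{p^m}/p$ gives a membership criterion: $z\in\operatorname{Im}\nu_\ast$ iff its class in $K^G_0(D)/\operatorname{Im}\Phi_{p^m}$ lies in $\mathbb{Z}[\zeta_{p^m}]$ and $\res^H_G(z)\in R(H)$.

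\textbf{Your route.} You stay inside $K^G_0(D)$ and $K^H_0(D)$. You show that $z\mapsto(\overline{z},\res^H_G z)$ is injective, using three facts:
\begin{itemize}
\item $\Phi_{p^m}K^G_0(D)\subset\operatorname{Im}I^G_H$ (already clear from $I^G_H\circ\res^H_G=\Phi_{p^m}$);
\item $\res^H_G\circ I^G_H=p$;
\item $K^H_0(D)$ has no $p$-torsion.
\end{itemize}
You then build an explicit $y\in R(G)$ with the same invariants, using the first half of Lemma \ref{cyclic lift}.

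\textbf{Comparison.} Your route avoids $C_\nu$ and Corollary \ref{embedding} (1) entirely. The price is the compatibility $\pi(\overline{z})=[\res^H_G z]$ for arbitrary $z\in K^G_0(D)$, which the paper's exact sequence handles implicitly. Your worry about it is unfounded, for the following reason. Since $\res^H_G(\Phi_{p^m}w)=p\,\res^H_G(w)$ and the extended $[\cdot]$ is a ring map, the map $z\mapsto[\res^H_G z]$ kills $\operatorname{Im}\Phi_{p^m}$. It therefore descends to a ring homomorphism on $K^G_0(D)/\operatorname{Im}\Phi_{p^m}$ that agrees with $\pi$ on $\mathbb{Z}[\zeta_{p^m}]$. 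By the uniqueness in Remark \ref{localization}, it equals the extended $\pi$.
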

\begin{proof}Let $S=S_D$ be the set of invertible elements of $K^G_0(D)$. Thanks to Corollary \ref{K1 vanish (cyclic not p-div)} and Proposition \ref{K includes R(G)}, we only have to show that $K^G_0(D)$ is isomorphic to $R(G)_S$. Since $\nu_\ast\colon R(G)\to K^G_0(D)$ is injective by Proposition \ref{K includes R(G)}, $\nu_\ast$ extends to an injection $R(G)_S\to K^G_0(D)$ by Remark \ref{localization}. We show that it is surjective. 

Take $x\in K^G_0(D)$. By Corollary \ref{K/Im is a localization}, there are some $f_0,f_1\in\mathbb{Z}[\zeta_{p^m}]$ such that $f_1$ is invertible in $K^G_0(D)/\operatorname{Im}\Phi_{p^m}$ and %
\[x+\Im\Phi_{p^m}=\frac{f_0}{f_1}\] %
holds in $K^G_0(D)/\Im\Phi_{p^m}$. Hypothesis \ref{induction hypothesis} also allows us to take $h_0,h_1\in R(H)$ so that $h_1$ is invertible in $K^H_0(D)$ and %
\[\res^H_G(x)=\frac{h_0}{h_1}\]%
holds in $K^H_0(D)$. By replacing $(f_0,f_1)$ with $(f_0\cdot f_1^{n-1},f_1^n)$ and $(h_0,h_1)$ with $(h_0\cdot h_1^{n-1},h_1^n)$ for $n=\abs{(\mathbb{Z}[t]/(t^{p^{m-1}}-1,p))^\times}$, we  may assume that $f_1$ and $h_1$ satisfy $[f_1]=\pi(h_1)=1$ in $\mathbb{Z}[t]/(t^{p^{m-1}}-1,p)$. By Lemma \ref{cyclic lift}, there exists an element $\widetilde{f_1}\in S$ such that $\widetilde{f_1}+\Im\Phi_{p^m}=f_1$ and $\res^H_G(\widetilde{f_1})=h_1$. Since $x=\widetilde{f_1}x/\widetilde{f_1}$, it remains to show that $\widetilde{f_1}x$ is in $R(G)$. 

Let $C_\nu$ be the mapping cone of the unital inclusion $\nu\colon \C\to D$. By Proposition \ref{K includes R(G)}, there is a short exact sequence %
\[R(G)\overset{\nu_\ast}\rightarrowtail K^G_0(D)\twoheadrightarrow K^G_1(C_\nu).\]%
By Corollary \ref{embedding} (1), $K^G_\ast(C_\nu)$ is uniquely $p$-divisible. Thus $K^G_1(C_\nu)$ is decomposed by the idempotent $\Phi_{p^m}/p$ as %
\begin{align*}K^G_1(C_\nu) &=\qty(1-\frac{\Phi_{p^m}}{p})K^G_1(C_\nu)\oplus \frac{\Phi_{p^m}}{p}K^G_1(C_\nu) \\ &\cong F^G_1(C_\nu)\oplus\Im\qty(\Res^H_G\colon K^G_1(C_\nu)\to K^H_1(C_\nu)).\end{align*}%
Thus an element $z\in K^G_0(D)$ is in $\Im\nu_\ast$ if and only if $[z]$ is in $\mathbb{Z}[\zeta_{p^m}]$ and $\Res^H_G(z)$ is in $R(H)$. Since $[\widetilde{f_1}x]=f_0$ and $\Res^H_G(\widetilde{f_1}x)=h_0$, the element $\widetilde{f_1}x$ is in $\Im\nu_\ast$. \end{proof}

Theorem \ref{cyclic case} follows from Corollary \ref{cyclic p-div case} and Theorem \ref{cyclic not p-div case}.

\subsection{Actions of EPPO-groups}

The main theorem of this paper is as follows.

\begin{thm}\label{EPPO case}Let $G$ be an EPPO-group and $D$ be a strongly self-absorbing $G$-$\Cstar$-algebra that belongs to the $G$-equivariant bootstrap class. Then $D$ satisfies the localization condition with respect to $C(G/K)$ for every cyclic subgroup $K\subset G$. Moreover, there is a $KK^G$-equivalence between $M^{S_D}$ of Proposition \ref{model} and $D$ that maps $[1_{M^{S_D}}]\in K^G_0(M^{S_D})$ to $[1_D]\in K^G_0(D)$.\end{thm}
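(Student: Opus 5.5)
By Remark \ref{rem of main conjecture} (2), the localization condition for $C(G/K)$ amounts to two statements: $K^K_0(D)\cong R(K)_{S_D}$, with the map induced by $\nu$ being the localization map, and $K^K_1(D)=0$. In an EPPO-group every cyclic subgroup $K$ has prime-power order, so $K\cong\mathbb{Z}/p^m$, and Theorem \ref{cyclic case} applies to $\Res^K_G D$. Restriction preserves strong self-absorption, and $\Res^K_G$ maps $\mathcal{B}_G$ into $\mathcal{B}_K$, since restricting $\Ind^G_H M_n$ gives a finite sum of induced algebras (Mackey decomposition). Theorem \ref{cyclic case} therefore gives $K^K_1(D)=0$ and $K^K_0(D)\cong R(K)_{S'}$, or $0$, where $S'\subset R(K)$ is the set of elements invertible in $K^K_0(D)$. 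The zero case is handled by the convention $\mathcal{M}_{R(G)}=0$: if $K^K_0(D)=0$, then $1\mapsto 0$ in $R(K)_{S_D}$ should also hold. That needs some element of $S_D$ whose restriction to $K$ is nilpotent or zero, for instance an induced element from a subgroup not containing $K$. This needs care, and I would treat it together with the main step.

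The main step is to show that $R(K)_{S'}=R(K)_{S_D}$ as $R(G)$-modules, that is, that $S'$ and the image $\res^K_G(S_D)$ have the same saturation in $R(K)$. One inclusion is clear: $\res^K_G(S_D)\subset S'$, because restriction is a ring homomorphism $K^G_0(D)\to K^K_0(D)$. For the other inclusion, given $s\in S'$, I want $x\in S_D$ with $\res^K_G(x)\in s\,R(K)$, perhaps after multiplying by a unit. My first attempt is a norm (multiplicative induction) construction: take a product of conjugates and restrictions that lands in $R(G)$ and is invertible in $K^G_0(D)$. Invertibility in $K^G_0(D)$ is checked with Corollary \ref{res invertible}, which reduces it to invertibility of the restrictions to every cyclic subgroup $L$; each $K^L_0(D)$ is again a localization of $R(L)$ by Theorem \ref{cyclic case}. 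So I need an element of $R(G)$ whose restriction to each cyclic $L$ lies in the corresponding $S'_L$, and whose restriction to $K$ lies in $sR(K)$. Constructing such an element compatibly across all cyclic subgroups, using the Mackey double coset formula of Proposition \ref{Dell'ambrogio}, is the main obstacle. This is where the EPPO hypothesis should enter: for each prime $p$, invertibility conditions at $p$-subgroups can be controlled through the $\mathbb{Z}/p$ reductions of Lemma \ref{[f] invertible}, and the mod-$p$ behaviour is governed by the augmentation $f(1)$.

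Concretely, for the main step I would first note, via Lemma \ref{[f] invertible}, that $s\in S'$ forces $\varepsilon(s)=s(1)$ to be invertible in $K_0(D)$. I would then correct $s$ by elements of the form $\Phi$ or $I^K_L(\cdot)$, using the decomposition (\ref{K decompose}) and the descriptions via $F^K$ and $K^L$ for $L\subset K$. After that I would take a suitable power and a product over conjugates $\con_{g}$ to obtain a $G$-invariant class, and lift it to $R(G)$ through the induction and restriction maps, checking its restrictions one cyclic subgroup at a time.

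Once all cyclic $K$ are handled, the second assertion follows at once. By \cite[Corollary 3.3]{Meyer and Nadareishvili} the objects $C(G/K)$ with $K$ cyclic generate $\mathcal{B}_G$, so Proposition \ref{loc KK} with $\mathcal{B}_G^0=\{C(G/K)\}$ gives the $KK^G$-equivalence $M^{S_D}\sim D$ preserving the units.
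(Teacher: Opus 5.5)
Your reduction matches the paper's. You apply Theorem~\ref{cyclic case} to each cyclic $K$, which has prime-power order, and reduce the localization condition for $C(G/K)$ to $S^K_D=\overline{\res^K_G(S_D)}$; the case $K^K_0(D)=0$ is just $s=0$. You then finish with Proposition~\ref{loc KK}.

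The gap is the step you yourself call the main obstacle: given $s\in S^K_D$, produce $x\in S_D$ with $\res^K_G(x)\in s\,R(K)$. This step is the whole content of the theorem, and your proposal gives no mechanism for it. ``Lift it to $R(G)$ through the induction and restriction maps'' is not an argument. Even when $\res^K_G$ is surjective, an arbitrary lift of $s$ need not lie in $S_D$. You need a lift whose restriction to \emph{every} cyclic subgroup $L$ is invertible in $K^L_0(D)$, including subgroups of other primes, and nothing in your plan controls those restrictions. The multiplicative norm fails as well. Take $G=\mathbb{Z}/p^2$ with $p$ odd, $K=pG$, and $D=M^S$ from Proposition~\ref{model action}, with $S$ generated by $x=p+\Phi_{p^2}$. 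Since $x^2=p(p+3\Phi_{p^2})$, $p$ is inverted.
\begin{itemize}
\item $\res^K_G(x)=2p$, so $s=2\in S^K_D$.
\item Both the obvious lift $2\in R(G)$ and the tensor induction of $2$ (character value $2$ at a generator) have component $2$ in $F^G_0(D)\cong\mathbb{Z}[\zeta_{p^2},1/p]$. This is not a unit, so neither lies in $S_D$.
\item The lift that works is the additive one, $x=p\cdot(1,s)$, in the splitting $R(G)_P\cong\mathbb{Z}[\zeta_{p^2},1/p]\oplus R(K)_P$.
\end{itemize}

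The missing idea is Brauer's theorem used as a gluing device. Choose $\phi_H\in R(H)$, for $H$ elementary, with $\sum_H I^G_H(\phi_H)=1$ (Theorem~\ref{Brauer}). Suppose $(f_H)_H$ is a family over \emph{all} elementary subgroups, compatible with restriction and conjugation, with $f_H\in S^H_D$ and $f_K\in s\,R(K)$. Then $\tilde f=\sum_H I^G_H(\phi_H f_H)$ satisfies $\res^L_G(\tilde f)=f_L$ for every cyclic $L$, by Proposition~\ref{Dell'ambrogio}. Hence $\tilde f\in S_D$ by Corollary~\ref{res invertible}. A family on cyclic subgroups alone is not enough, because integrality of the glued class needs the non-cyclic elementary subgroups; for an EPPO group these are all the $p$-subgroups.

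Producing such families (Theorem~\ref{EPPO family}) is the real work, and your plan does not address it.
\begin{itemize}
\item At primes where $K_*(D)$ is not uniquely $p$-divisible, one lifts along cyclic $p$-subgroups with Lemma~\ref{cyclic lift}. One then raises to $p$-powers until $f_H\equiv 1$ modulo $p^{r}$, where $p^{r}$ is the $p$-part of $|G|$. This congruence makes the explicit formula of Proposition~\ref{f_E}, which divides by $|N_E(H)/H|$, integral on non-cyclic $p$-subgroups $E$.
\item At divisible primes one works in $R(H)_P$, using the splitting $K^L_0(D)\cong F^L_0(D)\oplus K^{pL}_0(D)$ and the UCT description of Remark~\ref{p-group}. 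One then clears denominators by powers of $p$.
\item Finally, the families for different primes must agree on the trivial subgroup, their only overlap in an EPPO group. This is arranged with Lemma~\ref{integer lift} and by passing to powers.
\end{itemize}
None of these ingredients appears in your plan, so as written it does not prove the theorem.
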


Before outlining the strategy of the proof of this theorem, we first recall the following notion. 

\begin{dfn}[cf. {\cite[10.1 and 10.5]{Serre}}]Let $G$ be a finite group. For a prime number $p$, a subgroup of $G$ is called \textit{$p$-elementary} if it is the direct product of a $p$-group with a cyclic group of order coprime to $p$. A subgroup $H$ of $G$ is called \textit{elementary} if $H$ is $p$-elementary for some $p$. \end{dfn}

When $G$ is an EPPO-group, an elementary subgroup of $G$ is just a subgroup of prime-power order. The reason for focusing on elementary subgroups is given by the following theorem (see \cite[Theorem 19 of 10.5]{Serre}, for example). 

\begin{thm}[Brauer's theorem]\label{Brauer}Let $G$ be a finite group and $X$ be the set of elementary subgroups of $G$. Then the map %
\[\bigoplus_{H\in X}I^G_H\colon \bigoplus_{H\in X}R(H)\to R(G)\]%
is surjective. \end{thm}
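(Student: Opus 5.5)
Since the image $V$ of $\bigoplus_{H\in X}I^G_H$ in $R(G)$ is an ideal, it suffices to show $1\in V$. That $V$ is an ideal follows from the projection formula $x\cdot I^G_H(y)=I^G_H(\res^H_G(x)\cdot y)$ of Proposition \ref{Dell'ambrogio}, applied with $A=\C$. The plan is Serre's proof: for each prime $p$ dividing $\abs{G}$, produce an integer $n_p\in V$ prime to $p$. Then $\gcd_p(n_p)$ together with $\abs{G}$ is $1$, and an integral combination gives $1$, provided we also have some power-of-$p$-free multiple of $\abs{G}$, which the same argument supplies.

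\textbf{Extending scalars.} Let $N=\abs{G}$ and $\mathcal{A}=\mathbb{Z}[\zeta_N]$. I would work with $\mathcal{A}\otimes_{\mathbb{Z}} R(G)$, identified with the $\mathcal{A}$-valued virtual characters, and with $\mathcal{A}\otimes V$. Since $\mathcal{A}$ is a free $\mathbb{Z}$-module with a basis containing $1$, an integer lying in $\mathcal{A}\otimes V$ already lies in $V$. So it is enough to build elements of $\mathcal{A}\otimes V$ that are integer-valued constant functions.

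\textbf{Step 1: $p$-elementary subgroups.} Fix $x\in G$ and write $x=x_{p'}x_p$ with $x_{p'}$ of order prime to $p$ and $x_p$ of $p$-power order. Let $C=\langle x_{p'}\rangle$, let $P$ be a Sylow $p$-subgroup of the centralizer $Z_G(C)$, and put $H=C\times P$, which is $p$-elementary. Let $\theta_C$ be the $\mathcal{A}$-valued virtual character of $C$ equal to $\abs{C}$ at $x_{p'}$ and $0$ elsewhere; it lies in $\mathcal{A}\otimes R(C)$ by the standard Artin argument on cyclic groups. Inflate $\theta_C$ to $H$ and induce it to $G$. By the Frobenius formula, the resulting class function $\psi_x$ satisfies:
\begin{itemize}
\item[(a)] $\psi_x(x)=[Z_G(C):P]$, which is prime to $p$;
\item[(b)] $\psi_x(y)=0$ whenever the $p'$-part of $y$ is not conjugate to $x_{p'}$.
\end{itemize}

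\textbf{Step 2: the congruence and summation.} Using that $\chi(y)\equiv\chi(y_{p'})\pmod{\mathfrak{p}}$ for any prime $\mathfrak{p}$ of $\mathcal{A}$ above $p$, I would show that $\psi_x$ is $\mathbb{Z}$-valued. I would then sum such $\psi_x$ over representatives of $p'$-classes and subtract suitable multiples, to get an element $\psi\in\mathcal{A}\otimes V$ taking constant integer values prime to $p$ modulo $p$. Next, apply the fact that for a $\mathbb{Z}$-valued virtual character, $\psi-n$ with $\psi\equiv n$ everywhere is divisible enough: $\psi^{k}$ for large $k$ makes the non-constant part vanish modulo $\abs{G}$-type arguments. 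Writing $\psi=n_p\cdot 1+(\psi-n_p)$, I would show $(\psi-n_p)^{e}\in \abs{G}\cdot(\ldots)$ so as to extract an integer in $V$ prime to $p$. I expect this last extraction, getting an honest constant $n_p$ rather than merely a congruence, to be the main obstacle. The first thing I would try is Serre's trick of using that the values of $\psi-n_p$ are divisible by primes above $p$, and then raising to the $N$-th power and using that $\abs{G}\cdot R(G)\subset \mathcal{A}\otimes V$, which follows from Artin's theorem for cyclic subgroups with $\theta_C$.

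\textbf{Step 3: conclusion.} With $n_p\in V$ prime to $p$ for every $p\mid N$, and $N\in V$ (from Step 2's Artin input), we have $\gcd(N,(n_p)_p)=1$. Hence $1\in V$, and $V=R(G)$.
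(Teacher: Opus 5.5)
Your outline follows Serre's first proof, which is the argument the paper points to (the paper gives no proof of its own and only cites Serre). The reduction to $1\in\mathcal{A}\otimes V$, the ideal property via the projection formula, and the construction of $\psi_x$ are all sound. However, the step you flag as the main obstacle is a genuine gap, and the tool you propose for it, $\abs{G}\cdot R(G)\subset\mathcal{A}\otimes V$, does not close it.

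After Step 2 you have a $\mathbb{Z}$-valued $\psi\in\mathcal{A}\otimes V$ with $\psi(s)\not\equiv 0\pmod p$ for all $s$. Write $\abs{G}=p^a m$ with $p\nmid m$, and put $e=\varphi(p^a)$. Then $\psi^e\equiv 1\pmod{p^a}$ pointwise, so $f=m(\psi^e-1)$ takes values in $\abs{G}\mathbb{Z}$. Since $m=m\psi^e-f$ and $m\psi^e\in\mathcal{A}\otimes V$, everything hinges on showing $f\in\mathcal{A}\otimes V$. But $f=\abs{G}\cdot g$ where $g$ is only known to be an integer-valued class function, and such a $g$ need not lie in $\mathcal{A}\otimes R(G)$. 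For example, $\langle 1_{\{e\}},1\rangle=1/\abs{G}$, so $1_{\{e\}}\notin\mathcal{A}\otimes R(G)$. Hence $\abs{G}\cdot R(G)\subset\mathcal{A}\otimes V$ says nothing about $f$. The same objection kills your $(\psi-n_p)^e$ variant.

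What you need is the stronger lemma: \emph{every class function with values in $\abs{G}\mathcal{A}$ lies in $\mathcal{A}\otimes V$}. This is exactly what $\theta_C$ provides. For any $s\in G$, let $C=\langle s\rangle$, which is cyclic and hence elementary. Then $\theta_C=\sum_\chi\overline{\chi(s)}\chi\in\mathcal{A}\otimes R(C)$ equals $\abs{C}$ at $s$ and $0$ elsewhere. The induction formula gives $I^G_C(\theta_C)=\abs{Z_G(s)}\cdot 1_{\mathrm{cl}(s)}$, and multiplying by the integer $[G:Z_G(s)]$ puts $\abs{G}\cdot 1_{\mathrm{cl}(s)}$ in $\mathcal{A}\otimes V$. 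These elements span the $\abs{G}\mathcal{A}$-valued class functions over $\mathcal{A}$. With this lemma, $m=\abs{G}/p^a\in(\mathcal{A}\otimes V)\cap R(G)=V$ for each $p\mid\abs{G}$. These integers already have gcd $1$, so neither the ``subtract suitable multiples'' normalization nor the separate element $N\in V$ is needed.

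Two smaller corrections.
\begin{enumerate}
\item Claim (a) holds only for $p$-regular $x$. In general $\psi_x(x)$ is the number of fixed points of $x_p$ on $Z_G(C)/P$, which is only congruent to $[Z_G(C):P]$ mod $p$. For instance, $G=S_3$, $p=2$, and $x$ a transposition give $\psi_x(x)=1$, not $3$. So take $x$ $p$-regular from the start, as Serre does.
\item $\psi_x$ is $\mathbb{Z}$-valued because the induction formula makes its values rational algebraic integers, not because of the congruence $\chi(y)\equiv\chi(y_{p'})\pmod{\mathfrak{p}}$. The congruence's real role is to show $\sum_x\psi_x(y)\equiv\psi_{x}(x)\not\equiv0\pmod p$, where $x$ is the chosen representative with $x\sim y_{p'}$.
\end{enumerate}
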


In what follows $G$ denotes an EPPO-group and $D$ denotes a strongly self-absorbing $G$-$\Cstar$-algebra in the $G$-equivariant bootstrap class. For each subgroup $H\subset G$, $\nu^H_\ast\colon R(H)\to K^H_0(D)$ denotes the map induced by the unital inclusion $\nu\colon\C\to D$, and $S^H_D$ denotes the set of elements $f\in R(H)$ with $\nu_\ast^H(f)\in K^H_0(D)$ invertible. The strategy of the proof of Theorem \ref{EPPO case} is as follows. By Theorem \ref{cyclic case}, $K^H_\ast(D)\cong K^H_\ast(\mathbb{C})_{S^H_D}$ for each cyclic subgroup $H\subset G$. We show that $S^H_D=\overline{\res^H_G(S^G_D)}$, where $\overline{\res^H_G(S^G_D)}$ is the saturation of $\res^H_G(S^G_D)$ (Corollary \ref{res(S)}). Then 
\[KK^G_\ast(C(G/H),D)\cong K^H_\ast(D)\cong K^H_\ast(\mathbb{C})_{\res^H_G(S^G_D)}\cong KK^G_\ast(C(G/H),\mathbb{C})_{S^G_D}.\]
Thus $D$ satisfies the localization condition with respect to $C(G/H)$. By Proposition \ref{loc KK}, it implies Theorem \ref{EPPO case}. 

Let $K\subset G$ be a cyclic subgroup. It is easy to see that $\res^K_G(S_D)\subset S^K_D$ To show that $S^K_D=\overline{\res^K_G(S^G_D)}$, for each $f\in S^K_D$, we want to get $\widetilde{f}\in S^G_D$ such that $\res^K_G(\widetilde{f})$ is a multiple of $f$. If such an element $\widetilde{f}$ exists, then the family $(f_H)_{H\in X}$ of elements $f_H\coloneqq \res^H_G(\widetilde{f})\in S^H_D$, where $X$ is the set of elementary subgroups of $G$, satisfies %
\begin{align}\label{res}& \res^L_H(f_H)=f_L\quad (L\subset H), \\
\label{con}& \con_{g,H}(f_H)=f_{{}^gH}\quad (g\in G).\end{align}%
and $f_K$ is a multiple of $f$ (note that $K\in X$ because every cyclic subgroup of an EPPO-group has prime-power order). Conversely, $\widetilde{f}$ is constructed by glueing such a family via the induction $I^G_H\colon R(H)\to R(G)$. The following theorem guarantees the existence of such $(f_H)_{H\in X}$, and will be proved in the next subsection. 

\begin{thm}\label{EPPO family} For each cyclic subgroup $K\subset G$ and each element $f\in S^K_D$, there is a family $(f_H)_{H\in X}$ of elements $f_H\in S^H_D$ satisfying (\ref{res})-(\ref{con}) such that $f_K$ is a multiple of $f$. \end{thm}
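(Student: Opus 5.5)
The plan is to build the family from a single element $\widetilde{f}\in R(G)$, obtained by \emph{multiplicative induction} (tensor induction, the norm map) of $f$ from $K$ to $G$, and then set $f_H\coloneqq\res^H_G(\widetilde{f})$. Compatibility (\ref{res})--(\ref{con}) is then automatic, since restriction is transitive and $\con_{g,G}$ is trivial on $R(G)$. The real content is to show two things: $f_H\in S^H_D$ for every $H\in X$, and $f_K$ is a multiple of $f$.

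\textbf{Step 1 (the norm map).} I would use Evens' multiplicative transfer $N^G_K\colon R(K)\to R(G)$. On genuine representations it is tensor induction. It extends uniquely to a multiplicative, non-additive map on virtual characters, given by the character formula $\chi_{N(f)}(g)=\prod_{\text{$\langle g\rangle$-orbits } \omega\subset G/K} f\bigl(x_\omega^{-1}g^{|\omega|}x_\omega\bigr)$. I would record the multiplicative Mackey formula
\[
\res^H_G N^G_K(f)=\prod_{x\in H\backslash G/K} N^H_{H\cap {}^xK}\bigl(\res^{H\cap{}^xK}_{{}^xK}\con_{x,K}(f)\bigr),
\]
which one checks on characters.

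\textbf{Step 2 ($f_K$ is a multiple of $f$).} For $H=K$ the double coset of $x=1$ contributes the factor $f$ itself. Every other factor is a norm of an element of the form $\res\circ\con(f)$. So $f_K=f\cdot(\text{element of }R(K))$, and it is enough to show all factors lie in the relevant $S$-sets.

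\textbf{Step 3 (preservation of invertibility).} Restrictions and conjugates of elements of $S^L_D$ lie in $S^{L'}_D$, because $\res$ and $\con$ are ring maps on $K^{\bullet}_0(D)$ compatible with $\nu$. The key claim is then: for $L\subset H$ and $u\in S^L_D$, we have $N^H_L(u)\in S^H_D$. By Corollary \ref{res invertible}, invertibility in $K^H_0(D)$ is detected on cyclic subgroups $C\subset H$. Applying the Mackey formula again reduces the claim to the case where $L\subset C$ are both cyclic.

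Since $G$ is EPPO, $C$ has prime-power order. Two subcases arise.
\begin{itemize}
\item[(a)] If $L$ is trivial, then $u\in\mathbb{Z}$ is invertible in $K_0(D)$. In that case $N^C_1(u)=u^{|C|}\in\mathbb{Z}$, which is invertible as well.
\item[(b)] Otherwise $L\subset C$ is a pair of cyclic $q$-groups. Here I would prove the cyclic version of Corollary \ref{res(S)}, namely $S^L_D=\overline{\res^L_C(S^C_D)}$. I expect this to follow by induction along the chain of subgroups of $C$ from Lemma \ref{cyclic lift}, using the description $K^C_0(D)\cong R(C)_{S^C_D}$ of Theorem \ref{cyclic case}. (In the uniquely $q$-divisible case one instead uses the splitting (\ref{K decompose}).)
\end{itemize}
Granting (b), write $uv=\res^L_C(s)$ with $s\in S^C_D$. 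Then
\[
N^C_L(u)\,N^C_L(v)=N^C_L\res^L_C(s)=s^{[C:L]},
\]
because $C$ is abelian and conjugation acts trivially. Hence $N^C_L(u)$ is invertible.

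I expect the main obstacle to be this cyclic saturation statement in Step 3(b). Concretely, given invertible $u\in K^L_0(D)$, one must produce an invertible lift to $K^C_0(D)$. The route I would try first is to lift one index at a time. At each step, I would raise $u$ to a power so that its reduction in $\mathbb{Z}[t]/(t^{q^{m-1}}-1,q)$ is $1$, exactly as in the proof of Theorem \ref{cyclic not p-div case}. I would then apply Lemma \ref{cyclic lift}, pairing $u$ with a compatible invertible element of $\mathbb{Z}[\zeta_{q^m}]$; the natural candidate is the image of a suitable power of $\prod_{\sigma}\sigma(u)$ under the Galois action.

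A secondary technical point is justifying the Mackey formula for the norm on virtual characters. I would verify it directly on character values using the orbit formula above.
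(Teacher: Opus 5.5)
Steps 1--2 are fine: the norm exists, your Mackey formula holds, compatibility is automatic, and $f_K=f\cdot(\cdots)$. Step~3 fails, however, and not because cyclic saturation is hard. The norm map does not preserve the sets $S^H_D$, so $\widetilde f=N^G_K(f)$ need not lie in $S^G_D$.

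Both identities you rely on are false.
\begin{itemize}
\item[(a)] Your own orbit formula gives $\chi_{N^C_1(u)}(g)=u^{|C|/\mathrm{ord}(g)}$. So $N^C_1(u)$ is a permutation-type representation, not the integer $u^{|C|}$.
\item[(b)] For abelian $C$ one has $\chi_{N^C_L\res^L_C(s)}(g)=s(g^{k})^{[C:L]/k}$, where $k$ is the order of $gL$ in $C/L$. The power identity holds only for the other composite, $\res^L_C N^C_L(u)=u^{[C:L]}$. At a generator $g$ of $C$ the value is $s(g^{[C:L]})$, an Adams-twisted value, and invertibility of $s$ gives no control over it.
\end{itemize}

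\emph{Counterexample.} Let $G=\mathbb{Z}/3$ and $D=M^S$ from Proposition~\ref{model action}, with $S$ generated by $3$ and $x=2+t+t^2$. Using the map (dimension, $t\mapsto\zeta_3$), we get
\[
K^G_0(D)\cong R(G)_S\cong\mathbb{Z}[1/6]\times\mathbb{Z}[\zeta_3,1/3],\qquad x\mapsto(4,1).
\]
Also $K_0(D)\cong\mathbb{Z}[1/6]$, so $f=2\in S^{\{e\}}_D$ for the cyclic subgroup $K=\{e\}$.
\begin{itemize}
\item[(i)] Your lift is $N^G_{\{e\}}(2)=4+2t+2t^2$, with character $(8,2,2)$, and it maps to $(8,2)$. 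Since $2$ is inert in $\mathbb{Z}[\zeta_3]$, it is not a unit of $\mathbb{Z}[\zeta_3,1/3]$. Hence $f_G\notin S^G_D$, even though $G\in X$.
\item[(ii)] The theorem itself holds here: take $f_G=x$ and $f_{\{e\}}=4$.
\item[(iii)] The same example refutes (b) even after granting saturation. We have $4=\res^{\{e\}}_G(x)$ with $x\in S^G_D$, but $N^G_{\{e\}}(4)\mapsto(64,4)$ is not invertible.
\end{itemize}

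\emph{What is missing.} You need the freedom to choose, at each cyclic level $L$, the part of $f_L$ not seen by restriction to $pL$. This is its component in $F^L_0(D)$, or in $K^L_0(D)/\operatorname{Im}\Phi_{|L|}$ in the non-divisible case. The norm instead forces this component to be a twisted value of $f$, which may be invertible only in a larger localization.

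The paper uses exactly this freedom:
\begin{itemize}
\item[(1)] In the uniquely $p$-divisible case it sets that component equal to $1$ (Lemma~\ref{p-div lift}).
\item[(2)] Otherwise it arranges $[f_L]=1$ modulo $p$ and lifts using Lemma~\ref{cyclic lift}.
\item[(3)] It then extends from cyclic $p$-subgroups to all $p$-subgroups by an explicit induction formula.
\item[(4)] Finally it matches the families for different primes at the trivial subgroup, adjusting dimensions with Lemma~\ref{integer lift}.
\end{itemize}
Some componentwise construction of this kind has to replace the norm; there is no single multiplicative lift of $f$ that works.
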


Admitting Theorem \ref{EPPO family}, we can show the following. 

\begin{thm}\label{lifting theorem} For each cyclic subgroup $K\subset G$ and each element $f\in S^K_D$, there exists $\widetilde{f}\in S^G_D$ such that $\res^K_G(\widetilde{f})$ is a multiple of $f$. \end{thm}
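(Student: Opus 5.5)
The idea is to glue the family $(f_H)_{H\in X}$ from Theorem \ref{EPPO family} into a single element of $R(G)$ using Brauer's theorem, and then recover each $f_L$ by restriction via the Mackey formula. Fix the cyclic subgroup $K$ and $f\in S^K_D$, and take $(f_H)_{H\in X}$ as in Theorem \ref{EPPO family}, with $f_K=f\cdot g$ for some $g\in R(K)$.

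\textbf{Step 1 (Brauer).} By Theorem \ref{Brauer}, I choose $a_H\in R(H)$ for $H\in X$ with
\[1=\sum_{H\in X}I^G_H(a_H)\in R(G).\]
I then define
\[\widetilde{f}\coloneqq\sum_{H\in X}I^G_H(a_H f_H)\in R(G).\]

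\textbf{Step 2 (restriction to an elementary subgroup).} For any $L\in X$, I will show $\res^L_G(\widetilde f)=f_L$ in $R(L)$. All computations take place in representation rings, which are the Mackey modules $K^\bullet_0(\C)$, so Proposition \ref{Dell'ambrogio} applies. By the double coset formula,
\[\res^L_G I^G_H(a_Hf_H)=\sum_{x\in L\backslash G/H}I^L_{L\cap {}^xH}\,\con_{x,L^x\cap H}\,\res^{L^x\cap H}_H(a_Hf_H).\]
Since restriction and conjugation are ring homomorphisms, the summand is the product of $\con_{x}\res(a_H)$ and $\con_{x}\res^{L^x\cap H}_H(f_H)$.

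The group $L^x\cap H$ lies in $H$, which has prime-power order because $G$ is EPPO, so $L^x\cap H\in X$. Relations (\ref{res}) and (\ref{con}) then give
\[\con_x\res^{L^x\cap H}_H(f_H)=f_{L\cap{}^xH}=\res^{L\cap {}^xH}_L(f_L).\]
The Frobenius formula $I^L_M(y\cdot\res^M_L(z))=I^L_M(y)\cdot z$ lets me pull $f_L$ out of each summand. Applying the Mackey formula backwards then gives
\[\res^L_G(\widetilde f)=\Bigl(\sum_H \res^L_G I^G_H(a_H)\Bigr)f_L=\res^L_G(1)\,f_L=f_L.\]

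\textbf{Step 3 (conclusion).} Taking $L=K$ gives $\res^K_G(\widetilde f)=f_K=fg$, which is a multiple of $f$. For invertibility, note that every cyclic subgroup $L\subset G$ has prime-power order and hence lies in $X$. For such $L$,
\[\res^L_G(\nu_*\widetilde f)=\nu^L_*(f_L),\]
which is invertible since $f_L\in S^L_D$. By Corollary \ref{res invertible}, $\nu_*\widetilde f$ is therefore invertible in $K^G_0(D)$, that is, $\widetilde f\in S^G_D$.

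The main obstacle is the bookkeeping in Step 2. One must check that the conjugation conventions in the double coset formula of Proposition \ref{Dell'ambrogio} match relation (\ref{con}), and that every intermediate subgroup $L\cap{}^xH$ actually belongs to $X$, so that $f$ is defined there. The latter is exactly where the EPPO hypothesis enters: subgroups of elementary subgroups are again of prime-power order.
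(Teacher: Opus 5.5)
Your proposal is correct and follows the paper's proof essentially verbatim. You glue the family from Theorem~\ref{EPPO family} via a Brauer decomposition $1=\sum_{H\in X}I^G_H(a_H)$, verify $\res^L_G(\widetilde f)=f_L$ using the Mackey double coset formula together with the Frobenius relation from Proposition~\ref{Dell'ambrogio}, and conclude $\widetilde f\in S^G_D$ by Corollary~\ref{res invertible}. The only cosmetic difference is that you check the restriction identity for every $L\in X$, whereas the paper checks it only for cyclic $L$; this costs nothing extra.
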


\begin{proof}Take a family $(f_H)_{H\in X}$ as in Theorem \ref{EPPO family}. By Theorem \ref{Brauer}, there is a family $(\phi_H)_{H\in X}$ of elements $\phi_H\in R(H)$ with $\sum_{H\in X}I^G_H(\phi_H)=1$. Using this, define %
\[\widetilde{f}\coloneqq \sum_{H\in X}I^G_H(\phi_H f_H).\]%
We show that $\res^L_G(\widetilde{f})=f_L$ for each cyclic subgroup $L\subset G$. Then, since $f_L\in S^D_L$, Corollary \ref{res invertible} implies $\widetilde{f}\in S^G_D$. Moreover, $\res^K_G(\widetilde{f})=f_K$ is a multiple of $f$. 

It remains to show that $\res^L_G(\widetilde{f})=f_L$ for each cyclic subgroup $L\subset G$. By Proposition \ref{Dell'ambrogio}, %
\begin{align*}\res^L_G(\widetilde{f})&=\sum_{H\in X}\sum_{g\in L\backslash G/H}I^L_{L\cap {}^gH}\circ\con_{g,L^g\cap H}\circ \res^{L^g\cap H}_H(\phi_Hf_H). \end{align*}%
By Lemma \ref{EPPO family}, we have $\con_{g,L^g\cap H}\circ \res^{L^g\cap H}_H(f_H)=f_{L\cap{}^gH}=\res^{L\cap{}^gH}_L(f_L)$. Thus %
\begin{align*}&\sum_{g\in L\backslash G/H}I^L_{L\cap {}^gH}\circ\con_{g,L^g\cap H}\circ \res^{L^g\cap H}_H(\phi_Hf_H) \\
&=\sum_{g\in L\backslash G/H}I^L_{L\cap {}^gH}\qty(\con_{g,L^g\cap H}\circ \res^{L^g\cap H}_H(\phi_H)\cdot \res^{L\cap{}^gH}_L(f_L)) \\
&=\sum_{g\in L\backslash G/H}I^L_{L\cap {}^gH}\circ\con_{g,L^g\cap H}\circ \res^{L^g\cap H}_H(\phi_H)\cdot f_L \\
&=\res^L_G\circ I^G_H(\phi_H)\cdot f_L,\end{align*}%
where the second and third equality follow from Proposition \ref{Dell'ambrogio}. Hence %
\begin{align*}\res^L_G(f_1)&=\sum_{H\in X}\res^L_G\circ I^G_H(\phi_H)\cdot f_L=f_L\end{align*}%
because $\sum_{H\in X}I^G_H(\phi_H)=1$. \end{proof}

The above theorem implies that the natural map $R(K)\to R(K)_{\res^K_G(S^D_G)}$ induces the map $R(K)_{S^D_K}\to R(K)_{\res^K_G(S^D_G)}$. Since $\res^K_G(S^D_G)\subset S^D_K$, there is also the natural map $R(K)_{\res^K_G(S^D_G)}\to R(K)_{S^D_K}$. They are inverses of each other. Therefore, the following statement holds. 

\begin{cor}\label{res(S)} For each cyclic subgroup $K\subset G$, we have $S^K_D=\overline{\res^K_G(S^G_D)}$. \end{cor}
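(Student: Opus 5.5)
The plan is to verify the two inclusions $\res^K_G(S^G_D)\subset S^K_D$ and $S^K_D\subset\overline{\res^K_G(S^G_D)}$, and then check that $S^K_D$ is saturated.

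First, $\res^K_G(S^G_D)\subset S^K_D$. The restriction functor $\Res^K_G$ is compatible with the unital inclusion $\nu$, so $\res^K_G\circ\nu^G_\ast=\nu^K_\ast\circ\res^K_G$. Moreover $\res^K_G\colon K^G_0(D)\to K^K_0(D)$ is a ring homomorphism for the multiplication of Definition \ref{product}, because it commutes with cup products and with $(\id\otimes\nu)_\ast$. Hence an invertible element restricts to an invertible element.

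Second, $S^K_D$ is saturated. If $xy\in S^K_D$ for $x,y\in R(K)$, then $\nu^K_\ast(x)\nu^K_\ast(y)$ is invertible in the commutative ring $K^K_0(D)$ (Proposition \ref{ring structure}). Therefore $\nu^K_\ast(x)$ and $\nu^K_\ast(y)$ are both invertible. Combined with the first step, this gives $\overline{\res^K_G(S^G_D)}\subset S^K_D$.

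For the reverse inclusion, I apply Theorem \ref{lifting theorem}. Given $f\in S^K_D$, it provides $\widetilde{f}\in S^G_D$ with $\res^K_G(\widetilde{f})=fg$ for some $g\in R(K)$. By the definition of saturation, $f\in\overline{\res^K_G(S^G_D)}$. I would also record the consequence for localizations. The natural maps $R(K)_{\res^K_G(S^G_D)}\to R(K)_{S^K_D}$ and $R(K)_{S^K_D}\to R(K)_{\res^K_G(S^G_D)}$ are mutually inverse, since both are induced by the identity of $R(K)$. They exist by Remark \ref{localization}: each element of $S^K_D$ divides an element of $\res^K_G(S^G_D)$ and so becomes invertible in the latter localization.

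The only substantive input is Theorem \ref{lifting theorem}, which in turn rests on Theorem \ref{EPPO family}. The main point needing care is that restriction is a ring map on the $K$-theory of $D$. I would derive this from the naturality of the cup product under $\Res^K_G$ together with $\Res^K_G(\id\otimes\nu)=\id\otimes\nu$.
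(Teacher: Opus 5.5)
Your proposal is correct and follows the paper's argument. Theorem \ref{lifting theorem} gives $S^K_D\subset\overline{\res^K_G(S^G_D)}$, and the reverse inclusion follows from $\res^K_G(S^G_D)\subset S^K_D$ together with the saturation of $S^K_D$. Your extra justifications, which the paper leaves as easy, are sound: restriction is a unital ring map commuting with $\nu_\ast$, and $\nu^K_\ast$ is multiplicative, so $S^K_D$ is saturated.
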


\begin{proof}The preceding theorem implies that $\overline{\res^K_G(S^G_D)}$ includes $S^K_D$. Since $S^K_D$ is saturated and $\res^K_G(S^G_D)\subset S^K_D$, we obtain the desired conclusion. \end{proof}

Now we prove the main theorem. 

\begin{proof}[Proof of Theorem \ref{EPPO case}] Let $K$ be a cyclic subgroup of $G$. Proposition \ref{reciprocity} (2) gives the isomorphisms $KK^G_\ast(C(G/K),\mathbb{C})_{S^G_D}\cong K^K_\ast(\mathbb{C})_{\res^K_G(S^G_D)}$ and $KK^G_\ast(C(G/K),D)\cong K^K_\ast(D)$. By Theorem \ref{cyclic case} and Corollary \ref{res(S)}, there is an isomorphism $K^K_\ast(\mathbb{C})_{\res^K_G(S^G_D)} \cong K^K_\ast(D)$ such that its composition with the natural map $K^K_\ast(\mathbb{C})\to K^K_\ast(\mathbb{C})_{\res^K_G(S^G_D)}$ is $\nu_\ast\colon K^K_\ast(\mathbb{C})\to K^K_\ast(D)$. Thus, $D$ satisfies the localization condition with respect to $C(G/K)$. By \cite[corollary 3.3]{Meyer and Nadareishvili}, $\mathcal{B}_G$ is generated by the objects $C(G/K)$ for cyclic subgroups $K\subset G$. Therefore, Proposition \ref{loc KK} gives the desired $KK^G$-equivalence between $M^{S_D}$ and $D$. \end{proof}

As a consequence of Theorem \ref{EPPO case}, $S_D=S^G_D$ is the complete invariant of the identity-preserving $KK^G$-equivalence classes of a strongly self-absorbing $D\in\mathcal{B}_G$. 

\begin{cor}Let $G$ be an EPPO-group and $D_i=(D_i,\alpha_i)\ (i=1,2)$ be two strongly self-absorbing $G$-$\Cstar$-algebra that belong to $\mathcal{B}_G$. \begin{itemize} 
\item[$(1)$] There is an $KK^G$-equivalence between $D_1$ and $D_2$ that maps $[1_{D_1}]\in K^G_0(D_1)$ to $[1_{D_2}]\in K^G_0(D_2)$ if and only if $S_{D_1}=S_{D_2}$. 
\item[$(2)$] Suppose that $D_i$ are unital Kirchberg algebras and $\alpha_i$ are isometrically shift-absorbing for $i=1,2$. Then $D_1$ and $D_2$ are conjugate if and only if $S_{D_1}=S_{D_2}$. \end{itemize}\end{cor}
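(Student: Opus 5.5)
The plan is to deduce both parts from Theorem~\ref{EPPO case}, together with an elementary observation about how the ring $K^G_0(D)$ interacts with its $R(G)$-module structure. For (2) I will additionally invoke the Gabe--Szab\'o classification theorem.

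For the "if" direction of (1), suppose $S_{D_1}=S_{D_2}=:S$. By Theorem~\ref{EPPO case} there are $KK^G$-equivalences $f_i\in KK^G(M^{S},D_i)$ with $[1_{M^S}]\otimes f_i=[1_{D_i}]$. Then $f_1^{-1}\otimes f_2\in KK^G(D_1,D_2)$ is invertible and sends $[1_{D_1}]$ to $[1_{D_2}]$.

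For the "only if" direction, let $f\in KK^G(D_1,D_2)$ be invertible with $f_\ast[1_{D_1}]=[1_{D_2}]$. I first record that, for $x\in R(G)$ and $y\in K^G_0(D)$, the ring product satisfies $\nu_\ast(x)\,y=x\cdot y$, where the right-hand side is the $R(G)$-module action. This follows from Definition~\ref{product}: $\nu_\ast(x)\cupprod y=x\cdot([1_D]\cupprod y)=x\cdot(\nu\otimes\id)_\ast(y)$, and $(\nu\otimes\id)_\ast=(\id\otimes\nu)_\ast$ by Corollary~\ref{multiplication}(1).

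Consequently, $x\in S_D$ if and only if multiplication by $x$ is bijective on $K^G_0(D)$. Indeed, if this map is bijective, choose $y$ with $x\cdot y=[1_D]$, which gives $\nu_\ast(x)y=1$. Conversely, invertibility of $\nu_\ast(x)$ clearly makes the multiplication bijective. Since the Kasparov product is $R(G)$-bilinear, $f_\ast\colon K^G_0(D_1)\to K^G_0(D_2)$ is an $R(G)$-module isomorphism. It therefore conjugates multiplication by $x$ on $K^G_0(D_1)$ to multiplication by $x$ on $K^G_0(D_2)$, which gives $S_{D_1}=S_{D_2}$. This direction does not even use the unit-preservation hypothesis or the EPPO assumption.

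For (2), conjugacy implies $S_{D_1}=S_{D_2}$: a conjugacy gives a unit-preserving $KK^G$-equivalence, and we then apply (1). Conversely, if $S_{D_1}=S_{D_2}$, then (1) provides an invertible element of $KK^G(D_1,D_2)$ mapping $[1_{D_1}]$ to $[1_{D_2}]$. The Gabe--Szab\'o theorem \cite{Gabe and Szabo} states that isometrically shift-absorbing actions of $G$ on unital Kirchberg algebras are conjugate exactly when there is such a unit-preserving $KK^G$-equivalence, so $D_1$ and $D_2$ are conjugate.

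The only step requiring care is the identification of ring multiplication by $\nu_\ast(x)$ with the module action, which must be checked with the precise definition of the product. The main obstacle is rather to confirm that the unital form of the Gabe--Szab\'o classification applies verbatim to our hypotheses (finite $G$, separable, nuclear, unital), with unit-preserving $KK^G$-equivalence as the invariant. I expect this to be a direct citation.
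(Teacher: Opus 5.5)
Your proposal is correct and follows the paper's proof. The converse of (1) is obtained by composing the unit-preserving equivalences $M^{S}\to D_i$ from Theorem~\ref{EPPO case}; the paper phrases this via Proposition~\ref{loc KK}. Part (2) is (1) combined with the Gabe--Szab\'o theorem \cite[Corollary 6.4 (ii)]{Gabe and Szabo}.

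The paper calls the forward direction of (1) clear. Your argument that $x\in S_D$ if and only if $x$ acts bijectively on the $R(G)$-module $K^G_0(D)$ is a valid way to fill it in. It also shows that any $KK^G$-equivalence, unit-preserving or not, forces $S_{D_1}=S_{D_2}$.
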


\begin{proof}\begin{itemize}
\item[(1)] Clearly $S_{D_1}=S_{D_2}$ if $D_1$ and $D_2$ are $KK^G$-equivalent preserving identities. The converse follows from Theorem \ref{EPPO case} and Proposition \ref{loc KK} (2). 
\item[(2)] By \cite[Corollary 6.4 (ii)]{Gabe and Szabo}, two isometrically shift-absorbing actions of a compact group $G$ are conjugate if and only if they are $KK^G$-equivalent preserving identities. Hence, the statement follows from (1). \end{itemize}\end{proof}

Combining this result with Proposition \ref{model}, there is the one-to-one correspondence between the set of saturated multiplicative subsets $S\subset R(G)$ and the identity-preserving $KK^G$-equivalence classes of strongly self-absorbing $G$-$\Cstar$-algebra $D$ in $\mathcal{B}_G$. This correspondence is given by $S\mapsto M^S$ and $D\mapsto S_D$. 

\subsection{Proof of Theorem \ref{EPPO family}}

The purpose of this subsection is to prove Theorem \ref{EPPO family}. As in the previous subsection, $G$ denotes an EPPO-group and $D$ is a strongly self-absorbing $G$-$\Cstar$-algebra in the $G$-equivariant bootstrap class $\mathcal{B}_G$. Fix a cyclic subgroup $K\subset G$ and an element $f\in S^K_D$. 

Let $\abs{G}=p_1^{r_1}\cdots p_m^{r_m}$ be the prime factorization of $\abs{G}$. After reindexing, we may assume that there exists $n\geq 0$ such that $K_\ast(D)$ is uniquely $p_i$-divisible for all $1\leq i\leq n$. If $K_\ast(D)$ is not uniquely $p_i$-divisible for any $i$, then we set $n=0$. Let $X_i$ be the set of $p_i$-subgroups of $G$ and $Y_i\subset X_i$ be the set of cyclic $p_i$-subgroups of $G$. There is a unique number $k$ with $K\in Y_k$. In this setting, the statement of Theorem \ref{EPPO family} is reformulated as follows. 

\begin{thm}\label{reformulation} There exist families $(f^{(i)}_H)_{H\in X_i}\ (i=1,\dots, m)$ of elements $f^{(i)}_H\in S^H_D$ such that each $(f^{(i)}_H)_{H\in X_i}$ satisfies (\ref{res})-(\ref{con}), $f^{(k)}_K$ is a multiple of $f$ and $f^{(1)}_{\{e\}} =\dots =f^{(m)}_{\{e\}}$. \end{thm}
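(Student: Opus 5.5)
The plan is to construct the families one prime at a time, using the $p$-group structure of each $X_i$, and then to match the values at the trivial subgroup.

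\textbf{Step 1: the prime $p_k$.} Let $P$ be a Sylow $p_k$-subgroup containing $K$. First I will establish a $p$-group analogue of Theorem \ref{cyclic case}: for a $p$-group $P$, $K^P_0(D)$ is a localization of $R(P)$ (or $0$), and $K^P_1(D)=0$. This should go by induction along a normal series. When $K_\ast(D)$ is uniquely $p_k$-divisible, I would use the decomposition coming from the idempotents $\psi$ together with Theorem \ref{Kunneth}. When it is not, I would use Corollary \ref{embedding} and repeat the argument of Lemma \ref{cyclic lift}. The concrete goal is a lifting statement: every $f\in S^K_D$ has a multiple of the form $\res^K_P(\widetilde f)$ with $\widetilde f\in S^P_D$.

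Given such an $\widetilde f$, I would pass to a $G$-invariant version by a norm (multiplicative transfer) construction. Set
\[F=\prod_{g\in N_G(P)/P}\con_{g,P}(\widetilde f),\]
which is invertible. For the other Sylow subgroups ${}^gP$, define $f^{(k)}_{{}^gP}=\con_{g,P}(F)$, and take restrictions to obtain $f^{(k)}_H$ for every $H\in X_k$.

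The \textbf{main obstacle} is well-definedness when $H$ lies in two different Sylow subgroups. My first attempt would be to form instead the full multiplicative norm over all $G$-conjugates, namely the product of the values $\con_g(\res(\cdot))$ over representatives. If that fails, I would fall back to raising to a suitable power, exploiting the fact that $S^H_D$ is saturated. The point is that only a multiple of $f$ is required at $K$, and each factor $\con_{g}\res(\widetilde f)$ lies in $S^K_D$, so the product at $K$ is still a multiple of $f$.

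\textbf{Step 2: the primes $i\ne k$.} For these primes I only need invertible families whose values at $\{e\}$ are prescribed integers. The trivial choice $f^{(i)}_H=c$ for an integer $c\in S^{\{e\}}_D$ satisfies (\ref{res})--(\ref{con}), and it lies in $S^H_D$ because its image in $K^H_0(D)$ is the image of an invertible element under the unital ring map $\nu^H_\ast$. Indeed, $c$ is invertible in $K_0(D)$, and by Corollary \ref{res invertible} and Theorem \ref{cyclic case} its restriction to each cyclic subgroup of $H$ is invertible.

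\textbf{Step 3: matching at $\{e\}$.} The value of the $k$-family at the trivial subgroup, $f^{(k)}_{\{e\}}\in R(\{e\})=\mathbb{Z}$, is some integer $c$, and $c$ is invertible in $K_0(D)$. So I set $f^{(i)}_H=c$ for all $i\ne k$ and all $H\in X_i$. Then
\[f^{(1)}_{\{e\}}=\dots=f^{(m)}_{\{e\}}=c,\]
which completes the construction.
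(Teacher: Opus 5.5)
Your proposal has a genuine gap in Step 2. It claims that an integer $c$ invertible in $K_0(D)$ gives a constant family lying in $S^H_D$ for every $p_i$-subgroup $H$. This is false. Invertibility passes from $K^H_0(D)$ down to $K_0(D)$ under restriction, not back. Corollary \ref{res invertible} only reduces invertibility to cyclic subgroups, and Theorem \ref{cyclic case} says $K^L_0(D)$ is some localization of $R(L)$, not which integers it inverts.

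Here is a concrete counterexample. Let $G=S_3$, let $\sigma$ be its $2$-dimensional irreducible representation, and let $D=M^S$ be the model of Proposition \ref{model action} with $S$ generated by $1+\sigma$.
\begin{itemize}
\item Non-equivariantly, $K_0(D)\cong\mathbb{Z}[1/3]$.
\item For $K=A_3$, the restriction of $1+\sigma$ is $\Phi_3$, so $K^K_0(D)\cong R(K)_{\Phi_3}\cong\mathbb{Z}[1/3]$ and $f=3\in S^K_D$.
\item Since $f^{(k)}_K$ must be a multiple of $f$, the common value at $\{e\}$ is divisible by $3$.
\item For $L=\langle(12)\rangle$ with nontrivial character $t$, we get $K^L_0(D)\cong R(L)_{2+t}$.
\item The character embedding $R(L)\hookrightarrow\mathbb{Z}\times\mathbb{Z}$ sends $2+t$ to $(3,1)$. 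Hence $R(L)_{2+t}\subset\mathbb{Z}[1/3]\times\mathbb{Z}$, and $3$, hence every multiple of $3$, is not a unit there.
\end{itemize}
So no constant family works for the prime $2$. The invertible element of $R(L)$ lying over $3$ is $2+t$, not $3$. Adding $2$ to $S$ gives the same failure with $K_\ast(D)$ uniquely $2$-divisible, so the problem is not confined to one case.

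What the theorem actually needs is this: for each prime $p_i$ and each integer $a\in S^{\{e\}}_D$, an invertible family on $X_i$ satisfying (\ref{res})--(\ref{con}) whose value at $\{e\}$ is a suitable multiple or power of $a$. The paper supplies this in two ways. Lemma \ref{integer lift} gives value $p_i^{r_i}a$ in the uniquely $p_i$-divisible case; it is built from elements equal to $a$ in the augmentation component and $1$ in every nontrivial cyclotomic component. Corollary \ref{f lift not p-div}, applied with $K=\{e\}$, gives a value that is a power of $a$. The paper then matches the values at $\{e\}$ by multiplying by $p_1^{r_1}\cdots p_n^{r_n}$ and raising all families to a common power.

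Step 1 is also not a proof as it stands. That $K^P_0(D)$ is a localization of $R(P)$ for every $p$-group $P$ is, in the paper, a consequence of the main theorem, to which the present statement is an input. Theorem \ref{Kunneth} is only available for cyclic groups, and you do not say how the cyclic arguments would propagate along a normal series.

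More importantly, the compatibility problem you identify is left open. Conjugates of a single $\widetilde f\in S^P_D$ need not agree on $P\cap{}^gP$. Neither taking powers nor saturation of $S^H_D$ can make two distinct elements equal.

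The paper does not start from a Sylow subgroup. In the non-uniquely $p_k$-divisible case it proceeds as follows:
\begin{itemize}
\item It first builds a family on the set $Y_k$ of cyclic $p_k$-subgroups, which is closed under conjugation and passage to subgroups (Lemmas \ref{pre family} and \ref{cyclic family}). There, symmetrizing by a product over all of $G$ is harmless, and one arranges $f_H-1\in p_k^{r_k}R(H)$.
\item It then extends the family to all of $X_k$ by the explicit induction formula of Proposition \ref{f_E}.
\item The Mackey formula gives (\ref{res})--(\ref{con}), the congruence makes the divisions integral, and Corollary \ref{res invertible} gives invertibility.
\end{itemize}
In the uniquely $p_k$-divisible case, the extension instead uses the product decomposition of Remark \ref{p-group} (Lemma \ref{p-div lift}).
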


Our first step is to obtain $(f^{(k)}_H)_{H\in X_k}$. We begin with the case where $K_\ast(D)$ is not uniquely $p_k$-divisible. 

\begin{lem}\label{pre family} Suppose that $n<k \leq m$. Then there exists a family $(f_{H})_{H\in Y_k}$ of elements $f_{H}\in S^H_D$ satisfying (\ref{res}) such that $f_K$ is a multiple of $f$, $f_{\{e\}}$ is a power of $\res^{\{e\}}_K(f)$ and $f_H-1\in p_k\cdot R(H)$ for each $H$. \end{lem}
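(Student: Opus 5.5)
The plan is to build the family by induction on $\abs{H}$ over the cyclic $p_k$-subgroups, and to fix the congruence modulo $p_k$ at the very end by raising everything to a common power. Write $p=p_k$. For every cyclic $p$-subgroup $H\subset G$, the restriction of $D$ to $H$ is again a strongly self-absorbing $H$-$\Cstar$-algebra in $\mathcal{B}_H$, and $K_\ast(D)$ is not uniquely $p$-divisible. So all the results of Subsection 5.1 apply to $H$ in place of $G$. In particular:
\begin{itemize}
\item[(a)] $R(H)\subset K^H_0(D)$ via $\nu^H_\ast$ (Theorem \ref{cyclic case});
\item[(b)] if $g\in R(H)$ is invertible in $K^H_0(D)$, then its image $[g]$ in $R(H)/pR(H)$ is a unit (the argument of Lemma \ref{[f] invertible} with $H$ in place of the index-$p$ subgroup);
\item[(c)] Lemma \ref{cyclic lift} holds with $H$ in place of $G$ and its index-$p$ subgroup $H_0$ in place of $H$.
\end{itemize}

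Since $H$ is cyclic of prime-power order, it has a unique maximal proper subgroup $H_0$ (of index $p$), and its subgroups form a chain. So (\ref{res}) holds for the whole family as soon as $\res^{H_0}_H(f_H)=f_{H_0}$ for every $H\neq\{e\}$. I define $f_H$ by induction on $\abs{H}$:
\begin{itemize}
\item If $H\subset K$, put $f_H=\res^H_K(f)$. This lies in $S^H_D$, and it is compatible with $f_{H_0}$ because $H_0\subset K$ as well.
\item If $H\not\subset K$, then $f_{H_0}\in S^{H_0}_D$ is already defined. Apply Lemma \ref{cyclic lift} to the pair $(1,f_{H_0})$, where $1\in\mathbb{Z}[\zeta_{\abs{H}}]$.
\end{itemize}
The compatibility hypothesis $[1]=\pi(f_{H_0})$ of that lemma is where I expect the main obstacle, because it requires $f_{H_0}\equiv 1$ modulo $p$, at least after restricting to the level where $\pi$ lands. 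To secure this, I first replace $f$ by $f^{N_0}$, where $N_0=\abs{(R(K)/pR(K))^\times}$. By (b), $[f]$ is a unit, so $[f^{N_0}]=1$. All restrictions of $f^{N_0}$ are then $\equiv1$ modulo $p$, and hence $\pi(f_{H_0})=1$ at the first step outside $K$.

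For later steps outside $K$, I ask in addition that the lifts stay $\equiv 1$ modulo $p$. If the lift $\widetilde{g}$ given by Lemma \ref{cyclic lift} is not, I would first try to repair it by an inductive re-powering. I would verify the congruence directly only if needed: the lift $\widetilde{g}=1-I(h')$ with $p h'=1-f_{H_0}$, and in characteristic $p$ the induction $I^H_{H_0}$ of an element congruent to $0$ needs care. If the direct check fails, I handle it by simultaneous powers: replacing all previously constructed $f_L$ by a common $N$-th power preserves (\ref{res}), membership in $S^L_D$, and the required form at $K$ and $\{e\}$. So after each step I can re-power the whole partial family by $N=\abs{(R(H)/pR(H))^\times}$, which makes $[f_H]=1$ by (b). Since $G$ has only finitely many cyclic $p$-subgroups, this terminates after finitely many steps.

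Each lift lies in $S^H_D$ by the second half of Lemma \ref{cyclic lift}. Its hypotheses hold because $1$ is invertible in $K^H_0(D)/\operatorname{Im}\Phi_{\abs{H}}$ and $f_{H_0}$ is invertible in $K^{H_0}_0(D)$.

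Finally, let $M$ be the product of all exponents used along the way. Then $f_K=\res^K_K(f^{M})=f^{M}$, which is a multiple of $f$, and $f_{\{e\}}=\res^{\{e\}}_K(f)^{M}$, which is a power of $\res^{\{e\}}_K(f)$. By the last re-powering, every $f_H$ satisfies $f_H-1\in p\cdot R(H)$. This gives the family required in Lemma \ref{pre family}.
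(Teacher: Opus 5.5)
Your proposal is correct and follows essentially the same route as the paper: induction on $\abs{H}$, restrictions of a power of $f$ inside $K$, Lemma \ref{cyclic lift} applied to $(1,f_{H_0})$ outside $K$, and re-powering the whole partial family to restore the congruence modulo $p_k$. The re-powering is genuinely needed, since the lift $1-I^H_{H_0}(h')$ need not be $\equiv 1$. The only difference is the choice of exponent. You use the order of $(R(H)/p_kR(H))^\times$ together with (b). The paper instead starts from $\res^{\{e\}}_K(f)^{p_k-1}$ and raises to the power $p_k^r$, using the Frobenius congruence $g^{p_k^r}\equiv \res^{\{e\}}_L(g) \pmod{p_k}$ in $R(L)$; this is the same computation that underlies (b).
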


\begin{proof}We obtain $f_{H}$ by induction of $r$, where $\abs{H}=p_k^r$. When $r=0$, the cyclic subgroup of order $p_k^0$ is only the trivial subgroup $\{e\}$. Note that $\res^{\{e\}}_K(f)$ is invertible in $K_0(D)$. Since $K_0(D)$ is not uniquely $p_k$-divisible, $\res^{\{e\}}_K(f)\in R(\{e\}) \cong \mathbb{Z}$ is coprime to $p_k$. Hence $\res^{\{e\}}_K(f)^{p_k-1}-1 \in p_k\cdot \mathbb{Z}$. Define $f_{\{e\}}\coloneqq \res^{\{e\}}_K(f)^{p_k-1}$. 

Suppose that we can take $f_{H}$ satisfying the conditions of the statement for all $H\in Y_k$ of order $\abs{H}\leq p_k^{r-1}$ and that there is some $N\in\mathbb{N}$ with $f_H=\res^H_K(f)^N$ for all $H\subset K$. Fix $L\in Y_k$ with $\abs{L}=p^r$. Note that 
\[[f_{p_kL}]=1\in\mathbb{Z}[t]/(t^{p_k^{r-1}}-1,p_k),\] 
where $[f_{p_kL}]$ is the image of $f_{p_kL}$ under the natural projection $R(p_kL)=\mathbb{Z}[t]/(t^{p_k^{r-1}}-1)\to\mathbb{Z}[t]/(t^{p_k^{r-1}}-1,p_k)$. If $L\subset K$, then set $\widetilde{f_L}\coloneqq \res^L_K(f)^N$. If not, then by Lemma \ref{cyclic lift}, we can take $\widetilde{f_L}\in S^L_D$ with $\res^{p_kL}_L(\widetilde{f_{L}})=f_{p_kL}$. Under $R(L)\cong\mathbb{Z}[t]/(t^{p_k^r}-1)$, we may write $\widetilde{f_{L}}=\sum_{i=0}^{p_k^r-1}a_it^i$. Then, modulo $p_k$, 
\[(\widetilde{f_{L}})^{p_k^r}\equiv\sum_{i=0}^{p_k^r-1}a_i^{p_k^r}t^{ip_k^r}=\sum_{i=0}^{p_k^r-1}a_i^{p_k^r}\equiv\sum_{i=0}^{p_k^r-1}a_i.\]
The rightmost integer equals $\res^{\{e\}}_L(\widetilde{f_{L}})=f_{\{e\}}$ because $\res^{\{e\}}_L\colon R(L)\to R(\{e\})$ is identified with the evaluation-at-one map $\mathbb{Z}[t]/(t^{p_k^r}-1)\to\mathbb{Z}$. It implies $(\widetilde{f_{L}})^{p_k^r}\equiv 1$ modulo $p_k$. Once $\widetilde{f_{L}}$ has been obtained for all $L$ with $\abs{L}=p^r$, we set $f_{L}\coloneqq(\widetilde{f_{L}})^{p_k^r}$. For $H\in Y_k$ with $\abs{H}\leq p_k^{r-1}$, replace $f_{H}$ with $(f_{H})^{p_k^r}$. 

By iterating this procedure, we get the desired family $(f_{H})_{H\in Y_k}$. \end{proof}

\begin{lem}\label{cyclic family} The family $(f_{H})_H$ in Lemma \ref{pre family} can be chosen so that $(f_{H})_H$ satisfies (\ref{con}) and $f_{H}-1\in p_k^{r_k}\cdot R(H)$ for all $H$. \end{lem}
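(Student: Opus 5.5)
The plan is to modify the family $(f_H)_{H\in Y_k}$ of Lemma \ref{pre family} in two ways: first raise everything to a large $p_k$-power to obtain the congruence modulo $p_k^{r_k}$, and then make the family conjugation-invariant by a norm-type product over conjugates.

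\textbf{Step 1: congruence modulo $p_k^{r_k}$.} Since $f_H-1\in p_k R(H)$, the binomial expansion gives
\[
f_H^{p_k^{j}}-1\in p_k^{j+1}R(H).
\]
Indeed, if $x=1+p_k y$, then $x^{p_k}=1+p_k^2(\dots)$, and we iterate. Replacing every $f_H$ by $f_H^{p_k^{r_k-1}}$ therefore keeps $f_H\in S^H_D$, because powers of units are units. It also preserves (\ref{res}), because restriction is a ring homomorphism. It keeps $f_K$ a multiple of $f$, since $f_K^{N}=f\cdot f^{N-1}(\ldots)$ remains a multiple. Finally, $f_{\{e\}}$ stays a power of $\res^{\{e\}}_K(f)$, and we now have $f_H-1\in p_k^{r_k}R(H)$.

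\textbf{Step 2: conjugation invariance.} I would set
\[
f'_H\coloneqq \prod_{g\in G}\con_{g^{-1},{}^gH}\bigl(f_{{}^gH}\bigr)\in R(H).
\]
Each factor lies in $S^H_D$, since conjugation by $g$ is induced by an isomorphism intertwining the $D$-structures, so it maps $S^{{}^gH}_D$ onto $S^H_D$. Hence $f'_H\in S^H_D$. The following properties then need to be checked.
\begin{itemize}
\item[(a)] Condition (\ref{con}) holds: reindexing the product over $G$ by $g\mapsto gg_0^{-1}$ gives $\con_{g_0,H}(f'_H)=f'_{{}^{g_0}H}$, using the cocycle relation $\con_{g',{}^gH}\circ\con_{g,H}=\con_{g'g,H}$ of Proposition \ref{Dell'ambrogio}.
\item[(b)] Condition (\ref{res}) holds: Proposition \ref{Dell'ambrogio} gives $\con\circ\res=\res\circ\con$, and (\ref{res}) for the original family applies to ${}^gL\subset{}^gH$.
\item[(c)] The congruence modulo $p_k^{r_k}$ is preserved, because it is preserved under ring maps and products.
\item[(d)] $f'_K$ is a multiple of $f$, since the factor $g=e$ is $f_K$.
\item[(e)] $f'_{\{e\}}=f_{\{e\}}^{|G|}$ is still a power of $\res^{\{e\}}_K(f)$.
\end{itemize}

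\textbf{Main obstacle.} The main point to verify is that conjugation maps $S^{{}^gH}_D$ into $S^H_D$. For this I would argue that $\con_{g,H}$ is induced by the $G$-action $\alpha_g$ on $D$, which is an $H$-to-${}^gH$ equivariant isomorphism compatible with $\nu$ and with the ring structure of Proposition \ref{ring structure}. Consequently, $\nu^{{}^gH}_\ast\circ\con_{g,H}=\con_{g,H}\circ\nu^H_\ast$ is a ring isomorphism, and units correspond to units.

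The only remaining subtlety is the orders: raising to powers commutes with all the structure maps used, and the product over all of $G$ (rather than over $G/N_H$) avoids any choice of coset representatives. If the statement is meant to keep the extra property that $f_H=\res^H_K(f)^N$ for $H\subset K$, this is not needed for the later use, so I would not track it.
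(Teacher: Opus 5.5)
Your proposal is correct and is essentially the paper's proof. You raise each $f_H$ to the $p_k^{r_k-1}$-th power using the binomial estimate, then replace it by the product of its conjugates over all of $G$; your $\prod_{g}\con_{g^{-1},{}^gH}(f_{{}^gH})$ is the paper's $\prod_{g}\con_{g,H^g}(f_{H^g})$ after reindexing $g\mapsto g^{-1}$. Your explicit check that conjugation is a ring isomorphism compatible with $\nu_\ast$, so that it carries $S^{{}^gH}_D$ onto $S^H_D$, fills in a step the paper leaves implicit.
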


\begin{proof}Take $(f_{H})_H$ as in Lemma \ref{pre family}. For $r\in\mathbb{Z}_{>0}$, 
\begin{align*}(f_{H})^{p_k^r}-1 &=((f_{H})^{p_k^{r-1}}-1+1)^{p_k}-1 \\
& =((f_{H})^{p_k^{r-1}}-1)^{p_k}+\sum_{i=1}^{p_k-1} \binom{p_k}{i}((f_{1,H})^{p_k^{r-1}}-1)^i.\end{align*}
Since $f_{H}-1\in p_k\cdot R(H)$, we have $(f_{H})^{p_k}-1\in p_k^2\cdot R(H)$. Inductively, the above formula implies $(f_{H})^{p_k^r}-1\in p_k^{r+1}\cdot R(H)$. Therefore, by replacing $f_{H}$ with $\prod_{g\in G}\con_{g,H^g}((f_{H^g})^{p_k^{r_k-1}})$, the family $(f_{H})_H$ forms the desired one. \end{proof}

We will obtain the desired family $(f_H)_{H\in X_k}$ by extending the family of the preceding lemma, and several lemmas are needed for that. 

For subgroups $H\subset E\subset G$, $N_E(H)$ denotes the normalizer of $H$ in $E$. 

\begin{lem}\label{f_1 lemma}Let $E\in X_k$, $H\subset E$ be a cyclic subgroup and $L\subset E$ be a subgroup. Let $(f_H)_{H\in Y_k}$ be a family obtained from Lemma \ref{cyclic family}. \begin{itemize}
\item[$(1)$]If $H\not\subset L$, then 
\[\res^{H\cap L}_H\qty(\frac{f_{H}-1}{\abs{N_E(H)/H}}-I^H_{p_kH}\qty(\frac{f_{p_kH}-1}{\abs{N_E(H)/p_kH}}))=0.\]
\item[$(2)$]
\begin{align*}&\res^L_E I^E_H\qty(\frac{f_{H}-1}{\abs{N_E(H)/H}}-I^H_{p_kH}\qty(\frac{f_{p_kH}-1}{\abs{N_E(H)/p_kH}})) \\
&=\sum_{g\in L\backslash E/H,\ {}^gH\subset L}I^L_{{}^gH}\qty(\frac{f_{{}^gH}-1}{\abs{N_E(H)/H}}-I^{{}^gH}_{{}^gp_kH}\qty(\frac{f_{{}^gpH}-1}{\abs{N_E(H)/p_kH}})).\end{align*}
\end{itemize}\end{lem}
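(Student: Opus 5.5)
The plan is to handle both parts with the Mackey double coset formula of Proposition \ref{Dell'ambrogio}. Write
\[x_H\coloneqq\frac{f_{H}-1}{\abs{N_E(H)/H}}-I^H_{p_kH}\qty(\frac{f_{p_kH}-1}{\abs{N_E(H)/p_kH}})\in R(H).\]

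\textbf{Well-definedness.} By Lemma \ref{cyclic family} we have $f_H-1\in p_k^{r_k}R(H)$ and $f_{p_kH}-1\in p_k^{r_k}R(p_kH)$. Both $\abs{N_E(H)/H}$ and $\abs{N_E(H)/p_kH}$ divide $\abs{E}$, which divides $p_k^{r_k}$ since $E\in X_k$. So $x_H$ lies in $R(H)$. Since all representation rings are free abelian groups, I will verify every identity after multiplying through by the common denominator and then divide, which is legitimate.

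\textbf{Trivial $H$.} If $H=\{e\}$, I read $p_kH=H$ and $I^H_H=\id$. Then $x_H=0$, and both (1) (vacuous, since $\{e\}\subset L$) and (2) are trivial. So assume $H\neq\{e\}$.

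\textbf{Part (1).} Since $H$ is a cyclic $p_k$-group, $p_kH$ is its unique maximal subgroup. If $H\not\subset L$, then $H\cap L$ is a proper subgroup of $H$, so $H\cap L\subset p_kH$.

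For the first term of $x_H$, property (\ref{res}) of the family gives
\[\res^{H\cap L}_H(f_H)=f_{H\cap L}.\]

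For the second term, apply the Mackey formula to $\res^{H\cap L}_H\circ I^H_{p_kH}$. Because $H$ is abelian and $H\cap L\subset p_kH$, the double cosets $(H\cap L)\backslash H/p_kH$ are just the $p_k$ cosets of $p_kH$. Each corresponding term is
\[I^{H\cap L}_{H\cap L}\circ\con_{h,H\cap L}\circ\res^{H\cap L}_{p_kH}=\res^{H\cap L}_{p_kH}.\]
Here conjugation by $h\in H$ acts trivially, by the relation $\con_{h,H}=\id$ in Proposition \ref{Dell'ambrogio} together with its compatibility with restriction. Hence, using $\res^{H\cap L}_{p_kH}(f_{p_kH})=f_{H\cap L}$,
\[\res^{H\cap L}_H I^H_{p_kH}\qty(\frac{f_{p_kH}-1}{\abs{N_E(H)/p_kH}})=p_k\,\frac{f_{H\cap L}-1}{p_k\abs{N_E(H)/H}},\]
since $\abs{N_E(H)/p_kH}=p_k\abs{N_E(H)/H}$. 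This equals the restriction of the first term, so the difference vanishes.

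\textbf{Part (2).} Apply the Mackey formula to get
\[\res^L_EI^E_H(x_H)=\sum_{g\in L\backslash E/H}I^L_{L\cap{}^gH}\,\con_{g,L^g\cap H}\,\res^{L^g\cap H}_H(x_H).\]

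\emph{Terms with ${}^gH\not\subset L$.} Here $H\not\subset L^g$, and $L^g$ is again a subgroup of $E$. Part (1) applied to $L^g$ gives $\res^{L^g\cap H}_H(x_H)=0$, so these terms drop out.

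\emph{Terms with ${}^gH\subset L$.} Here $L^g\cap H=H$, and the term becomes $I^L_{{}^gH}\con_{g,H}(x_H)$. To evaluate $\con_{g,H}(x_H)$ I will use three facts:
\begin{itemize}
\item[(a)] $\con_{g,H}(f_H)=f_{{}^gH}$, by property (\ref{con}) from Lemma \ref{cyclic family};
\item[(b)] $\con_{g,H}\circ I^H_{p_kH}=I^{{}^gH}_{{}^gp_kH}\circ\con_{g,p_kH}$, with ${}^g(p_kH)=p_k({}^gH)$;
\item[(c)] $\abs{N_E({}^gH)}=\abs{N_E(H)}$, so the denominators may be kept as written in the statement.
\end{itemize}
Substituting these gives exactly the right-hand side of (2).

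\textbf{Main obstacle.} I expect the only delicate point to be the bookkeeping in part (1): checking that the $p_k$ cosets each contribute the same restriction, so the factor $p_k$ exactly cancels the ratio of the two normalizer indices. I will also take care that all divisions are justified by the $p_k^{r_k}$-divisibility from Lemma \ref{cyclic family}.
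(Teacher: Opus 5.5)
Your proposal is correct and is essentially the paper's proof. Part (2) is identical: the Mackey formula, then part (1) applied to $L^g$ to discard the terms with ${}^gH\not\subset L$, then the conjugation relations. In part (1) you evaluate $\res^{H\cap L}_H\circ I^H_{p_kH}$ directly by the Mackey formula, whereas the paper writes $\res^{H\cap L}_H=\res^{p_kH\cap L}_{p_kH}\circ\res^{p_kH}_H$ and uses $\res^{p_kH}_H\circ I^H_{p_kH}=p_k$ from Lemma \ref{s=1} and Remark \ref{alpha and res-ind}.

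One point to tighten: the relations you quote only give $\con_{h,H\cap L}=\id$ on the image of $\res^{H\cap L}_H$. That image is all of $R(H\cap L)$, since restriction between cyclic groups is surjective on representation rings. Alternatively, it suffices to note that $h$ centralizes $H\cap L$.
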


\begin{proof}\begin{itemize}
\item[(1)]$H\not\subset L$ implies $H\cap L \subsetneqq H$. Since $H$ is a cyclic $p_k$-group, we have $H\cap L=p_kH\cap L$. Moreover, by Lemma \ref{s=1} and Remark \ref{alpha and res-ind}, the map $\res^{p_kH}_H\circ I^H_{p_kH}\colon R(p_kH)\to R(p_kH)$ equals the multiplication by $p_k$. Thus 
\begin{align*}\res^{H\cap L}_H\frac{f_{H}-1}{\abs{N_E(H)/H}} &=\frac{f_{H\cap L}-1}{\abs{N_E(H)/H}} \\
&=p_k\frac{f_{p_kH\cap L}-1}{\abs{N_E(H)/p_kH}} \\
&=\res^{p_kH\cap L}_{p_kH}\qty(p_k\frac{f_{p_kH}-1}{\abs{N_E(H)/p_kH}}) \\
&=\res^{p_kH\cap L}_{p_kH}\res^{p_kH}_H I^H_{p_kH}\qty(\frac{f_{p_kH}-1}{\abs{N_E(H)/p_kH}}) \\
&=\res^{H\cap L}_H I^H_{p_kH}\qty(\frac{f_{p_kH}-1}{\abs{N_E(H)/p_kH}}).\end{align*}%
\item[(2)]Note that for $g\in E,\ l\in L$ and $h\in H$, we have ${}^{lgh}H={}^l({}^gH)$. 
In particular, when $[g_1]=[g_2]$ in $L\backslash E/H$, we have  ${}^{g_1}H\subset L$ if and only if ${}^{g_2}H\subset L$. 
Recall that by Proposition \ref{Dell'ambrogio} and by Lemma \ref{cyclic family}, %
\begin{align*}&\res^L_E\circ I^E_H=\sum_{g\in L\backslash E/H}I^L_{{}^gH\cap L}\circ\con_{g,L^g\cap H}\circ\res^{L^g\cap H}_H, \\
&\con_{g,H}\circ I^H_{p_kH}=I^{{}^gH}_{{}^gp_kH}\circ\con_{g,p_kH}, \\
&\con_{g,H}(f_{H})=f_{{}^gH}.\end{align*}%
Therefore, (1) implies %
\begin{align*}&\res^L_E I^E_H\qty(\frac{f_{H}-1}{\abs{N_E(H)/H}}-I^H_{p_kH}\qty(\frac{f_{p_kH}-1}{\abs{N_E(H)/p_kH}})) \\
&=\sum_{g\in L\backslash E/H}I^L_{{}^gH\cap L}\con_{g,L^g\cap H}\res^{L^g\cap H}_H\qty(\frac{f_{H}-1}{\abs{N_E(H)/H}}-I^H_{p_kH}\qty(\frac{f_{p_kH}-1}{\abs{N_E(H)/p_kH}})) \\
&=\sum_{g\in L\backslash E/H,\ {}^gH\subset L}I^L_{{}^gH}\con_{g,H}\qty(\frac{f_{H}-1}{\abs{N_E(H)/H}}-I^H_{p_kH}\qty(\frac{f_{p_kH}-1}{\abs{N_E(H)/p_kH}})) \\
&=\sum_{g\in L\backslash E/H,\ {}^gH\subset L}I^L_{{}^gH}\qty(\frac{f_{{}^gH}-1}{\abs{N_E(H)/H}}-I^{{}^gH}_{{}^gp_kH}\qty(\frac{f_{{}^gp_kH}-1}{\abs{N_E(H)/p_kH}})).\end{align*}
\end{itemize}\end{proof}

\begin{prp}\label{f_E}For $E\in X_k$, let $C_E$ be a complete set of representatives of the conjugacy classes of cyclic subgroups of $E$. Let $(f_{H})_{H\in Y_k}$ be a family obtained from Lemma \ref{cyclic family}. For each $E\in X_k\setminus Y_k$, define $f_E\in R(E)$ by 
\[f_E\coloneqq 1+I^E_{\{e\}}\qty(\frac{f_{\{e\}}-1}{\abs{E}})+\sum_{H\in C_E\setminus\{e\}}I^E_H\qty(\frac{f_{H}-1}{\abs{N_E(H)/H}}-I^H_{p_kH}\qty(\frac{f_{p_kH}-1}{\abs{N_E(H)/p_kH}})).\]
Then $(f_E)_{E\in X_k}$ satisfies (\ref{res})-(\ref{con}). \end{prp}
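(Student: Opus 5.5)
The plan is to reduce everything to restrictions to cyclic subgroups and then use Lemma \ref{f_1 lemma}~(2) to turn the required identity into a telescoping sum in $R(L)$ for a cyclic $p_k$-group $L$.

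\emph{Step 1 (well-definedness).} First I check that $f_E$ lies in $R(E)$. Every denominator appearing in the definition of $f_E$ divides $\abs{E}$, which divides $p_k^{r_k}$. Since $f_H-1\in p_k^{r_k}\cdot R(H)$ for all $H\in Y_k$ by Lemma \ref{cyclic family}, each fraction is an honest element of the relevant representation ring. The $f_E$ should also be invertible in $K^E_0(D)$, i.e.\ $f_E\in S^E_D$. This will follow from Step 2, since $\res^C_E(f_E)=f_C\in S^C_D$ for every cyclic $C\subset E$, together with Corollary \ref{res invertible} applied to $E$. For that application I view $D$ as an $E$-algebra; it still lies in $\mathcal{B}_E$ because restriction preserves bootstrap classes. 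I take $f_E$ as given from Lemma \ref{cyclic family} when $E\in Y_k$.

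\emph{Step 2 (key identity).} For every $E\in X_k$ and every cyclic $L\subset E$ I will show $\res^L_E(f_E)=f_L$. For $E\in Y_k$ this is (\ref{res}) from Lemma \ref{cyclic family}, so let $E\notin Y_k$. By Lemma \ref{f_1 lemma}~(2), the $H$-summand restricts to a sum over double cosets $LgH$ with ${}^gH\subset L$. For such $g$ we have $LgH=Lg$, so each such double coset has exactly $\abs{L}$ elements. For a fixed subgroup $H'\subset L$ conjugate to $H$, the set of $g\in E$ with ${}^gH=H'$ has $\abs{N_E(H)}$ elements. Hence $H'$ occurs $\abs{N_E(H)}/\abs{L}$ times. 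Since $L$ is cyclic, the subgroups of $L$ are pairwise non-conjugate in $E$, so every subgroup $H'\subset L$ arises from exactly one $H\in C_E$. The $\{e\}$-term behaves the same way, with $\abs{E}/\abs{L}$ cosets. Writing $p=p_k$, this gives
\[\res^L_E(f_E)=1+\frac{I^L_{\{e\}}(f_{\{e\}}-1)}{\abs{L}}+\sum_{\{e\}\neq H'\subset L}\frac{\abs{H'}}{\abs{L}}\qty(I^L_{H'}(f_{H'}-1)-I^L_{pH'}\qty(\frac{f_{pH'}-1}{p})),\]
where I used $I^L_{H'}\circ I^{H'}_{pH'}=I^L_{pH'}$. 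Since $\abs{H'}/p=\abs{pH'}$, the subtracted term equals $\frac{\abs{pH'}}{\abs{L}}I^L_{pH'}(f_{pH'}-1)$. The subgroups of $L$ form a chain $\{e\}\subset\cdots\subset L$, so the sum telescopes to $\frac{\abs{L}}{\abs{L}}I^L_L(f_L-1)=f_L-1$. Thus $\res^L_E(f_E)=f_L$.

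\emph{Step 3 (conclusion).} Characters are determined by their values on cyclic subgroups, so an element of $R(E)$ is determined by its restrictions to the cyclic subgroups of $E$. For (\ref{res}) with $L\subset E$ and any cyclic $C\subset L$, Step 2 gives $\res^C_L(\res^L_E f_E)=\res^C_E f_E=f_C=\res^C_L f_L$, hence $\res^L_E f_E=f_L$. For (\ref{con}), take any cyclic $C\subset{}^gE$. Using the compatibility of $\con$ with $\res$ from Proposition \ref{Dell'ambrogio}, Step 2, and (\ref{con}) for cyclic groups from Lemma \ref{cyclic family}, we get $\res^C_{{}^gE}\con_{g,E}(f_E)=\con_{g,C^g}(f_{C^g})=f_C=\res^C_{{}^gE}(f_{{}^gE})$. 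Therefore $\con_{g,E}(f_E)=f_{{}^gE}$.

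The main obstacle is the counting in Step 2. One must verify that the multiplicity $\abs{N_E(H)}/\abs{L}$ exactly cancels the normalizing factor $1/\abs{N_E(H)/H}$. One must also check that, because $L$ is cyclic, each subgroup of $L$ comes from a single representative in $C_E$. These two facts together are what produce the factor $\abs{H'}/\abs{L}$ that makes the telescoping work.
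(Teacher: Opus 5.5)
Your proof is correct. It shares its core computation with the paper: Lemma~\ref{f_1 lemma}~(2), a double-coset count that cancels the normalizer denominators, and the telescoping along the chain of subgroups of a cyclic $p_k$-group. What differs is how the argument is organized.

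\textbf{The paper's route.} It restricts to an \emph{arbitrary} subgroup $L\subset E$. It groups the terms by $L$-conjugacy classes of cyclic subgroups $H'\subset L$, counts the multiplicity $\abs{N_L(H')\backslash N_E(H')}$, and shows that $\res^L_E(f_E)$ is literally the defining formula for $f_L$. The telescoping appears only beforehand, to check that the same formula reproduces the given $f_E$ when $E$ is cyclic. Then (\ref{con}) is read off from $\con_{g,E}\circ I^E_H=I^{{}^gE}_{{}^gH}\circ\con_{g,H}$.

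\textbf{Your route.} You restrict only to cyclic $L$. There $N_L(H')=L$, so the count is elementary. The fibre of $g\mapsto{}^gH$ over $H'$ is a union of right $L$-cosets because $L$ is abelian; you use this implicitly and could state it. The telescoping then yields $f_L$ directly. Finally, the fact that a virtual character is determined by its restrictions to cyclic subgroups gives both (\ref{res}) for non-cyclic $L$ and (\ref{con}).

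\textbf{Comparison.} The extra input from character theory buys you a simpler count. It also sidesteps a point the paper leaves implicit: that $f_E$ does not depend on the choice of the representative set $C_E$. Applying $\con_{g,E}$ to the defining formula naturally produces the formula for ${}^gE$ with representatives ${}^gC_E$, not $C_{{}^gE}$. In your argument this independence is automatic, since each $f_E$ is pinned down by the cyclic family. The paper's route, in turn, stays entirely within the Mackey relations of Proposition~\ref{Dell'ambrogio}. It also exhibits $(f_E)_{E\in X_k}$ as one uniform formula that is visibly compatible with restriction to every subgroup.
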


\begin{proof}We first note that $f_E$ can be written by the same form as above even if $E\in Y_k$. Indeed, in this case, $C_E$ is just the set of subgroups of $E$ and so 
\begin{align*}& 1+I^E_{\{e\}}\qty(\frac{f_{\{e\}}-1}{\abs{E}})+\sum_{H\in C_E\setminus\{e\}}I^E_H\qty(\frac{f_{H}-1}{\abs{N_E(H)/H}}-I^H_{p_kH}\qty(\frac{f_{p_kH}-1}{\abs{N_E(H)/p_kH}})) \\
& =1+(f_E-1) \\
& =f_E,\end{align*}
where the first equality follows from $I^E_H\circ I^H_{p_kH}=I^E_{p_kH}$ (Proposition \ref{Dell'ambrogio}).

Let $E\in X_k$ and $L\subset E$ be a subgroup. We see $\res^L_E(f_E)=f_L$. By Proposition \ref{Dell'ambrogio} and Lemma \ref{f_1 lemma} (2), 
\begin{align*}\res^L_E(f_E) &=
\begin{aligned}[t] &1+\res^L_E I^E_{\{e\}}\qty(\frac{f_{\{e\}}-1}{\abs{E}}) \\
&+\sum_{H\in C_E\setminus\{e\}}\res^L_E I^E_H\qty(\frac{f_{H}-1}{\abs{N_E(H)/H}}-I^H_{p_kH} \qty(\frac{f_{p_kH}-1}{\abs{N_E(H)/p_kH}})) \end{aligned} \\
&=\begin{aligned}[t] &1+\sum_{g\in L\backslash E}I^L_{{}^g\{e\}} \con_{g,\{e\}} \qty(\frac{f_{\{e\}}-1}{\abs{E}}) \\
&+\sum_{H\in C_E\setminus\{e\}}\sum_{g\in L\backslash E/H,\ {}^gH\subset L}I^L_{{}^gH}\qty(\frac{f_{{}^gH}-1}{\abs{N_E(H)/H}}-I^{{}^gH}_{{}^gp_kH}\qty(\frac{f_{{}^gp_kH}-1}{\abs{N_E(H)/p_kH}}))\end{aligned}\end{align*}
Since $f_{\{e\}}=\con_{g,\{e\}}(f_{\{e\}})$ by Lemma \ref{cyclic family}, 
\[\sum_{g\in L\backslash E}I^L_{{}^g\{e\}} \con_{g,\{e\}} \qty(\frac{f_{\{e\}}-1}{\abs{E}})=I^L_{\{e\}} \qty(\frac{f_{\{e\}}-1}{\abs{L}}).\]%
In the sum $\sum_{H\in C_E\setminus\{e\}}\sum_{g\in L\backslash E/H,\ {}^gH\subset L}$, ${}^gH$ runs over all nontrivial cyclic subgroups of $L$. For a subgroup $H^\prime \subset L$, %
\[\qty{g\in L\backslash E/H^\prime \mid {}^gH^\prime=H^\prime}=N_L(H^\prime)\backslash N_E(H^\prime)/H^\prime=N_L(H^\prime)\backslash N_E(H^\prime).\]%
Thus %
\begin{align*}&\sum_{H\in C_E\setminus\{e\}}\sum_{g\in L\backslash E/H,\ {}^gH\subset L} I^L_{{}^gH} \qty(\frac{f_{{}^gH}-1}{\abs{N_E(H)/H}}-I^{{}^gH}_{{}^gp_kH}\qty(\frac{f_{{}^gp_kH}-1}{\abs{N_E(H)/p_kH}})) \\
&=\sum_{H^\prime \in C_L\setminus \{e\}} \abs{N_L(H^\prime)\backslash N_E(H^\prime)}\cdot I^L_{H^\prime} \qty(\frac{f_{H^\prime}-1}{\abs{N_E(H^\prime)/H^\prime}}-I^{H^\prime}_{p_kH^\prime} \qty(\frac{f_{p_kH^\prime}-1}{\abs{N_E(H^\prime)/p_kH^\prime}})) \\
&=\sum_{H^\prime \in C_L\setminus \{e\}} I^L_{H^\prime} \qty(\frac{f_{H^\prime}-1}{\abs{N_L(H^\prime)/H^\prime}}-I^{H^\prime}_{p_kH^\prime} \qty(\frac{f_{p_kH^\prime}-1}{\abs{N_L(H^\prime)/p_kH^\prime}})).\end{align*}%
Therefore, we have %
\begin{align*}\res^L_E(f_E)
&=\begin{aligned}[t] &1+I^L_{\{e\}}\qty(\frac{f_{\{e\}}-1}{\abs{L}}) \\
&+\sum_{H^\prime \in C_L\setminus \{e\}} I^L_{H^\prime} \qty(\frac{f_{H^\prime}-1}{\abs{N_L(H^\prime)/H^\prime}}-I^{H^\prime}_{p_kH^\prime} \qty(\frac{f_{p_kH^\prime}-1}{\abs{N_L(H^\prime)/p_kH^\prime}}))\end{aligned} \\
&=f_{L}. \end{align*}

(\ref{con}) follows from $\con_{g,E} \circ I^E_H = I^{{}^gE}_{{}^gH} \circ \con_{g,H}$ for all $g\in G$ (Proposition \ref{Dell'ambrogio}) and the fact that $(f_H)_{H\in Y_k}$ satisfies (\ref{con}). \end{proof}

Take $(f_H)_{H\in Y_k}$ as in Lemma \ref{cyclic family} and $f_E$ for $E\in X_k$ as in the previous lemma. Since $\res^H_E(f_E)=f_H$ and $f_H\in S^H_D$ for all cyclic subgroups $H\subset E$, we have $f_E\in S^E_D$ by Corollary \ref{res invertible}. Thus, we get the following results. 

\begin{cor}\label{f lift not p-div} Suppose that $n<k\leq m$. Then for any $f\in S^K_D$, there exists a family $(f_H)_{H\in X_k}$ of elements $f_H\in S^H_D$ satisfying (\ref{res})-(\ref{con}) such that $f_K$ is a multiple of $f$ and $f_{\{e\}}$ is a power of $\res^{\{e\}}_K(f)$. \end{cor}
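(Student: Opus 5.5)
The plan is to assemble the statement from Lemma \ref{pre family}, Lemma \ref{cyclic family} and Proposition \ref{f_E}, which together already contain the construction. Only three things need to be checked: the members of the family are invertible, the element $f_K$ is a multiple of $f$, and $f_{\{e\}}$ is a power of $\res^{\{e\}}_K(f)$.

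First, apply Lemma \ref{pre family} to the given $f\in S^K_D$, with $n<k\le m$. This gives a family $(f_H)_{H\in Y_k}$ of elements $f_H\in S^H_D$ satisfying (\ref{res}), with $f_H-1\in p_k R(H)$ for every $H$, with $f_K$ a multiple of $f$, and with $f_{\{e\}}$ a power of $\res^{\{e\}}_K(f)$. Next, apply the modification of Lemma \ref{cyclic family}. It raises each $f_H$ to a $p_k$-power and then takes the product over $G$-conjugates, which gives (\ref{con}) and $f_H-1\in p_k^{r_k}R(H)$.

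I will check that this modification preserves everything else.
\begin{itemize}
\item Each new $f_H$ is a product of conjugates of powers of elements of $S^{H^g}_D$. Since conjugation by $g$ is realized by an isomorphism of $D$, these conjugates lie in $S^H_D$, and $S^H_D$ is multiplicative, so the new $f_H$ lies in $S^H_D$.
\item Compatibility (\ref{res}) survives, because $\res$ commutes with $\con$ (Proposition \ref{Dell'ambrogio}) and with products.
\item The new $f_K$ contains the factor $f_K^{p_k^{r_k-1}}$, which is a multiple of $f$. The remaining factors lie in $R(K)$, so the product is still a multiple of $f$.
\item For the trivial subgroup, $\con$ acts trivially on $R(\{e\})=\mathbb{Z}$. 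Hence the new $f_{\{e\}}$ is $f_{\{e\}}^{\,p_k^{r_k-1}|G|}$, again a power of $\res^{\{e\}}_K(f)$.
\end{itemize}

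Then, for $E\in X_k\setminus Y_k$, define $f_E$ by the formula of Proposition \ref{f_E}. The divisions by $\abs{N_E(H)/H}$, $\abs{N_E(H)/p_kH}$ and $\abs{E}$ are powers of $p_k$ dividing $p_k^{r_k}$, so they are legitimate in $R(E)$ precisely because of the congruence $f_H-1\in p_k^{r_k}R(H)$. For $f_{p_kH}-1$, divisibility by $p_k^{r_k}$ covers $\abs{N_E(H)/p_kH}\le p_k^{r_k}$ as well. By Proposition \ref{f_E}, the whole family $(f_E)_{E\in X_k}$ satisfies (\ref{res}) and (\ref{con}), and for cyclic $E$ it agrees with the family already constructed.

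The remaining issue, and the main one, is invertibility: $f_E\in S^E_D$ for noncyclic $E$. By (\ref{res}), $\res^H_E(f_E)=f_H\in S^H_D$ for every cyclic subgroup $H\subset E$. The image of $f_E$ in $K^E_0(D)$ restricts to these, so Corollary \ref{res invertible}, applied to the $p_k$-group $E$ acting on $D$ (which lies in $\mathcal{B}_E$ by restriction), gives $\nu_*(f_E)$ invertible. Hence $f_E\in S^E_D$.

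The properties of $f_K$ and $f_{\{e\}}$ were tracked through the steps above, which completes the argument. The only delicate points are the integrality of the divided terms and the preservation of invertibility under conjugation, and both are handled as described.
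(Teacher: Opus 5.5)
Your proposal is correct and follows the paper's proof. The paper likewise takes the family from Lemma \ref{cyclic family} (built on Lemma \ref{pre family}), extends it to all $p_k$-subgroups via the formula of Proposition \ref{f_E}, and deduces $f_E\in S^E_D$ from $\res^H_E(f_E)=f_H$ together with Corollary \ref{res invertible}. Your extra checks fill in details the paper leaves implicit: that the modification in Lemma \ref{cyclic family} preserves membership in $S^H_D$, the multiple-of-$f$ property and the power property at $\{e\}$; that the $p_k^{r_k}$-congruence makes the divisions integral; and that $D$ restricted to $E$ lies in $\mathcal{B}_E$.
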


Next, let us consider the case where $K_\ast(D)$ is uniquely $p_k$-divisible. 

\begin{rem}\label{p-group} Suppose that $G$ is a $p$-group and $K_\ast(D)$ is uniquely $p$-divisible. Then by Corollary \ref{eq of p-div}, we can apply Theorem \ref{UCT} for $KK^G(\mathbb{C},D)$. Since $F^H_1(\mathbb{C})=F^H_1(D)=0$ for a cyclic subgroup $H\subset G$ by Theorem \ref{cyclic case}, 
we have the isomorphism 
\[K^G_0(D)=KK^G(\C,D)\cong\prod_{H\subset G\ \text{cyclic}}\text{Hom}_{\mathbb{Z}[\zeta_{\abs{H}}]\rtimes W_H}(F^H_0(\C),F^H_0(D)).\]
$\text{Hom}_{\mathbb{Z}[\zeta_{\abs{H}}]\rtimes W_H}(F^H_0(\C),F^H_0(D))$ forms a ring with the multiplication induced from $K^G_0(D)\cong KK^G(\C,D)$. Given $f_0, f_1\in\text{Hom}_{\mathbb{Z}[\zeta_{\abs{H}}]\rtimes W_H}(F^H_0(\C),F^H_0(D))$, their product $f_0f_1$ is defined by 
\[f_0f_1\coloneqq(\id\otimes\nu)_\ast^{-1}\circ(f_0\otimes f_1)\colon F^H_0(\C)\to F^H_0(D\otimes D)\to F^H_0(D).\]
By Theorem \ref{F is a localization}, $\text{Hom}_{\mathbb{Z}[\zeta_{\abs{H}}]\rtimes W_H}(F^H_0(\C),F^H_0(D))$ is isomorphic to $F^H_0(D)^{W_H}$: the fixed point subring of $F^H_0(D)$ by the action of $W_H$. These results yields the isomorphism of rings 
\[K^G_0(D)\cong \prod_{H\subset G\ \text{cyclic}} F^H_0(D)^{W_H}.\]
Considering the case $D=M_{p^\infty}$, we also obtain 
\[R(G)_P \cong K^G_0(M_{p^\infty}) \cong \prod_{H\subset G\ \text{cyclic}} F^H_0(M_{p^\infty})^{W_H} \cong \prod_{H\subset G\ \text{cyclic}} \mathbb{Z}[\zeta_{\abs{H}},1/p]^{W_H},\]
where $P\subset\mathbb{Z}$ is a multiplicative subset generated by $1$ and $p$. 
\end{rem}

\begin{lem}\label{p-div lift} Suppose that $1\leq k\leq n$. Then there exists a family $(f_H)_{H\in X_k}$ of elements $f_H\in S^D_H$ satisfying (\ref{res})-(\ref{con}) such that $f_K$ is a multiple of $f$. \end{lem}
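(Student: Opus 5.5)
We work inside a Sylow $p_k$-subgroup; to lighten notation write $p=p_k$. Every $H\in X_k$ is a $p$-group, and $K_\ast(D)$ is uniquely $p$-divisible. By Remark \ref{p-group}, applied to $\Res^H_G D$ (which is strongly self-absorbing and lies in $\mathcal{B}_H$), we get a ring isomorphism
\[K^H_0(D)\cong\prod_{L\subset H\ \text{cyclic}}F^L_0(D)^{W^H_L}.\]
Here the $L$-component is obtained from $\res^L_H$ followed by the projection $\psi_{\abs{L}}$ onto $F^L_0(D)$. Similarly $R(H)_P\cong\prod_L\mathbb{Z}[\zeta_{\abs{L}},1/p]^{W^H_L}$. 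By Theorem \ref{F is a localization}, each $F^L_0(D)$ is $0$ or a localization of $\mathbb{Z}[\zeta_{\abs{L}}]$.

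The plan is first to construct invertible elements of $K^H_0(D)$ coherently, and then to clear denominators. Conjugation permutes the cyclic subgroups of $G$ of $p$-power order and identifies the corresponding rings $F^L_0(D)$. So the data of a compatible family $(y_H)_{H\in X_k}$, with $y_H\in K^H_0(D)$, is equivalent to a choice of elements $c_L\in F^L_0(D)$, one for each cyclic $p$-subgroup $L$ of $G$, that are equivariant under conjugation by $N_G(L)$.

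First I would handle $f$ itself. Since $f\in S^K_D$, each component of $\nu_\ast(f)$ in $F^L_0(D)$, for $L\subset K$, is a unit. I would set
\[
c_L:=\prod_{g\in N_G(L)/L}\con_g\bigl(\psi_{\abs{L}}\res^L_K f\bigr)
\quad\text{for }L\subset K,
\]
and transport $c_L$ to conjugates of such $L$. For all other cyclic $L$ (those not conjugate into $K$) I would set $c_L=1$. Each $c_L$ is a $W_L$-fixed unit, so it defines units $y_H\in K^H_0(D)$ compatible under restriction and conjugation.

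To make $y_K$ a multiple of $f$, I would instead take for $y_K$ the product of $f$ with $f^{-1}\cdot(\text{the above})$. A cleaner way is to note that the $L$-components of $y_K$ are norms, so $y_K=f\cdot u$ with $u$ a unit of $K^K_0(D)$.

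The main obstacle is that $y_H$ lives in $K^H_0(D)$, not in $R(H)$. Theorem \ref{EPPO family} needs $f_H\in S^H_D$, that is, elements of $R(H)$. To fix this I would multiply by a suitable element so as to land in $\nu_\ast(R(H))$. By Theorem \ref{cyclic case}, each component is a fraction $a_L/b_L$ with $a_L,b_L\in\mathbb{Z}[\zeta_{\abs{L}}]$ and $b_L$ invertible. I would replace $c_L$ by $c_L\cdot\prod_g\con_g(b_L)^{N}$ for a large power $N$, which still gives a coherent unit. After this the components lie in $\mathbb{Z}[\zeta_{\abs{L}}]^{W_L}$, so they lie in the image of $R(H)_P$.

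The remaining $1/p$-denominators must then be cleared while keeping invertibility and the multiple-of-$f$ property. I would multiply every component by a common power $p^{M}$, which is a unit in $K^H_0(D)$ because $K_\ast(D)$ is uniquely $p$-divisible. Since $R(H)$ has finite index in $\prod_L\mathbb{Z}[\zeta_{\abs{L}}]^{W_L}$ with $p$-power index, this puts all $f_H:=p^M y_H$ in $\nu_\ast(R(H))$. Restrictions and conjugations are ring maps that commute with the decomposition, so (\ref{res})--(\ref{con}) persist. Moreover $f_K=p^Mf\cdot u'$ with $u'\in R(K)$, which is still a multiple of $f$.

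Finally, $\nu_\ast$ is injective on $R(H)_P$ when $F^L_0(D)\neq0$ for all $L$; if some $F^L_0(D)=0$, the corresponding component is irrelevant. In either case the preimages in $R(H)$ can be chosen compatibly, because $R(H)\subset R(H)_P$ and compatibility can be checked in $R(H)_P\cong\prod_L\mathbb{Z}[\zeta_{\abs{L}},1/p]^{W_L}$. To guarantee this, I would choose the lift in $R(H)_P$, not merely in $K^H_0(D)$, by working with components in $\mathbb{Z}[\zeta,1/p]$ throughout.
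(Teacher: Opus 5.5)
Your proposal is correct and is essentially the paper's argument. The paper likewise works in $R(H)_{P_k}$ via the cyclotomic decompositions (\ref{R decompose}) and Remark~\ref{p-group}. It gives the new cyclotomic component the value $1$ for cyclic subgroups not contained in $K$, multiplies all $G$-conjugates together to get (\ref{con}) while keeping $f$ as a factor of $f'_K$, extends to non-cyclic $H\in X_k$ through Remark~\ref{p-group}, and finally multiplies by a large power of $p_k$, which is invertible in $K^H_0(D)$.

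Your detour through $K^H_0(D)$ and the $b_L$-step are superfluous, because your $c_L$ are already images of elements of $\mathbb{Z}[\zeta_{\abs{L}}]^{W_L}$. Doing everything in $R(H)_{P_k}$ from the start, as you suggest at the end, is the clean version. It is also what guarantees that $f_K=f\cdot h$ holds with $h\in R(K)$, rather than only after applying $\nu_\ast$.
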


\begin{proof}Let $P_k\subset\mathbb{Z}$ be the multiplicative subset generated by $1$ and $p_k$. Note that $K^H_0(D)$ is uniquely $p_k$-divisible for all $H\in X_k$ by Theorem \ref{p-div}. 
Let $\widetilde{\nu_\ast^H}$ denote the map $R(H)_{P_k}\to K^H_0(D)$ induced by $\nu_\ast^H$. First, we construct a family $(f_H^\prime)_{H\in Y_k}$ of elements $f_H^\prime\in R(H)_{P_k}$ such that $\widetilde{\nu_\ast^H}(f_H^\prime)$ is invertible, $f^\prime_K=f$ and $(f_H^\prime)_H$ satisfies (\ref{res}). We obtain $f_H^\prime$ by induction of $r$, where $\abs{H}=p_k^r$. As a first step, set $f_{\{e\}}^\prime\coloneqq \res^{\{e\}}_K(f)$. Suppose that $(f_H^\prime)_{H\in Y_k}$ can be taken for $\abs{H}\leq p_k^{r-1}$. Take $L\in Y_i$ with $\abs{L}=p_k^r$. Corollary \ref{eq of p-div} implies that $D$ has uniquely $p_k$-divisible $K$-theory as a $L$-$\Cstar$-algebra. By (\ref{K decompose}) and (\ref{R decompose}), we have the isomorphisms of rings %
\begin{align*}K^L_0(D)\cong F^L_0(D)\oplus K^{p_kL}_0(D), \\
R(L)_{P_k}\cong \mathbb{Z}[\zeta_{\abs{L}},1/p_k]\oplus R(p_kL)_{P_k}.\end{align*}%
By Theorem \ref{F is a localization} and Theorem \ref{cyclic case}, $F^L_0(D)$ and $K^L_0(D)$ are localizations of $\mathbb{Z}[\zeta_{\abs{L}},1/p_k]$ and $R(L)_{P_k}$, respectively.  
If $L\subset K$, then set $f^\prime_L\coloneqq \res^L_K(f)$. If not, then let $f^\prime_L\in R(L)_{P_k}$ be the element corresponding to %
\[(1,f^\prime_{p_kL})\in \mathbb{Z}[\zeta_{\abs{L}},1/p_k]\oplus R(p_kL)_{P_k}.\] %
Since $(1,\widetilde{\nu_\ast^{p_kL}}(f_{p_kL}^\prime))\in F^L_0(D)\oplus K^{p_kL}_0(D)$ is invertible, $\widetilde{\nu_\ast^L}(f_L^\prime)\in K^L_0(D)$ is invertible. We also have $\res^{p_kL}_K(f_L^\prime)=f_{p_kL}^\prime$. Iterating this procedure, we obtain $f_H^\prime$ for all $H\in Y_k$. \par %
For $H\in Y_k$, replace $f_H^\prime$ with $\prod_{g\in G}\con_{g,H^g}(f_{H^g}^\prime)$. Take $L\in X_k\setminus Y_k$. Remark \ref{p-group} yields the isomorphisms of rings %
\begin{align*}& K^L_0(D)\cong\prod_{H\subset L\ \text{cyclic}} F^H_0(D)^{W_H}, \\
& R(L)_{P_k} \cong \prod_{H\subset L\ \text{cyclic}} \mathbb{Z}[\zeta_{\abs{H}},1/p_k]^{W_H}.\end{align*}%
Let $\overline{f_H^\prime}\in R(H)_{P_k}$ be the image of $f_H^\prime$ under the natural projection $R(H)_{P_k}\to R(H)_{P_k}/\Phi_{\abs{H}}\cong\mathbb{Z}[\zeta_{\abs{H}},1/p_k]$. Since $\con_{g,H}(f_H^\prime)=f_H^\prime$ for all elements $g$ of the normalizer of $H$, $\overline{f_H^\prime}$ is in $\mathbb{Z}[\zeta_{\abs{H}},1/p_k]^{W_H}$. Choose the element $f_L^\prime\in R(L)_{P_k}$ corresponding to $(\overline{f_H^\prime})_{H\subset L}$. Then $\widetilde{\nu_\ast^L}(f_L^\prime)\in K^K_0(D)$ is invertible because $\overline{f_H^\prime}$ is invertible in $F^H_0(D)$ for each $H$. If $L^\prime\subset L$, then $\res^{L^\prime}_L(f^\prime_L)\in R(L^\prime)_{P_k}$ corresponds to $(\overline{f^\prime_H})_{H\subset L^\prime}$. Also, $\con_{g,L}(f^\prime_L)\in R({}^gL)_{P_k}$ corresponds to $(\overline{f^\prime_{{}^gH}})_{H\subset L}$ for $g\in G$. Hence, the family $(f_L^\prime)_{L\in X_k}$ satisfies (\ref{res})-(\ref{con}). 

Pick $N\in\mathbb{N}$ sufficiently large so that $p_k^N\cdot f_H^\prime\in R(H)$ for all $H\in X_k$. Then $f_H\coloneqq p_k^N\cdot f_H^\prime$ forms the desired one. \end{proof}

\begin{lem}\label{integer lift} Suppose $n\geq 1$. Then for any $1\leq i\leq n$ and $a\in S^D_{\{e\}}$, there exists a family $(\psi_H)_{H\in X_i}$ of elements $\psi_H\in S^H_D$ satisfying (\ref{res})-(\ref{con}) such that $\psi_{\{e\}}=p_i^{r_i}a$. \end{lem}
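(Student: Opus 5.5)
The plan is to build the family by gluing through induction from the trivial subgroup, in the same spirit as Proposition \ref{f_E}, and to use the fact that $K_\ast(D)$ is uniquely $p_i$-divisible to control invertibility. Fix $1\leq i\leq n$ and $a\in S^{\{e\}}_D$. For every $H\in X_i$ I would set
\[\psi_H\coloneqq \frac{p_i^{r_i}}{\abs{H}}\,I^H_{\{e\}}(a)\in R(H).\]
This is a genuine element of $R(H)$, because $\abs{H}$ divides $p_i^{r_i}$ when $H$ is a $p_i$-subgroup of $G$ (Sylow). Taking $H=\{e\}$ gives $\psi_{\{e\}}=p_i^{r_i}a$.

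First I check (\ref{res}) and (\ref{con}). For $L\subset H$ in $X_i$, the double coset formula of Proposition \ref{Dell'ambrogio} gives
\[\res^L_H I^H_{\{e\}}(a)=\sum_{g\in L\backslash H}I^L_{\{e\}}\con_{g,\{e\}}(a)=\frac{\abs{H}}{\abs{L}}I^L_{\{e\}}(a).\]
Here $\con_{g,\{e\}}$ is trivial on $R(\{e\})=\mathbb{Z}$. Hence $\res^L_H(\psi_H)=\psi_L$. The conjugation compatibility follows from $\con_{g,H}\circ I^H_{\{e\}}=I^{{}^gH}_{\{e\}}\circ\con_{g,\{e\}}$.

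It remains to show $\psi_H\in S^H_D$. By Corollary \ref{res invertible}, applied to the $H$-algebra $D$, it suffices to treat cyclic $H\in Y_i$. For such $H$, Theorem \ref{p-div} (or Corollary \ref{eq of p-div}) shows that $D$ has uniquely $p_i$-divisible $K$-theory as an $H$-algebra. So I may work in $K^H_0(D)$, which is a $\mathbb{Z}[1/p_i]$-algebra. There $\psi_H$ is a unit exactly when $\frac{1}{\abs{H}}I^H_{\{e\}}(a)$ is, since $p_i^{r_i}$ is invertible.

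Write $\abs{H}=p_i^r$. I use the idempotent decomposition (\ref{K decompose}) iteratively:
\[K^H_0(D)\cong F^H_0(D)\oplus F^{p_iH}_0(D)\oplus\cdots\oplus K_0(D).\]
The element $I^H_{\{e\}}(a)$ is $a$ times the regular representation, and the regular representation is killed by $1-t$. So it lies in the summand where $t$ acts trivially, that is, the component corresponding to $\Phi_1$, which is the $K_0(D)$ factor in the decomposition of Remark \ref{decomposition by psi}. In the other summands it vanishes, since $\Phi_{p^j}$ does not divide $\sum t^k$ trivially... Here I need to be more careful. A zero component would make $\psi_H$ non-invertible, which signals that this naive choice fails.

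This is the main obstacle, and it forces a correction: the unit components must be added in. The fix is to mimic Lemma \ref{p-div lift}. In each $F^{H'}_0(D)$, with $H'\subset H$ cyclic, I put component $1$. In the $K_0(D)$ component I put $p_i^{r_i}a$, which is invertible because $K_0(D)$ is uniquely $p_i$-divisible. Via Remark \ref{p-group}, the isomorphism $R(L)_{P_i}\cong\prod_{H'\subset L}\mathbb{Z}[\zeta_{\abs{H'}},1/p_i]^{W_{H'}}$ yields compatible elements $\psi'_L\in R(L)_{P_i}$ for all $L\in X_i$. Their components are $1$ on nontrivial cyclic $H'$ and $p_i^{r_i}a$ on $\{e\}$. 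Compatibility with restriction and conjugation is then automatic, as in Lemma \ref{p-div lift}, and each $\psi'_L$ is invertible in $K^L_0(D)$.

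Finally I clear denominators. Explicitly,
\[\psi_L=1+\frac{p_i^{r_i}a-1}{\abs{L}}\,I^L_{\{e\}}(1),\]
or $p_i^{r_i}a\cdot\frac{1}{\abs{L}}I^L_{\{e\}}(1)$ plus the projection onto the other components. I expect the key remaining check to be integrality, since dividing by $\abs{L}$ may leave $R(L)$. To keep $\psi_{\{e\}}=p_i^{r_i}a$ exactly, I would multiply by a suitable power of the unit $p_i$ on the non-trivial-components only. Otherwise I would rescale $a$ by $p_i^{r_i}$ and use the formula of Proposition \ref{f_E}, for which the factor $p_i^{r_i}$ is exactly what makes the division by $\abs{N_E(H)/H}\mid p_i^{r_i}$ integral.
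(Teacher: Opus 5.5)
\textbf{There is a gap at the last step, and it is the step the lemma is really about.}

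Your corrected family is compatible and invertible. It has component $1$ at every nontrivial cyclic subgroup and $p_i^{r_i}a$ at $\{e\}$. But it lies in $R(L)_{P_i}$, not in $R(L)$. The factor $p_i^{r_i}$ in $\psi_{\{e\}}=p_i^{r_i}a$ exists precisely to restore integrality. Your proposal stops at ``I expect the key remaining check to be integrality'' and offers two remedies, neither of which is pinned down.

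The second remedy works only under the reading that coincides with the paper's proof: put $a$, not $p_i^{r_i}a$, in the $\{e\}$-slot, glue, and multiply the whole family by $p_i^{r_i}$ at the end. Applied to your corrected family instead, the formula of Proposition \ref{f_E} returns $1+\frac{p_i^{r_i}a-1}{\abs{E}}I^E_{\{e\}}(1)$. This is not in $R(E)$, because $p_i^{r_i}a-1$ is prime to $p_i$. A further global factor $p_i^{r_i}$ would give $\psi_{\{e\}}=p_i^{2r_i}a$.

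The first remedy needs two conditions you do not state or check. The power $p_i^N$ on the nontrivial components must be the same for every $L\in X_i$, since otherwise (\ref{res}) fails. You also need $N\geq r_i$, since otherwise $\abs{L}\nmid p_i^{r_i}a-p_i^N$ for a Sylow $p_i$-subgroup $L$.

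\textbf{How to close it.} Take $N=r_i$ and set
\[\psi_L\coloneqq p_i^{r_i}+\frac{p_i^{r_i}(a-1)}{\abs{L}}\,I^L_{\{e\}}(1)\qquad(L\in X_i).\]
\begin{itemize}
\item This lies in $R(L)$ because $\abs{L}$ divides $p_i^{r_i}$, and it has $\psi_{\{e\}}=p_i^{r_i}a$.
\item It satisfies (\ref{res})--(\ref{con}) by the identity $\res^{L'}_L I^L_{\{e\}}(1)=\frac{\abs{L}}{\abs{L'}}I^{L'}_{\{e\}}(1)$, which you already derived.
\item For invertibility you do not need Remark \ref{p-group}. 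By Corollary \ref{res invertible} it suffices to treat $H\in Y_i$. Induct on $\abs{H}$ through the ring splitting (\ref{K decompose}).
\item The restriction of $\psi_H$ to $p_iH$ is $\psi_{p_iH}$, which is invertible by induction. The base case is $p_i^{r_i}a$, a unit of $K_0(D)$.
\item The $F^H_0(D)$-component of $\psi_H$ is $(1-\Phi_{\abs{H}}/p_i)\,p_i^{r_i}$, which is a unit. The reason is the projection formula of Proposition \ref{Dell'ambrogio}: $\Phi_{\abs{H}}\cdot I^H_{\{e\}}(1)=I^H_{\{e\}}(\res^{\{e\}}_H\Phi_{\abs{H}})=p_i\,I^H_{\{e\}}(1)$, so $1-\Phi_{\abs{H}}/p_i$ kills $I^H_{\{e\}}(1)$.
\end{itemize}

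\textbf{Comparison with the paper.} This is literally the paper's element $p_i^{r_i}\psi'_L$. There, $\psi'_H-1=\frac{a-1}{\abs{H}}I^H_{\{e\}}(1)$ for cyclic $H$, so every summand of the gluing formula indexed by a nontrivial $H$ vanishes. The paper reaches this element through the recursion $\psi'_H=1-\frac{\Phi_{\abs{H}}}{p_i}+\frac{1}{p_i}I^H_{p_iH}(\psi'_{p_iH})$ on cyclic subgroups and then the formula of Proposition \ref{f_E}. The closed formula above makes integrality and (\ref{res})--(\ref{con}) one-line checks.
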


\begin{proof}Let $P_i\subset\mathbb{Z}$ be the multiplicative subset generated by $1$ and $p_i$. We first define the family $(\psi_H^\prime)_{H\in Y_i}$ of elements $\psi_H^\prime\in R(H)_{P_i}$ as follows. Set $\psi_{\{e\}}^\prime\coloneqq a$. If $H\in Y_i$ and we have $\psi_{p_iH}^\prime$, then define $\psi_H^\prime\in R(H)_{P_i}$ as 
\[\psi_H^\prime\coloneqq 1-\frac{\Phi_{\abs{H}}}{p_i}+\frac{1}{p_i}I^H_{p_iH}(\psi_{p_iH}^\prime).\] 
Then $\widetilde{\nu_\ast^H}(\psi_H^\prime)\in K^H_0(D)$ is invertible, where $\widetilde{\nu_\ast^H}$ be the map $R(H)_{P_i}\to K^H_0(D)$ induced by $\nu_\ast^H$. Indeed, the isomorphism 
\[R(H)_{P_i}\cong\mathbb{Z}[\zeta_{\abs{H}},1/p_i]\oplus R(p_iH)_{P_i}.\]
of (\ref{R decompose}) is given by the projection $\pi\colon R(H)_{P_i}\to R(H)_{P_i}/\Phi_{\abs{H}}\cong \mathbb{Z}[\zeta_{\abs{H}},1/p_i]$ and the restriction $R(H)_{P_i}\to R(p_iH)_{P_i}$. By (\ref{indres=Phi}) and (\ref{resind=p}), 
\begin{align*}\pi(\psi_H^\prime)&=\pi\qty(1-\frac{\Phi_{\abs{H}}}{p_i}+\frac{1}{p_i^2}\cdot I^H_{p_iH}\circ\res^{p_iH}_H\circ I^H_{p_iH}(\psi_{p_iH}^\prime)) \\
&=\pi\qty(1-\frac{\Phi_{\abs{H}}}{p_i}+\frac{1}{p_i^2}\cdot \Phi_{\abs{H}}\cdot I^H_{p_iH}(\psi_{p_iH}^\prime)) \\
&=1, \end{align*}
\[\res^{p_iH}_H(\psi_H^\prime)=1-\frac{p_i}{p_i}+\frac{1}{p_i}\cdot p_i\cdot\psi_{p_iH}^\prime=\psi_{p_iH}^\prime. \]
Thus, the invertibility of $\widetilde{\nu_\ast^H}(\psi_H^\prime)$ follows inductively. Also, one easily checks that $(\psi_H^\prime)_{H\in Y_i}$ satisfies (\ref{res})-(\ref{con}) and $\abs{H}\cdot\psi_H^\prime\in R(H)$. 

Next, we define $\psi_L^\prime\in R(L)_{P_i}$ for $L\in X_i\setminus Y_i$ as 
\[\psi_L^{\prime}\coloneqq 1+I^L_{\{e\}}\qty(\frac{\psi_{\{e\}}^{\prime}-1}{\abs{L}})+\sum_{H\in C_L\setminus \{e\}}I^L_H\qty(\frac{\psi_H^{\prime}-1}{\abs{N_L(H)/H}}-I^H_{p_iH}\qty(\frac{\psi_{p_iH}^{\prime}-1}{\abs{N_L(H)/p_iH}})). \]%
As in the proof of Proposition \ref{f_E}, the family $(\psi_L^\prime)_{L\in X_i}$ satisfies (\ref{res})-(\ref{con}). Thus, $\widetilde{\nu_\ast^L}(\psi_L^\prime)\in K^L_0(D)$ is invertible by Corollary \ref{res invertible}. Furthermore, $\abs{H}\cdot\psi_H^\prime\in R(H)$ implies $\abs{L}\cdot\psi_L^\prime\in R(L)$. Therefore, the family of elements $\psi_L\coloneqq p_i^{r_i}\cdot\psi_L^\prime$ is the desired one. \end{proof}

We are now ready to prove Theorem \ref{reformulation}. 

\begin{proof}[Proof of Theorem \ref{reformulation}] The construction of families $(f^{(i)}_H)_{H\in X_i} \ (i=1,\dots, m)$ splits into two cases, according as $k\leq n$ or $k>n$. 

Suppose that $k\leq n$. Take $(f_H^{\prime(k)})_{H\in X_k}$ as in Lemma \ref{p-div lift} and take $(\psi_H^{(k)})_{H\in X_k}$ as in Lemma \ref{integer lift} with $\psi_{\{e\}}^{(k)}=p_1^{r_1}\cdots p_n^{r_n}$. Then set $f_H^{(k)}\coloneqq f_H^{\prime(k)}\psi_H^{(k)}$. Also, for $1\leq j\leq k$ with $j\neq k$, take $(f_H^{(j)})_{H\in X_j}$ as in Lemma \ref{integer lift} with 
\[f_{\{e\}}^{(j)}=p_1^{r_1}\cdots p_n^{r_n}\cdot f_{\{e\}}^{\prime(k)}=f_{\{e\}}^{(k)}. \]
Then the families $(f_H^{(j)})_{H\in X_j}\ (j=1,\dots, n)$ satisfy the desired conditions. If $n=m$, this finish the construction. Assume $n<m$. By Corollary \ref{f lift not p-div}, there is a family $(f_H^{(n+1)})_{H\in X_{n+1}}$ of elements $f_H^{(n+1)}\in S^H_D$ satisfying (\ref{res})-(\ref{con}) such that $f_{\{e\}}^{(n+1)}=(f_{\{e\}}^{(1)})^N$ for some $N\in\mathbb{N}$. By replacing $f_H^{(j)}$ with $(f_H^{(j)})^N$ for $j\leq n$, the desired relations hold for the families $(f_H^{(j)})_{H\in X_j}\ (j=1,\dots, n+1)$. By iterating this argument, we obtain the required families $(f_H^{(j)})_{H\in X_j}$ for $j=1,\dots,m$. 

Suppose that $k>n$. We may assume $k=n+1$. Take $(f_H^{\prime(n+1)})_{H\in X_{n+1}}$ as in Corollary \ref{f lift not p-div}. Again by Corollary \ref{f lift not p-div}, there is a family $(f_H^{\prime\prime(n+1)})_{H\in X_{n+1}}$ of elements $f_H^{\prime\prime(n+1)}\in S^H_D$ satisfying (\ref{res})-(\ref{con}) such that $f_{\{e\}}^{\prime\prime(n+1)}$ is a multiple of $p_1^{r_1}\cdots p_n^{r_n}$. Then set $f_H^{(n+1)}\coloneqq f_H^{\prime(n+1)}\cdot f_{\{e\}}^{\prime\prime(n+1)}$. Since $f_{\{e\}}^{(n+1)}$ is a multiple of $p_1^{r_1}\cdots p_k^{r_n}$, for $1\leq j\leq n$, we can take $(f_H^{(j)})_{H\in X_j}$ as in Lemma \ref{integer lift} with $f_{\{e\}}^{(j)}=f_{\{e\}}^{(n+1)}$. Now the families $(f_H^{(j)})_{H\in X_j}\ (j=1,\dots,n+1)$ satisfy the required conditions. The remainder is the same as in the argument of the previous case. \end{proof}

This completes the proof of Theorem \ref{EPPO family}.

\section{The case of the Lie groups $U(1)$ and $SU(2)$}

Although we cannot verify Conjecture \ref{conjecture} for $G=U(1)$ or $G=SU(2)$, the structure of $K^G_*(D)$ is rather restricted 
for a strongly self-absorbing $G$-$\Cstar$-algebra $D$ in $\mathcal{B}_G$. 

The main tool for studying $K^G_\ast(D)$ is the following. 

\begin{thm}[cf. {\cite[Theorem 5.1]{Rosenberg and Schochet 1986}}]\label{spec seq}
Let $G$ be a compact Lie group such that the fundamental group $\pi_1(G)$ is torsion-free. Then for every $A\in \mathcal{B}_G$ and $B\in KK^G$, there exists a strongly convergent spectral sequence of $R(G)$-modules
\[E^2_{p,q}={\Tor}^{R(G)}_p(K^G_\ast(A), K^G_{\ast+q}(B)) \Longrightarrow K^G_{\ast+p+q}(A\otimes B)\]
such that the edge homomorphism
\[K^G_\ast(A)\otimes_{R(G)} K^G_{\ast+q}(B) = E^2_{0,q} \to E^{\infty}_{0,q} \to K^G_{\ast+q}(A\otimes B)\]
is given by the cup product. \end{thm}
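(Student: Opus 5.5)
This is the Künneth spectral sequence of Rosenberg and Schochet. I would follow their construction, the same geometric-resolution strategy used for Theorem \ref{Kunneth}, with $R(G)$ in place of $\mathbb{Z}[\zeta_n]$.

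The key algebraic input is that, for a compact connected Lie group with torsion-free $\pi_1(G)$, $R(G)$ is Noetherian of finite global dimension. This is a theorem of Steinberg and Pittie: $R(G)$ is a Laurent polynomial ring tensor a polynomial ring. Hence every $R(G)$-module has a projective resolution of finite length $\le \operatorname{rank} G+1$.

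\textbf{Step 1 (geometric realization).} Given $A\in\mathcal{B}_G$, I would realize a projective resolution of $K^G_\ast(A)$ geometrically, exactly as in Theorem \ref{geometric resolution}. The free generators are objects of the form $C(G)\otimes\C$, or rather $\C$ and $C_0(\R)$ with trivial action, whose $K^G_\ast$ is $R(G)$ in even or odd degree. Choosing generators of $K^G_\ast(A)$ gives
\[
\alpha\colon \mathscr{F}_0\to A, \qquad \mathscr{F}_0 \text{ a countable sum of copies of } \C \text{ and } C_0(\R),
\]
with $\alpha_\ast$ surjective on $K^G_\ast$. Taking the mapping cone gives $A_1$ with $K^G_\ast(A_1)=\ker\alpha_\ast$, up to a degree shift. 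I would iterate this. Because the projective dimension is finite, the process terminates: at some stage the remaining $K^G_\ast(A_k)$ is projective. I would then replace the last step by a stably free realization, using the direct-summand trick from the proof of Theorem \ref{projective case}.

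\textbf{Step 2 (projective case).} Show that if $K^G_\ast(A)$ is projective and $A\in\mathcal{B}_G$, then the cup product
\[
K^G_\ast(A)\otimes_{R(G)}K^G_\ast(B)\longrightarrow K^G_\ast(A\otimes B)
\]
is an isomorphism. This is clear for $A=\C$ and for sums of such objects. For general $A$, the map $\alpha$ is an isomorphism on $K^G_\ast$, so its cone $C$ has $K^G_\ast(C)=0$. The main obstacle is then to show that $K^G_\ast(C\otimes B)=0$, the analogue of Proposition \ref{F vanish}. This is where both $A\in\mathcal{B}_G$ and the torsion-free $\pi_1$ hypothesis enter. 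I would use the known fact, due to McLeod and Rosenberg--Schochet, that for such $G$ objects of $\mathcal{B}_G$ satisfy a UCT over $R(G)$, so that $K^G_\ast$-acyclic objects of $\mathcal{B}_G$ are zero in $KK^G$. That gives $C\cong0$ and hence $C\otimes B\cong 0$.

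\textbf{Step 3 (assembling the spectral sequence).} The tower of exact triangles from Step 1 gives a finite filtration of $A$ in $KK^G$. Tensoring with $B$ and applying $K^G_\ast$ yields an exact couple. By Step 2, its $E^1$-term is $K^G_\ast(\mathscr{F}_p)\otimes_{R(G)} K^G_\ast(B)$, and the $d^1$ differential is the resolution tensored with $K^G_\ast(B)$. Therefore
\[
E^2_{p,q}=\operatorname{Tor}^{R(G)}_p\bigl(K^G_\ast(A),K^G_{\ast+q}(B)\bigr).
\]
The filtration is finite, so the spectral sequence converges strongly. The edge map $E^2_{0,q}\to E^\infty_{0,q}\to K^G_{\ast+q}(A\otimes B)$ is induced by $\alpha\otimes\id_B$ composed with the cup-product identification of Step 2. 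By naturality of the cup product, this edge map equals the cup product.
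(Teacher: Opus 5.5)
Your Steps 1 and 3 are the standard Schochet geometric-resolution argument, and they are fine. Suppose you know that the cup product $K^G_\ast(A')\otimes_{R(G)}K^G_\ast(B)\to K^G_\ast(A'\otimes B)$ is an isomorphism for every $A'\in\mathcal{B}_G$ with projective $K^G_\ast(A')$. Then finite global dimension gives a finite tower of triangles, the $E^2$-term, strong convergence and the edge map.

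The gap is in Step 2, where all the non-formal content of the theorem sits. You need the claim that an object $C\in\mathcal{B}_G$ with $K^G_\ast(C)=0$ is zero, and you do not prove it. Nor is it formal. $K^G_\ast=KK^G_\ast(\C,-)$ only tests against $\C$, while $\mathcal{B}_G$ is generated by all $\Ind^G_H M_n$, so the argument of Proposition~\ref{bootstrap} does not apply. Finite global dimension of $R(G)$ does not help either.

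Here is a counterexample. Take $G=SO(3)$, where $R(G)=\Z[V_1]$ has global dimension two but $\pi_1(G)=\Z/2$. Let $M_2^\omega$ be $M_2$ with $G$ acting through the spin-$\tfrac12$ projective representation. Then $K^G_1(M_2^\omega)=0$, and $K^G_0(M_2^\omega)$, the group of $\omega$-projective representations, is free of rank one over $R(G)$ on $V_{1/2}$. So the cone $C$ of the corresponding element of $KK^G(\C,M_2^\omega)$ is a $K^G_\ast$-acyclic object of $\mathcal{B}_G$. Yet $C\neq 0$: on the Klein four subgroup $V_4$, where $\omega$ is nontrivial, $K^{V_4}_\ast(\C)\cong\Z^4$ but $K^{V_4}_\ast(M_2^\omega)\cong\Z$.

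Moreover, the claim is essentially equivalent to the theorem itself. Let $B_0=\Ind^G_H M_n$ be a generator, and let $B_0^\ast$ be its dual, with $KK^G_\ast(B_0,-)\cong K^G_\ast(B_0^\ast\otimes -)$ as in the proof of Proposition~\ref{loc w.r.t C}. Applying the spectral sequence to $A=C$ and $B=B_0^\ast$ gives $KK^G_\ast(B_0,C)=0$ for all generators, hence $C\cong 0$. So citing a UCT valid on all of $\mathcal{B}_G$ assumes something at least as strong as what you want to prove.

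What you actually need is the inclusion $\mathcal{B}_G\subseteq\langle\C\rangle$ into the localizing subcategory generated by $\C$ with trivial action. Given that, the vanishing is formal: the $X$ with $KK^G_\ast(X,C)=0$ form a localizing subcategory containing $\C$. The rest of your argument then goes through. Proving this inclusion is exactly where connectedness and torsion-freeness of $\pi_1(G)$ are needed; the example above, and also $G=\Z/p$, show it fails otherwise.

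One route goes through a maximal torus $T$:
\begin{enumerate}
\item $\C$ is a retract of $C(G/T)$ in $KK^G$: take the unit map followed by the Dolbeault class of $G/T$, whose equivariant index is $1$ by Borel--Weil.
\item Hence each $X\in\mathcal{B}_G$ is a retract of $C(G/T)\otimes X\cong\Ind^G_T\mathrm{Res}^T_G X$, and $\mathrm{Res}^T_G X\in\mathcal{B}_T$.
\item For the torus, show $\mathcal{B}_T=\langle\C\rangle$ via equivariant Bott elements. For example, $C(U(1)/(\Z/k))$ is, up to suspension, the cone of $1-t^k\colon\C\to\C$. Actions of finite subgroups of $T$ through projective representations need an extra argument.
\item Finally show $C(G/T)\in\langle\C\rangle_G$. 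This is the step that genuinely uses that $\pi_1(G)$ is torsion-free.
\end{enumerate}
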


In what follows, let $G=U(1)$ or $SU(2)$. Then $R(G)=\mathbb{Z}[t,1/t]$ or $\mathbb{Z}[t]$, respectively. In either case, the global dimension of $R(G)$ is two. In other words, for all $p\geq 3$ and $R(G)$-modules $M,N$, we have $\Tor^{R(G)}_p(M,N)=0.$ Hence, in this case, $E^2_{p,q}=0$ if $p\geq 3$ or $p<0$. 

Let $d^r_{p,q} \colon E^r_{p,q} \to E^r_{p-r, q+r-1}$ be the differential. The $E^3$-page of this spectral sequence is computed as
\begin{align}\label{E3} E^3_{p,q}=\Ker d^2_{p,q}/\Im d^2_{p+2,q-1}=\begin{dcases*}
0 & if $p\geq 3$ or $p<0$, \\
\Ker d^2_{2,q} & if $p=2$, \\
E^2_{1,q} & if $p=1$, \\
E^2_{0,q}/\Im d^2_{2,q-1} & if $p=0$. \end{dcases*}\end{align}
This yields $d^3_{p,q}=0$ and $E^{\infty}_{p,q}=E^3_{p,q}$. Let 
\[0=F_{-1}H_n \subset F_0H_n \subset \dots \subset F_nH_n=K^G_{\ast+n}(A\otimes B)\]
be the filtration with $F_pH_n/F_{p-1}H_n=E^{\infty}_{p,n-p}=E^3_{p,n-p}$. Since $E^3_{p,q}=0$ for $p\geq 3$, we have $F_2H_n=F_nH_n=K^G_{\ast+n}(A\otimes B)$. Now there are two short exact sequences 
\begin{align}& E^3_{0,n} \rightarrowtail F_1H_n \twoheadrightarrow E^3_{1,n-1}, \\
& F_1H_n \rightarrowtail K^G_{\ast+n}(A\otimes B) \twoheadrightarrow E^3_{2,n-2}. \end{align}
Also, $E^2_{p,q}={\Tor}^{R(G)}_p(K^G_\ast(A), K^G_{\ast+q}(B))$ gives the following two short exact sequences
\begin{align}& \Ker d^2_{2,q} \rightarrowtail {\Tor}^{R(G)}_2(K^G_\ast(A), K^G_{\ast+q}(B)) \overset{d^2_{2,q}}{\twoheadrightarrow} \Im d^2_{2,q}, \\
& \Im d^2_{2,q} \rightarrowtail K^G_\ast(A)\otimes_{R(G)} K^G_{\ast+q+1}(B) \twoheadrightarrow E^2_{0,q+1}/\Im d^2_{2,q}. \end{align}
Combining these four exact sequences with (\ref{E3}), we obtain the following two exact sequences 
\begin{align}
& \begin{aligned}[t] \Ker d^2_{2,q} \rightarrowtail {\Tor}^{R(G)}_2(K^G_\ast(A), K^G_{\ast+q} & (B)) \overset{d^2_{2,q}}{\to} K^G_\ast(A)\otimes_{R(G)} K^G_{\ast+q+1}(B) \\ 
& \to F_1H_{q+1} \twoheadrightarrow \Tor^{R(G)}_1(K^G_\ast(A), K^G_{\ast+q}(B)), \end{aligned} \label{long} \\
& F_1H_{q+1} \rightarrowtail K^G_{\ast+q+1}(A\otimes B) \twoheadrightarrow \Ker d^2_{2,q-1}. \label{F_1} \end{align}
Note that the composition map $K^G_\ast(A)\otimes_{R(G)} K^G_{\ast+q+1}(B) \to F_1H_{q+1} \to K^G_{\ast+q+1}(A\otimes B)$ is the cup product. 

\begin{prp} Let $D$ be a strongly self-absorbing $G$-$\Cstar$-algebra in $\mathcal{B}_G$ and $C_\nu$ be the mapping cone of the unital inclusion $\nu\colon \mathbb{C} \to D$. Then 
\begin{align}& \Tor^{R(G)}_1(K^G_\ast(D),K^G_\ast(D))=0, \label{Tor_1(D,D)} \\
& \Tor^{R(G)}_1(K^G_\ast(D),K^G_\ast(C_\nu))=0, \label{Tor_1(D,C_nu)} \\
&\Tor^{R(G)}_2(K^G_\ast(D),K^G_{\ast-1}(C_\nu)) \cong K^G_\ast(D)\otimes_{R(G)}K^G_\ast(C_\nu) \label{Tor_2(D,C_nu)} \end{align}
and there is an exact sequence 
\begin{align} \Tor^{R(G)}_2(K^G_\ast(D),K^G_{\ast-1}(D)) \rightarrowtail K^G_\ast(D)\otimes_{R(G)} K^G_\ast(D) \twoheadrightarrow K^G_\ast(D\otimes D). \label{D tensor D} \end{align}\end{prp}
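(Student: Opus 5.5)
Apply the spectral sequence of Theorem \ref{spec seq} and its consequences (\ref{long}), (\ref{F_1}) with $A=D$ and $B=D$. By Corollary \ref{multiplication}(1) the cup product followed by $(\id\otimes\nu)_\ast^{-1}$ is the ring multiplication of $K^G_\ast(D)$, which has unit $[1_D]$ and is therefore surjective. So the composition $K^G_\ast(D)\otimes_{R(G)}K^G_{\ast+q+1}(D)\to F_1H_{q+1}\to K^G_{\ast+q+1}(D\otimes D)$ is surjective. Hence $F_1H_{q+1}\to K^G_{\ast+q+1}(D\otimes D)$ is onto, and (\ref{F_1}) gives $\Ker d^2_{2,q-1}=0$ for every $q$. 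The map to $F_1H_{q+1}$ is also injective, since $F_1H_{q+1}$ is a subgroup of $K^G_{\ast+q+1}(D\otimes D)$. I would then argue that it is an isomorphism $K^G_\ast(D)\otimes K^G_{\ast+q+1}(D)\cong F_1H_{q+1}$, with $\Tor_1$ as cokernel. Surjectivity onto $F_1H_{q+1}$ needs care, because $E^3_{0}$ is exactly the image of the tensor product. Since the total composite is already onto $K^G(D\otimes D)\supset F_1H$, we get that $F_1H = E^3_0$ = image. Then the last term in (\ref{long}) is zero, so $\Tor_1^{R(G)}(K^G_\ast(D),K^G_\ast(D))=0$, which is (\ref{Tor_1(D,D)}). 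With $\Ker d^2_{2,q}=0$, (\ref{long}) becomes $\Tor_2\rightarrowtail K^G_\ast(D)\otimes K^G_\ast(D)\twoheadrightarrow F_1H=K^G_\ast(D\otimes D)$, which is (\ref{D tensor D}) after reindexing the grading.

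For $C_\nu$, I use Corollary \ref{multiplication}(3): $K^G_\ast(D\otimes C_\nu)=0$. Take $A=D$ and $B=C_\nu$ in (\ref{long}) and (\ref{F_1}). Since the target vanishes, (\ref{F_1}) gives $F_1H_{q+1}=0$ and $\Ker d^2_{2,q-1}=0$ for all $q$. Then (\ref{long}) becomes exact as
\[0\to \Tor_2^{R(G)}(K^G_\ast(D),K^G_{\ast+q}(C_\nu))\to K^G_\ast(D)\otimes_{R(G)}K^G_{\ast+q+1}(C_\nu)\to 0\to \Tor_1^{R(G)}(K^G_\ast(D),K^G_{\ast+q}(C_\nu))\to 0.\]
This gives (\ref{Tor_1(D,C_nu)}) and, with $q=-1$, the isomorphism (\ref{Tor_2(D,C_nu)}).

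The main obstacle is the first part: the bookkeeping of the $\mathbb{Z}/2$-graded indices in (\ref{long}), and in particular the identification $F_1H=E^3_{0}$, i.e.\ that $F_0H$ equals the image of the cup product. That identification follows from the edge homomorphism description in Theorem \ref{spec seq}. I would also double-check that the spectral sequence applies, namely that $D\in\mathcal{B}_G$ and that $\pi_1(G)$ is torsion-free, which holds for $U(1)$ and $SU(2)$. Note that the Tor terms in the $\mathbb{Z}/2$-graded sense include all parity combinations, so "for every $q$" covers every summand.
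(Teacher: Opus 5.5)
Your proposal is correct and matches the paper's proof: you feed $A=B=D$ into (\ref{long}) and (\ref{F_1}) and use surjectivity of the cup product (via the unit $[1_D]$) to get $F_1H_{q+1}=K^G_{\ast+q+1}(D\otimes D)$, hence $\Ker d^2_{2,q}=0$ for all $q$ and $\Tor_1=0$, and then repeat with $B=C_\nu$ using $K^G_\ast(D\otimes C_\nu)=0$. One wording slip: the map $K^G_\ast(D)\otimes_{R(G)}K^G_{\ast+q+1}(D)\to F_1H_{q+1}$ is only surjective, not an isomorphism; its kernel is $\Im d^2_{2,q}$, the image of $\Tor_2$, exactly as your own final sequence (\ref{D tensor D}) shows.
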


\begin{proof}First, apply (\ref{long}) and (\ref{F_1}) to the case $A=B=D$. Since the cup product $K^G_\ast(D)\otimes_{R(G)} K^G_\ast(D) \to K^G_\ast(D\otimes D)$ is surjective by Proposition \ref{ring structure}, $F_1H_{q+1} \cong K^G_\ast(D\otimes D)$ and the map $K^G_\ast(D)\otimes_{R(G)} K^G_{\ast+q+1}(D) \to F_1H_{q+1}$ is surjective. Hence $\Tor^{R(G)}_1(K^G_\ast(D),K^G_\ast(D)) = \Ker d^2_{2,q-1} =0$. Repeating this argument for $q+1$ instead of $q$, we also have $\Ker d^2_{2,q}=0$. Thus $d^2_{2,q}\colon \Tor^{R(G)}_2(K^G_\ast(D),K^G_{\ast-1}(D)) \to K^G_\ast(D)\otimes_{R(G)} K^G_\ast(D)$ is injective and so $(\ref{D tensor D})$ holds. 

Next, apply (\ref{long}) and (\ref{F_1}) to the case $A=D$ and $B=C_\nu$. Corollary \ref{multiplication} yields $F_1H_{q+1}=K^G_{\ast+q+1}(D\otimes C_\nu)=\Ker d^2_{2,q-1}=0$. Thus $\Tor^{R(G)}_1(K^G_\ast(D),K^G_{\ast+q}(C_\nu))=0$, $\Ker d^2_{2,q}=0$ and consequently $\Tor^{R(G)}_2(K^G_\ast(D),K^G_{\ast+q}(C_\nu)) \cong K^G_\ast(D)\otimes_{R(G)}K^G_{\ast+q+1}(C_\nu)$. \end{proof}

Since $R(G)\otimes_{\mathbb{Z}} \mathbb{Q}=\mathbb{Q}[t,1/t]$ or $\mathbb{Q}[t]$ is a PID, (\ref{Tor_1(D,D)}) and Lemma \ref{Tor(M,M)} imply that $K^G_\ast(D)\otimes_{\mathbb{Z}} \mathbb{Q}$ is flat. Also, by (\ref{D tensor D}), 
\[(K^G_\ast(D)\otimes_{\mathbb{Z}} \mathbb{Q}) \otimes_{R(G)\otimes_{\mathbb{Z}} \mathbb{Q}} (K^G_\ast(D)\otimes_{\mathbb{Z}} \mathbb{Q}) \cong K^G_\ast(D)\otimes_{\mathbb{Z}} \mathbb{Q}. \]
A similar argument to the proof of Theorem \ref{F is a localization} shows that $K^G_1(D)\otimes_{\mathbb{Z}} \mathbb{Q}=0$ and $K^G_0(D)\otimes_{\mathbb{Z}} \mathbb{Q}$ is a localization of $R(G)$. In particular, $K^G_1(D)$ is torsion as a $\mathbb{Z}$-module. 
This localization procedure can be realized by $D\otimes M_\Q$, which remains a strongly self-absorbing $G$-$\Cstar$-algebra, where $M_\Q$ is the UHF algebra satisfying $K_0(M_\Q)\cong \Q$.

\begin{prp} Let the notation be as above. Then, 
\begin{itemize}
\item[$(1)$] $K^G_1(D)$ is torsion as a $\Z$-module. 
\item[$(2)$] Unless $K^G_*(D)$ is trivial, the map $\nu_*:R(G)\to K_0^G(D)$ is injective and $K_0^G(C_\nu)\cong K_1^G(D)$.  
\end{itemize}
\end{prp}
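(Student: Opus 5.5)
Part (1) is the rationalization argument sketched just before the statement. To make it precise, I would use that $R(G)_{\mathbb Q}:=R(G)\otimes_{\mathbb Z}\mathbb Q$ is a PID and that tensoring with $\mathbb Q$ is exact and commutes with $\Tor$. From (\ref{Tor_1(D,D)}) we get $\Tor_1^{R(G)_{\mathbb Q}}(K^G_\ast(D)_{\mathbb Q},K^G_\ast(D)_{\mathbb Q})=0$. Lemma \ref{Tor(M,M)} then shows that $M:=K^G_\ast(D)_{\mathbb Q}$ is flat, hence torsion-free over the PID.

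Next, since $\Tor_2$ vanishes over a PID, (\ref{D tensor D}) gives that the multiplication $M\otimes_{R(G)_{\mathbb Q}}M\to M$ is an isomorphism. Torsion-freeness gives the embedding $M\subset M\otimes \mathrm{Frac}$. The basis argument of Theorem \ref{F is a localization} (with $e_i\otimes e_{i_0}-e_{i_0}\otimes e_i$ in the kernel) then shows that $M\otimes\mathrm{Frac}$ is either $0$ or one-dimensional in even degree. The odd part must vanish, by the same surjectivity-onto-a-summand argument used there for $F^G_1$. Hence $K^G_1(D)\otimes\mathbb Q=0$, i.e. $K^G_1(D)$ is torsion.

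For (2), assume $K^G_\ast(D)\neq0$. Then $[1_D]\neq0$, since it is the unit. I would first show that $M\neq 0$. Suppose instead that $K^G_0(D)$ is torsion. Then for some integer $N\neq0$ we have $N[1_D]=0$, so $N$ kills the whole ring $K^G_0(D)$. But $K_0(D)$ is a localization of $\mathbb Z$ by Toms–Winter, so it is torsion-free. The step I would still need to supply is why $\res^{\{e\}}_G$ gives a contradiction: the plan is that $N$ annihilates the unit, so it annihilates the unit of $K_0(D)$ after restriction, which is impossible unless $K_0(D)=0$. If $K_0(D)=0$, then $D$ is nonequivariantly $KK$-trivial (a UCT-type argument for $\mathcal B_G$), and that should force $K^G_\ast(D)=0$. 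This is the main obstacle: handling the case where the underlying non-equivariant $K$-theory is trivial but the equivariant one is not. I would try to use that $\mathcal B_G$ objects with trivial $K$-theory for $G=U(1),SU(2)$ (where $\pi_1(G)$ is torsion-free, so restriction to the trivial group detects via the Künneth/Hodgkin spectral sequence) have vanishing $K^G$.

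Granting $M\neq0$, $M$ is a localization of $R(G)_{\mathbb Q}$ containing it, since it is a torsion-free rank-one ring receiving the unital map $\nu_\ast$. Since $R(G)$ is a domain, the composite $R(G)\to R(G)_{\mathbb Q}\to M$ is injective. Therefore $\nu_\ast\colon R(G)\to K^G_0(D)$ is injective.

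Finally, I would use the six-term sequence of $\Sigma D\to C_\nu\to\mathbb C\xrightarrow{\nu}D$. Since $K^G_1(\mathbb C)=0$, the sequence reads
\[0\to K^G_1(D)\to K^G_0(C_\nu)\to R(G)\xrightarrow{\nu_\ast}K^G_0(D)\to K^G_1(C_\nu)\to 0,\]
with indices as in the paper's suspension convention. Injectivity of $\nu_\ast$ makes $K^G_0(C_\nu)\to R(G)$ zero, so $K^G_0(C_\nu)\cong K^G_1(D)$.
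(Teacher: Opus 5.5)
Your part (1) is the paper's rationalization argument. Your reduction in (2) is also correct. If $K^G_0(D)\otimes\Q\neq 0$, the rank-one structure of $K^G_\ast(D)\otimes\Q$ makes $R(G)\to K^G_0(D)\otimes\Q$ injective, so $\nu_\ast$ is injective, and your six-term sequence argument then gives $K^G_0(C_\nu)\cong K^G_1(D)$. This reaches the paper's intermediate conclusion (``$\nu_\ast$ not injective $\Rightarrow$ $K^G_0(D)$ is $\Z$-torsion'') by a different route; the paper obtains it from the ideal $J_1\cong R(G)\otimes\Q$ and (\ref{Tor_2(D,C_nu)}). So everything rests on the step you flag yourself: a $\Z$-torsion $K^G_0(D)$ must force $K^G_\ast(D)=0$. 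Here there is a genuine gap, and the route you suggest cannot work.

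Restricting to $\{e\}$ does correctly give $K_0(D)=0$. But the implication ``$K_\ast(D)=0\Rightarrow K^G_\ast(D)=0$ for objects of $\mathcal{B}_G$'' is false, even for strongly self-absorbing objects. For $G=U(1)$, let $S=\{(1-t)^k\}_{k\geq 0}$ (or use the set in the paper's closing example). The model $M^S$ of Proposition \ref{model action} is strongly self-absorbing, lies in $\mathcal{B}_G$, and has $K^G_0(M^S)\cong\Z[t,t^{-1}]_S\neq 0$. On the other hand, $\res^{\{e\}}_G(1-t)=0$ must be invertible in the ring $K_0(M^S)$, so $K_0(M^S)=0$.

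The underlying reason is that the K\"unneth/Hodgkin spectral sequence sees $K^G_\ast(A)$ only through the augmentation $R(G)\to\Z$. The groups $\Tor^{R(G)}_\ast(K^G_\ast(A),\Z)$ vanish as soon as some element of the augmentation ideal acts invertibly, so restriction to the trivial group does not detect vanishing. The torsion hypothesis therefore has to be used essentially, not only through its consequence $K_0(D)=0$.

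The paper does this with the mapping cone. Once $K^G_0(D)$ is torsion, it uses the six-term sequence for $C_\nu$ to rewrite the terms of (\ref{Tor_2(D,C_nu)}) in terms of $K^G_\ast(D)$ and $R(G)$. It then compares the result with (\ref{D tensor D}) to conclude $K^G_\ast(D)=0$. Nothing in your proposal plays this role, so (2) is not established.
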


\begin{proof}
We show only (2). 
If $\nu_*:R(G)\to K_0^G(D)$ is injective, the mapping cone sequence implies $K_0^G(C_\nu)\cong K_1^G(D)$. 

Assume that $\nu_*:R(G)\to K_0^G(D)$ is not injective. 
We first show that $K_0^G(D)$ is torsion as a $\Z$-module. 
Let $J_0$ be the kernel of $\nu_*$, which is an ideal of $R(G)$ as $\nu_*$ is an algebra homomorphism. 
Let $J_1$ be the kernel of $\nu_*\otimes \id_\Q:R(G)\otimes_\Z\Q\to K_0^G(D)\otimes_\Z\Q$. 
Then $J_1$ is an ideal of $R(G)\otimes_\Z \Q$ satisfying $J_0\otimes_\Z \Q\subset J_1$. 
Since $K_1^G(D\otimes M_\Q)=0$, the six-term exact sequence arising from 
\[\Sigma D\otimes M_\Q\rightarrowtail  C_\nu\otimes M_\Q
\twoheadrightarrow \C\otimes M_\Q\] 
implies $K_0^G(C_\nu)\otimes \Q\cong J_1$. 
Then (\ref{Tor_2(D,C_nu)}) shows $(K_0^G(D)\otimes \Q)\otimes_{R(G)\otimes\Q}J_1=0$. 
Since $R(G)\otimes \Q$ is a PID, the ideal $J_1$ is isomorphic to $R(G)\otimes_\Z \Q$ as an $R(G)\otimes_\Z \Q$-module, 
which means $K_0^G(D)\otimes_\Z\Q=0$. 
Thus $K_0^G(D)$ is torsion as a $\Z$-module. 

Since $R(G)$ is free as a $\Z$-module, the map $\nu_*$ is trivial, and we get $K_0^G(D)\cong K_1(C_\nu)$ and 
the split exact sequence 
\[K_1^G(D)\rightarrowtail K_0^G(C_\nu) \twoheadrightarrow R(G).\]
Thus 
\[\Tor_2^{R(G)}(K_{*-1}^G(D),K_{*}^G(D))\cong  \Tor_2^{R(G)}(K_{*}^G(C_\nu),K_*^G(D)),\]
\[K_0^G(D)\otimes_{R(G)}K_0^G(D)\cong K_1^G(C_\nu)\otimes_{R(G)}K_0^G(D),\]
and $K_1^G(D)\otimes_{R(G)}K_1^G(D)$ is a direct summand of $K_0^G(C_\nu)\otimes_{R(G)}K_1^G(D)$. 
Now (\ref{Tor_2(D,C_nu)}) and (\ref{D tensor D}) imply that $K_*^G(D)$ is trivial. 
\end{proof}

\begin{rem}When $K_*^G(D)$ is non-trivial, we can further show the following from (\ref{Tor_2(D,C_nu)}) and (\ref{D tensor D}).  
\begin{itemize}	
	\item [(1)] The following exact sequence splits: 
	\[K_0^G(D) \otimes_{R(G)} R(G)\rightarrowtail K_0^G(D)\otimes_{R(G)}K_0^G(D) 
	\twoheadrightarrow K_0^G(D)\otimes_{R(G)}K_1^G(C_\nu).\] 
	\item [(2)] $K_1^G(D)\otimes_{R(G)}K_1^G(C_\nu)=0$ and $1\otimes \nu_*:K_1^G(D)\otimes_{R(G)}R(G) \to K_1^G(D)\otimes_{R(G)} K_0^G(D)$ is an isomorphism. 
\item [(3)]$\operatorname{Tor}_2^{R(G)}(K_*^G(D),K_*^G(D))\cong K_1^G(D)$. 
\end{itemize}
Since we do not need these facts, we omit the proof. 
\end{rem}

Unfortunately, we are unable to prove that $K^G_1(D)=0$ itself. In what follows, we proceed under this assumption. 

In general, a ring homomorphism $R \to A$ is called \textit{epimorphic} if the multiplication map $A\otimes_R A \to A$ is an isomorphism. 

\begin{thm} \label{flat epi} Suppose that $K^G_1(D)=0$. Then $D$ satisfies the localization condition with respect to $\mathbb{C}$. \end{thm}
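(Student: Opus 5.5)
Assuming $K^G_1(D)=0$, the claim reduces to showing that $\nu_*\colon R(G)\to K^G_0(D)$ induces an isomorphism $R(G)_{S_D}\cong K^G_0(D)$; for $B=\C$ the localization condition says exactly this together with $K^G_1(D)=0=R(G)_{S_D}$ in odd degree. The plan is to show that $\nu_*$ is a flat epimorphism of rings. Then I invoke the classical fact that a flat epimorphism $R\to A$ with $R$ a Noetherian domain of small dimension is a localization at the set of elements that become units. If that fact is not available in the needed form, I prove surjectivity of $R(G)_{S_D}\to K^G_0(D)$ by hand.

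\emph{Epimorphism and flatness.} With $K^G_1(D)=0$, the sequence (\ref{D tensor D}) in degree $0$ reads
\[\Tor^{R(G)}_2(K^G_0(D),K^G_{1}(D))\rightarrowtail K^G_0(D)\otimes_{R(G)}K^G_0(D)\twoheadrightarrow K^G_0(D\otimes D).\]
The left term vanishes, so the cup product is an isomorphism. Composed with $(\id\otimes\nu)_*^{-1}$, this says the multiplication $K^G_0(D)\otimes_{R(G)}K^G_0(D)\to K^G_0(D)$ is an isomorphism, i.e.\ $\nu_*$ is epimorphic. For flatness, the mapping cone sequence gives $K^G_0(C_\nu)=\Ker\nu_*$ and $K^G_1(C_\nu)=\Cok\nu_*$. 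If $K^G_*(D)=0$, then $S_D=R(G)$ and there is nothing to prove. Otherwise the previous proposition gives $\nu_*$ injective, so $K^G_0(C_\nu)=0$, and then (\ref{Tor_2(D,C_nu)}) and (\ref{Tor_1(D,C_nu)}) read $\Tor_i^{R(G)}(K^G_0(D),K^G_0(D)/R(G))=0$ for $i=1,2$. Feeding this into the long exact Tor sequence of $R(G)\rightarrowtail K^G_0(D)\twoheadrightarrow K^G_0(D)/R(G)$ with $M=K^G_0(D)$ in the first slot yields $\Tor_i(M,M)\cong\Tor_i(M,R(G))=0$ for $i\ge1$. To upgrade this to flatness, I use that for an epimorphism $R\to A$ one has $\Tor^R_i(A,N)\cong\Tor^R_i(A,A\otimes_R N)$ for $A$-modules. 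Alternatively, I localize at primes of $R(G)$ (global dimension $2$) and argue as in Lemma \ref{Tor(M,M)}. This flatness step is where I expect the main obstacle, since $R(G)$ is not Dedekind and Lemma \ref{Tor(M,M)} does not apply directly. My first attempt is the standard lemma that if $R\to A$ is an epimorphism with $\Tor_1^R(A,A)=0$, then $A$ is flat over $R$ whenever $\operatorname{gldim}R\le2$ (Lazard / Popescu–Spircu style results on epimorphisms).

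\emph{From flat epimorphism to localization.} For a flat epimorphism $R\to A$, every prime $\mathfrak q$ of $A$ satisfies $A_{\mathfrak q}=R_{\mathfrak p}$ with $\mathfrak p=\mathfrak q\cap R$. Moreover, $A$ is the localization $R_S$ with $S=\{s\in R:\ s \text{ is a unit in } A\}$ provided the image of $\operatorname{Spec}A$ is an intersection of the open sets $D(s)$. Since $R(G)=\Z[t]$ or $\Z[t,1/t]$ is a UFD, every height-one prime is principal. The complement of the image of $\operatorname{Spec}A$ is then a union of closed sets, and each closed subset is cut out by principal elements up to radical, which gives the required description. Concretely, for $a\in K^G_0(D)$ I take the ideal $I=\{r\in R(G): ra\in R(G)\}$. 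Using flatness, $IA=A$. By UFD-ness together with the fact that the radical of $I$ misses all primes in the image, I extract a single $s\in I$ that is a unit in $A$, hence $s\in S_D$, and then $a=(sa)/s\in R(G)_{S_D}$. Injectivity of $R(G)_{S_D}\to K^G_0(D)$ follows from injectivity of $\nu_*$ by Remark \ref{localization}. By Remark \ref{uniqueness of h}, this gives the required $h_0^{\C}$; for odd degree both sides are $0$.
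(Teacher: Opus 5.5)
Your outline is the paper's. With $K^G_1(D)=0$, (\ref{D tensor D}) makes $\nu_*\colon R(G)\to K^G_0(D)$ a ring epimorphism. Tor-vanishing plus an external theorem on ring epimorphisms then gives flatness, and factoriality of $R(G)$ turns a flat epimorphism into a localization.

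\textbf{Flatness step.} This is the weakest part of your write-up.
\begin{itemize}
\item The $C_\nu$ detour is unnecessary. $\Tor_1^{R(G)}(M,M)=0$ is exactly (\ref{Tor_1(D,D)}). If you also want $\Tor_2^{R(G)}(M,M)=0$, it is the part of (\ref{D tensor D}) that injects into $K^G_0(D)\otimes K^G_1(D)\oplus K^G_1(D)\otimes K^G_0(D)=0$.
\item Neither of your in-house arguments yields flatness. For an epimorphism $R\to A$ and an $A$-module $N$ one has $A\otimes_R N\cong N$. So $\Tor^R_i(A,N)\cong\Tor^R_i(A,A\otimes_R N)$ is a tautology and says nothing about $\Tor^R_1(A,R/I)$, which is what flatness needs.
\item The proof of Lemma \ref{Tor(M,M)} uses that every localization of the base is a PID. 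This fails at height-two primes such as $(p,t-1)$. There, torsion-free modules need not be flat; the maximal ideal of $R(G)_{(p,t-1)}$ is an example.
\end{itemize}
So this step genuinely rests on the literature. You should replace the loosely attributed ``standard lemma'' by a precise statement. The paper uses \cite[Theorem 1.1]{flat ring epi} at exactly this point, fed only with (\ref{Tor_1(D,D)}).

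\textbf{Localization step.} Here you genuinely depart from the paper. The paper cites \cite[Theorem 1.3 (i)(ii)]{flat ring epi}, using $\operatorname{Cl}(R(G))=\operatorname{Pic}(R(G))=0$; you argue by hand.

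Drop your abstract version. A codimension-two closed subset of $\operatorname{Spec}R(G)$ is not $V(s)$ for any single $s$. Also, knowing only that $V(I)$ misses the image of $\operatorname{Spec}A$ does not let you extract one $s\in I$ avoiding infinitely many primes.

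Your concrete version does work once you add two facts:
\begin{itemize}
\item $A=K^G_0(D)$ is an overring of $R=R(G)$. Flat over a domain means torsion-free, so $A\hookrightarrow A\otimes_R\operatorname{Frac}R$. The map $\operatorname{Frac}R\to A\otimes_R\operatorname{Frac}R$ is an epimorphism onto a nonzero ring, so it is an isomorphism.
\item If $a=x/y$ with $\gcd(x,y)=1$ in the UFD $R$, then $I=(R:_Ra)=yR$ is principal.
\end{itemize}
Tensor $0\to I\to R\to A/R$ with the flat module $A$ and use $A\otimes_R(A/R)=0$; this gives $IA=A$. Hence $yA=A$, i.e.\ $y\in S_D$, so $a=x/y$ lies in the image of $R(G)_{S_D}$.

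This gives a short, self-contained proof of the localization step for a factorial base. The paper's citation is shorter and covers more general noetherian bases, controlled by class and Picard groups, at the price of being a black box.
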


\begin{proof} The assumption $K^G_1(D)=0$ and (\ref{D tensor D}) give the isomorphism 
\[K^G_0(D) \otimes_{R(G)} K^G_0(D) \cong K^G_0(D). \]
Hence, $\nu_\ast\colon R(G)\to K^G_0(D)$ is epimorphic. By (\ref{Tor_1(D,D)}) and \cite[Theorem 1.1]{flat ring epi}, $K^G_0(D)$ is in fact a flat $R(G)$-module. Since $R(G)$ is a UFD, the associated divisor class group $\operatorname{Cl}(R(G))$ and the Picard group $\operatorname{Pic}(R(G))$ are trivial (see \cite[p.165--167]{Matsumura}, for example). Thus, by \cite[Theorem 1.3 (i)(ii)]{flat ring epi}, $K^G_0(D)$ is a localization of $R(G)$. \end{proof}

\begin{prp}\label{loc w.r.t C} If $D$ satisfies the localization condition with respect to $\mathbb{C}$, then $D$ satisfies it with respect to every compact object in $\mathcal{B}_G$. \end{prp}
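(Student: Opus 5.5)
We would set up a localising-subcategory argument. Let $S=S_D$ and let $\mathcal{C}$ be the class of objects $B$ in $\mathcal{B}_G$ for which the natural map $\nu^B_\ast\colon KK^G_\ast(B,\C)\to KK^G_\ast(B,D)$ factors through an isomorphism $KK^G_\ast(B,\C)_S\cong KK^G_\ast(B,D)$. This is exactly the localization condition of Definition \ref{loc condition}, and Remark \ref{uniqueness of h} shows the factorization is unique when it exists. The plan is to prove, using the model $M^S$, that $\nu$ factors as $\C\to M^S\to D$ through a $KK^G$-equivalence $M^S\simeq D$. Then $KK^G_\ast(B,D)\cong KK^G_\ast(B,M^S)$ compatibly with $\nu^B_\ast$, and Proposition \ref{model action} gives the localization condition for every compact $B$.

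\emph{Step 1: the comparison map.} Take the inductive system $(A_n,\iota_n)$ of Remark \ref{model limit}, with $A_n=\C$ and $\iota_n=x_1\cdots x_n$ for an enumeration $x_n$ of $S$. Exactly as in the proof of Proposition \ref{loc KK}, the $\varprojlim^1$ term vanishes because each $x\in S$ acts invertibly on $K^G_\ast(D)$. Hence $KK^G_\ast(M^S,D)\cong K^G_\ast(D)$, and we obtain $f\in KK^G(M^S,D)$ with $[1_{M^S}]\otimes f=[1_D]$. Consequently $f_\ast\circ\nu^B_\ast=\nu^B_\ast$ for every $B$.

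\emph{Step 2: $f$ is an isomorphism on the generator $\C$.} The hypothesis says that $K^G_\ast(D)\cong R(G)_S$ via $\nu_\ast$. Proposition \ref{model action} gives the same description of $K^G_\ast(M^S)$. So $f_\ast\colon K^G_\ast(M^S)\to K^G_\ast(D)$ is an $R(G)$-module map that is compatible with the two maps from $R(G)$, hence is the identity of $R(G)_S$ by uniqueness. Thus $f$ induces an isomorphism on $KK^G_\ast(\C,-)$.

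\emph{Step 3: pass to all of $\mathcal{B}_G$ (the main obstacle).} For a non-finite compact Lie group, $\C$ alone does not generate $\mathcal{B}_G$; we also need the generators $\Ind^G_H M_n$, that is, the groups $K^H_\ast$. We would avoid working subgroup by subgroup and use the tensor structure instead. Consider $T\mapsto KK^G_\ast(T\otimes M^S,D)$, or equivalently $f\otimes\id_T$. We would show that the cone $C_f$ satisfies $K^G_\ast(C_f\otimes B)=0$ for all $B\in\mathcal{B}_G$, reducing to $B=C(G/H)\otimes M_n$ with inner-type actions.

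The key identity we would use is $D\otimes D\simeq D$ together with Corollary \ref{multiplication}(3), namely $K^G_\ast(D\otimes C_\nu)=0$. Here $K^G_\ast(C_\nu)$ is identified with the kernel and cokernel of $R(G)\to R(G)_S$, so $C_\nu$ is $S$-torsion in $K^G$. One checks likewise that $M^S\otimes C_\nu$ has vanishing $K^G_\ast$. Since $M^S$ and $D$ are each $\nu$-idempotent, both $M^S$ and $D$ are equivalent to $M^S\otimes D$ via the unit maps: the cones of $\nu\otimes\id_D$ and $\id_{M^S}\otimes\nu$ are $C_\nu\otimes D$ and $M^S\otimes C_\nu$. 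The remaining point is to see that these cones vanish in $KK^G$, not merely in $K^G_\ast$. For this we would use that $KK^G_\ast(B, C_\nu\otimes D)$ is a module over $K^G_0(D)=R(G)_S$ on which $S$ acts invertibly. At the same time, it should be killed by $S$-torsion coming from $C_\nu$. We would make this precise using the compactness of $B$ and the colimit description of $M^S$.

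Once $M^S\simeq M^S\otimes D\simeq D$ compatibly with units, the localization condition for every compact $B$ is transported from $M^S$ via Remark \ref{uniqueness of h}.
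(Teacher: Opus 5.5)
\noindent Steps 1 and 2 are correct. Step 3, however, carries the whole content of the proposition, and as written it is circular.

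The leg $D\to M^S\otimes D$ is harmless. By the colimit description of $M^S$ and compactness of $B$, $KK^G_\ast(B,M^S\otimes D)\cong KK^G_\ast(B,D)_S=KK^G_\ast(B,D)$, because $S$ acts invertibly on $KK^G_\ast(B,D)$.

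The problem is the other leg, $\id_{M^S}\otimes\nu\colon M^S\to M^S\otimes D$, whose cone is $M^S\otimes C_\nu$ (with $C_\nu$ the cone of $\nu\colon\C\to D$). For compact $B$ the same colimit argument gives $KK^G_\ast(B,M^S\otimes C_\nu)\cong KK^G_\ast(B,C_\nu)_S$. Localization is exact and $S$ acts invertibly on $KK^G_\ast(B,D)$. So this group vanishes if and only if $\nu^B_\ast$ induces $KK^G_\ast(B,\C)_S\cong KK^G_\ast(B,D)$, which is exactly the localization condition with respect to $B$.

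Your claim that these groups are ``killed by $S$-torsion coming from $C_\nu$'' is known only for $B=\C$, since the hypothesis controls only $K^G_\ast=KK^G_\ast(\C,-)$. Formal manipulations with $D\otimes D\simeq D$, unit maps and homotopy colimits cannot move information from $KK^G_\ast(\C,-)$ to $KK^G_\ast(B,-)$. Your argument never uses that $G=U(1)$ or $SU(2)$, and that is precisely the missing input.

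What is needed is the Rosenberg--Schochet K\"unneth spectral sequence (Theorem \ref{spec seq}, available because $\pi_1(G)$ is torsion-free), together with duality for compact objects. Every compact $B$ has a dual $B^\ast$ with $KK^G(B,A)\cong K^G_0(B^\ast\otimes A)$ naturally in $A$.

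The paper uses these directly, without any comparison map. $K^G_\ast(D)\cong R(G)_{S_D}$ is flat over $R(G)$, so all higher $\Tor$ terms vanish. The spectral sequence, with $D\in\mathcal{B}_G$ in the first variable, then collapses to the cup-product isomorphism $K^G_\ast(B^\ast)\otimes_{R(G)}R(G)_{S_D}\cong K^G_\ast(B^\ast\otimes D)$. This is the localization condition for $B$, compatible with $\nu^B_\ast$.

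Alternatively, your Steps 1--2 can be rescued with the same tools, and this variant does not even need flatness. We have $C_f\in\mathcal{B}_G$ and $K^G_\ast(C_f)=0$, so the $E^2$-page for $C_f\otimes B^\ast$ vanishes identically. Hence $KK^G_\ast(B,C_f)\cong K^G_\ast(B^\ast\otimes C_f)=0$ for every compact $B$, in particular for the generators $\Ind^G_H M_n$. Proposition \ref{bootstrap} then makes $f$ a $KK^G$-equivalence. Since $f_\ast\circ\nu^B_\ast=\nu^B_\ast$, the map $f_\ast\circ h^B_{M^S}$ gives the required isomorphism for $D$.
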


\begin{proof}Let $B\in\mathcal{B}_G$ be a compact object. By \cite[Proposition 3.13]{Lefschetz}, there exists an object $B^\ast \in KK^G$ such that for every $A\in KK^G$, there is an isomorphism 
\[KK^G(B,A) \cong K^G_0(B^\ast \otimes A)\]
natural in $A$. Thus, to prove that $D$ satisfies the localization condition with respect to $B$, it suffices to show that $K^G_\ast(B^\ast \otimes D) \cong K^G_\ast(B^\ast)_{S_D}$. Since $K^G_0(D) \cong R(G)_{S_D}$ and $K^G_1(D)=0$, $K^G_\ast(D)$ is a flat $R(G)$-module. Hence $\Tor^{R(G)}_p (K^G_\ast(B^\ast), K^G_\ast(D))=0$ for all $p\geq 1$. This and Theorem \ref{spec seq} give an isomorphism 
\[K^G_\ast(B^\ast) \otimes_{R(G)} K^G_\ast(D) \cong K^G_\ast(B^\ast \otimes D). \]
The left-hand side is isomorphic to $K^G_\ast(B^\ast) \otimes_{R(G)} R(G)_{S_D} = K^G_\ast(B^\ast)_{S_D}$. This completes the proof. \end{proof}

\begin{rem}The proof of the preceding proposition does not use the assumption that $G=U(1)$ or $SU(2)$. In other words, this proposition holds for every compact Lie group $G$ with $\pi_1(G)$ torsion-free. \end{rem}

Combining Theorem \ref{flat epi}, Proposition \ref{loc w.r.t C} and Proposition \ref{loc KK}, we obtain the following result. 

\begin{cor}Let $D$ be a strongly self-absorbing $G$-$\Cstar$-algebra in $\mathcal{B}_G$ for $G=U(1)$ or $SU(2)$. If $K^G_1(D)=0$, then there is a $KK^G$-equivalence between $M^{S_D}$ and $D$ that maps $[1_{M^{S_D}}] \in K^G_0(M^{S_D})$ to $[1_D]\in K^G_0(D)$. \end{cor}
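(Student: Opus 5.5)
The plan is to chain the three results established just above, using the compact generators of $\mathcal{B}_G$ to invoke Proposition \ref{loc KK}.

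First, since $K^G_1(D)=0$ by hypothesis, Theorem \ref{flat epi} shows that $D$ satisfies the localization condition with respect to $\mathbb{C}$. Concretely, $\nu_\ast\colon R(G)\to K^G_0(D)$ identifies $K^G_0(D)$ with $R(G)_{S_D}$. Here $S_D$ is exactly the set of elements of $R(G)$ that become invertible, which is the saturated set prescribed in Definition \ref{loc condition}.

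Second, I apply Proposition \ref{loc w.r.t C} to upgrade this to the localization condition with respect to every compact object $B\in\mathcal{B}_G$. That proposition uses Theorem \ref{spec seq}, whose hypothesis $\pi_1(G)$ torsion-free holds for $U(1)$ and $SU(2)$. It also uses the flatness of $K^G_\ast(D)=R(G)_{S_D}\oplus 0$ over $R(G)$, which kills all higher $\Tor$ terms.

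Third, I take $\mathcal{B}_G^0$ to be the class of generators $\Ind^G_H M_n$, with $H\subset G$ closed and $M_n$ carrying an $H$-action. These lie in $\mathcal{B}_G$, generate it by definition, and are compact objects by \cite[Proposition 3.13]{Lefschetz}, as recalled after Proposition \ref{loc KK}. So $D$ satisfies the localization condition with respect to every $B\in\mathcal{B}_G^0$. Proposition \ref{loc KK} then yields an invertible $f\in KK^G(M^{S_D},D)$ with $[1_{M^{S_D}}]\otimes f=[1_D]$, which is the required identity-preserving $KK^G$-equivalence.

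There is no real obstacle, since everything is assembled from earlier results. The only point needing care is checking the hypotheses of Proposition \ref{loc w.r.t C}. Its duality $KK^G(B,A)\cong K^G_0(B^\ast\otimes A)$ must be compatible with the $R(G)$-module structures and with $\nu_\ast$, so that the resulting isomorphism $KK^G_\ast(B,\mathbb{C})_{S_D}\cong KK^G_\ast(B,D)$ really factors $\nu^B_\ast$ as Definition \ref{loc condition} demands. Naturality in $A$, applied to $\nu\colon\mathbb{C}\to D$, together with $R(G)$-linearity of the cup product, should settle this.
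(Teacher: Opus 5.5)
Your proposal is correct and follows the paper's own argument exactly: Theorem \ref{flat epi} gives the localization condition with respect to $\mathbb{C}$, and Proposition \ref{loc w.r.t C} upgrades it to every compact object of $\mathcal{B}_G$, in particular the compact generators $\Ind^G_H M_n$, so Proposition \ref{loc KK} then yields the unit-preserving $KK^G$-equivalence with $M^{S_D}$. Your remark that the duality $KK^G(B,A)\cong K^G_0(B^\ast\otimes A)$ must be compatible with $\nu^B_\ast$ and with the $R(G)$-module structures is a point the paper leaves implicit, and, as you say, naturality in $A$ handles it.
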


\begin{cor}Let $D$ be a strongly self-absorbing $G$-$\Cstar$-algebra in $\mathcal{B}_G$ for $G=U(1)$ or $SU(2)$. 
Then there is a $KK^G$-equivalence between $M^{S_{D\otimes M_\Q}}$ and $D\otimes M_\Q$ that maps 
$[1_{M^{S_{D\otimes M_\Q}}}] \in K^G_0(M^{S_{D\otimes M_\Q}})$ to $[1_{D\otimes M_\Q}]\in K^G_0(D\otimes M_\Q)$. \end{cor}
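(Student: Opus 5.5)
The idea is to reduce the statement for $D$ to the preceding corollary applied to $D\otimes M_\Q$. Once we know that $D\otimes M_\Q$ is a strongly self-absorbing $G$-$\Cstar$-algebra in $\mathcal{B}_G$ with $K^G_1(D\otimes M_\Q)=0$, that corollary yields the identity-preserving $KK^G$-equivalence with $M^{S_{D\otimes M_\Q}}$ directly.

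The first step is to check that $D\otimes M_\Q$ is strongly self-absorbing and lies in $\mathcal{B}_G$. Here $M_\Q$ carries the trivial $G$-action. It is a strongly self-absorbing $\Cstar$-algebra, and a tensor product of two strongly self-absorbing $G$-algebras is again strongly self-absorbing, by the same approximate unitary equivalence argument applied factorwise. For membership in $\mathcal{B}_G$, write $M_\Q$ as the inductive limit of the matrix algebras $M_{n!}$, with trivial action and connecting maps that are unital inclusions. The class $\mathcal{B}_G$ is closed under tensoring with $\C$ and under admissible inductive limits, which are homotopy colimits of countable sums. Each $D\otimes M_{n!}$ is Morita equivalent to $D$, so $D\otimes M_\Q\in\mathcal{B}_G$.

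The second step is to compute $K^G_\ast(D\otimes M_\Q)$. By continuity of equivariant $K$-theory along the inductive system, together with the Morita equivalences $M_{n!}\sim\C$, we get $K^G_\ast(D\otimes M_\Q)\cong \varinjlim (K^G_\ast(D),\times n)\cong K^G_\ast(D)\otimes_\Z\Q$. This is the same identification used in Lemma \ref{F(M tensor B)}. The discussion preceding the previous proposition showed that $K^G_1(D)\otimes_\Z\Q=0$, since $K^G_1(D)$ is torsion (part (1) of the previous proposition). Hence $K^G_1(D\otimes M_\Q)=0$.

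Finally, apply the preceding corollary to $D\otimes M_\Q$. Concretely, Theorem \ref{flat epi} gives the localization condition with respect to $\C$, Proposition \ref{loc w.r.t C} extends it to all compact objects, and Proposition \ref{loc KK} produces the desired $KK^G$-equivalence sending $[1_{M^{S_{D\otimes M_\Q}}}]$ to $[1_{D\otimes M_\Q}]$. The only step needing real care is the first one, that is, confirming that $D\otimes M_\Q$ is strongly self-absorbing and belongs to $\mathcal{B}_G$. I expect this to follow from standard closure properties (e.g. Szab\'o's results on tensor products of strongly self-absorbing actions) rather than from any new argument.
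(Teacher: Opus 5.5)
Your proposal is correct and follows the paper's route. The paper obtains this corollary by applying the preceding corollary (Theorem \ref{flat epi}, Proposition \ref{loc w.r.t C} and Proposition \ref{loc KK}) to $D\otimes M_\Q$, using that $D\otimes M_\Q$ is still strongly self-absorbing and that $K^G_1(D\otimes M_\Q)\cong K^G_1(D)\otimes_\Z\Q=0$ because $K^G_1(D)$ is torsion; your arguments for membership in $\mathcal{B}_G$ (via the admissible inductive limit $\varinjlim D\otimes M_{n!}$) and for strong self-absorption of the tensor product fill in details the paper leaves implicit.
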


We conclude this paper with the following peculiar example. 

\begin{ex} For $G=U(1)$, let $S$ be the multiplicative set in $R(G)=\Z[t,t^{-1}]$ generated by $\{1-t^n\}_{n=1}^\infty$. 
Then $K_*^{\Z/n}(M^S)=0$ for all natural numbers $n$ while $K_0^G(M^S)\cong \Z[t,t^{-1}]_S\neq 0$. 
We can realize the $KK^G$-equivalence class of this model by an isometrically shift absorbing action $\alpha$ on the 
Cuntz algebra $\mathcal{O}_2$. 
This $\alpha$ satisfies the following property: the restriction $\beta_n$ of $\alpha$ to $\Z/n$ for any natural number is the unique $\Z/n$-action with the Rohlin property and $\mathcal{O}_2\rtimes_{\beta_n}\Z/n\cong \mathcal{O}_2$, 
while $\mathcal{O}_2\rtimes_\alpha U(1)$ is not isomorphic to $\mathcal{O}_2$.  
Indeed, we can show from the construction that $\beta_n$ is approximately representable, and its dual action has the Rohlin property (see \cite[Lemma 3.8]{Izumi2004}). 
Since $K_*^{\Z/n}(\mathcal{O}_2)=0$, the crossed product $\mathcal{O}_2\rtimes_{\beta_n}\Z/n$ is isomorphic to $\mathcal{O}_2$. 
Since the unqiue Rohlin action of $\Z/n$ on $\mathcal{O}_2$ is self-daul, we see that $\beta_n$ itself has the Rohlin prolerty. 
Choosing larger multiplicative sets $S$, we can construct a large variety of $U(1)$-actions with the same property.  
\end{ex}

\end{document}